 \theoremstyle{plain}
 \newtheorem{theorem}{Theorem}[section]
 \newtheorem{lemma}[theorem]{Lemma}
 \newtheorem{corollary}[theorem]{Corollary}
 \newtheorem{proposition}[theorem]{Proposition}
  \newtheorem{question}[theorem]{Question}
 \theoremstyle{definition}
 \newtheorem{definition}[theorem]{Definition}
 \newtheorem{definitionnotation}[theorem]{Notation}
 \newtheorem{lemmadef}[theorem]{Definition-Lemma}
 \theoremstyle{remark}
 \newtheorem{remark}[theorem]{Remark}
\newcommand{\spec}{\operatorname{Spec}}
\newcommand{\loc}{\operatorname{Frac}}
\newcommand{\hg}{\operatorname{H}}
\newcommand{\chara}{\operatorname{char}}
\newcommand{\iso}{\operatorname{Iso}}
\newcommand{\sections}{\operatorname{Sections}}
\newcommand{\cls}{\overline{K}}
\newcommand{\projection}{\operatorname{pr}}
\newcommand{\jac}{\operatorname{Jac}}
\newcommand{\homo}{\operatorname{Hom}}
\newcommand{\mni}{\medskip\noindent}
\newcommand{\wt}{\widetilde}
\newcommand{\ol}{\overline}
\newcommand{\SP}{\text{Spec }}
\newcommand{\Cc}{C}
\newcommand{\bB}{B}
\title{Pseudo-N\'eron Model and Restriction of Sections}
\title{%
  Pseudo-N\'eron Model and Restriction of Sections \\
  \large with an appendix by Jason Michael Starr}
\author{Santai Qu}
\date{\today}
\begin{document}

\maketitle

\begin{abstract}
We introduce the notion of pseudo-N\'eron model and give new examples of varieties admitting pseudo-N\'eron models other than Abelian varieties.  As an application of pseudo-N\'eron models, given a scheme admitting a finite morphism to an Abelian scheme over a positive-dimensional base, we prove that for a very general genus-0, degree-$d$ curve in the base with $d$ sufficiently large, every section of the scheme over the curve is contained in a unique section over the entire base.
\end{abstract}

\tableofcontents

\section{Introduction}

\subsection{Main results}

By a Dedekind scheme, we always mean an irreducible, Noetherian and normal scheme of dimension one.  Let $S$ be a Dedekind scheme with function field $K$.  Let $X_K$ be a smooth and separated $K$-scheme of finite type.  We say that $X$ is an \emph{$S$-model} of $X_K$ if $X$ is an $S$-scheme with generic fiber isomorphic to $X_K$.  A N\'eron model of $X_K$ is an $S$-model satisfying a universal property of extending morphisms.  This extends the smooth variety $X_K$ to a family of smooth varieties over $S$.  The precise definition is the following.

\begin{definition}(\cite{BLR}, Def.1.2/1, p.12)\label{Nerondefn}
Let $X_K$ be  a smooth and separated $K$-scheme of finite type.  A \emph{N\'eron model} of $X_K$ is an $S$-model $X$ which is \emph{smooth}, separated, and of finite type, and which satisfies the following universal property, called the \emph{N\'eron mapping property}:

For each smooth $S$-scheme $Y$ and each $K$-morphism $u_K: Y_K\rightarrow X_K$ there is a unique $S$-morphism $u: Y\rightarrow X$ extending $u_K$.
\end{definition}

From the uniqueness of the morphism extension, it is easy to see that a N\'eron model is unique as soon as it exists.  If $X_K$ is an Abelian variety over $K$, then the existence of N\'eron model is proved in the welcomed survey book \cite{BLR}.  However, the N\'eron model of an Abelian variety is not necessary an Abelian scheme over the Dedekind scheme $S$ (cf. \cite{BLR}, Theorem 1.4/3, p.19).

\begin{theorem}(\cite{BLR}, Theorem 1.4/3, p.19)\label{Nerontheoremblr}
Let $X_K$ be an abelian variety over $K$.  Then $X_K$ admits a N\'eron model $X$ over S.  
\end{theorem}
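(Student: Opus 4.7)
The plan is to reduce to the case of a strictly Henselian discrete valuation ring and then to construct the N\'eron model via N\'eron's smoothening process, using the group law of $X_K$ to upgrade the resulting ``weak'' extension property to the full N\'eron mapping property.

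First, I would observe that the existence of a N\'eron model is local on $S$: given $s \in S$, it suffices to produce the N\'eron model over $\operatorname{Spec}(\mathcal{O}_{S,s})$ and then glue, since uniqueness in the N\'eron mapping property canonically identifies the models on overlaps. Hence one may assume $S = \operatorname{Spec}(R)$ for a DVR $R$. A further reduction via faithfully flat descent along $R \to R^{sh}$ (the strict Henselization) lets one assume $R$ is strictly Henselian; descent is effective because smooth separated schemes descend, and the compatibility of the descent datum with the group action is automatic from the uniqueness in the N\'eron mapping property.

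Second, I would construct a so-called \emph{weak N\'eron model}: a smooth separated $R$-model $X$ such that every $K'$-point of $X_K$ extends to an $R'$-point of $X$, for every unramified extension $R \subseteq R'$. Starting from a projective $R$-model $\ol{X}$ of $X_K$ (available since abelian varieties are projective), one applies N\'eron's smoothening process, which iteratively blows up carefully chosen centers in the non-smooth locus of the special fiber and proves that a suitable measure of the ``defect of smoothness'' strictly decreases. After finitely many steps every $R'$-point of $\ol{X}$ factors through the smooth locus, and the union of these smooth loci (with suitable gluing) yields $X$. This smoothening argument is the technical heart of the proof and the main obstacle: the delicate part is defining the invariant and verifying that each blow-up decreases it by a controllable amount.

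Third, I would promote the weak property to the genuine N\'eron mapping property using the group structure on $X_K$. Given a smooth separated $S$-scheme $Y$ and $u_K : Y_K \to X_K$, I would consider the graph of $u_K$ inside $(Y \times_S X)_K$ and show that its schematic closure is a rational map which is defined in codimension one on $Y$. Here the group law enters: translations by sections coming from the weak N\'eron property let one reduce, Zariski-locally on $Y$, to the case where $u_K$ carries a given point of $Y$ to the identity section, so the obstructions to extension can be simultaneously trivialized. A standard lemma then says that a rational map from a smooth scheme to a smooth separated group scheme which is defined in codimension one is defined everywhere, producing the desired $S$-morphism $u : Y \to X$ uniquely. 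Separatedness of $X$ over $S$ gives uniqueness and hence the gluing in Step~1 is well-defined, completing the construction.
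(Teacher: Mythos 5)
The paper does not prove this theorem; it is quoted verbatim from \cite{BLR}, Theorem 1.4/3, so there is no ``paper's own proof'' to compare against. Your sketch correctly identifies the overall strategy of that reference: localize and descend to the strictly Henselian DVR case, build a weak N\'eron model via N\'eron's smoothening process, and then upgrade to the full N\'eron mapping property using the group law.

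There is, however, a genuine gap in your Step~3. The ``standard lemma'' you invoke is Weil's extension theorem, which requires the target to be a smooth, separated \emph{group scheme over $S$}; it is false for a general smooth separated target (consider $\mathbb{A}^2 \dashrightarrow \mathbb{P}^1$, $(x,y)\mapsto [x:y]$, defined in codimension one but not at the origin). The weak N\'eron model produced by smoothening is merely a smooth separated $R$-model whose \emph{generic fiber} is a group; the multiplication on $X_K$ does not automatically extend to an $R$-group law on $X$, and in fact the weak N\'eron model typically has superfluous components that obstruct any group structure. Your translation trick re-centers the rational map but does not supply the group structure on the target that the extension argument fundamentally needs. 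The missing ingredient is the theory of $R$-rational (birational) group laws (\cite{BLR}, Chapters 4.3 and 5): one shows the weak N\'eron model carries an $R$-rational group law, invokes a Weil--Artin theorem to produce an honest smooth separated $R$-group scheme of finite type with the same generic fiber, and only then applies Weil's extension theorem with that group scheme as target to verify the N\'eron mapping property. This construction is the most technically demanding part of the proof in \cite{BLR} and cannot be elided.
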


The N\'eron model is a very important tool in both arithmetic geometry and algebraic geometry.  In the article \cite{GJ}, Tom Graber and Jason Michael Starr introduced the technique of using N\'eron models to prove the theorem of restriction of sections for families of Abelian varieties (Theorem~\ref{maintheoremjason}).  To state their theorems, we cite the following definition from \cite{GJ}.

\begin{definition}(\cite{GJ}, p.312)\label{linesdef}
Let $k$ be an algebraically closed field.  Fix a generically finite, generically unramified morphism $u_0: S\rightarrow \mathbb{P}^n_k$.  We define
\begin{itemize}
\item{an \emph{$u_0$-line} is a curve in $S$ of the form $S\times_{\mathbb{P}^n_k}L$ for a line $L\subset \mathbb{P}^n_k$;}
\item{an \emph{$u_0$-conic} is a curve in $S$ of the form $S\times_{\mathbb{P}^n_k}C$ for a plane conic $C\subset \mathbb{P}^n_k$;}
\item{an \emph{$u_0$-line-pair} is a curve in $S$ of the form $S\times_{\mathbb{P}^n_k}L$, where $L=L_1\cup L_2$ for a pair of incident lines in $\mathbb{P}^n_k$;}
\item{an \emph{$u_0$-smooth-curve} is an irreducible smooth curve in $S$ of the form $S\times_{\mathbb{P}^n_k} C_0$ for a smooth curve $C_0\subset \mathbb{P}^n_k$;}
\item{an \emph{$u_0$-curve-pair} of degree $d+2$ is a curve in $S$ of the form $S\times_{\mathbb{P}^n_k}C$, where $C=C_0\cup C_1$ for a pair of incident curves in $\mathbb{P}^n_k$ where $C_0$ is a genus zero, smooth curve of degree $d$, and $C_1$ is a conic;}
\item{an \emph{$u_0$-planar surface} is a surface in $S$ of the form $S\times_{\mathbb{P}^n_k}\Sigma$ for a $2$-plane $\Sigma\subset \mathbb{P}^n_k$.}
\end{itemize}
\end{definition}

Note that, by Bertini's theorem, for sufficiently general line, conic, and plane, the corresponding $u_0$-line, $u_0$-conic, and $u_0$-planar surface will be smooth.  By abuse of notations, we will just say line, conic, line-pair, curve-pair, planar surface, and smooth curve in $S$ instead of $u_0$-line, $u_0$-conic, $u_0$-line-pair, $u_0$-curve-pair, $u_0$-planar surface, and $u_0$-smooth-curve.  

Let $k$ be an uncountable algebraically closed field.  We say a subset of a scheme is \emph{general}, resp. \emph{very general}, if the subset contains an open dense subset, resp. the intersection of a countable collection of open dense subsets.  We say that a property of points in a scheme holds \emph{at a general point}, resp. \emph{at a very general point}, if the set where the property holds is a general subset, resp. a very general subset.

Now, we state the main theorem in \cite{GJ} as following.

\begin{theorem}(\cite{GJ}, Theorem 1.3, p.312)\label{maintheoremjason}
Let $k$ be an uncountable algebraically closed field.  Let $S$ be an integral, normal, quasi-projective $k$-scheme of dimension $b\ge 2$.  Let $A$ be an Abelian scheme over $S$.  For a very general line-pair $C$ in $S$, the restriction map of sections
\[\sections(A/S)\to\sections(A_C/C)\]
is a bijection.  The theorem also holds with $C$ a very general planar surface in $S$.  If $\chara k=0$, this also holds with $C$ a very general conic in $S$.
\end{theorem}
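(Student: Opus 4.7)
The approach is to leverage Theorem~\ref{Nerontheoremblr} on Néron models of abelian varieties over Dedekind schemes, propagating a section from $C$ to $S$ by sweeping $S$ with one-parameter families of Dedekind curves. Bijectivity splits into injectivity and surjectivity, the latter being the substantive direction.

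For injectivity, given two sections $\sigma_1, \sigma_2 : S \to A$ agreeing on $C$, I form the difference $\tau := \sigma_1 - \sigma_2$ under the group law of $A/S$ and let $Z \subset S$ denote its vanishing locus (the pullback of the zero section $S \hookrightarrow A$ along $\tau$). Then $Z$ is closed and contains $C$. A genericity argument --- showing that for $C$ very general the vanishing locus of any nonzero global section of $A/S$ cannot contain $C$, because line-pairs cover $S$ and the collection of such vanishing loci is a countable system of proper closed subsets --- forces $\tau = 0$ and hence $\sigma_1 = \sigma_2$.

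For surjectivity in the line-pair case, write $C = L_1 \cup L_2$ with $p = L_1 \cap L_2$ and let $\Sigma \subset S$ be the planar surface containing $C$ (pullback of the plane spanned by the two lines in $\mathbb{P}^n_k$). I first extend $\sigma : C \to A_C$ to a section $\sigma_\Sigma : \Sigma \to A_\Sigma$ by fibering $\Sigma$ via the pencil of lines through $p$ (equivalently, blowing up $p$ to obtain a $\mathbb{P}^1$-bundle over $\mathbb{P}^1$). The generic line $L_\eta$ of this pencil is a Dedekind scheme, so $A_{L_\eta}$ admits a Néron model by Theorem~\ref{Nerontheoremblr}. Interpreting $\sigma|_{L_1}$ as prescribing the value at the intersection $L_\eta \cap L_1$ for varying $L_\eta$, the Néron mapping property extends $\sigma|_{L_1}$ to a rational section of $A_\Sigma \to \Sigma$. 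Properness of $A_\Sigma/\Sigma$ and normality of $\Sigma$ force the indeterminate locus to have codimension $\geq 2$, and the weak Néron property for abelian schemes over normal bases then promotes the rational section to an everywhere defined morphism $\sigma_\Sigma$; the compatibility $\sigma_\Sigma|_{L_2} = \sigma|_{L_2}$ follows from uniqueness of the Néron extension together with $\sigma_1(p) = \sigma_2(p)$. To extend $\sigma_\Sigma$ from $\Sigma$ to $S$, I vary the planar surface $\Sigma'$ through a fixed very general line $L \subset \Sigma$: each $\Sigma'$ carries an extension by the same stage applied to $\sigma_\Sigma|_L$ paired with another line in $\Sigma'$ meeting $L$, and injectivity glues these local extensions into a global section. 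The planar surface case is the first stage with $\Sigma = C$, and the conic case in characteristic zero follows by a degeneration argument, specializing a smooth conic to a pair of incident lines.

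The main obstacle is the first stage of surjectivity: propagating the section along the pencil of lines and then extending the resulting rational section to the entire planar surface. This rests on the weak Néron property for abelian schemes over normal two-dimensional bases, which goes beyond the Dedekind-base statement in Theorem~\ref{Nerontheoremblr} and must be established separately using smoothness and properness of $A$. A secondary issue is verifying that the deformations used in the sweep stay inside the locus of very general line-pairs and planar surfaces --- this is ultimately why the theorem requires $C$ to be very general rather than merely general.
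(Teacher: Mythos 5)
The paper does not reprove Theorem~\ref{maintheoremjason}; it quotes it from \cite{GJ} and uses it as input, so there is no in-paper proof to compare against. Evaluating your proposal on its own terms, there is a genuine gap in the surjectivity argument. You blow up $p=L_1\cap L_2$ so that the pencil of lines through $p$ fibres $\Sigma$, and then claim to ``interpret $\sigma|_{L_1}$ as prescribing the value at $L_\eta\cap L_1$ for varying $L_\eta$.'' But every member $L_\eta$ of that pencil meets $L_1$ exactly at $p$, so the prescribed value never varies. And even after fixing a pencil with moving base point, prescribing the value of a section of an abelian scheme at one (or two) points of the generic line $L_\eta$ does not determine the section. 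For a constant family $A=A_0\times_k S$ and for $u_0$-lines of positive geometric genus (which is what one gets as soon as $u_0$ has degree at least $2$), there are typically countably many sections over $L_\eta$ through a given point, one for each homomorphism $\jac(\overline{L_\eta})\to A_0$. The N\'eron mapping property lets you extend a morphism already defined on the generic fibre of $A_\Sigma$; it does not manufacture such a morphism from a point value. So the claimed rational section of $A_\Sigma/\Sigma$ is never actually produced, and the sweep cannot get started.

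This failure is precisely the obstruction that the isotrivial factor $\iso(A)$ creates (see Remark~\ref{modulibyisotrivialfactor} in this paper), and controlling it is the hard half of \cite{GJ}. Graber and Starr split $A$ (after a finite \'etale Galois cover, via the Chow $S'/k$-trace) into a constant factor and a strongly nonisotrivial factor, treat the two cases by genuinely different arguments, and work with the moduli space of lifted line-pairs and a dominance/deformation argument rather than propagating a single section along a pencil; that is the structure reflected here in Theorem~\ref{keylemma} and Corollary~\ref{inductionI}. Your injectivity argument, and your observation that a rational section of an abelian scheme over a regular base extends everywhere, are both correct and both appear in \cite{GJ}; but the surjectivity step needs a fundamentally different strategy, and the ``very general'' hypothesis must actually do work against the countable moduli of sections from the isotrivial part, which your outline does not engage.
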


The main application of N\'eron models in the proof of Theorem~\ref{maintheoremjason} is Lemma 4.13 in \cite{GJ}, p.323.  However, going over the proof, it is easy to see that only the existence of extensions of morphisms is needed, but not the uniqueness, and this is also the case for many other applications of N\'eron models.  This leads us to weaken the definition of N\'eron mapping property, and consider a weak version of N\'eron model.

\begin{definition}\label{weakextensionproperty}
Let $X$ be a separated, flat scheme of finite type over $S$.  We say $X$ has the \emph{weak extension property} if for every smooth scheme $Z$ over $S$ and every $K$-morphism $u_K:Z_K\rightarrow X_K$, there exists an $S$-morphism $u:Z\rightarrow X$ extending $u_K$.
\end{definition}

\begin{definition}\label{weakextensiondefn}
Let $X_K$ be a smooth, finite type and separated $K$-variety.  Suppose that $X$ is a separated, flat and finite type scheme over $S$ with generic fiber $X_K$.  We say that $X$ is a \emph{pseudo-N\'eron model of its generic fiber} if $X$ satisfies the weak extension property.
\end{definition}

We note that, in our definition for pseudo-N\'eron models, we do not require that $X$ is normal or regular since after an \'etale base change $T\to S$, $X_T$ is not necessarily normal or regular.  And, we stress that, unlike N\'eron models, a pseudo-N\'eron model is always not unique.

By \cite{BLR} Proposition 1.2/8, we know that every Abelian scheme over $S$ satisfies the weak extension property.  Moreover, from Theorem~\ref{Nerontheoremblr}, every Abelian variety has a N\'eron model, and hence a pseudo-N\'eron model.  So it is natural to ask the following question.

\begin{question}\label{firstquestion}
Is there any other class of varieties, besides Abelian schemes and Abelian varieties, satisfying the weak extension property or admitting pseudo-N\'eron models?
\end{question}

In the first part of this article, we give a positive answer to this question.  It turns out that the existence of pseudo-N\'eron models is closely related to the non-existence of rational curves on the variety (Corollary~\ref{pseudoliu}).  Our new example of pseudo-N\'eron models is the following, which will be restated and proved as Corollary~\ref{newgeometric}.

\begin{theorem}(New Examples)\label{newexamples}
Let $k$ be an uncountable algebraically closed field.  Let $S$ be a Dedekind scheme of finite type over $k$ with field of functions $K$ (e.g. $S$ is a smooth curve over $k$).  Let $d$ be an integer prime to $\chara k$.  Let $X_K$ be a smooth $K$-variety admitting a finite morphism to a very general smooth hypersurface of degree $d\ge 2n-1$ defined over $k$ in $\mathbb{P}^n_K$.  Then $X_K$ has a pseudo-N\'eron model over $S$.  

In particular, every smooth subvariety of a very general hypersurface of degree $d\ge 2n-1$ defined over $k$ in $\mathbb{P}^n_K$ has a pseudo-N\'eron model.
\end{theorem}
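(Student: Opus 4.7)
The plan is to combine the geometric input that very general hypersurfaces of high degree contain no rational curves with the general criterion (Corollary~\ref{pseudoliu}) linking this property to the existence of a pseudo-N\'eron model. The argument has two main steps: (i) show that $X_K$ contains no rational curves, and (ii) invoke Corollary~\ref{pseudoliu} to build the model.

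For (i), I would first import the Clemens--Ein--Voisin-type theorem asserting that a very general smooth hypersurface $Y\subset \PP^n_k$ of degree $d\ge 2n-1$ contains no rational curves over $\overline k$. Since the ``very general'' condition is preserved under scalar extension and any rational curve on $Y_K = Y\times_k K$ would descend (via spreading out and specialization) to $Y_{\overline k}$, the base change $Y_K$ also admits no non-constant morphism from $\PP^1_{\overline K}$. I would then transfer this property through the given finite morphism $f_K\colon X_K\to Y_K$: if there were a non-constant morphism $g\colon \PP^1_{\overline K}\to X_{\overline K}$, then $f_K\circ g$ would be a non-constant morphism $\PP^1_{\overline K}\to Y_{\overline K}$, since the finite morphism $f_K$ contracts no curve. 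This contradicts the previous paragraph, so $X_K$ contains no rational curves.

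For (ii), I would produce a concrete $S$-model $X$ of $X_K$ to feed into Corollary~\ref{pseudoliu}. A natural candidate is obtained by spreading out $f_K$ over an open $U\subset S$ to a finite morphism $X_U\to Y\times_k U$, and extending across the missing codimension-one points of $S$ by taking the normalization of $Y\times_k S$ inside the function field of $X_K$ (using that $Y\times_k S$ is normal and $f_K$ is generically finite, so the normalization is finite over $Y\times_k S$, hence of finite type and separated). This yields a separated, finite-type, $S$-flat (after removing any embedded components) $S$-scheme $X$ with generic fiber $X_K$, equipped with a finite $S$-morphism $X\to Y\times_k S$. Corollary~\ref{pseudoliu} then converts the no-rational-curves condition on $X_K$ established in (i) into the weak extension property for $X$, making $X$ a pseudo-N\'eron model of $X_K$ in the sense of Definition~\ref{weakextensiondefn}.

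The main obstacle is the geometric input in step (i): the theorem on absence of rational curves on very general hypersurfaces of degree $d\ge 2n-1$ is itself a deep result relying on Hodge-theoretic and deformation-theoretic techniques (Clemens, Ein, Voisin, Pacienza, and others). A secondary technical point is to verify that the spread-out model $X$ constructed in (ii) genuinely satisfies the hypotheses of Corollary~\ref{pseudoliu} at every closed point of $S$, especially over points where $X$ may fail to be smooth; the fact that $X$ is finite over the nice fixed model $Y\times_k S$ should be the key leverage for handling these special fibers uniformly.
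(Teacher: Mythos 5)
Your proposal is sound and arrives at the theorem, but it takes a genuinely different route from the paper in the ``transport along a finite morphism'' step. Both arguments use the same deep geometric input (Riedl--Woolf / Clemens--Ein--Voisin: a very general hypersurface of degree $d\ge 2n-1$ over $\overline k$ has no rational curves) and the same construction of the model $X$ as the integral closure of the constant family $Y\times_k S$ in the function field of $X_K$. The difference is where Corollary~\ref{pseudoliu} is applied. The paper applies it \emph{only to the constant hypersurface family} $Y\times_k S$ (via Proposition~\ref{neronliu}, Theorem~\ref{hypersurface} and Corollary~\ref{geometricthm}), so the no-rational-curves check takes place on the fixed $k$-variety $Y$, and then transports the resulting weak extension property to $X$ using the purely formal Lemma~\ref{finitemap} (equivalently Theorem~\ref{neron}); nothing about rational curves on $X$ is ever needed. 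You instead apply Corollary~\ref{pseudoliu} directly to $X$, which requires showing that \emph{no} geometric fiber $X_{\overline s}$, $s\in S$, contains a rational curve. Your step (i) handles the generic geometric fiber via the descent/spreading-out argument (which is precisely Lemma~\ref{curvelemma} in the paper) together with the fact that a finite morphism contracts no curves. Your closing remark correctly identifies the tool for the special fibers — that $X\to Y\times_k S$ is finite and the target is a constant family — but you stop short of spelling out the conclusion: for each closed $s\in S$, $X_{\overline s}\to Y_{\overline s}\cong Y_{\overline k}$ is finite, so a rational curve in $X_{\overline s}$ would push forward to a nonconstant map $\mathbb{P}^1_{\overline k}\to Y_{\overline k}$, contradicting the choice of hypersurface. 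Once that half-sentence is added your argument is complete. The paper's route is marginally cleaner in that it never examines the special fibers of $X$ at all, at the cost of invoking the slightly more abstract transport statement Lemma~\ref{finitemap}; your route is more concrete but duplicates a bit of that lemma's work by hand.
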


In this second part of this article, we will use pseudo-N\'eron models to restate the Lemma 4.13 in \cite{GJ} in a more general set up.  As a consequence, we prove that there exists a broader class of varieties for which Theorem~\ref{maintheoremjason} holds for higher order curve-pairs and smooth curves.

\begin{theorem}\label{generalizedjasonmaintheorem}
Let $k$ an uncountable algebraically closed field of characteristic zero.  Let $S$ be an integral, normal, quasi-projective $k$-scheme of dimension $b\ge 2$.  Let $X$ be a smooth $S$-scheme admitting a finite morphism $f: X\rightarrow A$ to an Abelian scheme $A$ over $S$.  Let $e$ be the fiber dimension of $\iso(A)$ where $\iso(A)$ is the isotrivial factor of $A$ (see Definition-Lemma~\ref{isotrivialfactor} and Remark~\ref{fiberdimension}).  Let $d$ be a positive even integer.  

Then, for $d>2e$, every section of $X_C$ over a very general genus-0 and degree-$(d+2)$ curve-pair or a very general genus-0, degree-$(d+2)$ smooth curve $C$ is the restriction of a unique global section of $X$ over $S$. 
\end{theorem}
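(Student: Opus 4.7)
The plan is to reduce the statement for $X$ to an analogous statement for the abelian scheme $A$ via the finite cover $f\colon X \to A$, using the pseudo-N\'eron machinery developed earlier in the paper.

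First, I would verify that $X$ is itself a pseudo-N\'eron model of its generic fiber. Since $f$ is finite and $A$ is an abelian scheme, every fiber of $X \to S$ admits a finite morphism to an abelian variety, and in characteristic zero no such variety contains a non-constant rational curve. The criterion established in the first part of the paper, relating pseudo-N\'eron models to the non-existence of rational curves, then gives $X$ the weak extension property (Definition~\ref{weakextensionproperty}).

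Given a section $\sigma\colon C \to X_C$, I would compose with $f$ to obtain a section $\tau_C := f \circ \sigma\colon C \to A_C$ and then extend $\tau_C$ to a global section $\tau\colon S \to A$. This is the heart of the argument. Using the isogeny decomposition of $A$ into its maximal isotrivial subscheme $\iso(A)$ and a complementary trace-zero factor, one handles the two pieces separately: the trace-zero factor yields rigid extensions by the Lang-N\'eron theorem combined with the Graber-Starr moving technique underlying Theorem~\ref{maintheoremjason}, while for $\iso(A)$, a finite \'etale base change trivializes the factor and turns sections into morphisms to a fixed abelian variety $A_0$ of dimension $e$. The bound $d > 2e$ enters as the precise dimension count forcing such a morphism from the base-change cover of $C$ to $A_0$ to be constant, so that $\tau_C$ extends uniquely to $S$.

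Once $\tau$ is in hand, I would lift it to a section of $X$. The pullback $\tau^{*}X := S \times_{A,\tau} X$ is a finite $S$-scheme, and $\sigma$ determines a section of $\tau^{*}X$ over $C$. Let $W$ be the irreducible component of $\tau^{*}X$ through $\sigma(C)$, which is finite and dominant over $S$, and generically \'etale in characteristic zero. Combining the pseudo-N\'eron property of $X$ with a Lefschetz-type surjectivity $\pi_1(C) \twoheadrightarrow \pi_1(S)$ valid for very general rational curves of sufficiently high degree, one shows that $W \to S$ is birational, producing the global section $\wt{\sigma}\colon S \to X$ extending $\sigma$. Uniqueness follows by pushing two hypothetical extensions forward to $A$ (where $\tau$ is unique) and using separatedness of $\tau^{*}X \to S$. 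The main obstacle will be the middle step: establishing the threshold $d > 2e$ requires a careful refinement of the Graber-Starr argument to handle sections of the isotrivial factor over very general rational curves of high degree.
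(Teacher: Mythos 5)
Your overall architecture (push the section down via $f$ to $A$, split $A$ by its isotrivial and strongly nonisotrivial factors, then lift back up the finite cover) tracks the paper's strategy in broad outline, but the two load-bearing steps are mislocated, and the explanation you give for the threshold $d > 2e$ is incorrect.

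First, the dimension count you invoke does not do the work you want. If $C$ is a genus-$0$ curve and $S'\to S$ is the finite \'etale Galois cover trivializing the isotrivial factor, then $C' := C\times_S S'$ is again a disjoint union of genus-$0$ curves (an \'etale cover of $\mathbb{P}^1$ is split), and \emph{any} morphism from a rational curve to the abelian variety $A_0$ is automatically constant — for every degree $d$, not just $d>2e$. So "$d>2e$ forces constancy" is vacuous, and it cannot be the source of the bound. The real obstruction is elsewhere: even though each section of $\iso(A)$ over $C$ is locally constant, the tuple of constant values it gives on the components of $C'$ must be consistent with coming from a single morphism $S'\to A_0$, and the set of all values $p = (\pi\circ f\circ\sigma)(b)\in\iso(A)_b$ forms an uncountable, $e$-dimensional family. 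For a \emph{fixed} lift $\sigma$ of a section of $A$, the Bertini argument (Theorem~\ref{bertinisectionsrevised}) gives a dense \emph{open} set of conics over which sections of $X\times_{A,\sigma}S$ extend; but to conclude for a very general curve one would need to intersect these open sets over the $e$-dimensional family of $(p,\sigma)$, which is not a countable intersection. This is exactly what Remark~\ref{modulibyisotrivialfactor} warns about, and it is the gap your proposal walks into.

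The paper's actual argument is a dimension-shrinking induction on the bad locus inside the fiber $\iso(A)_b$ (not inside $A_0$ or inside a Jacobian). One fixes $b_1$ and a very general conic $C_1$ and shows that the bad points $\mathcal{M}_{2,b_1}\subset\iso(A)_{b_1}$ form a proper closed subset (Corollary~\ref{verygeneralmatching}). Attaching a second conic $C_2$ at a very general node and base-pointing at a very general $b_2$ produces a second bad locus, and transversality forces the new bad set $\mathcal{M}_{4,b_1}$ to lie in $\Delta_2(C_1\cup C_2,b_2)\cap\mathcal{M}_{2,b_1}$, which has strictly smaller dimension. One then smooths the curve-pair using Corollary~\ref{inductionI} (which relies on Theorem~\ref{keylemma}, the key pseudo-N\'eron input, plus Cartier's theorem to rule out intersecting components of $H$) and repeats, getting a strictly decreasing chain
\[
\dim\mathcal{M}_{2,b_1}>\dim\mathcal{M}_{4,b_1}'>\dim\mathcal{M}_{6,b_1}'>\cdots,
\]
which terminates below zero once the curve degree exceeds $2e+2$. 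Your $\pi_1$-surjectivity Lefschetz step is not needed and does not appear; the lifting is handled by the good/bad curve-pair dichotomy (Definition~\ref{goodcurves}) together with Lemma~\ref{matching}, which matches sections at the node of the curve-pair and is the place where the countability statements of Lemma~\ref{moduliofsections} are used. Without the bad-set machinery and the dimension induction, your proposal has no mechanism to produce or explain the $d>2e$ threshold.
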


\subsection{Review of theorems of sections}

Now, we give a brief review of theorems of sections in literatures and compare our main theorem with these results.

A complex variety $V$ is said to be \emph{rationally connected} if two general points of $V$ can be joined by a rational curve (\cite{Kollar}, Definition 3.2, p.199).  In \cite{GHS}, it is proved that a one-parameter family of rationally connected complex varieties has a section.  

\begin{theorem}(\cite{GHS}, Theorem 1.1, p.57)\label{GHS}
Let $f: X\to B$ be a proper morphism of complex varieties with $B$ a smooth curve.  If the general fiber of $f$ is rationally connected, then $f$ has a section.
\end{theorem}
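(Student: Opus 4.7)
The plan is to reduce to a smooth projective setting and then construct a section via a comb construction and a smoothing argument inside $X$.

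First I would make standard reductions: since $f$ is proper and $B$ is a smooth curve, any rational section extends by the valuative criterion of properness, so we may replace $B$ by its smooth projective completion; by Hironaka we may then resolve singularities to assume $X$ is smooth projective, noting that resolution is an isomorphism over the generic fiber so the general fiber remains rationally connected. By Bertini, a general complete intersection of sufficiently many ample divisors on $X$ yields a smooth irreducible multisection $Y\subset X$ with $f|_Y\colon Y\to B$ finite surjective of some degree $d\geq 1$; if $d=1$ we are done, so assume $d\geq 2$.

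Next, I would exploit the rational connectedness of the general fiber to attach very free rational curves to $Y$. By Koll\'ar--Miyaoka--Mori, through every sufficiently general point of a rationally connected smooth projective variety there passes a very free rational curve. Choose general points $p_1,\dots,p_N\in Y$ lying in distinct general fibers of $f$, and for each $p_i$ attach a very free rational curve $R_i\subset X_{f(p_i)}$ through $p_i$, forming a comb $\Sigma=Y\cup R_1\cup\cdots\cup R_N$ inside $X$ with handle $Y$ and teeth $R_i$.

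The crucial step is to smooth $\Sigma$ inside $X$ so that the smoothing acquires a section of $f$ as an irreducible component. By Koll\'ar's smoothing theorem for combs, once $N$ is large enough and the teeth are sufficiently free, the normal bundle of $\Sigma$ in $X$ becomes positive enough that $\Sigma$ has unobstructed deformations and smooths to a morphism from a smooth curve of genus $g(Y)$ mapping into $X$. To force the smoothing to contain a degree-$1$ component over $B$, I would degenerate the handle $Y$ inside its Hurwitz scheme to a nodal cover of $B$ having an irreducible component of degree $1$, and then track this degeneration through the comb smoothing to conclude that a nearby smoothing contains a section as a component. The main obstacle, and the technical heart of the Graber--Harris--Starr argument, is to verify that the $H^1$ obstruction controlling such constrained deformations of $\Sigma$ in $X$ vanishes; this is exactly what attaching enough very free teeth is designed to achieve, via a careful analysis of the positivity of the normal bundle of $\Sigma$ (the very-freeness of the $R_i$ makes their contribution to $N_{\Sigma/X}$ sufficiently ample to kill the obstructions coming from the handle).
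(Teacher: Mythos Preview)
The paper does not prove this theorem; it is quoted from \cite{GHS} as background, with no argument supplied. So there is no ``paper's own proof'' to compare against, and your sketch should be assessed on its own terms as an outline of the Graber--Harris--Starr argument.

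Your reductions and the comb construction are correct in spirit, and you correctly identify that the vanishing of $H^1$ of the normal sheaf of the comb is the technical crux. The genuine gap is in the sentence ``degenerate the handle $Y$ inside its Hurwitz scheme to a nodal cover of $B$ having an irreducible component of degree $1$, and then track this degeneration through the comb smoothing.'' This is stated backwards and skips the essential point. Degenerating $Y$ abstractly as a cover of $B$ produces nothing in $X$; what one must show is that the forgetful morphism from the space of stable maps to $X$ (near the comb) to the Hurwitz space of degree-$d$ covers of $B$ is \emph{dominant} (indeed smooth) onto its image component. That dominance is what the $H^1$-vanishing buys, and it is a vanishing for the \emph{vertical} tangent sheaf $f^*T_{X/B}$ restricted to the comb, not merely for $N_{\Sigma/X}$. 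Once dominance is established, the boundary of the Hurwitz space---where the source curve breaks as $C_1\cup_p C_2$ with $\deg(C_i/B)=d_i<d$---lies in the closure of the image, giving a stable map from a reducible curve to $X$; each $C_i\to X$ is then a multisection of strictly smaller degree. One does not reach degree $1$ in a single step: the argument is an induction on $d$, and you need to check that the lower-degree piece can again be made ``flexible'' (i.e.\ can itself be equipped with enough very free vertical teeth) so the induction continues. Both the direction of the lifting argument and the inductive structure are missing from your outline.
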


\begin{definition}(\cite{GHMS}, Def.1.2, p.672)\label{defGHMS}
Let $\pi: X\to B$ be an arbitrary morphism of complex varieties.  By a \emph{pseudosection} of $\pi$ we will mean a subvariety $Z\subset X$ such that the restriction $\pi|_Z: Z\to B$ is dominant with rationally connected general fiber.
\end{definition}

In \cite{GHMS}, the authors prove the coverse of Theorem~\ref{GHS} as following.

\begin{theorem}(\cite{GHMS}, Theorem 1.3, p.672, \cite{GJ}, Theorem 1.1, p.311)\label{GHMS}
Let $\pi: X\to B$ be a proper morphism of complex varieties.  If $\pi$ admits a section when restricted to a very general sufficiently positive curve in $B$, then there exists a pseudosection of $\pi$.
\end{theorem}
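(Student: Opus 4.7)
The plan is to produce a pseudosection directly from the sections guaranteed by the hypothesis. First I would compactify $B$ if necessary, fix an ample class $H$, and take as the family of ``sufficiently positive curves'' the complete intersections of $\dim B - 1$ very general elements of $|mH|$ for $m\gg 0$; let $\mathcal{C}\to T$ be the universal such curve, with evaluation $\mathrm{ev}\colon \mathcal{C}\to B$, which is smooth and surjective, and whose fibers are rationally connected for $m$ large. By hypothesis there is a very general locus $T_0\subset T$ whose every member $C_t$ admits a section $\sigma_t\colon C_t\to X$ of $\pi$.

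Next I would assemble these sections into an algebraic family. The relative Hom-scheme $\mathrm{Sec}_T(X\times_B \mathcal{C}/\mathcal{C})$ is a countable disjoint union of quasi-projective $T$-schemes, and its set of $T$-points contains all of $T_0$. Since $T_0$ is a countable intersection of dense opens while there are only countably many components, some irreducible component $\mathcal{S}$ of this Hom-scheme must dominate $T$. After a generically finite base change $\widetilde{T}\to T$ along which $\mathcal{S}\to T$ acquires a section, I obtain a universal section $\sigma\colon \widetilde{\mathcal{C}}\to X$, where $\widetilde{\mathcal{C}}=\mathcal{C}\times_T \widetilde{T}$. Let $Y\subset X$ be the (reduced) scheme-theoretic image of $\sigma$; because the composite $\widetilde{\mathcal{C}}\to \mathcal{C}\to B$ is surjective, the restriction $\pi|_Y\colon Y\to B$ is dominant, so $Y$ is a candidate pseudosection.

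The remaining step, and the main obstacle, is to prove that the generic fiber $Y_b$ is rationally connected. The approach I would take follows the comb technique of Graber--Harris--Mazur--Starr. Pick a general $b\in B$ and two points $p_1,p_2\in Y_b$, coming from sections $\sigma_1,\sigma_2$ over members $C_1,C_2$ of the family through $b$. The strategy is to connect $\sigma_1$ and $\sigma_2$ by a chain of rational curves inside the parameter space of sections over curves of the family through $b$: attach to $C_1$ a large comb whose teeth are further members of the family passing through general points of $C_1$, choose sections on each tooth that agree with $\sigma_1$ at the respective node, then smooth the comb back to an irreducible member of the family while deforming the attached section along the smoothing. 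Iterating this comb-and-smoothing operation and varying the teeth, one transports $\sigma_1$ to $\sigma_2$ through a rationally connected family of sections over curves through $b$; evaluating this family at $b$ yields a chain of rational curves inside $Y_b$ joining $p_1$ to $p_2$.

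The technical heart of the argument, where ``sufficiently positive'' is essential, is the deformation theory of combs of sections: for $m\gg 0$ the normal bundle of a generic member $C_t\subset B$ is sufficiently positive that (i) combs of such curves attached along a fixed central curve deform freely and smooth to irreducible members of the family, and (ii) the corresponding obstruction groups for deforming a comb section vanish, so each tooth can be chosen with enough freedom to effect the connection between $\sigma_1$ and $\sigma_2$. Making these two properties simultaneous and uniform in the choice of teeth is the delicate step, and it is where most of the work in the proof lies; once it is established, the pseudosection property of $Y$ follows by combining the construction above with the standard criterion that a variety is rationally connected provided any two general points can be joined by a chain of rational curves.
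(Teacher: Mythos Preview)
The paper does not contain a proof of this statement. Theorem~\ref{GHMS} is quoted from the literature (\cite{GHMS}, Theorem~1.3, and \cite{GJ}, Theorem~1.1) purely as background and motivation in the introduction; the body of the paper never returns to it, and none of the paper's own results (pseudo-N\'eron models, restriction of sections for schemes finite over Abelian schemes) are used to reprove it. So there is nothing here to compare your proposal against.

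That said, your sketch is broadly the strategy of the original \cite{GHMS} argument: package the hypothesized sections via a component of the relative $\mathrm{Hom}$-scheme dominating the parameter space, take the image in $X$ as the candidate $Y$, and then run a comb-smoothing argument to verify rational connectedness of the general fiber of $Y\to B$. Two small cautions if you intend to flesh this out. First, the remark that the fibers of $\mathrm{ev}\colon \mathcal{C}\to B$ are rationally connected is neither quite what is needed nor the way \cite{GHMS} phrases it; what matters is the rational connectedness of the parameter space of curves through a fixed point (and eventually of the space of sections over such curves), and this is where the positivity of $mH$ enters. Second, the step ``choose sections on each tooth that agree with $\sigma_1$ at the respective node'' hides a genuine issue: you do not know a priori that sections through a prescribed point of the fiber exist, and the actual argument in \cite{GHMS} organizes the induction more carefully, controlling the dimension of the locus swept out by sections and using the comb to \emph{increase} that dimension step by step until it fills the fiber. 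Your outline is in the right spirit but compresses this inductive dimension count into a single sentence.
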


However, the theorem asserts only the existence of a pseudosection of $\pi$; it does not claim any direct connection between the sections of $X_C\to C$ over very general curves $C$ and the pseudosection.  So the following question is asked in \cite{GHMS} and \cite{GJ}.
\begin{question}(\cite{GJ}, Conjecture 1.2, p.311, \cite{GHMS}, Question 7.1, p.689)\label{questionghms}
If $\pi: X\to B$ is a morphism of complex varieties, then for a very general, sufficiently positive curve $C\subset B$, does every section of the restricted family $X_C=\pi^{-1}(C)\to C$ take values in a pseudosection?
\end{question}

On the other hand, in Theorem~\ref{GHMS}, the genus and degree of the very general curve depend on the relative dimension of $\pi$ (see the statement of theorem in \cite{GHMS}, Theorem 1.3, p.672).  The genus and degree can grow enormously fast with respect to the relative dimension of $\pi: X\to B$.  So it is natural to ask the following questions.

\begin{question}(\cite{GHMS}, Section 7.3, p.689)\label{degreequestion}
Can we eliminate the dependence of the family of curves on the relative dimension of $\pi$ in Theorem~\ref{GHMS}?  
\end{question}

The answer of this question is ``no".  The detailed proof can be found in \cite{Jason}.  A sketch of the argument could also be found in \cite{GHMS}, Section 7.3.  Then, a further question is the following.

\begin{question}\label{fastquestion}
If the dependence in Theorem~\ref{GHMS} can not be eliminated, how fast do the genus and degree of the family of curves grow?
\end{question}

One extreme special case of Question~\ref{questionghms} and Question~\ref{degreequestion} is that $X$ is an Abelian scheme over $B$.  In this case, since the fibers contain no rational curves, every pseudosection is a rational section, and every rational section is everywhere defined.  Then, Theorem~\ref{maintheoremjason} gives positive answers to both Question~\ref{questionghms} and Question~\ref{degreequestion} when $X$ is an Abelian scheme over $B$.

When $X$ is a smooth scheme admitting a finite morphism to an Abelian scheme $A$ over $B$, Theorem~\ref{generalizedjasonmaintheorem} gives a positive answer to Question~\ref{questionghms} and Question~\ref{fastquestion}.  The genus of curves is zero as in Theorem~\ref{maintheoremjason}.  And, the degree of the curves grows at a \emph{linear} rate with respect to the relative dimension of the isotrivial factor of the Abelian scheme.

\noindent{\bf Acknowledgement:}  The author is very grateful to his advisor Prof. Jason Michael Starr for introducing this problem, his consistent support during the proof and his writing for the Appendix~\ref{thebigonlemma}.

\section{Pseudo-N\'eron Model}

\subsection{Basic properties}
In this section, we will assume that $S$ is a \emph{Nagata} Dedekind scheme and $K$ is its function field.  Recall that every scheme of finite type over a field is Nagata.

\begin{lemma}\label{affineneron}
Suppose that $S=\spec R$ is an affine Nagata Dedekind scheme.  Let $Y_K$ be a smooth variety and $Y$ be a normal pseudo-N\'eron model over $S$.  Let $X_K$ be a smooth $K$-variety with a finite $K$-morphism $f: X_K \rightarrow Y_K$.  Then there exists a flat normal $S$-scheme $X$ admiting a finite morphism $g: X\rightarrow Y$ which extends $f$.
\end{lemma}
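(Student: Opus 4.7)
The plan is to construct $X$ as the normalization, in the function field of $X_K$, of the integral closed subscheme of $Y$ cut out by the image of the generic point of $X_K$; finiteness of the resulting morphism $g: X \to Y$ will follow from the Nagata hypothesis on $S$. Since $X_K$ is smooth, its irreducible components are integral, and I may work componentwise and then take disjoint unions. So assume throughout that $X_K$ is integral, with generic point $\eta$ and function field $L := \kappa(\eta)$.

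Let $p := f(\eta) \in Y_K \subset Y$, and let $Z \subset Y$ be the reduced induced closed subscheme structure on $\overline{\{p\}}$. Then $Z$ is an integral closed subscheme of $Y$ with function field $\kappa(p)$, and $L/\kappa(p)$ is a finite field extension because $f$ is finite. I define $X$ to be the normalization of $Z$ in $L$: on an affine open $\spec C \subset Z$ with $C$ a Noetherian domain of fraction field $\kappa(p)$, the corresponding piece of $X$ is $\spec C'$, where $C'$ is the integral closure of $C$ in $L$. This produces a normal integral scheme $X$ with function field $L$, together with a natural morphism $g: X \to Z \hookrightarrow Y$.

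It remains to verify the asserted properties. Since $S$ is Nagata and $Z$ is of finite type over $S$, $Z$ inherits the Nagata property, and therefore $C'$ is a finite $C$-module; this means $X \to Z$ is finite, and $g$ is finite as the composition of a finite morphism with a closed immersion. Next, $X$ is flat over $S$: being integral with dominant structure morphism (as $\eta$ lies over the generic point of $S$), every stalk of $\mathcal{O}_X$ is torsion-free over the corresponding stalk of $\mathcal{O}_S$, which is enough over the Dedekind base $S$. Finally, the generic fiber $X \times_S \spec K$ is by construction the normalization of $Z_K$ in $L$; since $X_K$ itself is a normal integral $Z_K$-scheme with function field $L$ and finite surjective structure map to $Z_K$, $X_K$ coincides with this normalization, and $g$ restricts to $f$ on generic fibers.

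The main technical input is the finiteness of the normalization morphism $X \to Z$: this is exactly the ingredient that requires the Nagata hypothesis, without which one would only obtain an integral (but possibly non-finite) morphism. The remaining properties \textemdash{} normality by construction, flatness of an integral scheme dominating a Dedekind base, and the identification of the generic fiber \textemdash{} are formal.
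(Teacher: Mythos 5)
Your proof is correct, and the underlying mechanism is the same as the paper's: form an integral closure in $L = K(X_K)$, with the Nagata hypothesis making it finite over $Y$. The paper does this affine-locally over $Y$ itself — for an affine open $\spec A \subseteq Y$ it forms $C = B_K \cap B'$, where $B'$ is the integral closure of $A$ in $\loc(B_K)$ and $B_K$ is the ring of $f^{-1}(\spec A_K)$, then glues — whereas you first pass to the reduced scheme-theoretic image $Z = \overline{\{f(\eta)\}} \subseteq Y$ and take the global normalization of $Z$ in $L$. These yield the same scheme: locally your integral closure of $\mathcal{O}_Z$ in $L$ lands inside $B_K$ (because $B_K$ is normal with fraction field $L$), and conversely every element of $B_K$ integral over $A$ lies in it, so the two rings coincide. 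Your packaging buys two things: the gluing step is absorbed into the standard universal property of normalization in a finite field extension over a Nagata base, and the non-dominant case — which does occur downstream, e.g.\ in Corollary~\ref{newgeometric} for a proper smooth subvariety of a hypersurface — is handled transparently by working with the image $Z$, whereas the paper's line ``$\loc(B_K)$ is a finite field extension of $\loc(A_K)$'' tacitly assumes $f$ is dominant. The one thing worth spelling out in your last paragraph is that integral closure commutes with the localization $R \to K$, which is what identifies the generic fiber of $X$ with the normalization of $Z_K$ in $L$, and that $g_K = f$ then follows because $f$ factors uniquely through the closed immersion $Z_K \hookrightarrow Y_K$ and the resulting finite $Z_K$-map from $X_K$ is pinned down by the universal property of normalization.
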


\begin{proof}
First consider the affine case.  Let $\spec A_K$ be an affine open subset of $Y_K$ and $\spec B_K=f^{-1}(\spec A_K)$.  Suppose that $\spec A$ is an open affine in $Y$ with generic fiber $\spec A_K$, where $K=\loc(R)$ and $A_K=A\otimes_RK$.  We claim that there exists a finite $A$-algebra $C$ such that $B_K=C\otimes_R K$.  $\loc(B_K)$ is a finite field extension of $\loc(A_K)$ since $B_K$ is finite over $A_K$.  Let $B'$ be the integral closure of $A$ in $\loc(B_K)$.  We have the ring $A$ is Nagata (\cite{Liuqing}, Prop.8.2.29(b), p.340, and Def.8.2.30, p.341), and hence $B'$ is finite over $A$ (\cite{Liuqing}, Def.8.2.27, p.340).  Now, take $C=B_K\cap B'$.  Then since $A$ is Noetherian we have $C$ is finite over $A$, and by construction $C$ is the integral closure of $A$ in $B_K$.  It is easy to check that $B_K=C\otimes_RK$.

Now, take an affine open covering of $Y_K$, and hence an affine open covering of $X_K$.  By the construction in the affine case, $C$ is uniquely determined by $B_K$, $A_K$ and $A$.  Thus, we can glue the $\spec C$ as above to form the normal scheme $X$ with a canonical finite morphism $g: X\rightarrow Y$, which is of finite type, separated and flat over $S$.
\end{proof}

\begin{lemma}\label{finitemap}
Let $Y$ be a separated, flat $S$-scheme of finite type satisfying the weak extension property.  Suppose that $X$ is an integral $S$-scheme with a finite $S$-morphism $f:X\rightarrow Y$.  Then $X$ satisfies the weak extension property.
\end{lemma}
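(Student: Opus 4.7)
The plan is to use the weak extension property of $Y$ to extend the composite $f \circ u_K$, then lift that extension through the finite cover $f$ via a pullback construction. Given a smooth $S$-scheme $Z$ and a $K$-morphism $u_K : Z_K \to X_K$, I first reduce to the case that $Z$ is integral: smoothness over the regular Dedekind scheme $S$ forces $Z$ to be regular, hence normal, so its connected components are integral and may be treated independently. Applying the hypothesis on $Y$, I extend $f \circ u_K : Z_K \to Y_K$ to an $S$-morphism $v : Z \to Y$.

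Next, I form the fiber product $W := X \times_Y Z$ via $v$, with projections $p : W \to Z$ and $q : W \to X$. Being a base change of $f$, the projection $p$ is finite. The pair $(u_K, \mathrm{id}_{Z_K})$ defines a section $s_K : Z_K \to W_K$ of $p_K$, and since $p$ is separated, $s_K$ is a closed immersion. I then let $\Sigma$ denote the scheme-theoretic closure of $s_K(Z_K) \subset W_K$ inside $W$. Because $Z_K$ is integral, $\Sigma$ is an integral closed subscheme of $W$, and the restriction $p|_\Sigma : \Sigma \to Z$ is finite (as the restriction of a finite morphism to a closed subscheme) and birational (an isomorphism on generic points with matching residue fields, since $s_K$ is a section of $p_K$).

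The crucial step is to conclude that $p|_\Sigma$ is an isomorphism. For this I invoke normality of $Z$: a finite birational morphism from an integral scheme to a normal scheme is an isomorphism, since locally it corresponds to an injection $A \hookrightarrow B \subset \mathrm{Frac}(A)$ with $B$ finite over $A$, and integral closedness of the normal ring $A$ forces $B \subset A$, hence $B = A$. The desired extension is then $u := q \circ (p|_\Sigma)^{-1} : Z \to X$, which restricts to $q_K \circ s_K = u_K$ on $Z_K$ by construction.

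The main technical point is to verify that $\Sigma$ is integral and that $p|_\Sigma$ is birational — i.e., that the scheme-theoretic closure of a reduced, irreducible locally closed subscheme of the Noetherian scheme $W$ remains integral with the expected generic point. Once this is in hand, the finite-birational-to-normal isomorphism step is routine; flatness of $X$ over $S$, implicit in treating $X$ as a candidate for the weak extension property, follows from $X$ being integral with nonempty generic fiber.
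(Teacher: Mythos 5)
Your proof is correct, and it takes a genuinely different and, in my view, cleaner route than the one in the paper. Both arguments begin identically: extend $f_K \circ u_K$ to $v\colon Z \to Y$ via the weak extension property of $Y$. The paper then reduces to affine $S$ and affine open covers of $X$ and $Y$, and works locally: at each codimension-one point $\zeta$ of $Z$ it uses the valuative criterion of properness for the finite morphism $f$ to produce a lift over $\spec\mathcal{O}_{Z,\zeta}$, spreads it out to a neighborhood, verifies agreement of the various local lifts by comparing their generic fibers, and then invokes BLR Lemma 4.4/2 (the Weil extension theorem for rational maps from a normal scheme to an affine scheme defined in codimension one) to propagate the lift everywhere; a second step handles non-affine $S$ by re-gluing over an affine cover. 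Your argument replaces all of this local patching with a single global construction: pull back $f$ along $v$ to get $p\colon W = X\times_Y Z \to Z$, take the scheme-theoretic closure $\Sigma$ of the section $s_K = (u_K,\mathrm{id})\colon Z_K \to W_K$, and observe that $p|_\Sigma\colon \Sigma\to Z$ is a finite birational morphism from an integral scheme onto the normal integral scheme $Z$, hence an isomorphism by the integral-closure argument (equivalently, Zariski's Main Theorem for finite birational maps). The extension is $u = q\circ (p|_\Sigma)^{-1}$, which visibly restricts to $u_K$ because $(p|_\Sigma)_K$ is an isomorphism whose unique section must coincide with $s_K$. Both proofs hinge on the same two ingredients — properness of $f$ and normality of $Z$ — but your packaging avoids the affine reductions, the codimension-one-by-codimension-one analysis, and the citation of the Weil extension theorem entirely, and it handles the base $S$ uniformly without a separate gluing step. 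The reduction to $Z$ integral and the remark that $X$ integral dominating $S$ is automatically $S$-flat are both correct and necessary for the statement to be well-posed.
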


\begin{remark}
The Dedekind scheme $S$ does not have to be Nagata in this lemma.
\end{remark}

\begin{proof}
\emph{Step 1}: Assume that $S=\spec R$ is an affine Dedekind scheme.  Let $Z$ be an irreducible smooth $S$-scheme with generic fiber $Z_K$ and a $K$-morphism $u_K: Z_K\rightarrow X_K$.  We note that if $\spec A$ is an affine open in $X$ then $A\otimes_RK$ is also an integral domain, so the generic fiber $X_K$ is also an integral scheme with the same function field $K(X_K)=K(X)$ as $X$.

First we assume that $X$ and $Y$ are affine.  Since $Y$ satisfies the weak extension property, $f_K\circ u_K$ extends to an $S$-morphism $g: Z\rightarrow Y$.  Denote $\zeta$ by one of the generic points of codimension one irreducible subsets of $Z$.  Then, since $Z$ is normal, $\mathcal{O}_{Z,\zeta}$ is a discrete valuation ring.  And we have the following commutative diagram
\[\xymatrix{
\spec K(Z)\ar[r]\ar[d]& X\ar[d]^{f} \\
\spec \mathcal{O}_{Z,\zeta}\ar[r]\ar@{-->}[ur]_{u_{\zeta}}& Y\\
}\]
where $\spec K(Z)\rightarrow X$ is induced by the map $K(X_K)\rightarrow K(Z_K)$.  Moreover, by the properness of the morphism $f$, there exists a unique morphism $u_{\zeta}: \spec \mathcal{O}_{Z,\zeta}\rightarrow X$ making the diagram commute.  Since $Z$ is locally of finite type over $S$, the morphism $u_{\zeta}$ can be extended to a neighborhood $V$ of $\zeta$ in $Z$.  We denote this morphism by $u_V: V\rightarrow X$.  Checking every open affine $\spec C$ in $V$, since $Z$ is an integral scheme, we have that the generic fiber of the morphism from $\spec C$ to $X$ is the same as the restriction of $u_K$ because they are giving the same morphism when viewed as restriction of $K(X)\rightarrow K(Z)$.  Moreover, suppose that there are two codimension one points $\zeta_1$ and $\zeta_2$, and they are giving two extensions $V_1\rightarrow X$ and $V_2\rightarrow X$ respectively.  Since $f$ is separated, it is easy to see that these two morphisms agree on every open affine in the overlap $V_1\cap V_2$ since they have the same generic fiber and hence give the same map in $K(X)\rightarrow K(Z)$.  Therefore, the morphism $u_K$ can be extended to a rational map defined over every condimension one point on $Z$.  Hence, by \cite{BLR} Lemma 4.4/2, since $X$ is affine, this rational map is actually defined everywhere.

Now, when $X$ and $Y$ are not affine, consider an open affine covering of $Y$, which induces an affine covering of $X$, and extend $u_K$ for every such open affine of $X$.  By the same reason as above, since any two extensions give the same morphism on generic fibers, these extensions on affines of $X$ can be glued, and hence $X$ satisfies the weak extension property.

\emph{Step 2}: $S$ is a Dedekind scheme, not necessarily affine.  Take an affine covering $\{U_1,\cdots, U_n\}$ of $S$.  Let $X_i$, $Y_i$, $Z_i$ and $(u_K)_i$ be the base changes of $X$, $Y$, $Z$ and $u_K$ from $S$ to $U_i$ respectively.  From step 1, we know that every $(u_K)_i$ can be extended to the whole $Z_i$.  Now, cover each $U_i\cap U_j$ by open affines and check over each these affines.  Again, since these extensions give the same morphism on generic fibers of their overlap, the extensions can be glued and give an extension of $u_K$ to the whole $Z$.  So $X$ satisfies the weak extension property.
\end{proof}

\begin{remark}
Let $\mathcal{C}$ denote the category of normal $S$-schemes with finite morphisms.  Then the above lemma asserts that normal $S$-schemes with the weak extension property form a fully faithful subcategory of $\mathcal{C}$.
\end{remark}

\begin{theorem}\label{neron}
Let $S$ be a Nagata Dedekind scheme with generic point $\spec K$.  Let $X_K$ be a smooth scheme admitting a finite $K$-morphism to a smooth, separated variety $Y_K$ of finite type which has a normal pseudo-N\'eron model $Y$ over $S$.  Then $X_K$ has a normal pseudo-N\'eron model $X$ over $S$.  
\end{theorem}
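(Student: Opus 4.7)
The plan is to combine the two preceding lemmas: use Lemma~\ref{affineneron} to construct $X$ as a normal finite cover of $Y$ extending $f$, and then invoke Lemma~\ref{finitemap} to transfer the weak extension property from $Y$ to $X$. The only genuine work is to globalize Lemma~\ref{affineneron}, which as stated handles only an affine base.

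First I would cover $S$ by affine opens $U_i = \spec R_i$. Since the Nagata property descends to open subschemes, each $U_i$ is affine Nagata. The restriction $Y_i := Y \times_S U_i$ is a normal pseudo-N\'eron model of $Y_K$ over $U_i$: normality is clear, and for any smooth $U_i$-scheme $Z$ with $K$-morphism $Z_K \to Y_K$, one regards $Z$ as a smooth $S$-scheme, extends via the weak extension property of $Y$ to an $S$-morphism $Z \to Y$, and notes that this factors through $Y_i$ because the structure morphism $Z \to S$ already factors through $U_i$. Applying Lemma~\ref{affineneron} over each $U_i$ then produces a normal $U_i$-scheme $X_i$ with a finite morphism $g_i : X_i \to Y_i$ extending $f$.

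The next step is to glue. In the proof of Lemma~\ref{affineneron}, the extension is built locally as an integral closure: over an affine open $\spec A \subset Y_i$, the corresponding piece of $X_i$ is $\spec C$ where $C$ is the integral closure of $A$ in the relevant finite extension of $\Fr(A)$. This construction is canonical and commutes with localization, so on each overlap $U_i \cap U_j$ both $X_i|_{U_i \cap U_j}$ and $X_j|_{U_i \cap U_j}$ are canonically identified with the integral closure of $Y|_{U_i \cap U_j}$ inside the function field of $X_K$; the resulting isomorphisms automatically satisfy the cocycle condition. Gluing produces a normal $S$-scheme $X$ with a finite $S$-morphism $g : X \to Y$ extending $f$. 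Separatedness and finite-typeness of $X$ over $S$ follow from the corresponding properties of $Y$ together with the finiteness of $g$, and flatness over the Dedekind base holds because $X$ is $R_i$-torsion-free locally on each $U_i$.

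Finally, Lemma~\ref{finitemap} applied to $g : X \to Y$ yields the weak extension property for $X$, so $X$ is a normal pseudo-N\'eron model of $X_K$ over $S$. The main obstacle is verifying that the affine construction glues; this reduces to the standard fact that the formation of the integral closure of a Noetherian normal domain inside a finite extension of its fraction field commutes with localization.
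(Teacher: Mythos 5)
Your proof is correct and takes essentially the same route as the paper: cover $S$ by affine Nagata Dedekind opens, apply Lemma~\ref{affineneron} over each to produce the local models $X_i$, glue using the fact that integral closure is canonical and commutes with localization, and then invoke Lemma~\ref{finitemap} to transfer the weak extension property. The only cosmetic difference is that the paper spells out the overlap-gluing via Nike's trick (principal affine opens inside $\spec A^i \cap \spec A^j$), whereas you appeal directly to the canonicality/cocycle argument; both are the standard way to see that integral closure glues.
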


\begin{proof}
Let $\{U_1,\cdots, U_n\}$ be a finite open affine covering of $S$ and $Y^i$ be the inverse images of each $U_i$ such that they form an open covering of $Y$.  Then $Y_K^i$ and $X_K^i=f^{-1}(Y_K^i)$ form an open covering of $Y_K$ and $X_K$ respectively.  For each $i$, $Y^i$ is flat, separated and of finite type over the affine Nagata Dedekind scheme $U_i$.  Take $X^i$ to be the $U_i$-model of $X_K^i$ as constructed in Lemma~\ref{affineneron} and $g_i: X^i\rightarrow Y^i$ to be the corresponding $U_i$-morphism extending $f|_{Y_K^i}$.

Cover each $Y^i$ by affine opens.  Suppose that $\spec A^i$ and $\spec A^j$ are two such affine opens in $Y^i$ and $Y^j$ respectively with $i\not=j$.  Since $\spec A^i$ and $\spec A^j$ are affine schemes over affine bases, $U_i$ and $U_j$, also their generic fibers are affine.  Let the inverse image of $\spec A^i_K$ (resp. $\spec A^j_K$) in $X_K^i$ (resp. $X^j_K$) be $\spec B^i_K$ (resp. $\spec B^j_K$).  Then, as in the construction in Lemma~\ref{affineneron}, we can construct an affine open $\spec C^i$ (resp. $\spec C^j$) as the integral closure of $A^i$ in $B^i_K$ (resp. $A^j$ in $B^j_K$).  Now, by Nike's trick (\cite{foag}, Prop.5.3.1, p.157), cover $\spec A_i\cap \spec A_j$ by principal open affines.  Then they have affine generic fibers since $\spec A^i$ and $\spec A^j$ have affine bases.  Because the process of taking integral closure is unique up to a unique isomorphism and compatible with localization, the affine opens $\spec C^i$ and $\spec C^j$ with morphisms $g_i$ and $g_j$ can be glued.  By the uniqueness of taking integral closure, we can make the same gluing for other pairs of affine opens in the fixed affine covering of $Y^i$ and $Y^j$.  Thus, we obtain a gluing of $X^i$ and $X^j$.  Similarly, $X^1,\cdots, X^n$ glue to be an $S$-scheme $X$ admitting a finite $S$-morphism to the normal scheme $Y$.

Take the $S$-model $X$ of $X_K$ as above.  By applying Lemma~\ref{finitemap} to the scheme $X$, we have this normal $S$-scheme satisfies the weak extension property.
\end{proof}

This theorem gives us a strategy.  Suppose that $S$ is a Nagata Dedekind scheme.  Then every time we have a class of varieties admitting normal pseudo-N\'eron models, by considering smooth varieties with finite morphisms to the varieties in this class, we will get a new class of varieties admitting normal pseudo-N\'eron models.  As a first result, we know that all varieties admitting finite morphisms to Abelian varieties have normal pseudo-N\'eron models.  In particular, every smooth subvariety of an Abelian variety has a normal pseudo-N\'eron model.  

\subsection{Application of rational curves}

In \cite{Liu}, Qing Liu and Jilong Tong proved theorems about N\'eron models of smooth proper curves of positive genus, see \cite{Liu}, Theorem 1.1, p.7019, for details.  In our situation, their result (\cite{Liu}, Prop.4.13, p.7031) in the higher dimensional case can be used to construct new examples of varieties admitting pseudo-N\'eron models.  We start with the basic notion of rational curves as following.

\begin{definition}(\cite{Liu}, p.7031)
Let $V$ be a variety over an algebraically closed field $k$.  We say that $V$ \emph{contains a rational curve} if there is a locally closed subscheme of $V$ which is isomorphic to an open dense subscheme of $\mathbb{P}^1_k$.  
\end{definition}
If $V$ is proper over $k$, then every morphism from an open dense of $\mathbb{P}^1_k$ can be extended to the whole $\mathbb{P}^1_k$ (\cite{Liuqing}, Cor. 4.1.17, p.119).  Moreover, by L\"uroth's theorem, our definition is the same as the existence of a nonconstant morphism from $\mathbb{P}^1_k$ to $V$ (\cite{Kollar}, Definition 2.6, p.105).

\begin{proposition}(\cite{Liu}, Prop.4.13, p.7031)\label{neronliu}
Let $S$ be a Dedekind scheme with field of functions $K$.  Let $X_K$ be a smooth proper variety over $K$.  Suppose $X_K$ has a proper regular $S$-model $X$ such that no geometric fiber $X_{\overline{s}}$, $s\in S$, contains a rational curve.  Then the smooth locus $X_{sm}$ of $X$ is the N\'eron model of $X_K$.  
\end{proposition}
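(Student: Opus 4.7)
My plan is to verify that $X_{sm}$ satisfies the definition of a N\'eron model in Definition~\ref{Nerondefn}. Smoothness is automatic by construction, while separatedness and finite type over $S$ are inherited from $X$ since $X_{sm}$ is an open subscheme of the proper $S$-scheme $X$. The content of the proof is the N\'eron mapping property: given a smooth $S$-scheme $Y$ with a $K$-morphism $u_K: Y_K \to X_K$, produce a unique $S$-morphism $u: Y \to X_{sm}$ extending $u_K$.

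Uniqueness is immediate: $Y$ is reduced (being smooth) with $Y_K$ schematically dense in $Y$, and $X_{sm}$ is separated over $S$, so any two $S$-extensions of $u_K$ coincide. For existence, after reducing to $S = \spec R$ a strictly henselian DVR (using that the N\'eron property is local in the \'etale topology on the base), I would first extend $u_K$ in codimension one. For each codimension-one point $\eta \in Y$, the local ring $\mathcal{O}_{Y,\eta}$ is a DVR because $Y$ is regular, and the valuative criterion of properness applied to $X \to S$ gives a unique morphism $\spec \mathcal{O}_{Y,\eta} \to X$ compatible with $u_K$. Spreading out and gluing produces a rational map $\widetilde{u}: Y \dashrightarrow X$ defined outside a closed subset $Z \subseteq Y$ of codimension $\ge 2$.

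The key step, where the rational-curve hypothesis enters, is to show that $Z = \emptyset$. Here I would appeal to the classical principle (used extensively in the theory of N\'eron models; see \cite{BLR} \S4.4): for a rational map from a regular scheme $Y$ to a proper scheme $X$ over $S$, the indeterminacy may be resolved by a sequence of blow-ups of $Y$ along smooth centers contained in $Z$, and the resulting exceptional divisors are ruled by rational curves mapping into fibers of $X/S$. Since by hypothesis no geometric fiber of $X$ contains a rational curve, each of these exceptional divisors must collapse to a point in $X$; unwinding the blow-ups, the indeterminacy locus $Z$ must already be empty. Hence $\widetilde{u}$ extends to an $S$-morphism $u: Y \to X$.

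It remains to show $u$ factors through $X_{sm}$. For $y \in Y$ with image $x = u(y)$ lying over $s \in S$, both $\mathcal{O}_{X,x}$ and $\mathcal{O}_{Y,y}$ are regular local rings (by regularity of $X$ and by smoothness of $Y/S$ respectively), and the local map $\mathcal{O}_{X,x} \to \mathcal{O}_{Y,y}$ together with $\mathcal{O}_{S,s}$-smoothness of $\mathcal{O}_{Y,y}$ and regularity of $\mathcal{O}_{X,x}$ forces, via miracle flatness and a standard local smoothness criterion, that $X \to S$ is smooth at $x$. Thus $u: Y \to X_{sm}$. The principal obstacle in the whole argument is the indeterminacy-emptying step, which is precisely where the no-rational-curves hypothesis is used and where the proper regular model hypothesis on $X$ is essential; this is the geometric heart of \cite{Liu}, Prop.\ 4.13.
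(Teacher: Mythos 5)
The paper does not prove this proposition; it is quoted from \cite{Liu}, Prop.\ 4.13, and the paper's only commentary is to note (just after the statement) that the regularity hypothesis on $X$ enters only through \cite{Liu}, Cor.\ 3.12. Your reconstruction of the structure of the argument is plausible, and the reduction steps (uniqueness via schematic density of $Y_K$ and separatedness of $X$; extension in codimension one via the valuative criterion; landing in $X_{sm}$ via sections through a regular model) are all sound. Two points deserve attention.

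First, your indeterminacy-removal step invokes resolution of indeterminacy by blow-ups along \emph{smooth} centers, followed by the claim that exceptional divisors are ruled by rational curves. Resolution of this kind is a characteristic-zero tool; the proposition, like \cite{Liu}, has no characteristic restriction, and indeed the paper's Corollary~\ref{pseudoliu} and Lemma~\ref{curvelemma} are used in positive characteristic too. The characteristic-free replacement (which the paper itself uses in Lemma~\ref{curvelemma}) is to pass to the normalization $\Gamma$ of the closure of the graph of $\widetilde u$ in $Y\times_S X$, note that $\pi_1:\Gamma\to Y$ is proper birational with $Y$ regular, and invoke Abhyankar's lemma (\cite{rational}, Theorem 4.26): the exceptional locus $E$ of $\pi_1$ is ruled over its image. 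Then one derives a contradiction from finiteness of $\Gamma\to Y\times_S X$: a generic rational fiber of the ruling of $E$ is collapsed by $\pi_1$, so if it were also collapsed by $\pi_2$ it would map to a point of $Y\times_S X$, contradicting that $\Gamma\to Y\times_S X$ is finite; hence $\pi_2$ is nonconstant on it, producing a rational curve in a geometric fiber of $X/S$.

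Second, the phrase ``each of these exceptional divisors must collapse to a point in $X$; unwinding the blow-ups, the indeterminacy locus $Z$ must already be empty'' overstates what follows and is the weakest step as written. What one actually concludes from the no-rational-curves hypothesis is that the ruling of $E$ is contracted, not that $E$ is a point; and ``unwinding the blow-ups'' hides precisely the argument that needs to be made. The graph-closure version above replaces both of these with the finiteness contradiction, which is tighter. Finally, for the landing in $X_{sm}$, the reference to miracle flatness is not quite what is needed; the standard statement is that a section of a flat $\mathcal O_{S,s}$-scheme through a regular point is automatically a section of the smooth locus (\cite{BLR}, Prop.\ 3.1/2). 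Since $Y\to S$ is smooth, it has sections through any given point after an \'etale base change, and smoothness of $X\to S$ at $x$ can be checked after such a base change.
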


Note that the regularity of $X$ in the above theorem is only used to apply \cite{Liu} Cor.3.12.  Thus, in the case of pseudo-N\'eron models, the same proof gives the following corollary.  

\begin{corollary}\label{pseudoliu}
Let $S$ be a Dedekind scheme with field of functions $K$.  Let $X_K$ be a smooth proper variety over $K$.  Suppose $X_K$ has a proper and flat $S$-model $X$ such that no geometric fiber $X_{\overline{s}}$, $s\in S$, contains a rational curve.  Then $X$ satisfies the weak extension property, i.e., $X$ is a pseudo-N\'eron model of $X_K$.
\end{corollary}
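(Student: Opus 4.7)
The plan is to follow the proof of \cite{Liu}, Prop.4.13 essentially verbatim, with the single observation that the regularity hypothesis on $X$ in that proposition is invoked only to apply \cite{Liu}, Cor.3.12, which supplies the \emph{uniqueness} of the extension. Since the weak extension property demands only the existence of an extension, that step becomes unnecessary, and the rest of the argument goes through for an arbitrary proper flat $S$-model $X$.

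Concretely, let $Z$ be a smooth $S$-scheme and $u_K : Z_K \to X_K$ a $K$-morphism. Since $X \to S$ is separated and smoothness is local, I may assume $Z$ is irreducible, and by the same kind of gluing argument used in Lemma~\ref{finitemap} and Theorem~\ref{neron} it suffices to extend $u_K$ over $Z$. Form the closure $\Gamma \subset Z\times_S X$ of the graph of $u_K$, with projections $p_1:\Gamma\to Z$ and $p_2:\Gamma\to X$. The morphism $p_1$ is proper (since $X\to S$ is proper) and birational. Because $Z$ is smooth and hence normal, the local ring at every codimension-one point of $Z$ is a discrete valuation ring, and the valuative criterion applied to the proper morphism $X\to S$ yields an extension of $u_K$ over each such point. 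Gluing these extensions, $u_K$ extends to a morphism $u:U\to X$ where $U\subset Z$ is open with $\codim(Z\setminus U,Z)\ge 2$, and $p_1$ is an isomorphism over $U$.

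The main obstacle is the extension across the codimension-$\ge 2$ locus $Z\setminus U$. Suppose $p_1$ is not an isomorphism at some $z\in Z\setminus U$; let $s\in S$ be the image of $z$. Then $p_1^{-1}(z)$ is a positive-dimensional closed subscheme of the fiber $X_{\overline{s}}$ (up to base change to the algebraic closure of $\kappa(s)$). The key input from \cite{Liu} is that such an exceptional fiber of a proper birational morphism with smooth source is covered by the images of rational curves in $X_{\overline{s}}$; this is precisely the content of the rigidity/uniruledness arguments in \cite{Liu}, §3–§4 used in the proof of Prop.4.13, and none of those steps use regularity of $X$. By hypothesis, no geometric fiber of $X\to S$ contains a rational curve, so we obtain a contradiction. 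Hence $p_1$ is an isomorphism and $u_K$ extends to an $S$-morphism $u:Z\to X$, establishing the weak extension property for $X$.
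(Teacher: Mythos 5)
Your proposal takes the same route as the paper — both defer to \cite{Liu}, Prop.~4.13 and observe that regularity of $X$ can be dropped — and the engine you describe (graph closure, valuative criterion in codimension one, ruling out exceptional fibers of $p_1$ via ruledness plus the no-rational-curve hypothesis) is indeed the mechanism behind Liu's proposition. The final conclusion is correct.

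However, two of the supporting claims are off. First, the assertion that \cite{Liu}, Cor.~3.12 "supplies the \emph{uniqueness} of the extension" cannot be the role of regularity. Uniqueness of an $S$-morphism $Z\to X$ extending $u_K$, when one exists, follows at once from separatedness of $X$ (two such morphisms agree on the dense open $Z_K$, hence everywhere) and never needs regularity. What regularity of $X$ buys in Liu's Prop.~4.13 is that the extension lands in the \emph{smooth locus} $X_{sm}$, which is exactly what one must check for $X_{sm}$ (a smooth model, as required by Definition~\ref{Nerondefn}) to be the Néron model. Since the weak extension property of Definition~\ref{weakextensionproperty} asks only for a morphism into $X$, that step is indeed superfluous — so your conclusion that Cor.~3.12 can be dropped is right, but your stated reason for it is not. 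Second, "proper birational morphism with smooth source" misstates the ruledness lemma you need. The source of $p_1:\Gamma\to Z$ is $\Gamma$, which is generally singular; the relevant result (KSC Thm.~4.26, used in Lemma~\ref{curvelemma}) is about a proper birational morphism from a \emph{normal} source onto a \emph{regular} target. It is the target $Z$, smooth over the Dedekind base $S$ and hence regular, that makes the lemma available, and $\Gamma$ must first be replaced by its normalization — a step your sketch omits (compare the normalization and Nagata bookkeeping carried out in Lemma~\ref{curvelemma}).
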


The following lemma is well-known.
\begin{lemma}\label{etalecurve}
Let $k$ be an algebraically closed field.  Let $f: X\rightarrow Y$ be an \'etale surjective $k$-morphism of proper $k$-varieties.  If $X$ does not contain any rational curve, then $Y$ does not contain any rational curve.
\end{lemma}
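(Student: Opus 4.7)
The plan is to argue by contrapositive: assume $Y$ contains a rational curve and produce one in $X$. By the remark preceding the lemma (together with \cite{Liuqing}, Cor.~4.1.17, p.~119), since $Y$ is proper, having a rational curve in $Y$ is equivalent to having a nonconstant $k$-morphism $g:\mathbb{P}^1_k\to Y$.

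First I would upgrade $f$ from étale to finite étale. Since $X$ is proper over $k$ and $Y$ is separated over $k$, the morphism $f:X\to Y$ is proper; combined with being étale, $f$ is finite étale. Now form the pullback $W := \mathbb{P}^1_k\times_Y X$ with its two projections $\pi_1:W\to\mathbb{P}^1_k$ and $\pi_2:W\to X$. The morphism $\pi_1$ is the base change of $f$, so it is finite étale and surjective.

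The next step is to identify $W$. Because $k$ is algebraically closed and $\pi_1$ is finite étale, $W$ is smooth and its connected components are smooth projective curves $\tilde{C}_i$, each equipped with a finite étale map $\pi_1|_{\tilde{C}_i}:\tilde{C}_i\to\mathbb{P}^1_k$ of some degree $n_i\geq 1$. Since étale means unramified (no tame or wild ramification contribution), the Riemann--Hurwitz formula gives
\[
2g(\tilde{C}_i)-2 \;=\; n_i\bigl(2\cdot 0-2\bigr),
\]
so $g(\tilde{C}_i)=1-n_i$. Nonnegativity of the genus forces $n_i=1$ and $g(\tilde{C}_i)=0$, i.e.\ each $\tilde{C}_i\cong \mathbb{P}^1_k$ and $\pi_1|_{\tilde{C}_i}$ is an isomorphism. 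This argument is characteristic-free.

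Finally, pick any component $\tilde{C}_i\cong\mathbb{P}^1_k$ and consider $h := \pi_2|_{\tilde{C}_i}:\mathbb{P}^1_k\to X$. Then $f\circ h = g\circ (\pi_1|_{\tilde{C}_i})$ is nonconstant, because $\pi_1|_{\tilde{C}_i}$ is an isomorphism and $g$ is nonconstant. Hence $h$ itself is nonconstant, producing a rational curve in $X$ and contradicting the hypothesis. The only conceptual point that needed care is ensuring $f$ is finite étale (so that the base change $\pi_1$ has the expected structure) and that Riemann--Hurwitz applies without a ramification contribution in positive characteristic; both are immediate from properness of $X$ and from étaleness being equivalent to unramifiedness plus flatness. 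No step presents a serious obstacle.
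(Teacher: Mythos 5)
Your proof is correct and is essentially the same argument as the paper's: base change $f$ along the rational curve $g:\mathbb{P}^1_k\to Y$ to get a finite \'etale cover of $\mathbb{P}^1_k$, observe that each connected component is a smooth projective curve that must be isomorphic to $\mathbb{P}^1_k$ (you invoke Riemann--Hurwitz directly; the paper cites \cite{Liuqing}, Cor.\ 7.4.20, which amounts to the same computation), and project to $X$ to produce a rational curve there. Your treatment is slightly more explicit on two minor points — upgrading $f$ to finite \'etale at the outset and verifying that the resulting map $\mathbb{P}^1_k\to X$ is nonconstant via the composition with $f$ — but the route and the key step are identical.
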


\begin{proof}
Suppose there exists a rational curve on $Y$, i.e. a nonconstant $k$-morphism $\mathbb{P}^1_k\rightarrow Y$.  Since $f$ is surjective, base change gives a nonconstant morphism $X\times_Y\mathbb{P}^1_k\rightarrow X$, and \'etale surjective morphism $\overline{f}: X\times_Y\mathbb{P}^1_k\rightarrow \mathbb{P}^1_k$.  Since \'etale morphisms preserve the dimensions of local rings (\cite{Liuqing}, Prop.4.3.23), $X\times_Y\mathbb{P}^1_k$ is a smooth proper curve over $k$, and $\overline{f}$ is quasi-finite.  Let $C$ be one of the connected components of $X\times_Y\mathbb{P}^1_k$, which is a smooth, connected, projective curve (\cite{Hart}, Prop.II.6.7).  Then $\overline{f}: C\rightarrow \mathbb{P}^1_k$ is a finite \'etale morphism of smooth projective curves (\cite{Hart}, Prop.II.6.8).  Therefore, the restriction of $\overline{f}$ on $C$ is an isomorphism (\cite{Liuqing}, Cor.7.4.20), which gives a rational curve on $X$.  This contradicts the hypothesis that $X$ does not contain rational curve.
\end{proof}

Lemma~\ref{etalecurve} gives an immediate application of Corollary~\ref{pseudoliu} as following.

\begin{corollary}\label{etalesurjective}
Let $X$ be a proper pseudo-N\'eron $S$-model of $X_K$ such that no geomertric fiber contains rational curves, as in Corollary~\ref{pseudoliu}.  Suppose that $Y$ is a proper $S$-scheme with smooth generic fiber $Y_K$ and there exists an \'etale surjective morphism $f: X\rightarrow Y$.  Then $Y$ is a pseudo-N\'eron model of $Y_K$.  
\end{corollary}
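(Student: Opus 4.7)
The plan is to verify the hypotheses of Corollary~\ref{pseudoliu} for $Y$: namely, that $Y$ is a proper and flat $S$-model of the smooth proper $K$-variety $Y_K$, and that no geometric fiber $Y_{\overline{s}}$ contains a rational curve. Given these, Corollary~\ref{pseudoliu} immediately yields that $Y$ is a pseudo-N\'eron model of $Y_K$.

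Smoothness of $Y_K$ is part of the hypothesis, and properness of $Y_K$ over $K$ follows by base change from the properness of $Y$ over $S$. For the flatness of $Y\to S$, I would invoke faithfully flat descent: the morphism $f: X\to Y$ is \'etale and surjective, hence faithfully flat, while the composition $X\to S$ is flat because $X$ is a pseudo-N\'eron model. The standard fact that flatness descends along faithfully flat morphisms (if $A\to B\to C$ with $B\to C$ faithfully flat and $A\to C$ flat, then $A\to B$ is flat) then gives that $Y\to S$ is flat.

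It remains to rule out rational curves in the geometric fibers of $Y\to S$. For each $s\in S$, base change of $f$ gives an \'etale surjective morphism $f_{\overline{s}}: X_{\overline{s}}\to Y_{\overline{s}}$ of proper $\kappa(\overline{s})$-schemes, the properness coming from that of $X$ and $Y$ over $S$. By hypothesis $X_{\overline{s}}$ contains no rational curve, so Lemma~\ref{etalecurve} applied to $f_{\overline{s}}$ yields that $Y_{\overline{s}}$ contains no rational curve either. The proof of Lemma~\ref{etalecurve} goes through verbatim for proper $k$-schemes (the key facts used are only that the base change of $f_{\overline{s}}$ along $\mathbb{P}^1_k\to Y_{\overline{s}}$ is finite \'etale, that a connected component is a smooth projective curve finite \'etale over $\mathbb{P}^1_k$, and that any such cover is trivial); so the possible failure of $X_{\overline{s}}$ or $Y_{\overline{s}}$ to be a ``variety'' in the strict reduced-irreducible sense is not an obstruction.

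The argument is essentially mechanical once these ingredients are assembled. The only mildly delicate step is the flatness descent for $Y\to S$, which is a standard commutative-algebra exercise, together with the fiberwise application of Lemma~\ref{etalecurve}; no serious obstacle is expected.
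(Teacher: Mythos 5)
Your proof is correct and takes essentially the same route the paper intends: apply Lemma~\ref{etalecurve} fiberwise to rule out rational curves in the geometric fibers of $Y$, then invoke Corollary~\ref{pseudoliu}. The paper leaves the corollary unproved beyond the remark that Lemma~\ref{etalecurve} gives an ``immediate application'' of Corollary~\ref{pseudoliu}; you have just spelled out the verifications (flatness of $Y\to S$ by descent along the faithfully flat $f$, properness of $Y_K$ by base change, and the fiberwise use of Lemma~\ref{etalecurve}), which is exactly the intended argument.
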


There are many varieties which do not contain any rational curve.  One of the typical examples is very general hypersurfaces of large degree.  We cite the following result.

\begin{theorem}(\cite{nocurves}, Theorem 1.2)
Let $k$ be an algebraically closed field.  For $d\ge 2n-1$, a very general hypersurface $X\subset \mathbb{P}^n_k$ of degree $d$ contains no rational curves, and moreover, the locus of hypersurfaces that contain rational curves will have codimension at least $d-2n+2$.
\end{theorem}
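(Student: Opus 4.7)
The plan is a dimension count on an incidence correspondence of pairs (parameterized degree-$e$ rational curve, degree-$d$ hypersurface containing it), carried out uniformly in $e\ge 1$. Put $N+1=\binom{n+d}{n}$, so that $\mathbb{P}^N$ parameterizes degree-$d$ hypersurfaces in $\mathbb{P}^n_k$. For each $e\ge 1$ let $M_e$ denote the quasi-projective variety of nonconstant morphisms $\phi\colon\mathbb{P}^1_k\to\mathbb{P}^n_k$ with $\phi^{\ast}\mathcal{O}(1)\cong\mathcal{O}(e)$, taken up to scalars; then $\dim M_e=(n+1)(e+1)-1$. Form the incidence $\mathcal{I}_e\subset M_e\times\mathbb{P}^N$ of pairs $(\phi,X)$ with $\phi(\mathbb{P}^1_k)\subset X$.

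First I would analyze the projection $\pi_M\colon\mathcal{I}_e\to M_e$. Its fiber over $\phi$ is the linear subspace of $\mathbb{P}^N$ cut out by $\phi^{\ast}F=0$; since $\phi^{\ast}F\in H^0(\mathbb{P}^1_k,\mathcal{O}(de))$ has $de+1$ coefficients, this fiber has codimension at most $de+1$. For generic $\phi$ the image is a smooth rational curve of degree $e$ which is $d$-normal (the restriction $H^0(\mathbb{P}^n_k,\mathcal{O}(d))\twoheadrightarrow H^0(\phi(\mathbb{P}^1_k),\mathcal{O}(d))$ is surjective), so the fiber has codimension exactly $de+1$. Hence the dominant irreducible component of $\mathcal{I}_e$ has dimension exactly $N+(n+1)(e+1)-de-2$.

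Next, for the other projection $\pi_N\colon\mathcal{I}_e\to\mathbb{P}^N$, the group $\mathrm{PGL}_2$ acts on $M_e$ by reparametrization $(\phi,g)\mapsto\phi\circ g$, inducing an action on $\mathcal{I}_e$ that preserves every fiber of $\pi_N$; hence each nonempty fiber of $\pi_N$ contains a $\mathrm{PGL}_2$-orbit and so has dimension at least $3$. Combining with the previous step,
\[
\codim_{\mathbb{P}^N}\pi_N(\mathcal{I}_e)\;\ge\;e(d-n-1)-n+4.
\]
Under the hypothesis $d\ge 2n-1$ one has $d-n-1\ge n-2\ge 0$ (for $n\ge 2$), so the right-hand side is nondecreasing in $e$ and is minimized at $e=1$, yielding codimension at least $d-2n+3\ge d-2n+2$.

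Finally, the locus of hypersurfaces containing some rational curve is the countable union $\bigcup_{e\ge 1}\pi_N(\mathcal{I}_e)$, each stratum of codimension $\ge d-2n+2\ge 1$; its complement is a countable intersection of dense opens, i.e.\ a very general subset of $\mathbb{P}^N$, and hypersurfaces in this complement contain no rational curve. The principal obstacle is the assertion of generic $d$-normality used in the second paragraph. For $e=1$ it is immediate since every line is $d$-normal. For larger $e$ one specializes to a rational normal curve of degree $e$ in a linear subspace $\mathbb{P}^e\subset\mathbb{P}^n_k$ (which is projectively normal, hence $d$-normal for every $d$) and uses semicontinuity of $h^1(\mathcal{I}_C(d))$ in a flat family to propagate the property to the generic nearby rational curve of degree $e$. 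The loss of $d$-normality is confined to a proper closed substratum of $M_e$, whose contribution to $\pi_N(\mathcal{I}_e)$ is of strictly smaller dimension and does not affect the codimension bound.
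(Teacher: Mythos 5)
The paper only cites this result (\cite{nocurves}, Theorem 1.2) and gives no proof of its own, so there is nothing to compare against directly; I will assess the argument on its own merits.

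Your argument is the classical Clemens-style dimension count on the incidence variety of pairs (parameterized rational curve, hypersurface). The first two steps are fine as far as they go, but the genuine gap is exactly the point you acknowledge and then dismiss in the final paragraph: the claim that the non-$d$-normal locus of $M_e$ ``does not affect the codimension bound'' is unjustified, and in fact it is false, so the bound you derive is wrong. Take $n=2$, $d=3$, $e=3$. A generic $\phi\in M_3$ is birational onto an irreducible nodal plane cubic $C$ --- the image is never a smooth curve (a smooth plane cubic has genus $1$), and $C$ is not $3$-normal. The pullback $\phi^*\colon H^0(\mathbb{P}^2,\mathcal{O}(3))\to H^0(\mathbb{P}^1,\mathcal{O}(9))$ factors through $H^0(C,\mathcal{O}_C(3))$, which has dimension $de-p_a(C)+1=9$, so $\phi^*$ has rank $9$, not $de+1=10$. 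The $\pi_M$-fiber over such $\phi$ is therefore a single point (namely $X=C$ itself), so $\dim\mathcal{I}_3=\dim M_3=11$ rather than the $10$ your formula predicts, and $\pi_N(\mathcal{I}_3)$ is the discriminant hypersurface of codimension $1=d-2n+2$, not your claimed $2=d-2n+3$. Thus the bound $e(d-n-1)-n+4$ is not valid, and the failure originates in precisely the stratum you discarded.

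More generally, whenever $\phi$ is birational onto a singular image $C$ of arithmetic genus $p_a$, the rank of $\phi^*$ is at most $de+1-p_a$, so the $\pi_M$-fibers jump by at least $p_a$. The real content of the theorem is to show that this jump is always compensated by the codimension in $M_e$ of the stratum where it occurs; this is not automatic, and it is exactly why the naive incidence count --- which is what you have written --- was long known to be insufficient, prompting the substantial literature (Clemens, Ein, Voisin, Pacienza, Coskun--Riedl, \cite{nocurves}) devoted to closing this gap. Note also that your specialization-to-a-rational-normal-curve argument for $d$-normality only makes sense for $e\le n$, which excludes the regime ($e$ large relative to $n$) where the degeneracy is most severe; and in positive characteristic the $\mathrm{PGL}_2$-quotient and generic-smoothness steps need further care. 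The cited reference proceeds by a different construction rather than by bounding this incidence variety directly, which is what makes the uniform bound --- and the positive-characteristic case --- go through.
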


\begin{lemma}\label{curvelemma}
Let $R$ be a Nagata DVR with fraction field $K$ and residue field $k$.  Suppose that $X$ is a proper scheme over $R$ with nonempty fibers.  If $X_{\overline{k}}$ contains no rational curves, then $X_{\overline{K}}$ also contains no rational curves.
\end{lemma}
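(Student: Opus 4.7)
The plan is to argue the contrapositive: suppose $X_{\overline K}$ contains a rational curve, so there is a nonconstant morphism $g\colon \PP^1_{\overline K} \to X_{\overline K}$, and produce a rational curve on $X_{\overline k}$, contradicting the hypothesis. Since $\PP^1$ and $X$ are of finite type, $g$ descends to a nonconstant $g\colon \PP^1_L \to X_L$ for some finite extension $L/K$. The Nagata hypothesis makes the integral closure $R' \subset L$ finite over $R$; localizing $R'$ at a maximal ideal above $\mm_R$ gives a DVR $R''$ with fraction field $L$ and residue field $k''$ algebraic over $k$. It therefore suffices to produce a rational curve on $X \times_R k''$, which then base-changes along $k'' \hookrightarrow \overline k$ to the desired rational curve on $X_{\overline k}$.

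Form the graph $\Gamma_g \subset \PP^1_L \times_L X_L$ and take its schematic closure $Y \subset \PP^1_{R''} \times_{R''} X_{R''}$. Then $Y$ is integral, flat, and proper over $R''$ of relative dimension one. The first projection $\pi_1\colon Y \to \PP^1_{R''}$ is proper and birational (an isomorphism on generic fibers), and the image $\pi_2(Y) \subset X_{R''}$ is the flat closure $D$ of $g(\PP^1_L)$, whose special fiber $D_{k''}$ is one-dimensional. Apply Lipman's resolution of singularities for two-dimensional excellent surfaces to obtain a regular model $Y' \to Y$, proper and birational. The induced $Y' \to \PP^1_{R''}$ is a proper birational morphism between regular two-dimensional schemes, hence by the Abhyankar--Lichtenbaum factorization theorem a finite sequence of blow-ups at closed points of $\PP^1_{R''}$. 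Since $R''$ is a DVR, every closed point of $\PP^1_{R''}$ lies in the special fiber $\PP^1_{k''}$, so the special fiber $Y'_{k''}$ of $Y' \to \spec R''$ is a connected tree of $\PP^1$'s: the strict transform of $\PP^1_{k''}$ together with the exceptional fibers of the blow-ups, each a $\PP^1$ over some finite extension of $k''$.

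Finally, the composition $\psi\colon Y' \to Y \xrightarrow{\pi_2} X_{R''}$ restricts on special fibers to $\psi_{k''}\colon Y'_{k''} \to X_{k''}$, whose image contains $D_{k''}$ (since $Y' \to Y$ is surjective and $\pi_2(Y) = D$). Because $D_{k''}$ is one-dimensional and every irreducible component of $Y'_{k''}$ is a $\PP^1$, at least one such component must map nonconstantly to $X_{k''}$, yielding a nonconstant morphism $\PP^1_\kappa \to X_{k''}$ for some $\kappa$ finite over $k''$. Base-changing along $\kappa \hookrightarrow \overline k$ then gives the sought rational curve on $X_{\overline k}$.

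The hardest step is the resolution and factorization: one must verify that $R''$ is sufficiently nice for Lipman's two-dimensional resolution to apply, which should follow from $R''$ being a Nagata one-dimensional local ring, and that the Abhyankar--Lichtenbaum factorization in the arithmetic setting produces exactly a chain of point blow-ups. When $X$ is projective over $R$---the setting relevant to the later application to very general hypersurfaces in $\PP^n_K$---a slicker alternative is to work directly inside the Kontsevich moduli of genus-zero stable maps $\overline{M}_{0,0}(X_{R''}/R'', \beta)$, whose properness over $R''$ yields the limit as a stable map from a tree of $\PP^1$'s to $X_{k''}$, bypassing the surface theory entirely.
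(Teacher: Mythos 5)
Your argument is correct, and it takes a genuinely different route from the paper's. Both proofs begin identically: descend the rational curve on $X_{\overline K}$ to a nonconstant $\mathbb{P}^1_L\to X_L$ over a finite extension $L/K$, use the Nagata hypothesis to produce a DVR dominating $R$ with fraction field $L$, and form the schematic closure of the graph in $\mathbb{P}^1\times X$. From there they diverge. The paper only \emph{normalizes} this closure, obtaining a proper birational morphism $\pi\colon\Gamma\to\mathbb{P}^1_T$ from a normal surface to a regular one, and invokes the Abhyankar ruling theorem (Koll\'ar--Smith--Corti, Thm.\ 4.26) to conclude that the exceptional locus of $\pi$ is \emph{birationally} a disjoint union of $\mathbb{P}^1$'s over finite extensions of the residue field; no resolution of singularities is needed, and the final nonconstancy is checked by noting that $\Gamma$ is finite over $\mathbb{P}^1_T\times_T X_T$. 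You instead pass to a full Lipman resolution $Y'\to Y$ and then factor $Y'\to\mathbb{P}^1_{R''}$ as a chain of point blow-ups (Lichtenbaum/Shafarevich), so the special fiber of $Y'$ is \emph{literally} a tree of $\mathbb{P}^1$'s, and nonconstancy follows from the purely dimension-theoretic fact that the image is the one-dimensional $D_{k''}$. Your route uses heavier machinery (two-dimensional resolution plus the factorization theorem for regular arithmetic surfaces) where the paper needs only normalization plus the ruling lemma, but the geometry is more transparent in your version. The point you flag as the hardest step does deserve care: you need $Y$ to satisfy the hypotheses of Lipman's theorem (finiteness of the normalization, finitely many singular points on it, and normal completions there), but these do follow from $R''$ being a one-dimensional Nagata local ring and $Y$ being of finite type over it. Finally, your stable-maps alternative via $\overline{M}_{0,0}(X_{R''}/R'',\beta)$ is indeed the cleanest argument when $X$ is projective over $R$ --- which covers the only application in the paper (Theorem~\ref{hypersurface}) --- but the lemma as stated allows merely proper $X$, for which properness of the Kontsevich space is not automatic.
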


\begin{proof}
Suppose that there is a nonconstant $K$-morphism $f_{\cls}: \mathbb{P}^1_{\overline{K}}\rightarrow X_{\overline{K}}$.  By limit arguments, there exists a discrete valuation ring $T$ with fraction field $L$, finite over $K$, and residue field $l$ such that $R\subset T\subset \cls$, $T$ dominates $R$, and $f_{\cls}$ is the base change of a nonconstant $L$-morphism $f_L: \mathbb{P}^1_L\rightarrow X_L$.  Consider the generic point of the special fiber $\mathbb{P}^1_l$ which is of codimension one in $\mathbb{P}^1_T$.  Using the valuative criterion of properness, $f_L$ extends uniquely to a $T$-morphism $f_T: V\rightarrow X_T$ where $V$ is an open dense of $\mathbb{P}^1_T$ containing the generic point of $\mathbb{P}^1_l$ (\cite{Liuqing}, Prop.4.1.16, p.119), and hence an open dense of $\mathbb{P}^1_l$.  

Let $\Gamma$ be the normalization of the schematic closure for the graph of $f_T$.  Then the projection $\mathbb{P}^1_T\times_TX_T\rightarrow \mathbb{P}^1_T$ induces a birational morphism $\pi: \Gamma\rightarrow \mathbb{P}^1_T$.  Since $R$ is Nagata, $T$ is finite over $R$, so $T$ is also Nagata (\cite{Liuqing}, Def.8.2.27 and Prop.8.2.29, p.340).  Thus, all the schemes are Nagata and the normalization morphism is finite.  And, hence, $\pi$ is a proper birational morphism.  Let $E$ be the exceptional locus of $\pi$.  By Abhyankar's lemma (\cite{rational}, Theorem 4.26, p.112), $E$ is ruled over its image.  Let $E'$ be its image.  Then $E$ is birationally equivalent over $E'$ to $W\times_{E'} \mathbb{P}^1_{E'}$.  Note that, since $f_T$ is defined over $V$, $E'$ is finitely many closed points in the closed fiber $\mathbb{P}^1_l$ and $E$ is of codimension one.  Then $W$ is dimension zero over $E'$, hence finitely many closed points.  Thus, $W\times_{E'}\mathbb{P}^1_{E'}$ is a finite disjoint copy of $\mathbb{P}^1_{k_j}$ with each $k_j$ a finite extension of $l$.  Base change to the algebraic closure $\overline{l}$ of $l$, then each $\mathbb{P}^1_{k_j}$ splits to finitely many disjoint copies of $\mathbb{P}^1_{\overline{l}}$.  Now, take one of these copies, say, $C$.  If $C$ is mapped to a single point of $X_{\overline{l}}$, then the image of $C$ in $(\mathbb{P}^1_T\times_TX_T)\times_T{\overline{l}}$ is a single point, contradicting that $\Gamma_{\overline{l}}\rightarrow (\mathbb{P}^1_T\times_TX_T)\times_T{\overline{l}}$ is a finite morphism.  Therefore, $C$ is a rational curve in $X_{\overline{l}}=X_{\overline{k}}$.  And this contradiction shows that $X_K$ does not contain any rational curve.
\end{proof}

\begin{definition}
Let $R$ be a DVR with fraction field $K$ and $E=R^{\times}\cup \{0\}$.  Let $H$ be a hypersurface in $\mathbb{P}^n_K$.  We say that $(H,f)$ is a \emph{unitary hypersurface} if the defining equation $f$ of $H$ has coefficients in $E$.
\end{definition}

\begin{theorem}\label{hypersurface}
Let $R$ be a Nagata DVR with fraction field $K$.  Suppose that the residue field $k$ is uncountable and algebraically closed, and $d$ is an integer prime to $\chara{k}$.  Then, there exists unitary hypersurfaces of degree $d\ge 2n-1$ in $\mathbb{P}^n_K$ admitting a N\'eron model.
\end{theorem}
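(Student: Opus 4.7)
The plan is to apply Proposition~\ref{neronliu} to an $R$-model obtained by lifting a very general smooth hypersurface over the residue field. Since $k$ is uncountable, algebraically closed, and $d \geq 2n-1$, the cited theorem on rational curves (combined with the open condition of smoothness in the parameter space of degree-$d$ hypersurfaces) produces a smooth hypersurface $\bar{H} = V(\bar{f}) \subset \mathbb{P}^n_k$ of degree $d$ that contains no rational curve. Writing $\bar{f} = \sum_{|I|=d} \bar{a}_I x^I$, I would lift coefficients by choosing $a_I \in R^\times$ reducing to $\bar{a}_I$ whenever $\bar{a}_I \neq 0$ (possible because $R^\times \twoheadrightarrow k^\times$) and setting $a_I = 0$ otherwise. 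The polynomial $f = \sum a_I x^I$ has coefficients in $E = R^\times \cup \{0\}$, so $H := V(f) \subset \mathbb{P}^n_R$ is a unitary hypersurface of degree $d$.

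Next I would verify that $H \to \spec R$ is smooth and proper. Properness is automatic from $H \subset \mathbb{P}^n_R$. Flatness holds because at least one coefficient of $f$ is a unit in $R$, so $f$ does not vanish identically on any fiber of $\mathbb{P}^n_R \to \spec R$ and is therefore a non-zero-divisor in $\mathcal{O}_{\mathbb{P}^n_R}$; hence $H$ is an effective Cartier divisor that is flat over $R$. The closed fiber of $H$ is $\bar{H}$, which is smooth of dimension $n-1$ by choice. Since the smooth locus of the finite-type morphism $H \to \spec R$ is open in $H$, its complement $Z$ is closed in $H$ and disjoint from the closed fiber; by properness the image of $Z$ in $\spec R$ is closed, and since it avoids the unique closed point it must be empty. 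Therefore $H \to \spec R$ is smooth, and in particular $H$ is regular.

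Finally, Lemma~\ref{curvelemma} applied to $H \to \spec R$ promotes the absence of rational curves from the special fiber $\bar{H}$ to the geometric generic fiber $H_{\ol{K}}$. Proposition~\ref{neronliu} then applies to the proper, regular $R$-model $H$ whose geometric fibers contain no rational curve, yielding that $H_{sm} = H$ is the N\'eron model of the smooth generic fiber $H_K$. Since $H_K \subset \mathbb{P}^n_K$ is by construction a unitary hypersurface of degree $d$, this supplies the required example.

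The only genuine obstacle is the promotion of smoothness from the closed fiber to the total space in the middle paragraph; it is routine but deserves comment because it relies on properness combined with the special topology of $\spec R$ (a single closed point). The substantive inputs, namely the existence of smooth rational-curve-free hypersurfaces of degree $d \geq 2n-1$ over $k$ and the propagation of the ``no rational curves'' property from special to geometric generic fiber, are furnished by the cited theorem and by Lemma~\ref{curvelemma} respectively.
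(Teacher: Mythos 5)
Your proof is correct and reaches the same destination as the paper's, but the route through the smoothness of the $R$-model is genuinely different and a bit cleaner. Where you apply the fibral criterion (flat plus smooth closed fiber implies smooth at those points) and then push through properness — the non-smooth locus $Z$ is closed and disjoint from the special fiber, so its image in $\spec R$ is a closed set missing the closed point, hence empty — the paper instead routes through Fitting ideals: from smoothness of $X_k$ it deduces $\text{Fitt}_{n-1}(\Omega^1_{X_k/k}) = \mathcal{O}_{X_k}$, propagates this to $\text{Fitt}_{n-1}(\Omega^1_{X/R}) = \mathcal{O}_X$, and then argues that $\Omega^1_{X_K/K}$ is locally free of rank $n-1$, whence $X_K$ is smooth, and then flat plus all fibers smooth gives the model smooth (BLR Prop.~2.4/8). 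Your version makes the topological leverage over the two-point space $\spec R$ explicit and avoids the Fitting-ideal detour entirely; the paper's version has the modest advantage of producing an auxiliary intermediate fact about $X_K$ along the way. Two minor points worth acknowledging: you should note that, for $n\ge 2$, a smooth degree-$d$ hypersurface over $K$ is automatically connected and hence irreducible, so the generic fiber really is a smooth proper variety as Proposition~\ref{neronliu} requires; and the paper also sets up a surjective parameter map $F:E\to\mathbb{P}^N_k$ from unitary hypersurfaces to specializations, which you replace with a direct coefficient-lifting argument using $R^\times\twoheadrightarrow k^\times$ — these are the same construction, yours just stated more concretely.
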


\begin{proof}
Suppose that $X_K=V_+(f)_K\subset \mathbb{P}^n_K$ is a unitary hypersurface defined by an irreducible homogeneous polynomial $f$ of degree $d$ in $n+1$ variables.  Since all the nonzero coefficients of $f$ are in the group of units $E$ of $K$, there is no term in $f$ vanishing in the residue field of $R$, so the specialization $X_k=V_+(f)_k$ is a hypersurface of degree $d$ in $\mathbb{P}^n_k$.  Conversely, every hypersurface of degree $d$ in $\mathbb{P}^n_k$ arises as a specialization of some unitary hypersurface of degree $d$ in $\mathbb{P}^n_K$.  

Set $N={n+d\choose d}-1$.  Let $E$ be the space of unitary hypersurfaces in $\mathbb{P}^n_K$.  The argument above gives a surjective map of parameter spaces $F: E\rightarrow \mathbb{P}^N_k$ by sending $(X_K,f)$ to its specialization $X_k$.  Let $U$ be an intersection of countably many open dense subsets of $\mathbb{P}^N_k$ such that every member in $U$ is smooth without rational curves.  Take $X_K\in F^{-1}(U)$ a $K$-point.  By Lemma~\ref{curvelemma}, there is no rational curve on $X_K$.  Let $X=V_+(f)\subset \mathbb{P}^n_R$ be the $R$-model of $X_K$.  

Since $f$ is irreducible, $X$ is an integral hypersurface.  Thus, $X$ is flat over $\spec R$ and every nonempty fiber is irreducible of dimension $n-1$(\cite{Liuqing}, Cor.4.3.10, p.137).  Let $\text{Fitt}_{n-1}(\Omega^1_{X/R})$ be the $(n-1)$-th Fitting ideal of $\Omega^1_{X/R}$, which is a coherent ideal sheaf of $\mathcal{O}_X$.  Then, $\text{Fitt}_{n-1}(\Omega^1_{X_k/k})$ is equal to $(\text{Fitt}_{n-1}(\Omega^1_{X/R}))\cdot\mathcal{O}_{X_k}$ (\cite{eisenbud}, Cor.20.5, p.498).  Since $X_k$ is smooth, $\Omega^1_{X_k/k}$ is locally free of rank $n-1$.  Thus, $\text{Fitt}_{n-1}(\Omega^1_{X_k/k})$ is equal to $\mathcal{O}_{X_k}$ (\cite{eisenbud}, Prop.20.6, p.498).  And hence, $\text{Fitt}_{n-1}(\Omega^1_{X/R})$ is equal to $\mathcal{O}_X$.  Then, $\Omega^1_{X/R}$ can be locally generated by $n-1$ elements (\cite{eisenbud}, Prop.20.6, p.498).  So $\Omega^1_{X_K/K}$ can be locally generated by $n-1$ elements, and hence locally free of rank $n-1$ (\cite{Liuqing}, Lemma 6.2.1, p.220, and \cite{BLR}, Prop.2.2/15, p.43).  Thus $X_K$ is smooth.  At this stage, every fiber of $X$ is smooth and $X$ is flat over $R$, then $X$ is a smooth $R$-scheme (\cite{BLR}, Prop.2.4/8, p.53).  Therefore, by Lemma~\ref{curvelemma} and Proposition~\ref{neronliu}, $X$ is the N\'eron model of $X_K$.  
\end{proof}

This theorem gives a direct corollary in the geometric setting as following.

\begin{corollary}\label{geometricthm}
Let $k$ be an uncountable algebraically closed field.  Let $S$ be a Dedekind scheme of finite type over $k$ with field of functions $K$ (for example, $S$ is a smooth curve over $k$).  Let $d$ be an integer prime to $\chara k$.  Then, a very general smooth hypersurface of degree $d\ge 2n-1$ defined over $k$ in $\mathbb{P}^n_K$ has a N\'eron model.  In particular, the N\'eron model of such a hypersurface is the constant family over $S$.
\end{corollary}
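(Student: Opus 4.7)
The plan is to exhibit the N\'eron model explicitly as the constant family $X := H \times_k S$ and then verify the hypotheses of Proposition~\ref{neronliu}. Specifically, since the hypersurfaces in $\mathbb{P}^n_K$ that are defined over $k$ are parameterized by the $k$-points of $\mathbb{P}^N_k$ with $N=\binom{n+d}{d}-1$, a very general such hypersurface corresponds to a very general $k$-point of $\mathbb{P}^N_k$. Smoothness is a Zariski-open condition and, by the cited Theorem of \cite{nocurves}, the locus of hypersurfaces containing no rational curves contains a very general subset for $d\ge 2n-1$. Intersecting these, I would fix a smooth hypersurface $H\subset \mathbb{P}^n_k$ of degree $d$ with no rational curves, so that $H_K := H\times_k K$ is the given hypersurface in $\mathbb{P}^n_K$.

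Next I would form the candidate model $X := H\times_k S \subset \mathbb{P}^n_S$. This is proper over $S$ (base change of projective), flat over $S$ (pull-back of a smooth $k$-scheme along the structural map $S\to \spec k$), with generic fiber $H_K$. Since $k$ is algebraically closed and $S$ is an integral normal $1$-dimensional $k$-scheme of finite type, $S$ is smooth over $k$; combined with the smoothness of $H/k$, this gives that $X\to S$ is smooth and $X$ itself is regular. Thus $X_{sm}=X$.

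The remaining content is to show that no geometric fiber of $X\to S$ contains a rational curve. For every closed point $s\in S$ we have $\kappa(s)=k$ (as $k$ is algebraically closed and $S$ is of finite type over $k$), so the geometric fiber over $s$ is simply $H$, which contains no rational curves by our choice of $H$. For the generic point, I localize at each closed point $s\in S$ to obtain a Nagata DVR $\mathcal{O}_{S,s}$ with residue field $k$ and fraction field $K$, and apply Lemma~\ref{curvelemma} to $X\times_S\spec\mathcal{O}_{S,s}$: since the special fiber $H_{\bar k}=H$ contains no rational curves, neither does $X_{\bar K}=H_{\bar K}$. Hence every geometric fiber of $X/S$ is free of rational curves.

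Finally, Proposition~\ref{neronliu} applied to $X/S$ yields that $X_{sm}=X$ is the N\'eron model of $X_K$, which proves both the existence claim and the ``in particular'' statement that the N\'eron model is the constant family $H\times_k S$. The main substantive step is the verification that all geometric fibers avoid rational curves, where the closed fibers are handled tautologically and the geometric generic fiber is handled by Lemma~\ref{curvelemma}; the rest is essentially a smoothness/flatness check for the constant family and then a direct citation of Proposition~\ref{neronliu}.
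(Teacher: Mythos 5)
Your proof is correct and follows essentially the same strategy as the paper: fix a smooth hypersurface $H\subset\mathbb{P}^n_k$ of degree $d\ge 2n-1$ without rational curves, take the constant family $X = H\times_k S$ as the candidate model, check that no geometric fiber contains a rational curve (closed fibers tautologically, the generic geometric fiber via Lemma~\ref{curvelemma}), and apply Proposition~\ref{neronliu}. Your version is marginally cleaner on two points the paper handles more indirectly: you observe that smoothness of $X/S$ is immediate because $X$ is the base change of the smooth $k$-scheme $H$ (so the Fitting-ideal descent from Theorem~\ref{hypersurface} that the paper implicitly reuses is not needed here), and you apply Proposition~\ref{neronliu} directly over the Dedekind scheme $S$ rather than first reducing to the affine case and gluing via uniqueness of N\'eron models.
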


Note that we say a $K$-scheme $X$ is defined over $k$ if there exists a $k$-scheme $Y$ such that $X$ is isomorphic to $Y\times_{\spec k}\spec K$ (see \cite{Kollar}, Definition 1.15, p.19).

\begin{proof}
$S$ is a Nagata Dedekind scheme (\cite{Liuqing}, Prop.8.2.29, p.340).  First assume that $S=\spec R$ affine.  Let $f$ be a homogeneous polynomial of degree $d\ge 2n-1$ defined over $k$ in $\mathbb{P}^n_k$ such that there is no rational curve on the smooth hypersurface $V_+(f)$.  Define $X_K=V_+(f)_K$ in $\mathbb{P}^n_K$.  Denote $V_+(f)_R\subset \mathbb{P}^n_R$ by $X$, an $R$-model of $X_K$.  Then, exactly the same argument as in Theorem~\ref{hypersurface} shows that $X$ is the N\'eron model of $X_K$.  

Now, take a finite affine covering $\{\spec R_i\}_{i\in I}$ of $S$.  Then there exists a N\'eron model $X^i$ of $X_K$ over $\spec R_i$ for every $i\in I$.  By the uniqueness of N\'eron model and that N\'eron model is local on the base (\cite{BLR}, Prop.1.2/3, p.13), $\{X^i\}_{i\in I}$ glues to be a N\'eron model of $X_K$ over $S$.
\end{proof}

Combining Theorem~\ref{affineneron} and Corollary~\ref{geometricthm}, we get the following corollary.

\begin{corollary}\label{newgeometric}
Keep the notations of Corollary~\ref{geometricthm}.  Let $X_K$ be a smooth $K$-variety admitting a finite morphism to a very general smooth hypersurface of degree $d\ge 2n-1$ defined over $k$ in $\mathbb{P}^n_K$.  Then $X_K$ has a normal pseudo-N\'eron model over $S$.  In particular, every smooth subvariety of a very general hypersurface of degree $d\ge 2n-1$ in $\mathbb{P}^n_K$, where the hypersurface is defined over $k$, has a normal pseudo-N\'eron model.
\end{corollary}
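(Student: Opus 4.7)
The plan is to deduce the statement as an almost immediate consequence of Theorem~\ref{neron} combined with Corollary~\ref{geometricthm}. First, I would apply Corollary~\ref{geometricthm} to the very general smooth hypersurface $Y_K$ of degree $d \ge 2n-1$ defined over $k$ in $\mathbb{P}^n_K$ to produce a N\'eron model $Y$ of $Y_K$ over $S$, namely the constant family $Y_k \times_{\spec k} S$ obtained from the underlying $k$-hypersurface.

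Next I would argue that $Y$ is in fact a \emph{normal} pseudo-N\'eron model of $Y_K$. For normality, a N\'eron model is smooth over $S$ by definition, and $S$ is a normal Dedekind scheme, so $Y$ is normal (smoothness over a normal base implies normality of the total space via Serre's criterion). For the pseudo-N\'eron property, the weak extension property of Definition~\ref{weakextensionproperty} is strictly weaker than the full N\'eron mapping property, as it simply drops the uniqueness clause. Since $S$ is of finite type over the field $k$ it is Nagata, so all hypotheses of Theorem~\ref{neron} are verified for the given finite $K$-morphism $X_K \to Y_K$ with $X_K$ smooth. Applying that theorem directly yields a normal $S$-scheme $X$ satisfying the weak extension property, i.e., a normal pseudo-N\'eron model of $X_K$ over $S$.

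For the ``in particular'' clause I would note that any smooth subvariety $X_K$ of the hypersurface $Y_K$ comes equipped with a closed immersion $X_K \hookrightarrow Y_K$, and closed immersions are finite morphisms (they are affine, and the pushforward of the structure sheaf is a cyclic $\mathcal{O}_{Y_K}$-module). Hence the first part of the corollary applies to produce the desired normal pseudo-N\'eron model.

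I do not anticipate a genuine obstacle in this argument, since the nontrivial content has already been packaged into Theorem~\ref{neron} (whose proof invokes Lemma~\ref{affineneron} together with a gluing argument over a finite affine cover of $S$) and into Corollary~\ref{geometricthm}. The only minor point to double-check is the normality of the N\'eron model supplied by Corollary~\ref{geometricthm}, which however is automatic from the smooth-over-normal principle. Thus the corollary reduces to a routine verification that the hypotheses of the two previous results compose correctly.
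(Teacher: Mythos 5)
Your proposal is correct and follows exactly the route the paper intends: Corollary~\ref{geometricthm} supplies a N\'eron model $Y$ of the hypersurface $Y_K$ (which is smooth over the regular Dedekind base $S$, hence normal, and in particular a normal pseudo-N\'eron model), and since $S$ is of finite type over $k$ and therefore Nagata, Theorem~\ref{neron} applies to the finite morphism $X_K \to Y_K$ to give the normal pseudo-N\'eron model of $X_K$; the ``in particular'' clause then follows because a closed immersion is a finite morphism. The paper states the corollary as an immediate consequence of exactly these two inputs, so your argument matches, and the details you supply (Nagata hypothesis, normality of $Y$, weak extension property being implied by the full N\'eron mapping property) are precisely the routine verifications that were elided.
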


From this Corollary, we see that there are many smooth varieties admitting normal pseudo-N\'eron model over a smooth curve defined over an uncountable algebraically closed field.  In the situation of our corollary, we cannot control the regularity of other fibers except $X_K$.  It is a normal model of $X_K$, but in general not a N\'eron model in the sense of Definition~\ref{Nerondefn}.  Moreover, the variety $X_K$ is not necessarily defined over $k$, unlike the constant case in Corollary~\ref{geometricthm}.

\subsection{Base change properties}

The next lemma shows that pseudo-N\'eron models commute with \'etale extension of the base scheme, which is the analogue of \cite{BLR} Prop.1.2/2 (c) for N\'eron models.

\begin{lemma}\label{etalebasechange}
Let $S$ be a Dedekind scheme with function field $K$ and $X_K$ be a smooth $K$-variety with pseudo-N\'eron model $X$ over $S$.  Suppose that $S'$ is another Dedekind scheme with $S'\rightarrow S$ \'etale and the function field of $S'$ is $K'$.  Let $X_{S'}=X\times_S S'$ and $X_{K'}=X_K\times_K K'$ be its generic fiber.  Then $X_{S'}$ is a pseudo-N\'eron model of $X_{K'}$.
\end{lemma}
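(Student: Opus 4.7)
The plan is to deduce the weak extension property for $X_{S'}$ directly from that of $X$ by combining base change with the universal property of the fiber product. First, I would record the easy structural facts. Since $S'\to S$ is \'etale and $S'$ is a Dedekind scheme, the induced extension of function fields $K\hookrightarrow K'$ is finite separable, so $X_{K'}=X_K\times_K K'$ is smooth, separated and of finite type over $K'$. The properties of being separated, flat and of finite type over the base are all preserved by the base change $X_{S'}=X\times_S S'\to S'$, and its generic fiber is $X_{K'}$. Thus $X_{S'}$ qualifies as a candidate pseudo-N\'eron model; only the weak extension property needs verification.

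Next, I would set up the extension problem. Let $Z'$ be a smooth $S'$-scheme with a $K'$-morphism $u_{K'}\colon Z'_{K'}\to X_{K'}$. Since $S'\to S$ is \'etale (hence smooth of relative dimension zero) and $Z'\to S'$ is smooth, the composed structure morphism $Z'\to S$ is smooth. Moreover, because $S'\to S$ has generic fiber $\operatorname{Spec} K'$, the generic fiber of $Z'$ over $S$ is canonically identified with $Z'_{K'}$ regarded as a $K$-scheme. Composing $u_{K'}$ with the first projection $X_{K'}=X\times_S\operatorname{Spec} K'\to X_K$ then produces a $K$-morphism $v_K\colon Z'_{K'}\to X_K$.

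Applying the weak extension property of the pseudo-N\'eron model $X$ to the smooth $S$-scheme $Z'$ and to $v_K$, I obtain an $S$-morphism $v\colon Z'\to X$ whose restriction to the generic fiber equals $v_K$. The universal property of the fiber product $X_{S'}=X\times_S S'$ then combines $v$ with the structure morphism $Z'\to S'$ into an $S'$-morphism $u\colon Z'\to X_{S'}$. To conclude, I would check that the $K'$-morphism $u|_{Z'_{K'}}\colon Z'_{K'}\to X_{K'}$ coincides with $u_{K'}$: both have the same structure morphism to $\operatorname{Spec} K'$ and, by construction, their compositions with $X_{K'}\to X_K$ both equal $v_K$, so the universal property of $X_{K'}=X\times_S\operatorname{Spec} K'$ forces them to be equal.

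I do not expect any genuine obstacle: the argument is a formal manipulation of fiber products, and the substantive content is entirely absorbed by the weak extension property of $X$ itself. The only point that requires a moment of care is the identification of the generic fiber of $Z'/S$ with $Z'_{K'}$, and the verification that the reconstructed $S'$-morphism really has $u_{K'}$ as its generic fiber; both follow from the universal property of the fiber product, exploiting the fact that $S'\to S$ is \'etale and hence that $\operatorname{Spec} K'$ is precisely the generic fiber of $S'\to S$.
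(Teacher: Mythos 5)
Your proof is correct and follows essentially the same route as the paper: compose with the projection $X_{K'}\to X_K$ to get a $K$-morphism, observe that $Z'$ is smooth over $S$ so the weak extension property of $X$ applies, and reassemble an $S'$-morphism to $X_{S'}=X\times_S S'$ via the universal property of the fiber product. You spell out the identification of $\operatorname{Spec} K'$ with the generic fiber of $S'/S$ and the final compatibility check more carefully than the paper does, but the underlying argument is identical.
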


\begin{proof}
Let $Z$ be smooth of finite type over $S'$.  Take a $K'$-morphism $Z_{K'}\rightarrow X_{K'}$.  Then, $Z$ is smooth over $S$ and $Z_{K'}$, as the $K$-generic fiber, is smooth over $K$.  By the weak extension property of $X$, there exists an $S$-morphism from $Z\rightarrow X$ extending $Z_{K'}\rightarrow X_K$.  Hence the universal property of fiber products gives $Z\rightarrow X'$ as an extension of $Z_{K'}\rightarrow X_{K'}$.  
\end{proof}

The following lemma is an analogue of \cite{BLR} Prop.1.2/4.  However, since a pseudo-N\'eron model is not unique, we can not have the converse direction as in \cite{BLR} Prop.1.2/4.

\begin{lemma}\label{localbasechange}
Let $S$ be a Dedekind scheme with function field $K$, $X$ finite type over $S$ and it is a pseudo-N\'eron model of its generic fiber.  Then, for each closed point $s\in S$, the $\mathcal{O}_{S,s}$-scheme $X_s=X\times_S\mathcal{O}_{S,s}$ is a pseudo-N\'eron model of its generic fiber.
\end{lemma}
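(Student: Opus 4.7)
The plan is to reduce the weak extension property of $X_s$ over $\spec \mathcal{O}_{S,s}$ to the already-assumed weak extension property of $X$ over $S$ via a spreading-out argument. So let $Z$ be a smooth $\mathcal{O}_{S,s}$-scheme of finite type, and let $u_K : Z_K \to X_K$ be a $K$-morphism on generic fibers; we need to produce an $\mathcal{O}_{S,s}$-morphism $Z \to X_s$ extending $u_K$.

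Since $\spec \mathcal{O}_{S,s}$ is the projective limit of the open neighborhoods $U$ of $s$ in $S$, Grothendieck's limit formalism (EGA IV$_3$, \S 8) provides an open neighborhood $U \subset S$ of $s$ and a finite-type $U$-scheme $\widetilde{Z}$ with $\widetilde{Z} \times_U \spec \mathcal{O}_{S,s} \cong Z$. Smoothness, being local of finite presentation, descends through such limits (EGA IV$_4$, 17.7.8), so after shrinking $U$ we may assume $\widetilde{Z}$ is smooth over $U$. Since $K$ is also the function field of $U$, we have $\widetilde{Z}_K = Z_K$, and $u_K$ is naturally viewed as a $K$-morphism $\widetilde{Z}_K \to X_K$.

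Because $U$ is open in $S$, the composition $\widetilde{Z} \to U \hookrightarrow S$ is smooth, making $\widetilde{Z}$ a valid test object for the weak extension property of $X$ over $S$. This yields an $S$-morphism $\tilde{u} : \widetilde{Z} \to X$ extending $u_K$. Since $\tilde{u}$ is an $S$-morphism, the composite $\widetilde{Z} \to X \to S$ equals the structural morphism $\widetilde{Z} \to U \hookrightarrow S$, and hence factors through the open subscheme $U$. Therefore $\tilde{u}$ takes values in $X \times_S U$. Base-changing along $\spec \mathcal{O}_{S,s} \to U$ now produces an $\mathcal{O}_{S,s}$-morphism $Z \cong \widetilde{Z} \times_U \spec \mathcal{O}_{S,s} \to X_s$ extending $u_K$, as required.

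The main obstacle is the spreading-out step itself: one must be confident that a smooth, finite-type $\mathcal{O}_{S,s}$-scheme $Z$ indeed descends to a smooth, finite-type scheme over some open neighborhood of $s$. This is standard, but it does rely on Grothendieck's limit formalism together with the preservation of smoothness under descent to a sufficiently small open neighborhood. Once this reduction is in place, the remainder of the argument is a routine application of the weak extension property of $X$ plus the elementary observation that any $S$-morphism out of a $U$-scheme factors through $X \times_S U$.
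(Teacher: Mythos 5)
Your proof is correct and follows essentially the same strategy as the paper: spread out the smooth $\mathcal{O}_{S,s}$-scheme to a smooth scheme over an open neighborhood $U$ of $s$ (the paper cites \cite{BLR} Lemma 1.2/5 for this limit step), apply the weak extension property, then base-change back. The one minor difference is that the paper first invokes Lemma~\ref{etalebasechange} (applied to the open immersion $U\hookrightarrow S$, which is \'etale) to conclude that $X\times_S U$ is itself a pseudo-N\'eron model over $U$, and then applies the weak extension property over $U$; you instead apply the weak extension property directly for $X$ over $S$ (using that $\widetilde{Z}\to U\hookrightarrow S$ is smooth) and observe post hoc that the resulting $S$-morphism automatically factors through $X\times_S U$. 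Both routes are valid and amount to the same argument.
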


\begin{proof}
Let $Y_s$ be an smooth $\mathcal{O}_{S,s}$-scheme with a $K$-morphism $u_K: Y_{s,K}\rightarrow X_{s,K}$.  By limit arguments (\cite{BLR}, Lemma 1.2/5), there exists a connected open neighborhood $S'$ of $s$, and a smooth $S'$-scheme $Y'$ such that $Y'\times_{S'}\spec\mathcal{O}_{S,s}=Y_s$.  Lemma~\ref{etalebasechange} gives that $X_{S'}=X\times_S S'$ is a pseudo-N\'eron model of $X_K$ over $S'$.  Then, by the weak extension property of $X_{S'}$, $u_K$ extends to an $S'$-morphism $u': Y'\rightarrow X_{S'}$.  Therefore, the base change $u=u'\times_{S'}\spec\mathcal{O}_{S,s}$ is a required extension of $u_K$.
\end{proof}

\begin{definition}\label{universalmodel}
Let $S$ be a Dedekind scheme and let $X$ be an $S$-scheme satisfying the weak extension property.  We say that \emph{$X$ satisfies the weak extension property universally} if for any $S'$ a Dedekind scheme and for any $S'\rightarrow S$ of finite type, the base change $X\times_SS'$ also satisfies the weak extension property.  
\end{definition}

\begin{definition}
Let $S$ be a Dedekind scheme with fraction field $K$.  Let $X_K$ be a smooth, separated $K$-scheme of finite type, and let $X$ be a pseudo-N\'eron model of $X_K$.  We say that $X$ is a \emph{universal pseudo-N\'eron model} of $X_K$ if $X$ satisfies the weak extension property universally.
\end{definition}

\begin{lemma}
Keep the notations and hypothesis as in Corollary~\ref{pseudoliu}.  Then, $X$ is a universal pseudo-N\'eron model of $X_K$.
\end{lemma}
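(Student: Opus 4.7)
The plan is to verify that the hypotheses of Corollary~\ref{pseudoliu} are preserved by base change and then re-apply the corollary. Fix a Dedekind scheme $S'$ together with a finite-type morphism $S' \to S$, and set $X' := X \times_S S'$. Properness and flatness of $X' \to S'$ are immediate from base change, so the essential point is to show that no geometric fiber of $X' \to S'$ contains a rational curve.

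For a geometric point $\overline{s'}$ of $S'$ lying over $s \in S$, the fiber $X'_{\overline{s'}}$ is isomorphic to $X_{\overline{s}} \otimes_{\overline{\kappa(s)}} \overline{\kappa(s')}$ after fixing a compatible geometric point $\overline{s}$ over $s$. So the problem reduces to the following descent statement: if $Y$ is a proper variety over an algebraically closed field $k$ containing no rational curve, and $L/k$ is an extension of algebraically closed fields, then $Y_L$ contains no rational curve either. I would prove this using representability of the Hom functor. The scheme $\operatorname{Hom}_k(\mathbb{P}^1_k, Y)$ exists and decomposes as a disjoint union of open-closed subschemes of finite type over $k$, indexed by the Hilbert polynomial of the graph. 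The constant morphisms form a closed subscheme (identified with $Y$ itself), so the non-constant locus is open in each finite-type component. A non-constant morphism $\mathbb{P}^1_L \to Y_L$ gives an $L$-point of this non-constant locus inside some finite-type component; that locus is then a nonempty scheme of finite type over the algebraically closed field $k$, so by the Nullstellensatz it has a $k$-point, giving a non-constant morphism $\mathbb{P}^1_k \to Y$ and hence a rational curve on $Y$---a contradiction.

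Having verified that $X' \to S'$ satisfies the hypotheses of Corollary~\ref{pseudoliu} (properness, flatness, absence of rational curves on geometric fibers), I would apply the corollary to deduce the weak extension property of $X'$ over $S'$. Since this holds for every Dedekind $S'$ equipped with a finite-type map to $S$, the scheme $X$ is a universal pseudo-N\'eron model of $X_K$. One subtlety to address: when $S' \to S$ is not dominant the generic fiber of $X' \to S'$ need not be smooth, but the proof behind Corollary~\ref{pseudoliu} uses only properness, flatness, and absence of rational curves on geometric fibers to produce the required extensions, so that smoothness of the total space's generic fiber is not needed for the extension argument itself.

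The main obstacle is the descent step for rational curves across an extension of algebraically closed fields. It requires careful use of the decomposition of the Hom scheme into finite-type components, the openness of the non-constant locus, and Hilbert's Nullstellensatz to produce a $k$-rational non-constant morphism from an $L$-rational one.
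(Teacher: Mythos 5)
Your proof is correct and takes a genuinely different route from the paper's. The paper checks the hypotheses of Corollary~\ref{pseudoliu} for $X' = X \times_S S'$ by splitting into two cases: at a closed point $t' \in S'$ lying over $s \in S$, the residue extension $\kappa(t')/\kappa(s)$ is finite, so $X'_{\overline{t'}}$ is literally the same scheme as $X_{\overline{s}}$ and hence has no rational curve; for the geometric generic fiber of $X' \to S'$ it invokes Lemma~\ref{curvelemma}, the degeneration result (proved via Abhyankar's lemma on ruled exceptional divisors) asserting that absence of rational curves on the special geometric fiber of a proper scheme over a Nagata DVR propagates to the generic geometric fiber. You instead treat every geometric fiber $X'_{\overline{s'}}$ uniformly as $X_{\overline{s}} \otimes_{\overline{\kappa(s)}} \overline{\kappa(s')}$ and reduce to the statement that ``no rational curve'' is preserved under extension of algebraically closed fields. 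This is cleaner: it avoids Lemma~\ref{curvelemma} entirely, and in particular avoids the Nagata/Abhyankar machinery (and the mild imprecision in the paper's proof, which implicitly assumes that closed points of $S'$ map to closed points of $S$, which need not hold).

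One technical point deserves care in your argument. You assert that $\operatorname{Hom}_k(\mathbb{P}^1_k, Y)$ exists as a scheme decomposing into finite-type open-closed pieces indexed by Hilbert polynomial. This is clean when $Y$ is (quasi-)projective, but the fibers $X_{\overline{s}}$ in Corollary~\ref{pseudoliu} are merely proper over the residue field, and a proper non-projective variety need not have a representable Hilbert functor in the category of schemes (only as an algebraic space, by Artin), nor a canonical Hilbert-polynomial decomposition without an ample line bundle. The argument is easily repaired by a direct spreading-out: a nonconstant $\mathbb{P}^1_L \to Y_L$ is defined over a finitely generated $k$-subalgebra $A \subseteq L$, giving an $A$-morphism $\mathbb{P}^1_A \to Y_A$ that is nonconstant on the geometric generic fiber; by constructibility of the locus where the fiber morphism is nonconstant (upper semicontinuity of fiber dimension for the graph over $\operatorname{Spec} A$), there is a nonempty open of $\operatorname{Spec} A$ over which the fiber morphism is nonconstant, and the Nullstellensatz supplies a $k$-point there, yielding a nonconstant $\mathbb{P}^1_k \to Y$. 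This is morally identical to your Hom-scheme argument but works for proper targets without representability concerns. Your remark that $X'_{K'}$ may fail to be smooth when $S' \to S$ is not dominant, and that the weak extension property is all that is needed, is also well taken; the paper's own proof glosses over the same point.
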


\begin{proof}
Let $g: S'\rightarrow S$ be a morphism of Dedekind schemes of finite type and $X'=X\times_S S'$.  Take $s\in S$ and $t\in S'$ closed points such that $s=g(t)$.  Then the residue field $\kappa(t)$ is finite over $\kappa(s)$, thus $X'_{\overline{t}}=X_{\overline{s}}$, and hence $X'_{\overline{t}}$ does not contain rational curves.  And by Lemma~\ref{curvelemma}, there is no rational curve on the geometric generic fiber of $X'$.  Then $X'$ satisfies the weak extension property by Corollary~\ref{pseudoliu}.
\end{proof}

\begin{lemma}\label{nocurveongeneric}
Let $S$ be a Dedekind scheme with function field $K$.  Let $X_K$ be a smooth and separated variety of finite type over $K$.  If $X_K$ has a proper $S$-model satisfying the weak extension property universally, then $X_K$ contains no rational curve.
\end{lemma}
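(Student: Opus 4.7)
The approach is by contradiction, combining the universal weak extension property with properness of the model to extend a hypothetical rational curve on the generic fiber into a family over a finite cover of $S$, and then to extract a contradiction.

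Suppose for contradiction that $X_K$ contains a rational curve. After a finite extension $L/K$ over which this curve is defined, one obtains a nonconstant $L$-morphism $f_L\colon \mathbb{P}^1_L\to X_L$. Let $S'\to S$ be the normalization of $S$ in $L$; this is a finite morphism of Dedekind schemes (hence of finite type) and $S'$ has function field $L$. By the universal weak extension property, $X_{S'}=X\times_SS'$ satisfies the weak extension property over $S'$. Applying this to the smooth $S'$-scheme $\mathbb{P}^1_{S'}$ and the generic fiber morphism $f_L$ produces an $S'$-morphism $F\colon \mathbb{P}^1_{S'}\to X_{S'}$ extending $f_L$.

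Next, I would analyze $F$ on closed fibers. Since $X_{S'}\to S'$ is proper and $f_L$ is nonconstant, the scheme-theoretic image of $F$ is a two-dimensional closed subscheme of $X_{S'}$ whose generic fiber over $S'$ has dimension one; upper semicontinuity of fiber dimension then forces the image-fiber over every closed point $s'\in S'$ to have dimension at least one. Consequently, $F$ restricts to a nonconstant morphism $\mathbb{P}^1_{\kappa(s')}\to X_{\kappa(s')}$ on every closed fiber, so every geometric closed fiber of $X$ carries a rational curve.

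The principal obstacle is to convert this geometric conclusion into a genuine contradiction. The intended route is to apply universal weak extension a second time, to a carefully chosen smooth test scheme constructed from the family of rational curves above --- for instance a smooth open locus of a relative $\homo$-scheme parametrizing morphisms $\mathbb{P}^1\to X_{S'}$, or a smooth parameter space of first-order deformations of $f_L$ --- and show that the required extension of a generic-fiber morphism produces a configuration incompatible with properness of $X\to S$. A complementary route, in the spirit of the proof of Lemma~\ref{curvelemma}, is to perform an Abhyankar-style analysis of the graph of $F$ in $\mathbb{P}^1_{S'}\times_{S'}X_{S'}$ and bootstrap on the very conclusion we are aiming for. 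The descent and the construction of $F$ in the first two steps are essentially routine; the delicate step is this final contradiction, which I expect to rely crucially on both properness of $X$ and the universality of the weak extension property.
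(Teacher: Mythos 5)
You have correctly carried out the reduction steps: assuming a rational curve on $X_K$, descending to a nonconstant $f_L\colon\mathbb{P}^1_L\to X_L$ over a finite extension, applying universal weak extension over a suitable Dedekind (in the paper, local) base to obtain an $S'$- (resp.\ $T$-) morphism $F$ extending $f_L$, and then using properness and semicontinuity to conclude that $F$ is nonconstant on closed fibers. This matches the paper's first half. But at the point where you say ``the principal obstacle is to convert this geometric conclusion into a genuine contradiction,'' you have not found the argument, and the two routes you sketch are not the right ones. Knowing that every closed fiber of $X$ carries a rational curve does not by itself contradict anything in the hypotheses; hypersurfaces in the statement of Corollary~\ref{pseudoliu} were chosen to have no rational curves in fibers, but that is a sufficient condition for the weak extension property, not a necessary one, so there is no a priori contradiction to cash in. Likewise, a ``second application'' of universal weak extension to a Hom-scheme is not used, and the Abhyankar analysis in Lemma~\ref{curvelemma} runs the specialization in the opposite direction and is not what is needed here.

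The key idea you are missing is the upgrade to finiteness plus the transfer of the weak extension property to $\mathbb{P}^1$ itself. After Lemma~\ref{localbasechange} one may assume $S=\spec R$ is local; take $T$ dominating $R$ with fraction field $L$ as you did. One first shows $f_L\colon\mathbb{P}^1_L\to X_L$ is \emph{finite}: its image is a curve, its normalization $C$ is a proper normal curve mapping finitely to $X_L$ (Nagata), and $\mathbb{P}^1_L\to C$ is a nonconstant morphism of nonsingular proper curves, hence finite. By the weak extension property of $X_T$, extend to $f_T\colon\mathbb{P}^1_T\to X_T$. Your nonconstant-on-the-closed-fiber observation then shows, by the same argument, that the closed fiber $f_l$ is also finite; since a proper morphism is finite iff it has finite fibers, $f_T$ is finite. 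Now Lemma~\ref{finitemap} applies: $\mathbb{P}^1_T$, admitting a finite morphism to the pseudo-N\'eron model $X_T$, itself satisfies the weak extension property. But this is absurd: it is classical that not every $L$-automorphism of $\mathbb{P}^1_L$ extends to a $T$-morphism $\mathbb{P}^1_T\to\mathbb{P}^1_T$ (\cite{BLR}, Example 5, p.75). This is the contradiction the paper uses, and without the finiteness upgrade and the invocation of Lemma~\ref{finitemap} your proposal does not close.
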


\begin{proof}
Suppose that $X$ is a proper $S$-model of $X_K$, i.e., $X$ is flat, separated and finite type over $S$ satisfying the weak extension property and has generic fiber $X_K$.  By Lemma~\ref{localbasechange}, we can replace $S$ by $\mathcal{O}_{S,s}$ for any closed point of $S$, and assume that $S=\spec R$ for some discrete valuation ring R.  If $X_K$ contains a rational curve, then there exists a nonconstant $\overline{K}$-morphism $f_{\overline{K}}: \mathbb{P}^1_{\overline{K}}\rightarrow X_{\overline{K}}$.  By a limit argument as in Lemma~\ref{curvelemma}, there exists a DVR in $\overline{K}$ dominating $R$ with fraction field $L$ and residue field $l$ such that $f_{\overline{K}}$ is a base change of a nonconstant morphism $f_L: \mathbb{P}^1_L\rightarrow X_L$.  Since $X$ is a universal pseudo-N\'eron model, $X_T$ also satisfies the weak extension property.

Let $C$ be the normalization of the schematic closure of $f_L$.  Then, $C$ is a proper normal curve over the field $L$ since $X_L$ is Nagata, and hence $i: C\rightarrow X_L$ is a finite morphism.  Moreover, $f_L$ has a unique factorization via $C$, i.e., $f_L=i\circ g_L$ where $g_L: \mathbb{P}^1_L\rightarrow C$ is a morphism of nonsingular proper curves.  Then $g_L$ is finite (\cite{Liuqing}, Lemma 7.3.10 and Cor.4.4.7).  Therefore, $f_L$ is a finite morphism.  

Since $X_T$ satisfies the weak extension property, $f_L$ extends to a $T$-morphism $f_T: \mathbb{P}^1_T\rightarrow X_T$.  Let $f_l$ be the closed fiber of $f_T$.  Then, the morphism $f_l$ is nonconstant.  The same argument as for $f_L$ shows that $f_l$ is finite.  Therefore, $f_T$ is finite (\cite{Liuqing}, Cor.4.4.7).  Then, from Lemma~\ref{finitemap}, $\mathbb{P}^1_T$ satisfies the weak extension property.  Now, consider an $L$-isomorphism $\sigma_L: \mathbb{P}^1_L\rightarrow \mathbb{P}^1_L$.  However, not all these isomorphisms can be extended to be a $T$-morphism $\mathbb{P}^1_T\rightarrow \mathbb{P}^1_T$ (\cite{BLR}, Example 5, p.75), contradicting that $\mathbb{P}^1_T$ satisfies the weak extension property.  Thus, $X_K$ contains no rational curve.
\end{proof}

\begin{remark}
Let $X_K$ be a smooth, separated $K$-scheme of finite type, and let $X$ be a proper $S$-model of $X_K$.  The above lemma and theorem give us the following picture: 
\begin{enumerate}[label=(\roman*)]
\item no rational curve in any geometric fiber,
\item universal pseudo-N\'eron model,
\item no rational curve in the generic geometric fiber.
\end{enumerate}
Then, (i)$\Rightarrow$(ii)$\Rightarrow$(iii).
\end{remark}

\section{Theorem of Restriction of Sections}

\subsection{Higher dimensional pseudo-N\'eron model}

We will need the notion of higher dimensional pseudo-N\'eron model which generalizes definition 4.10 in \cite{GJ}.  

\begin{definition}\label{defweak}(\cite{GJ} Definition 4.10)
Let S be an integral, regular, separated, Noetherian scheme of dimension $b\ge 1$.  A \emph{flat}, finite type, separated morphism $X\rightarrow S$ has the \emph{weak extension property} if for every triple ($Z\rightarrow S, U, s_U$) of
\begin{enumerate}[label=(\roman*)]
\item a smooth morphism $Z\rightarrow S$,
\item a dense, open subset $U\subset S$,
\item and an $S$-morphism $s_U:Z\times_SU\rightarrow X_U$,
\end{enumerate}
there exists a pair $(V,s_V)$ of 
\begin{enumerate}[label=(\roman*)]
\item an open subset $V\subset S$ containing U and all codimension 1 points of S,
\item and an S-morphism $s_V: Z\times_SV\rightarrow X$ whose restriction to $Z\times_SU$ is equal to $s_V$.
\end{enumerate}
\end{definition}

\begin{definition}\label{defmodel}
Let $S$ be an integral, regular, separated, Noetherian scheme of dimension $b\ge 1$.  Let $K$ be the fraction field of $S$, and $X_K$ be a smooth, separated $K$-scheme of finite type.  A flat, finite type, separated $S$-scheme $X$ is called a \emph{pseudo-N\'eron model} of $X_K$ if $X_K$ is isomorphic to its generic fiber and $X$ satisfies the weak extension property as in Definition~\ref{defweak}.
\end{definition}

By a limit argument, it is easy to see that Definition~\ref{defweak} (resp. Definition~\ref{defmodel}) implies Definition~\ref{weakextensionproperty} (resp. Definition~\ref{weakextensiondefn}) when $S$ is a Dedekind scheme, and they agree when $S=\spec R$, where $R$ is a DVR.  Now, we prove the corresponding results for Lemma~\ref{affineneron}, Lemma~\ref{finitemap} and Theorem~\ref{neron}.

\begin{lemma}\label{affineneronhigh}
Suppose that $S$ is an integral, regular, separated, Noetherian Nagata scheme of dimension $b\ge 1$ with fraction field $K$.  Let $Y_K$ be a smooth $K$-variety and $Y$ be its normal pseudo-N\'eron model over $S$.  Let $X_K$ be a smooth $K$-variety with a finite $K$-morphism $f: X_K \rightarrow Y_K$.  Then there exists a flat normal $S$-scheme $X$ admiting a finite morphism $g: X\rightarrow Y$ which extends $f$.
\end{lemma}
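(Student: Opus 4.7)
The plan is to adapt the proof of Lemma~\ref{affineneron} almost verbatim; the only new input is to check that the two ring-theoretic properties used locally -- Noetherianity and Nagata-ness of an affine open of $Y$ -- survive passage to a higher-dimensional Noetherian Nagata base. Concretely, any affine open $\spec A \subset Y$ lies over some affine open $\spec R \subset S$, and because $Y \to S$ is of finite type while $R$ is Noetherian Nagata (\cite{Liuqing}, Prop.8.2.29(b)), so is $A$; therefore all the finiteness results for integral closures used in the Dedekind proof remain available.

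The local construction proceeds unchanged. Fix an affine open $\spec A \subset Y$ with generic fiber $\spec A_K = \spec(A \otimes_R K)$, where $K = \Fr(R) = \Fr(S)$. Because $f$ is finite and $X_K$ is integral, the preimage $\spec B_K := f^{-1}(\spec A_K)$ is an affine open of $X_K$ with $B_K$ a finite integral $A_K$-algebra. Writing $L = \Fr(B_K)$, let $B'$ be the integral closure of $A$ in $L$; Nagata-ness forces $B'$ finite over $A$. Then $C := B_K \cap B' \subset L$ is a finite $A$-submodule (as a submodule of a finite module over a Noetherian ring), and by construction it is the integral closure of $A$ in $B_K$. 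A clearing-of-denominators argument identical to the Dedekind case yields $C \otimes_R K = B_K$: given $b \in B_K$, a common $r \in R \setminus \{0\}$ for the $A_K$-denominators of the coefficients of an integral equation for $b$ over $A_K$ makes $rb$ land in $C$. Gluing across $Y$ is done exactly as in Theorem~\ref{neron}: the intrinsic characterization of $C$ as an integral closure makes the construction compatible with principal localizations, and Nike's trick supplies simultaneous principal refinements on overlaps, yielding a normal scheme $X$ with a canonical finite $S$-morphism $g: X \to Y$ extending $f$.

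I do not expect any serious obstacle; the remaining technical points are bookkeeping. The only places where $\dim S > 1$ enters nontrivially are (i) the clearing-of-denominators step, which still goes through because any finite collection of elements of $A_K = (R \setminus \{0\})^{-1} A$ admits a common denominator in $R \setminus \{0\}$; and (ii) verifying flatness of $X \to S$, which reduces to flatness of the finite map $g: X \to Y$ (since $Y \to S$ is already flat) and can be checked via the local criterion for flatness, using that $C$ is $A$-torsion-free and that $C \otimes_R K = B_K$ is $A_K$-flat by miracle flatness for a finite morphism between smooth $K$-varieties.
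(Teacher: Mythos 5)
Your proposal reproduces the paper's construction faithfully for the main part: an affine open $\spec A\subset Y$ inherits Noetherianity and Nagata-ness from $R$ since $Y\to S$ is of finite type, the integral closure $C$ of $A$ in the function field of $X_K$ is therefore finite over $A$, your clearing-of-denominators verification of $C\otimes_R K=B_K$ is correct, and the gluing via Nike's trick is exactly as in Theorem~\ref{neron}. This is what the paper intends by ``the same proof of Lemma~\ref{affineneron} and Theorem~\ref{neron}.''

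The flatness step (ii), however, is a genuine gap. You propose to deduce flatness of $X\to S$ by first ``reducing'' to flatness of the finite morphism $g\colon X\to Y$, and then establishing the latter via a local criterion from torsion-freeness of $C$ over $A$ together with flatness of $C\otimes_R K=B_K$ over $A_K$. Neither implication works. The reduction goes the wrong way: flatness of $g$ together with flatness of $Y\to S$ would give flatness of $X\to S$, but $g$ has no reason to be flat and $X\to S$ could be flat without $g$ being so. More importantly, torsion-freeness plus flatness over the dense open $\spec K\subset S$ does not imply flatness once $\dim S\ge 2$; the implicit mechanism that makes the Dedekind-base version painless --- torsion-free over a DVR is flat --- has no higher-dimensional analogue. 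Concretely, $C$ is a normal domain finite over $A$, but a normal domain of dimension $\ge 3$ need not be Cohen--Macaulay, and a finite module over a regular local ring that is not Cohen--Macaulay is never flat (finite flat implies free, and free over a Cohen--Macaulay ring is Cohen--Macaulay); so $g$ can genuinely fail to be flat, and nothing in your argument establishes flatness of $X\to S$ directly either. To be fair, the paper's one-sentence proof is equally silent on this point, but your explicit patch does not close the gap --- the flatness assertion in the higher-dimensional lemma requires an argument that is genuinely different from the Dedekind case, not merely bookkeeping.
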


\begin{proof}
Since $S$ is Noetherian, we can cover $S$ by finitely many affine opens.  As we assume that $S$ is Nagata, the same proof of Lemma~\ref{affineneron} and Theorem~\ref{neron} gives the extension as claimed.
\end{proof}

\begin{lemma}\label{finitemaphigh}
Keep the same hypothesis of $S$ as in Lemma~\ref{affineneronhigh}.  Let $Y$ be a separated, flat $S$-scheme of finite type satisfying the weak extension property.  Suppose that $X$ is an integral $S$-scheme with a finite $S$-morphism $f:X\rightarrow Y$.  Then $X$ satisfies the weak extension property.
\end{lemma}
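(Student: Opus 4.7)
The plan is to mimic the proof of Lemma~\ref{finitemap}, replacing the one-parameter base with the higher-dimensional $S$ and using the weak extension property in the sense of Definition~\ref{defweak}.

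Given an input triple $(Z\to S, U, s_U)$ with $s_U:Z\times_SU\to X_U$, I would first compose with $f$ to obtain $f_U\circ s_U:Z\times_SU\to Y_U$. Since $Y$ satisfies the weak extension property, this morphism extends to some $t_{V_0}:Z\times_SV_0\to Y$ for an open $V_0\subset S$ containing $U$ and all codimension one points of $S$. The goal is then to lift $t_{V_0}$ along $f$ to a morphism $\tilde{s}_{V_0}:Z\times_SV_0\to X$ whose restriction to $Z\times_SU$ recovers $s_U$; since $V_0$ already meets the required containment conditions of Definition~\ref{defweak}, this $\tilde{s}_{V_0}$ will witness the weak extension property for $X$.

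To build $\tilde{s}_{V_0}$, cover $Y$ by affine opens $\{\spec A_i\}$. Because $f$ is finite, each preimage $X_i:=f^{-1}(\spec A_i)$ is affine. Setting $W_i:=t_{V_0}^{-1}(\spec A_i)$ yields an open cover of $Z\times_SV_0$. Since $Z$ is smooth over the regular scheme $S$, $Z\times_SV_0$ and hence each $W_i$ is regular, in particular normal. At each codimension one point $\zeta\in W_i$, the local ring $\mathcal{O}_{W_i,\zeta}$ is a DVR; its fraction field is the function field of the irreducible component of $Z\times_SV_0$ containing $\zeta$, and that generic point already maps to $X$ via $s_U$ because the generic points of irreducible components lie in the dense open $Z\times_SU$. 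The valuative criterion of properness, applied to the finite (hence proper) morphism $f$, then yields a unique lift $\spec\mathcal{O}_{W_i,\zeta}\to X_i$ compatible with $t_{V_0}$. Thus the rational map $W_i\dashrightarrow X_i$ determined by this data is defined at every codimension one point of $W_i$, and by \cite{BLR}, Lemma 4.4/2, since $X_i$ is affine and $W_i$ is normal, it extends uniquely to a morphism $\tilde{s}_i:W_i\to X_i$.

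Finally I would glue the $\tilde{s}_i$ into $\tilde{s}_{V_0}:Z\times_SV_0\to X$ and verify $\tilde{s}_{V_0}|_{Z\times_SU}=s_U$. On an overlap $W_i\cap W_j$, both $\tilde{s}_i$ and $\tilde{s}_j$ lift $t_{V_0}|_{W_i\cap W_j}$ through the separated morphism $f$, and by uniqueness in the valuative criterion they coincide at every codimension one point; the uniqueness clause of \cite{BLR}, Lemma 4.4/2, then forces them to coincide on all of $W_i\cap W_j$. The equality with $s_U$ on $Z\times_SU$ follows by the same argument. The main technical obstacle I anticipate is careful bookkeeping when $Z$ has several irreducible components, so that the function field on each component is correctly identified in order to extract the generic-point map into $X$ used in the valuative criterion, and so that the local lifts at codimension one points are seen to be independent of the chosen affine chart on $Y$. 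Once this is in hand the argument is a direct higher-dimensional transcription of Lemma~\ref{finitemap}, with Lemma~\ref{affineneronhigh} and Definition~\ref{defweak} replacing their one-dimensional counterparts.
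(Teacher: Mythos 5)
Your proposal is correct and follows essentially the same route as the paper: compose with $f$, invoke the weak extension property of $Y$ to extend over some $V_0\supseteq U$ containing all codimension-one points, then lift along the finite morphism $f$ affine-locally via the valuative criterion at codimension-one points of the regular scheme $Z\times_S V_0$ and BLR Lemma 4.4/2. The paper's proof states this reduction more tersely, reducing to the argument of Lemma~\ref{finitemap}, but the content is the same.
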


\begin{proof}
Let $U$ be a dense open of $S$, and let $Z$ be a smooth $S$-scheme with a $U$-morphism $t_U: Z_U \rightarrow X_U$.  Composing this morphism with $f_U$ gives a $U$-morphism $s_U: Z_U\rightarrow Y_U$.  Since $Y$ satisfies the weak extension property, there exists an open dense $V$ in $S$ containing all the codimension one points, and an extension $s: Z_V\rightarrow Y_V$ of $s_U$.  Up to replacing $S$ by $V$, we can assume that $V$ is the whole $S$.  Cover $S$ and $Y$ by open affines as in Lemma~\ref{finitemap}, then the same proof as in Lemma~\ref{finitemap} completes the proof.
\end{proof}

Therefore, combining the above two lemmata and the proof of Theorem~\ref{neron}, we get the following theorem.

\begin{theorem}\label{neronhigh}
Keep the same hypothesis of $S$ as in Lemma~\ref{affineneronhigh}.  Let $X_K$ be a smooth scheme admitting finite $K$-morphism to a smooth, separated variety $Y_K$ of finite type which has a normal pseudo-N\'eron model $Y$ over $S$.  Then $X_K$ has a normal pseudo-N\'eron model $X$ over $S$.  

The theorem also holds if $X_K$ and $Y_K$ are replaced by some $X_U$ and $Y_U$ defined over a dense open $U$ of $S$.
\end{theorem}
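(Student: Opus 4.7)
The plan is to recycle the Dedekind-base proof of Theorem~\ref{neron} essentially verbatim, with Lemma~\ref{affineneronhigh} and Lemma~\ref{finitemaphigh} taking over the roles played there by Lemma~\ref{affineneron} and Lemma~\ref{finitemap}. The argument then splits cleanly into a construction step and a property-verification step, with the geometric hypothesis on $S$ entering only through the finiteness of integral closures.

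In the construction step I apply Lemma~\ref{affineneronhigh} to the finite morphism $f: X_K \to Y_K$ and to the given normal pseudo-N\'eron model $Y$ over $S$. This directly produces a flat normal $S$-scheme $X$ together with a finite $S$-morphism $g: X \to Y$ whose generic fibre recovers $f$. Because $g$ is finite and $Y/S$ is separated and of finite type, so is $X/S$; and because $X$ is constructed locally as $\spec C$, where $C$ is the integral closure of an appropriate affine $\spec A \subset Y$ inside the function field of the corresponding affine open of $X_K$, the scheme $X$ is integral whenever $X_K$ is. In the verification step I apply Lemma~\ref{finitemaphigh} to the finite $S$-morphism $g: X \to Y$. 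The hypothesis of that lemma, namely that $Y$ satisfies the weak extension property of Definition~\ref{defweak}, is built into the assumption that $Y$ is a normal pseudo-N\'eron model in the sense of Definition~\ref{defmodel}, so its conclusion immediately gives the same property for $X$. Together with the geometric properties listed above and the identification of the generic fibre with $X_K$, this exhibits $X$ as a normal pseudo-N\'eron model of $X_K$ over $S$. The final sentence of the theorem is handled by running exactly the same two steps after passing to generic fibres: if $X_U \to Y_U$ is the given finite morphism over a dense open $U \subset S$, then its restriction to the generic point is a finite $K$-morphism of smooth separated $K$-varieties of finite type, and the argument above applies unchanged.

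The main obstacle is entirely absorbed into Lemma~\ref{affineneronhigh}: one must check that the locally defined integral closures glue coherently across a finite affine cover of $S$ and across affines of $Y$. The Nagata and Noetherian hypotheses on $S$ are precisely what make these integral closures module-finite, and the uniqueness of integral closure together with its compatibility with localization is what makes the gluing unambiguous, exactly as in the Dedekind case. Once the finite morphism $g: X \to Y$ is in hand, the transfer of the weak extension property via Lemma~\ref{finitemaphigh} presents no further difficulty, since that property is formulated locally on $S$ via a dense open $U$ and its enlargement $V$, and is stable under pullback along a finite morphism.
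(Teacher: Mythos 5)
Your proposal is correct and follows essentially the same route as the paper: the paper's proof of Theorem~\ref{neronhigh} is literally the one-line remark that it follows by combining Lemma~\ref{affineneronhigh} and Lemma~\ref{finitemaphigh} with the gluing argument from the proof of Theorem~\ref{neron}, which is exactly the construction step plus verification step you describe. The only point worth flagging is that for the final ``$X_U$, $Y_U$'' variant, ``passing to generic fibres'' is a little terse: what actually guarantees that the model $X$ built from the integral closure of $Y$ restricts to $X_U$ over $U$ is that $X_U$ is smooth (hence normal) and finite over the normal $Y_U$ with the correct function field, so $X_U$ is itself the integral closure of $Y_U$ there; but this is the same compatibility-with-restriction fact already used inside Lemma~\ref{affineneron}, so your proof is complete once that is spelled out.
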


Moreover, Corollary 4.12 in \cite{GJ} and Theorem~\ref{neronhigh} give us the following corollary.

\begin{corollary}\label{higherdimensionneron}
Let $W$ be an integral, regular, separated, Nagata Noetherian scheme of dimension $b\ge 1$.  Let $S$ be a dense open subset of $W$, and let $X$ be a scheme admitting a finite morphism to an Abelian scheme over $S$.  There exists an open subset $\widetilde{S}$ of $W$ containing $S$ and all codimension one points, and there exists a normal pseudo-N\'eron model $\widetilde{X}$ over $\widetilde{S}$ whose restriction over $S$ equals $X$.
\end{corollary}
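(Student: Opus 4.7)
The plan is to combine Corollary~4.12 of \cite{GJ} with Theorem~\ref{neronhigh} above. First, I apply Corollary~4.12 of \cite{GJ} to the Abelian scheme $A\to S$: this yields an open $\widetilde{S}\subset W$ containing $S$ and every codimension-one point of $W$, together with a normal pseudo-N\'eron model $\widetilde{A}$ of the generic fiber $A_K$ over $\widetilde{S}$ whose restriction to $S$ recovers $A$.

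Next, I observe that since $W$ is integral, regular, separated, Nagata and Noetherian, the open subscheme $\widetilde{S}$ inherits all these properties. The finite $K$-morphism $X_K\to A_K$ goes from the smooth $K$-variety $X_K$ to the smooth separated variety $A_K$, and by the previous step $A_K$ admits $\widetilde{A}$ as its normal pseudo-N\'eron model over $\widetilde{S}$. Hence the hypotheses of Theorem~\ref{neronhigh} are satisfied over the base $\widetilde{S}$ (applied to the open $S\subset\widetilde{S}$ with the data $X\to A$), and the theorem produces a normal pseudo-N\'eron model $\widetilde{X}$ of $X_K$ over $\widetilde{S}$ together with a finite morphism $\widetilde{X}\to\widetilde{A}$ extending the generic fiber $X_K\to A_K$.

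Finally, I have to identify $\widetilde{X}\times_{\widetilde{S}}S$ with the given $X$. Working affine-locally as in Lemma~\ref{affineneronhigh}, the scheme $\widetilde{X}$ is built as the integral closure of $\widetilde{A}$ in the function field $K(X_K)$; restricting to $S$ it becomes the integral closure of $A$ in $K(X)$. Since $X$ (smooth in the applications, e.g.\ Theorem~\ref{generalizedjasonmaintheorem}) is normal and finite over $A$, this integral closure is canonically $X$ itself, and uniqueness of integral closure guarantees the identifications on affine charts glue to a global $\widetilde{S}$-isomorphism.

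The main obstacle is precisely this last gluing step: one must verify that the constructions performed chart-by-chart over $\widetilde{S}$ are compatible both with one another and with the given $S$-scheme $X$ on the overlap $S\subset\widetilde{S}$. The requisite patching has however already been carried out, in the affine and Dedekind settings, in the proofs of Lemma~\ref{affineneron} and Theorem~\ref{neron} (via Nike's trick and the compatibility of integral closure with localization), and those arguments transfer verbatim to the present higher-dimensional Nagata Noetherian base, so no essentially new input is required.
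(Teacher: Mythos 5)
Your proposal matches the paper's own argument, which is stated in a single sentence: apply Corollary~4.12 of \cite{GJ} to the Abelian scheme to produce $\widetilde{S}$ and a normal pseudo-N\'eron model $\widetilde{A}$ of $A$ over it, then feed this into Theorem~\ref{neronhigh} to obtain $\widetilde{X}$ as the integral closure of $\widetilde{A}$ in the function field of $X$; you have simply unpacked the two citations. Your observation that recovering $X$ over $S$ uses the (implicit, but satisfied in every application in the paper) normality of $X$ is a fair and correct clarification of a point the paper itself elides.
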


\begin{remark}
Even though we have these parallel results as in the case when $S$ is a Nagata Dedekind scheme, Definition~\ref{defweak} and Definition~\ref{weakextensionproperty} do not agree in general when $S$ is a Dedekind scheme because we lack the uniqueness of the morphism extensions.
\end{remark}

\subsection{Bertini's theorems for higher order curves}

Recall that a scheme $X$ is called \emph{algebraically simply connected} if for every connected scheme $Y$, and every surjective finite \'etale morphism $f:Y\to X$, the morphism $f$ is an isomorphism (\cite{Debarre}, p.97).  In this section, by a scheme over a field $k$, we mean a scheme that is of finite type over $k$.

\begin{theorem}(\cite{Christian}, Prop.3.1)\label{Bertinigeneral}
Let $X$ be a smooth, algebraically simply connected variety over a field $k$.  Let $N$ be a normal, connected and quasi-projective $k$-scheme.  Let $h: N\rightarrow X$ be a projective $k$-morphism.  If the closed subscheme $N_h$ of $N$ where $h$ is not smooth has codimension at least 2, then the geometric generic fiber of $h$ is connected.
\end{theorem}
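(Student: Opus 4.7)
The plan is to form the Stein factorization $h = g \circ f$ with $f_{*}\mathcal{O}_{N} = \mathcal{O}_{X'}$ and $g\colon X' \to X$ finite, show that $g$ is an isomorphism, and conclude that $h$ coincides with $f$, whose fibers are geometrically connected by construction. Since $X$ is algebraically simply connected and $g$ will turn out to be a surjective (as $h$ is dominant; if $h$ were not dominant the geometric generic fiber would be empty and the statement trivial) connected finite \'etale cover, the isomorphism $g \cong \mathrm{id}_{X}$ follows as soon as one knows that $X'$ is connected and $g$ is \'etale.

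First I would verify that $X'$ is integral: normality of $N$ implies normality of $X'$ (the Stein factorization of a proper morphism with normal source is normal); connectedness of $N$ together with the geometric connectedness of the fibers of $f$ gives connectedness of $X'$; and a connected normal Noetherian scheme is integral. To prove that $g$ is \'etale I would invoke Zariski--Nagata purity of the branch locus, which applies since $X$ is regular and $X'$ is normal, and which reduces the question to showing that $g$ is \'etale at every codimension-one point of $X'$.

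The hard part will be this codimension-one check. Fix a codimension-one point $x' \in X'$, set $x = g(x') \in X$, and note that both $\mathcal{O}_{X,x}$ and $\mathcal{O}_{X',x'}$ are DVRs. I would pick $y$ to be the generic point of an irreducible component of $f^{-1}(x')$ of \emph{maximal} dimension. A transcendence-degree calculation, using that the generic fiber of $f$ has dimension $r := \dim N - \dim X$ and that $y$ is not the generic point of $N$ (since $f(y) = x' \neq \eta_{X'}$), forces that maximal dimension to equal $r$, whence $\mathrm{codim}(y, N) = 1$. The hypothesis that $N_{h}$ has codimension at least two then gives $y \notin N_{h}$, so $h$ is smooth at $y$. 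Since $\dim \mathcal{O}_{N,y} = \dim \mathcal{O}_{X,x} = 1$, smoothness of $h$ at $y$ forces the relative fiber dimension of $h$ at $y$ to be zero, hence $\mathfrak{m}_{x}\mathcal{O}_{N,y} = \mathfrak{m}_{y}$ in the DVR $\mathcal{O}_{N,y}$ and $\kappa(y)/\kappa(x)$ is finite separable. A short valuation computation inside $\mathcal{O}_{N,y}$, using that $\mathcal{O}_{X',x'} \hookrightarrow \mathcal{O}_{N,y}$ is a local injection because $f$ is dominant, then forces the ramification index of $\mathcal{O}_{X',x'}$ over $\mathcal{O}_{X,x}$ to be $1$; separability of $\kappa(x')/\kappa(x)$ follows because it is a subextension of the separable $\kappa(y)/\kappa(x)$. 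Hence $g$ is \'etale at $x'$.

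Finally, purity then yields that the branch locus of $g$ is empty, so $g$ is unramified everywhere. Over the regular base $X$, unramifiedness of the finite map $g$ forces $\mathcal{O}_{X',x'}$ to be regular of the same dimension as $\mathcal{O}_{X,x}$ at every point, and miracle flatness upgrades $g$ to an \'etale morphism. Simple connectedness of $X$ together with connectedness of $X'$ then forces $g$ to be an isomorphism, so $h = f$ has geometrically connected fibers and in particular a connected geometric generic fiber.
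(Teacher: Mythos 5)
The paper does not prove Theorem~\ref{Bertinigeneral}; it cites \cite{Christian}, Proposition~3.1, so there is no internal argument to compare against. Your proof --- Stein factorization $h=g\circ f$, a local ramification computation over codimension-one points of $X'$ using the smooth locus of $h$, Zariski--Nagata purity, and then algebraic simple connectedness --- is the natural route, and it is essentially correct. Three small points to tidy. First, $\kappa(y)/\kappa(x)$ is \emph{not} finite: with $y$ the generic point of a (necessarily $r$-dimensional, $r=\dim N-\dim X$) component of $f^{-1}(x')$, the extension has transcendence degree $r$. What smoothness of $h$ at $y$ actually gives is that $\mathcal{O}_{N_x,y}$ is a zero-dimensional regular local ring, hence a field, which yields both $\mathfrak{m}_x\mathcal{O}_{N,y}=\mathfrak{m}_y$ and the separability (geometric reducedness, separable generation) of $\kappa(y)/\kappa(x)$; the finite subextension $\kappa(x')/\kappa(x)$ is then finite separable, so your conclusion stands. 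Second, if $h$ were not dominant the theorem would be vacuous, not trivially true: smooth morphisms are open, so non-dominance forces $N_h=N$, violating the codimension hypothesis, and in any case an empty geometric generic fibre is not connected. Third, purity requires the generic separability of $K(X')/K(X)$, which your codimension-one computation supplies (equivalently, it follows from $\eta_N\notin N_h$); with that in hand the standard statement of purity already yields that the non-\'etale locus is pure of codimension one, so the closing miracle-flatness argument is redundant, though not incorrect.
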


\begin{proposition}\label{variantmino}
Let $k$ be a field.  Let $X$ be a smooth, irreducible $k$-scheme that is algebraically simply connected.  Let $Y$ be an irreducible quasi-projective $k$-scheme.  Let $M$ be a normal, irreducible, quasi-projective $k$-scheme.  Let $(h,g): M \to X\times_k Y$ be a $k$-morphism such that $h$ is projective and surjective and such that $g$ is dominant with irreducible geometric generic fiber.  Let $Z$ be an irreducible $k$-scheme.  Let $f: Z \to Y$ be a finite, surjective $k$-morphism.  Denote by $\nu: N\to Z \times_Y M$ the normalization of the fiber product $Z \times_Y M$.  Denote by $h': N \to X$ the composition of $h$ and projection from $N$ to $M$.  If the closed subscheme of $N$ where $h'$ is not smooth has codimension at least 2, then the geometric generic fiber of $h'$ is connected.

\[\xymatrix{
N\ar[r]^{\nu\,\,\,\,\,\,\,\,}\ar[rd]\ar@/_2pc/[ddrr]_{h'} & Z\times_Y M\ar[rr]\ar[d] &  &  Z\ar[d]^f  \\
  &     M\ar[r]^{(h,g)\,\,\,\,\,\,\,\,}\ar[rd]_h &  X\times_k Y\ar[r]\ar[d]^{\projection_1} &  Y  \\
  &                              &  X
}\]

\end{proposition}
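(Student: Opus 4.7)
\emph{Plan.} The strategy is to apply Theorem~\ref{Bertinigeneral} to the morphism $h':N\to X$. With the exception of connectedness of $N$, the other hypotheses are routine to verify: $N$ is normal as a normalization; it is quasi-projective because $Z\times_Y M$ is quasi-projective and normalization is finite in our setting (schemes of finite type over the field $k$, which is Nagata); and $h'$ is projective, factoring as the finite morphism $\nu:N\to Z\times_Y M$, followed by the finite morphism $Z\times_Y M\to M$ (a base change of the finite $f$), followed by the projective morphism $h:M\to X$. The codimension-at-least-two condition on the non-smooth locus of $h'$ is given.

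The main step is the connectedness of $N$. Since the connected components of $N$ correspond bijectively to the irreducible components of $(Z\times_Y M)_{\mathrm{red}}$, it suffices to show that $Z\times_Y M$ has a unique irreducible component. The key input is the hypothesis that $g:M\to Y$ has geometrically irreducible generic fiber: equivalently, $K(Y)$ is separably algebraically closed in $K(M)$. For the finite extension $K(Z)/K(Y)$, this forces the tensor product $K(Z)\otimes_{K(Y)}K(M)$ to be a local Artinian ring. Indeed, if $K(Z)_s$ denotes the separable closure of $K(Y)$ in $K(Z)$, then $K(Z)_s\otimes_{K(Y)}K(M)$ stays a field by separable closedness, and tensoring further with $K(Z)$ over $K(Z)_s$ (a purely inseparable extension) preserves the property of being a local ring. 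Hence the scheme-theoretic fiber of the projection $\pi:Z\times_Y M\to M$ over $\eta_M$ consists of a single point $\xi$, and its closure is the unique irreducible component of $Z\times_Y M$ dominating $M$.

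I expect the main obstacle to be ruling out ``vertical'' irreducible components of $Z\times_Y M$ (those projecting into proper closed subsets of $M$), which would otherwise produce extra connected components of $N$. Such components would be supported on the $\mathcal{O}_M$-torsion of the coherent sheaf $\pi_*\mathcal{O}_{Z\times_Y M} = g^*f_*\mathcal{O}_Z$; since $f_*\mathcal{O}_Z$ is torsion-free on $Y$ and $M$ is normal, I plan to exploit normality of $M$ together with the codimension-at-least-two non-smoothness hypothesis on $h'$ to exclude such torsion at least in the locus relevant to the argument. An alternative route, should vertical components persist, is to apply Theorem~\ref{Bertinigeneral} only to the normalization of the unique dominant irreducible component of $Z\times_Y M$, and observe that any other connected component of $N$ either fails to dominate $X$ (and so does not contribute to the geometric generic fiber of $h'$) or is absorbed into the dominant one.

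Once $N$, or at least its sole component dominating $X$, is shown to be connected, Theorem~\ref{Bertinigeneral} applies to $h':N\to X$ and yields the connectedness of its geometric generic fiber, establishing the proposition.
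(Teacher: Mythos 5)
Your overall plan — verify that $N$ is normal, quasi-projective, that $h'$ is projective, and then invoke Theorem~\ref{Bertinigeneral} — is exactly the paper's route, and your first paragraph mirrors the paper's one-line verifications (Nagata, finiteness of $\nu$, and writing $h'$ as finite $\circ$ finite $\circ$ projective). Your second paragraph, showing that the fiber of $Z\times_Y M\to M$ over $\eta_M$ is a single point by using that $K(Y)$ is separably algebraically closed in $K(M)$, is a correct and more explicit version of what the paper compresses into the sentence that ``also the normalization $N$ is irreducible.''

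The gap is where you already suspect, and neither of your two escape routes fills it. Vertical components of $Z\times_Y M$ can genuinely occur under the remaining hypotheses: take $Y$ a (projective) nodal cubic, $Z=M$ its normalization $\mathbb{P}^1$, and $f=g$ the normalization map. Then $Z$ is irreducible and $g$ has irreducible geometric generic fiber, yet $Z\times_Y M$ has two reduced, zero-dimensional irreducible components off the diagonal — the off-diagonal preimages of the node. So the paper's one-sentence claim is not a formal consequence of those two hypotheses alone, and your unease is well founded. Your ``alternative route,'' however, also fails: if the non-smooth locus of $h'$ has codimension $\geq 2$ in $N$, then $h'$ is smooth (in particular flat) at the generic point of \emph{every} connected component of $N$ (such a generic point has local ring of dimension $0$, hence cannot lie in a codimension-$\geq 2$ locus); so every component of $N$ dominates $X$, and each contributes a nonempty piece to the geometric generic fiber of $h'$; moreover, being disjoint, the components are not ``absorbed'' into one another. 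Thus, should $N$ have more than one component, the conclusion would outright fail, and so the argument must actually prove $Z\times_Y M$ is irreducible under the full hypotheses. You have not done this, and the first route (normality of $M$ together with the codimension hypothesis) is where the missing argument would have to live — but as written it remains a promissory note.
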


\begin{proof}
Since the geometric generic fiber of $g$ is connected, and since $Z$ is irreducible, also the normalization $N$ is irreducible.  Since $Z\times_Y M$ is a $k$-scheme, it is Nagata, and hence the normalization $\nu$ is a finite morphism.  Moreover, since $M$ is quasi-projective, it admits an ample invertible sheaf.  Thus, the finite morphism $N\to M$ is projective.  Therefore, $h'$ is projective.  Thus, the statement reduces to Theorem~\ref{Bertinigeneral}.
\end{proof}

\begin{definition}\label{ramifieldlocusdef}
Let $Y$ be a regular locally Noetherian scheme.  Let $f: Z \to Y$ be a finite surjective morphism that is generically \'etale.  The closed subscheme $R$ inside $Z$ where $f$ is not \'etale is called  \emph{the ramification locus} of $f$.  Let $B$ denote the image of $R$ in $Y$.  The closed subscheme $B$ of $Y$ is called \emph{the branch locus} of $f$.  
\end{definition}

Note that, by Zariski's purity theorem (\cite{Liuqing}, Exercise 8.2.15(c), p.347), the branch locus $B$ of $f$ is either empty or pure of codimension one if $f$ generically separable.  In particular, let $k$ be a field of characteristic zero, and let $Y$ be a smooth $k$-scheme.  Then, if $f: Z\to Y$ is a finite surjective morphism of $k$-schemes that is generically \'etale, the branch locus $B$ of $f$ is either empty or pure of codimension one.  The following lemma about ramification in codimension one is well-known.

\begin{lemma}\label{ramificationcodimensionone}
Let $k$ be a field of characteristic zero.  Let $Z$ be a normal $k$-scheme.  Let $Y$ be a smooth $k$-scheme.  Let $f: Z\to Y$ be a finite surjective morphism that is generically \'etale.  Denote by $R$ the ramification locus of $f$.  Let $D$ be an irreducible component of $R$, and let $E$ be the image of $D$ in $Y$.  Denote by $\eta_D$ (resp. $\eta_E$) the generic point of $D$ (resp. $E$).

Then, there exists a uniformizer $s$ (resp. $t$) for $\widehat{\mathcal{O}}_{Y,\eta_E}$ (resp. $\widehat{\mathcal{O}}_{Z,\eta_D}$) such that $\widehat{\mathcal{O}}_{Y,\eta_E}\to\widehat{\mathcal{O}}_{Z,\eta_D}$ maps $s$ to  $t^e$ for some positive integer $e$.
\end{lemma}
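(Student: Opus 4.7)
The plan is to reduce the claim to a local computation at the completed local rings, which in equal characteristic zero admits a very concrete normal form via Cohen's structure theorem combined with the structure theory of tame ramification.

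First I would verify that $A := \widehat{\mathcal{O}}_{Y,\eta_E}$ and $B := \widehat{\mathcal{O}}_{Z,\eta_D}$ are both complete discrete valuation rings, and that the natural local homomorphism $\iota : A \to B$ is a finite extension. The DVR property at $\eta_E$ follows from $Y$ being smooth combined with the observation (already noted before the lemma, via Zariski's purity applied to the generically \'etale morphism $f$) that the branch locus is pure of codimension one, so $\mathcal{O}_{Y,\eta_E}$ is a regular one-dimensional local ring. For $\eta_D$, finiteness of $f$ forces $\dim D = \dim E$, so $D$ has codimension one in $Z$, and normality of $Z$ gives a DVR. Finiteness of $\iota$ follows from the decomposition of the completion of $(f_\ast \mathcal{O}_Z)_{\eta_E}$ at $\mathfrak{m}_{\eta_E}$ as the product of its completions at the finitely many primes above $\eta_E$, with the factor at $\eta_D$ being $B$.

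Next I would invoke Cohen's structure theorem. Since all residue fields have characteristic zero, the residue extension $\kappa(\eta_E) \hookrightarrow \kappa(\eta_D)$ is separable, so I can choose compatible coefficient fields inside $A$ and $B$ and identify $A \cong \kappa(\eta_E)[[s_0]]$, $B \cong \kappa(\eta_D)[[t_0]]$ for some uniformizers $s_0, t_0$, with $\kappa(\eta_E)$ embedded in $\kappa(\eta_D)$ inside $B$. Setting $e := v_B(\iota(s_0))$, the ramification index, I have $\iota(s_0) = u \cdot t_0^e$ for a unit $u \in B^\ast$.

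Finally I would use the structure theorem for tamely ramified extensions of complete DVRs: since $k$ has characteristic zero, $e$ is invertible in $\kappa(\eta_D)$, so $\iota : A \to B$ is tamely ramified and factors as $A \subset A' \subset B$, with $A \to A'$ unramified and $A' \to B$ totally tamely ramified of degree $e$. The totally tame piece admits a presentation $B \cong A'[t]/(t^e - \pi)$ for some uniformizer $\pi$ of $A'$. Since $A \to A'$ is unramified, uniformizers of $A$ are already uniformizers of $A'$; a Hensel's lemma argument in $B$ (which works because $e$ is a unit in $\kappa(\eta_D)$) then lets me modify $t$ by a unit so that $\pi$ can be replaced by a uniformizer $s$ of $A$, yielding the required identity $\iota(s) = t^e$. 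The main obstacle I expect is this last uniformizer adjustment: the unit $u$ in $\iota(s_0) = u \cdot t_0^e$ must be simultaneously absorbed into a choice of uniformizer of $A$ (not merely of $A'$) and of $B$, which requires combining tameness with Hensel's lemma in a careful way and uses the characteristic-zero hypothesis in an essential manner.
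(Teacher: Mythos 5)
The paper offers no proof of this lemma --- it is declared ``well-known'' and immediately used only to define the ramification index --- so there is no argument in the paper to compare yours against. That said, the step you yourself flag as the ``main obstacle'' is not merely delicate: it cannot be carried out in general, and the lemma is in fact false as stated whenever the residue extension $\kappa(\eta_E)\hookrightarrow\kappa(\eta_D)$ is nontrivial. Once you have chosen compatible coefficient fields $K_A\hookrightarrow K_B$ and, via Hensel, reduced to $\iota(s_0)=u_0\,t_0^e$ with $u_0\in K_B^*$, finishing the argument requires writing $u_0=\iota(a_0)^{-1}b_0^{\,e}$ with $a_0\in K_A^*$, $b_0\in K_B^*$ --- that is, it requires $u_0$ to lie in the subgroup $\iota(K_A^*)\cdot(K_B^*)^e$ of $K_B^*$. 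Tameness and Hensel kill the $1+\mathfrak{m}_B$ part of the unit but have no traction on $K_B^*\big/\iota(K_A^*)(K_B^*)^e$, which is generally nontrivial when $\kappa(\eta_E)\subsetneq\kappa(\eta_D)$, and $u_0$ need not land in that subgroup.

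A concrete counterexample: let $Y=\mathbb{A}^2_k=\operatorname{Spec}k[x,y]$ and $Z=\operatorname{Spec}k[x,w,t]/(t^2-wx)$ with $f(x,w,t)=(x,w^2)$. Then $Z$ is normal, $f$ is finite surjective and generically \'etale, $D=\{x=t=0\}$ is a component of the ramification locus with image $E=\{x=0\}$, the residue extension is $\kappa(\eta_E)=k(y)=k(w^2)\hookrightarrow k(w)=\kappa(\eta_D)$, and the completed map is $k(y)[[x]]\to k(w)[[t]]$, $x\mapsto t^2/w$. If $s=ax$ and $\tilde{t}=bt$ were uniformizers with $\iota(s)=\tilde{t}^{\,2}$, reducing modulo $t$ would give $\bar{a}=w\,\bar{b}^{\,2}$ in $k(w)^*$ with $\bar{a}\in k(w^2)^*$, hence $w\in k(w^2)^*\cdot(k(w)^*)^2$; but every element of that subgroup has even order of vanishing at $w=0$, whereas $w$ has order $1$, a contradiction. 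The content that actually survives, and is all the paper subsequently needs, is the existence of the ramification index (i.e.\ $\iota(s_0)=u\,t_0^e$ with $u$ a unit) together with tameness; the clean normal form $\iota(s)=t^e$ holds only when the residue extension is trivial, which is the situation at closed points of a transversal curve over $k=\bar{k}$ where the paper actually invokes it.
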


\begin{definition}\label{ramificationindex}
Keep the notations in Lemma~\ref{ramificationcodimensionone}.  The positive integer $e$ obtained in Lemma~\ref{ramificationcodimensionone} is called \emph{the ramification index of $f$ at $\eta_D$}.
\end{definition}

\begin{theorem}\label{bertinismoothness}
Let $k$ be an algebraically closed field of characteristic zero.  Let $Z$ be a normal $k$-scheme.  Let $f: Z\to \mathbb{P}^n_k$ be a finite surjective morphism that is generically \'etale.  Then, for a general smooth curve $C\subset \mathbb{P}^n_k$, the inverse image $f^{-1}(C)$ is a smooth curve.
\end{theorem}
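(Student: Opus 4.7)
The plan is to realize the statement as an instance of generic smoothness (Sard's theorem in characteristic zero), applied to the universal pullback of a sufficiently rich family of smooth curves in $\mathbb{P}^n_k$. The two structural inputs are that $Z$ is normal (so $Z_{sing}$ has codimension at least $2$ in $Z$) and that $k$ has characteristic zero.

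First I fix a parameter space for smooth curves. For $d$ sufficiently large, let $T$ be the open subvariety of $\prod_{i=1}^{n-1}|\mathcal{O}_{\mathbb{P}^n_k}(d)|$ whose points correspond to $(n-1)$-tuples of hypersurfaces cutting out a smooth irreducible curve in $\mathbb{P}^n_k$. Let $\mathcal{C}\subset\mathbb{P}^n_k\times T$ be the universal curve with evaluation $ev\colon\mathcal{C}\to\mathbb{P}^n_k$, and denote by $C_t\subset\mathbb{P}^n_k$ the fiber of $\mathcal{C}\to T$ over $t$. At any point $(c,t)\in\mathcal{C}$ the differential $d(ev)$ is surjective, because the coefficients of each defining hypersurface may be varied freely to adjust the linear incidence condition $H_i(c)=0$; hence $ev$ is a smooth morphism. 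The same framework applies to any irreducible covering family of smooth curves whose evaluation is smooth, so the argument captures the notion of ``general smooth curve''.

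Next, form the pullback $W := Z\times_{\mathbb{P}^n_k}\mathcal{C}$ with projections $p\colon W\to Z$ and $\pi\colon W\to T$. Smoothness of $ev$ forces $p$ to be smooth, so $W$ is normal and $W_{sing}=p^{-1}(Z_{sing})$. Each fiber $\pi^{-1}(t)$ equals the finite pullback $f^{-1}(C_t)$, a one-dimensional subscheme of $Z$. Two generic conditions over $T$ now finish the argument. First, since $f(Z_{sing})$ has codimension at least $2$ in $\mathbb{P}^n_k$ (by normality of $Z$ and finiteness of $f$), a routine dimension count on the incidence $\{(t,c)\in T\times f(Z_{sing}) : c\in C_t\}$ shows that for general $t$, $C_t$ avoids $f(Z_{sing})$, which forces $f^{-1}(C_t)\subset W_{sm}$. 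Second, generic smoothness in characteristic zero applied to the morphism $W_{sm}\to T$ of smooth $k$-schemes yields that general fibers $\pi^{-1}(t)\cap W_{sm}$ are smooth over $k$. Combining, $f^{-1}(C_t)=\pi^{-1}(t)\cap W_{sm}$ is a smooth curve for general $t$.

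The technical heart is the smoothness of $ev$: once established, $p\colon W\to Z$ inherits smoothness, $W_{sing}$ is supported over a codimension-$\geq 2$ locus in $\mathbb{P}^n_k$, and the remainder is a standard combination of generic smoothness and a dimension count. The main place to be careful is verifying smoothness of $ev$ (which for the complete-intersection parametrization is immediate because incidence is linear in the coefficients, but for other families of smooth curves may require more care) and checking that the parameter variety $T$ is rich enough that the phrase ``general smooth curve'' can be interpreted within it. An alternative, more hands-on approach would use Lemma~\ref{ramificationcodimensionone} to exhibit a local normal form $f^\ast s_1 = t_1^e\cdot u$, $f^\ast s_i = t_i$ for $i\geq 2$ at any ramification point $z\in f^{-1}(C)$, then check directly that the pulled-back defining equations of $C$ have linearly independent differentials at $z$; the genericity of $C$ is used to avoid the various codimension-$\geq 2$ loci (non-smooth $D_i$ or $Z$, intersections of distinct ramification components, branch locus of $f|_{D_i}$, and the degeneration locus of the unit $u$).
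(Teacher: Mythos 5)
Your proof is correct, and it takes a more explicit route to the same transversality statement. The paper makes the same two observations you begin with — that $Z_{\mathrm{sing}}$, hence $f(Z_{\mathrm{sing}})$, has codimension at least $2$ — and then invokes the Kleiman--Bertini theorem (Hartshorne, Theorem~III.10.8) in one step: for a general smooth curve $C$ disjoint from $f(Z_{\mathrm{sing}})$, the pullback $f^{-1}(C)$ is smooth. Kleiman--Bertini uses the transitive $\mathrm{PGL}_{n+1}$-action on $\mathbb{P}^n$ to move a fixed curve transversely to $Z_{\mathrm{sm}} \to \mathbb{P}^n$, and its proof is itself a generic-smoothness argument in characteristic zero, so your argument effectively unwinds the cited black box for an explicit covering family: you replace the group action by the observation that the universal complete-intersection curve is a (projective) bundle over $\mathbb{P}^n$, so its evaluation is smooth; you form $W = Z\times_{\mathbb{P}^n}\mathcal{C}$, identify $W_{\mathrm{sing}} = p^{-1}(Z_{\mathrm{sing}})$, and apply generic smoothness to $W_{\mathrm{sm}}\to T$. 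Both proofs rest on exactly the same structural inputs — normality for the codimension bound, characteristic zero for generic smoothness. The paper buys brevity by citing a theorem; you buy explicitness about which parameter space the word ``general'' refers to, which is not a small point: the theorem is later applied with $C$ a member of a family of genus-$0$, degree-$d$ curves (Theorem~\ref{bertinihigherordercurves}, Corollary~\ref{bertinisections}) rather than a complete-intersection curve, and your closing remark that the argument works for any covering family whose evaluation is smooth is exactly what is needed to bridge that gap.
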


\begin{proof}
Since $Z$ is normal, the singular locus $Z_{sing}$ has codimension at least two (\cite{Liuqing}, Prop.4.2.24, p.131).  Also the closed subset $f(Z_{sing})$ in $\mathbb{P}_k^n$ has codimension at least two since $f$ is finite.  By Kleiman-Bertini's Theorem (\cite{Hart}, Theorem III.10.8, p.273), for a general smooth curve $C$ in the complement of $f(Z_{sing})$, the inverse image $f^{-1}(C)$ is smooth.
\end{proof}

\begin{theorem}\label{bertinihigherordercurves}
Let $k$ be an algebraically closed field of characteristic zero.  Let $f: Z\to \mathbb{P}_k^n$ be a finite surjective morphism from a normal irreducible variety.  Then, for a general genus-0, degree-$d$ curve $C$ in $\mathbb{P}_k^n$, the inverse image $f^{-1}(C)$ is connected. 
\end{theorem}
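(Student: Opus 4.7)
The plan is to apply Proposition~\ref{variantmino} to an appropriate parameter space of parameterized rational curves in $\mathbb{P}^n_k$. Let $P \subset \mathbb{P}^{(n+1)(d+1)-1}_k$ be the open subscheme parametrizing $(n+1)$-tuples $(f_0,\dots,f_n)$ of degree-$d$ forms on $\mathbb{P}^1$ with no common zero, i.e., degree-$d$ morphisms $\phi:\mathbb{P}^1\to\mathbb{P}^n_k$. For each fixed $t\in\mathbb{P}^1$, the condition that all $f_i(t)$ vanish cuts out a linear subspace of codimension $n+1$, so the complement of $P$ has codimension $n$; for $n\geq 2$ (the cases $n\leq 1$ being trivial), this is at least $2$, and $P$ is a smooth, irreducible, quasi-projective $k$-scheme, algebraically simply connected by Zariski--Nagata purity. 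Set $\mathcal{U}=P\times\mathbb{P}^1$ with projection $\pi:\mathcal{U}\to P$ (a smooth $\mathbb{P}^1$-bundle, hence projective and surjective) and universal evaluation $e:\mathcal{U}\to\mathbb{P}^n_k$; the geometric generic fiber of $e$ is a projective bundle over $\mathbb{P}^1$, hence irreducible.

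I would then invoke Proposition~\ref{variantmino} with $X=P$, $Y=\mathbb{P}^n_k$, its $M$ equal to $\mathcal{U}$, its $h$ equal to $\pi$, its $g$ equal to $e$, and $(Z,f)$ equal to the given data, obtaining the normalization $\nu:N\to Z\times_{\mathbb{P}^n_k}\mathcal{U}$ and the composite $h':N\to P$. The proposition's conclusion is connectedness of the geometric generic fiber of $h'$; over a general $\phi\in P$ this fiber is the normalization of $Z\times_{\mathbb{P}^n_k}\mathbb{P}^1_\phi$, which admits a finite surjection onto $f^{-1}(C_\phi)$ for $C_\phi=\phi(\mathbb{P}^1)$. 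Connectedness of the source forces connectedness of the image, and since every general genus-$0$, degree-$d$ curve in $\mathbb{P}^n_k$ arises as such a $C_\phi$ up to reparametrization by $\mathrm{PGL}_2$, this yields the theorem.

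The main obstacle is verifying that the non-smooth locus of $h'$ has codimension at least $2$ in $N$. Normality of $N$ already makes its singular locus codimension $\geq 2$; elsewhere, non-smoothness of $h'$ coincides with non-smoothness of the scheme-theoretic fiber. For general $\phi\in P$, Kleiman--Bertini, applied through the transitive action of $\mathrm{PGL}_{n+1}$ on $\mathbb{P}^n_k$ lifted to $P$ by postcomposition, ensures that $C_\phi$ is smooth, avoids both $f(Z_{\mathrm{sing}})$ and the codimension-$\geq 2$ singular locus of the branch divisor $B$ of $f$, and meets the smooth locus of $B$ transversally. At each such intersection, Lemma~\ref{ramificationcodimensionone} combined with tameness in characteristic zero exhibits $f$ in an \'etale neighborhood as a Kummer cover $y^{e_i}=x_1$ over the smooth branch $\{x_1=0\}$; pulling back along the transverse parametrization $x_1=s$ gives a smooth curve, and away from ramification $f$ is \'etale, so $Z\times_{\mathbb{P}^n_k}\mathbb{P}^1_\phi$ is smooth. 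A codimension-$1$ component of the non-smooth locus of $h'$ could arise only from a divisor $D\subset P$ forcing generic fibers $N_\phi$ (for $\phi\in D$) to be everywhere non-smooth, hence non-reduced; but in characteristic zero, $f$ and $\phi$ are generically separable, so $Z\times_{\mathbb{P}^n_k}\mathbb{P}^1_\phi$ is \'etale, and therefore reduced, over the generic point of $\phi(\mathbb{P}^1)$ for every $\phi$. This rules out the remaining case, and Proposition~\ref{variantmino} then finishes the proof.
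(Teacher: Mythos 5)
Your argument is correct and follows the same overall strategy as the paper: both proofs reduce the statement to Proposition~\ref{variantmino} by choosing a simply connected parameter space of rational curves and then verifying that the non-smooth locus of $h'$ has codimension at least~$2$.  The difference is in the choice of that parameter space.  The paper takes $X$ to be the non-stacky locus of the Kontsevich moduli stack $\overline{\mathcal{M}}_{0,0}(\mathbb{P}^n_k,d)$ and $M$ to be its universal curve, asserting (without proof) that this open subscheme is algebraically simply connected; normalization is then a genuine step because the evaluation from the universal curve is not smooth.  You instead take $X=P$, the explicit open subscheme of $\mathbb{P}^{(n+1)(d+1)-1}_k$ of base-point-free degree-$d$ tuples, $M=P\times\mathbb{P}^1$, and the evaluation map.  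This has two small advantages: simple connectedness follows immediately from Zariski--Nagata purity because the complement has codimension $n\geq 2$ in projective space, and since $e:\mathcal{U}\to\mathbb{P}^n_k$ is smooth (it is a $\mathrm{PGL}_{n+1}$-equivariant fibration), the fiber product $Z\times_{\mathbb{P}^n_k}\mathcal{U}$ is already normal, so the normalization $\nu$ is the identity.  The trade-off is that $P$ parameterizes \emph{marked} curves, so you must quotient by reparameterization at the end, and the complement of $P$ is not compact, so the codimension-$2$ estimate cannot lean on properness.  Your estimate itself is fine, but the sentence asserting that $Z\times_{\mathbb{P}^n_k}\mathbb{P}^1_\phi$ is \'etale over the generic point of $\phi(\mathbb{P}^1)$ \emph{for every} $\phi$ is an overstatement: it fails when $\phi(\mathbb{P}^1)$ is contained in the branch locus $B$.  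That locus of $\phi$ has codimension at least~$2$ in $P$ (imposing $\phi^*(\text{equation of }B)=0$ is $d\cdot\deg B + 1$ linear conditions), so the conclusion you want still holds, but the exception should be noted rather than asserted away.  The paper's proof has the same implicit assumption when it asserts that failure of transversality to $B$ is exactly a codimension-one condition, so this is a shared small imprecision rather than a gap unique to your argument.
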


\begin{proof}
Keep the notations in Proposition~\ref{variantmino}.  Let $Y$ be projective space $\mathbb{P}^n_k$.  Note that $f$ is generically \'etale by the generic smoothness theorem (\cite{foag}, Theorem 25.3.1, p.681).  Let $X$ be the non-stacky locus inside the stack of genus-$0$, degree-$d$ stable maps to $\mathbb{P}^n_k$.  In other words, $X$ is the maximal open subscheme of this stack.  The open subscheme $X$ is algebraically simply connected.  Let $M$ be the universal family of curves over $X$, and let $g$ be the universal morphism.  Then, the generic geometric fiber of $g$ is connected.

To complete the proof, we need to prove that the singular locus of $h'$ inside $N$ has codimension at least $2$.  The codimension one subset of $X$ parameterizes degree-$d$, genus-$0$ curves in $\mathbb{P}^n_k$ that are not transversal to the branch locus.  Thus, away from codimension one points in $X$, the fibers are everywhere smooth (Theorem~\ref{bertinismoothness}).  Moreover, for a genus-0, degree-$d$ curve that is not transversal to the branch locus, the singularities of the fiber of $h'$ occur only over the intersection points of the curve with the branch locus, and this is codimension one in the fiber.  Thus, the total codimension of singular locus of $h'$ in $N$ is at least two.  By Proposition~\ref{variantmino}, for a general genus-0, degree-$d$ curve $C$ in $\mathbb{P}_k^n$, the inverse image $f^{-1}(C)$ is connected. 
\end{proof}

\begin{corollary}\label{bertiniirreducible}
Let $k$ be an algebraically closed field of characteristic zero.  Let $f: Z\to \mathbb{P}_k^n$ be a finite surjective morphism from a normal irreducible variety.  Then, for a general genus-0, degree-$d$ curve $C$ in $\mathbb{P}_k^n$, the inverse image $f^{-1}(C)$ is smooth and irreducible. 
\end{corollary}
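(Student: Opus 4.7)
The plan is to deduce this corollary directly by combining the two preceding results, Theorem~\ref{bertinismoothness} and Theorem~\ref{bertinihigherordercurves}, with the standard observation that a smooth connected scheme is irreducible. The content is essentially bookkeeping: I need to establish both smoothness and connectedness of $f^{-1}(C)$ for a general member $C$ of the parameter space of genus-$0$, degree-$d$ curves, and then irreducibility follows formally.

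First, I would confirm that smoothness of $f^{-1}(C)$ holds on an open dense subset of the parameter space $X$ of stable genus-$0$, degree-$d$ maps to $\mathbb{P}^n_k$. Theorem~\ref{bertinismoothness} is stated for general smooth curves in $\mathbb{P}^n_k$, but the argument transfers to the family $X$ without change: since $Z$ is normal, the singular locus $Z_{\text{sing}}$ has codimension at least two in $Z$, and as $f$ is finite the image $f(Z_{\text{sing}})$ has codimension at least two in $\mathbb{P}^n_k$, so the locus of $C \in X$ meeting $f(Z_{\text{sing}})$ is of codimension at least one in $X$; similarly, as already observed in the proof of Theorem~\ref{bertinihigherordercurves}, the locus of $C \in X$ not transverse to the branch locus of $f$ is of codimension at least one. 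Over the complementary open dense subset of $X$, the inverse image $f^{-1}(C)$ is smooth.

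Second, connectedness of $f^{-1}(C)$ on an open dense subset of $X$ is exactly Theorem~\ref{bertinihigherordercurves}. Intersecting the two open dense loci yields an open dense subset of $X$ over which $f^{-1}(C)$ is simultaneously smooth and connected. Smoothness implies that each local ring $\mathcal{O}_{f^{-1}(C),x}$ is a regular local ring and in particular an integral domain, so $f^{-1}(C)$ is locally irreducible and its irreducible components coincide with its connected components; irreducibility then follows from connectedness. I do not foresee any real obstacle here: all the genuine work is contained in Theorem~\ref{bertinihigherordercurves}, and the only point requiring care is to verify that the Kleiman-Bertini style argument underlying Theorem~\ref{bertinismoothness} applies verbatim to the restricted family of genus-$0$, degree-$d$ curves, which is implicit in the preceding proof.
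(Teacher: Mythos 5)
Your proposal is correct and takes essentially the same approach as the paper's own proof, which simply invokes Theorem~\ref{bertinismoothness} and Theorem~\ref{bertinihigherordercurves} to conclude that $f^{-1}(C)$ is smooth and connected for a general genus-0, degree-$d$ curve $C$, hence irreducible. Your extra care in verifying that the Kleiman--Bertini argument applies to the restricted family of genus-0, degree-$d$ curves is a point the paper leaves implicit, but it is not a departure from the paper's route.
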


\begin{proof}
By Theorem~\ref{bertinismoothness} and Theorem~\ref{bertinihigherordercurves}, for a general genus-0, degree-$d$ curve $C$, $f^{-1}(C)$ is both smooth and connected.  Thus, the inverse image $f^{-1}(C)$ is irreducible.
\end{proof}

\begin{corollary}\label{bertinisections}
Let $k$ be an algebraically closed field of characteristic zero.  Let $f: Z\to \mathbb{P}_k^n$ be a finite surjective morphism from a normal irreducible variety.  Then, for a general genus-0, degree-$d$ curve $C$ in $\mathbb{P}_k^n$, the restriction map of sections
\[\sections(Z/\mathbb{P}^n_k)\to \sections(Z_C/C)\]
is bijective.
\end{corollary}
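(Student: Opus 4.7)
The plan is to reduce to showing that, for a general $C$, the morphism $f : Z\to \mathbb{P}^n_k$ is an isomorphism if and only if its restriction $f_C : Z_C\to C$ is an isomorphism. Once this is done, the desired bijection of sections will be immediate, since a finite morphism between irreducible reduced schemes has at most one section, and such a section exists precisely when the morphism is an isomorphism.

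First I would apply Corollary~\ref{bertiniirreducible} to ensure that, for a general genus-$0$, degree-$d$ curve $C\subset \mathbb{P}^n_k$, the preimage $Z_C = f^{-1}(C)$ is smooth and irreducible, so that $f_C$ is a finite surjective morphism between smooth irreducible curves. Next I would record the following elementary observation, to be applied to both $f$ and $f_C$: if $g: X\to Y$ is a finite morphism of integral schemes with $Y$ reduced, then any section $s: Y\to X$ is a closed immersion (since $g$ is separated); the image $s(Y)$ is closed of dimension $\dim Y = \dim X$, hence equals all of $X$, and since $X$ is reduced, $s$ must be an isomorphism onto $X$, forcing $g = s^{-1}$ to be an isomorphism. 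Conversely, if $g$ is an isomorphism, then $g^{-1}$ is its unique section. Consequently $\sections(Z/\mathbb{P}^n_k)$ and $\sections(Z_C/C)$ each have cardinality at most one, and each is nonempty precisely when the corresponding morphism is an isomorphism.

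It remains to compare the two conditions ``$f$ is an isomorphism'' and ``$f_C$ is an isomorphism'' for a general $C$. The forward direction is automatic by base change. For the converse, let $U\subset \mathbb{P}^n_k$ denote the \'etale locus of $f$, which is open and dense by generic smoothness in characteristic zero. I would include the requirement $C\cap U\neq \emptyset$ in the definition of ``general''; this is an open nonempty condition on the parameter space of genus-$0$, degree-$d$ curves, since being contained in the closed proper subset $\mathbb{P}^n_k\setminus U$ cuts out a closed subscheme of the parameter space. The restriction of $f_C$ over $C\cap U$ is then the base change of the finite \'etale morphism $f|_{f^{-1}(U)}: f^{-1}(U)\to U$, hence finite \'etale of constant degree $\deg f$. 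Since $Z_C$ is irreducible, comparing generic ranks of $f_{C*}\mathcal{O}_{Z_C}$ yields $[K(Z_C):K(C)] = \deg f$. If $f_C$ is an isomorphism, then $\deg f = 1$, so $f$ is a finite birational morphism onto the normal variety $\mathbb{P}^n_k$, hence an isomorphism.

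Combining these reductions, the restriction map $\sections(Z/\mathbb{P}^n_k)\to\sections(Z_C/C)$ is a bijection for a general $C$. The only substantive ingredient is the irreducibility of $Z_C$ from Corollary~\ref{bertiniirreducible}; the remaining steps are a formal comparison of generic degrees on the \'etale locus together with the structure of sections of finite morphisms between irreducible schemes, so I do not anticipate any serious obstacle beyond what has already been overcome.
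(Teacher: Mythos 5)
Your proof is correct, and it organizes the argument around a cleaner invariant than the paper does, though the two share the key Bertini input (Corollary~\ref{bertiniirreducible}). You first reduce to the abstract observation that a finite morphism of integral schemes admits a section if and only if it is an isomorphism (and then the section is unique), which collapses the problem to showing ``$f$ is an isomorphism $\iff$ $f_C$ is an isomorphism''. You then prove the nontrivial implication by a degree count on the \'etale locus together with the fact that a finite birational morphism to a normal variety is an isomorphism. The paper instead performs a case analysis on the ramification index $e$ at a generic codimension-one point of the ramification locus: when $e>1$ it derives a contradiction from the degree of $f_C^*(p)$ (again after noting a section would force $f_C$ to be an isomorphism, which is implicitly your observation), and when $e=1$ it invokes the simple connectedness of $\mathbb{P}^n_k$ to conclude $f$ itself is an isomorphism. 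Your approach eliminates that case split and replaces the simple-connectedness input with the Zariski-Main-Theorem-type statement about finite birational maps; the paper's version is slightly more explicit about which hypotheses feed into which case. Both are valid, and nothing essential is missing from your argument. Two very minor points of hygiene: the phrase ``with $Y$ reduced'' is redundant once $Y$ is integral, and you should say explicitly that the conditions ``$Z_C$ is smooth irreducible'' and ``$C\cap U\neq\emptyset$'' are each open and dense, hence their conjunction still defines ``general''.
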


\begin{proof}
Keep the notations in Lemma~\ref{ramificationcodimensionone}, and let $Y$ be the projective space $\mathbb{P}^n_k$.  By choosing the curve $C$ generally, we can assume that $C$ intersects with every irreducible component of $R$ transversally.  Suppose that $R_1$, $\cdots$, $R_r$ (resp. $B_1$, $\cdots$, $B_r$) are irreducible components of $R$ (resp. $B$).  Let $\widetilde{R}$ (resp. $\widetilde{B}$) be the union of pairwise intersections of the irreducible components of $R$ (resp. $B$), i.e.,
$$\widetilde{R}=\bigcup_{1\le i<j\le r}R_i\cap R_j,$$
$$\widetilde{B}=\bigcup_{1\le i<j\le r}B_i\cap B_j.$$
Since $k$ is algebraically closed, the sigular locus $\widetilde{Z}$ of $Z$ has codimension at least two (\cite{Liuqing}, Prop.4.2.24, p.131).  Let $\widetilde{Y}$ be the image of $\widetilde{Z}$.  Then, by choosing $C$ generally, we can assume further that $C$ does not intersect with $\widetilde{B}\cup\widetilde{Y}$ and that $f^{-1}(C)$ does not intersect with $\widetilde{R}\cup\widetilde{Z}$.  Thus, we may assume that both $R$ and $B$ are irreducible and that $Z$ is smooth.  Let $\eta_B$ (resp. $\eta_R$) be the generic point of $B$ (resp. $R$).  Let $e$ be the ramification index of $f$ at $\eta_{R}$.  By choosing the genus-0, degree-$d$ curve $C$ generally, we assume that $f^{-1}(C)$ is smooth and irreducible (Corollary~\ref{bertiniirreducible}).

\emph{Case 1}: $e>1$.  Suppose that there exists a section $\sigma$ of $Z_C$ over $C$.  Then, $\sigma$ is a closed immersion of smooth irreducible curves.  Thus, $\sigma$ must be an isomorphism (\cite{Hart}, Prop.II.6.8, p.137).  The maximal ideal of the regular local ring $\mathcal{O}_{f^{-1}(C),q}$ is generated by $t$, and the maximal ideal of $\mathcal{O}_{C,p}$ is generated by $s$ such that $s=t^e$.  Thus, the degree of the effective divisor $f^*(p)$ on $f^{-1}(C)$ is at least $e$ (\cite{Hart}, Definition on p.137).  However, since $\text{deg}f=1$, this contradicts that
\[\text{deg}f^*(p)=\text{deg}f\cdot\text{deg}p\]
(\cite{Hart}, Prop.II.6.9, p.138).  Thus, there is no such section $\sigma$, let alone a section of $Z$ over $\mathbb{P}^n_k$.  Then the restriction map of sections is bijective since both sets of sections are empty.

\emph{Case 2}: $e=1$.  Then $f$ is everywhere \'etale.  Since $\mathbb{P}^n_k$ is algebraically simply connected, the finite \'etale cover $f$ is an isomorphism.  Thus, the restriction map of sections is bijective.
\end{proof}

\begin{theorem}\label{bertinisectionsrevised}
Let $k$ be an algebraically closed field of characteristic zero.  Let $Z$ be a normal $k$-scheme that is not necessarily connected.  Let $f: Z\to S$ be a finite surjective morphism to a smooth, connected, quasi-projective $k$-scheme $S$ where $S$ admits a finite, generically \'etale morphism to an open dense subset of $\mathbb{P}^n_k$, $u_0: S\to \mathbb{P}^n_k$.  Then, for a general genus-0, degree-$d$ curve $C$ in $\mathbb{P}_k^n$, the restriction map of sections
\[\sections(Z/S)\to \sections(Z_C/C)\]
is bijective.  
\end{theorem}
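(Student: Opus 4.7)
The plan is to compactify via normalization to a finite cover of $\mathbb{P}^n_k$, reduce the restriction-of-sections statement to its compactified version, and then invoke Corollary~\ref{bertinisections} through a ramification dichotomy. Since $Z$ is normal, its connected components coincide with its irreducible components; a section of $f$ (or of its restriction over a general curve $C_S:=u_0^{-1}(C\cap U)$, irreducible by Corollary~\ref{bertiniirreducible}) lands in a single component, and components of $Z$ whose image in $S$ is a proper closed subset contribute no sections on either side for a general $C$. I may thus assume that $Z$ is normal, irreducible, and $f$ is surjective.

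Next, let $\overline{S}$ (resp.\ $\overline{Z}$) be the normalization of $\mathbb{P}^n_k$ in $K(S)$ (resp.\ $K(Z)$), yielding finite, generically \'etale morphisms $\overline{u_0}:\overline{S}\to\mathbb{P}^n_k$, $\overline{\phi}=\overline{u_0}\circ\overline{f}:\overline{Z}\to \mathbb{P}^n_k$, and a factorization $\overline{f}:\overline{Z}\to\overline{S}$, with $S=\overline{u_0}^{-1}(U)$ and $Z=\overline{f}^{-1}(S)$. Because a finite birational morphism to a normal scheme is an isomorphism, taking the closure of the graph extends a section $\sigma:S\to Z$ uniquely to a section $\overline{\sigma}:\overline{S}\to\overline{Z}$, and restriction inverts this process, yielding a bijection $\sections(Z/S)\cong \sections(\overline{Z}/\overline{S})$. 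The same reasoning on $\overline{C}_S:=\overline{u_0}^{-1}(C)$, which is smooth irreducible for general $C$ by Corollary~\ref{bertiniirreducible}, produces $\sections(Z_C/C)\cong\sections(\overline{Z}_{\overline{C}_S}/\overline{C}_S)$ compatibly with restriction. It therefore suffices to show that $\sections(\overline{Z}/\overline{S})\to\sections(\overline{Z}_{\overline{C}_S}/\overline{C}_S)$ is bijective for general $C$.

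For this I would repeat the case analysis from the proof of Corollary~\ref{bertinisections}, now applied to $\overline{f}$. If the branch locus $B_f\subset\overline{S}$ of $\overline{f}$ is nonempty, then its image $\overline{u_0}(B_f)$ is a hypersurface in $\mathbb{P}^n_k$ met transversally by a general $C$, so $\overline{C}_S$ crosses $B_f$, and the local pullback-divisor argument of Corollary~\ref{bertinisections} (case $e>1$) applied to the finite map $\overline{Z}_{\overline{C}_S}\to\overline{C}_S$ of smooth irreducible curves rules out any section on the curve side. Restricting a hypothetical section of $\overline{f}$ to $\overline{C}_S$ then forces $\sections(\overline{Z}/\overline{S})=\emptyset$ as well. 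If $\overline{f}$ is \'etale everywhere, then irreducibility of $\overline{Z}$ forces $\overline{f}$ to be either an isomorphism (giving a unique section on each side, namely the inverse) or of degree $>1$; in the latter case $\sections(\overline{Z}/\overline{S})=\emptyset$, and by Corollary~\ref{bertiniirreducible} applied to $\overline{\phi}$ the curve cover $\overline{Z}_{\overline{C}_S}$ is irreducible, so $\sections(\overline{Z}_{\overline{C}_S}/\overline{C}_S)=\emptyset$ too. In every case the restriction map is bijective. The main technical subtlety is the compactification bookkeeping in the second paragraph, in particular verifying that $Z=\overline{f}^{-1}(S)$ and that the graph-closure extension is truly inverse to restriction; once this is secured, the case analysis transplants the argument of Corollary~\ref{bertinisections} essentially verbatim to the intermediate cover $\overline{f}:\overline{Z}\to\overline{S}$.
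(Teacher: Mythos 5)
Your compactification step is a genuinely different route from the paper's. The paper works directly with $Z$ viewed as finite over the open image $U\subset\mathbb{P}^n_k$ and invokes ``the same proof of Corollary~\ref{bertiniirreducible}'', which is stated only for finite covers of all of $\mathbb{P}^n_k$; by normalizing $\mathbb{P}^n_k$ in $K(S)$ and in $K(Z)$ and passing to $\overline{S},\overline{Z}$, you make that invocation literal. The graph-closure extension, combined with the fact that a finite birational morphism onto a normal scheme is an isomorphism, gives the bijections $\sections(Z/S)\cong\sections(\overline{Z}/\overline{S})$ and their curve analogues, and the verification that $S=\overline{u_0}^{-1}(U)$ and $Z=\overline{f}^{-1}(S)$ is correct. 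So the structure of the reduction is sound, and both proofs then finish with the same ramification/degree case analysis.

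There is, however, a gap in your Case 1. You assert that if the branch locus $B_f\subset\overline{S}$ of $\overline{f}$ is nonempty then $\overline{u_0}(B_f)$ is a hypersurface in $\mathbb{P}^n_k$. This uses Zariski--Nagata purity of the branch locus, which requires the base to be \emph{regular}; but $\overline{S}$, being the normalization of $\mathbb{P}^n_k$ in $K(S)$, is only normal. In principle $B_f$ could be nonempty yet of codimension $\geq 2$ (sitting over the singular locus of $\overline{S}$, entirely outside $S$), in which case a general $C$ misses $\overline{u_0}(B_f)$, $\overline{C}_S$ does not cross $B_f$, and the ramification argument never fires. Fortunately the repair is already latent in your Case 2: $\overline{f}$ is not an isomorphism iff $\deg\overline{f}>1$ (a finite birational morphism between normal schemes is an isomorphism), and for general $C$ the map $\overline{Z}_{\overline{C}_S}\to\overline{C}_S$ is a finite morphism of smooth irreducible projective curves whose degree equals $\deg\overline{f}$ (finite maps of smooth projective curves are flat, and a general $\overline{C}_S$ meets the open flat locus of $\overline{f}$). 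Irreducibility of $\overline{Z}_{\overline{C}_S}$ together with degree $>1$ already rules out a section, with no need to examine ramification at all. You should either run the whole argument on the dichotomy $\deg\overline{f}=1$ versus $\deg\overline{f}>1$, or, if you wish to keep the ramification analysis, apply purity to the branch locus of $f:Z\to S$ over the \emph{smooth} scheme $S$ rather than over $\overline{S}$; if $f$ is ramified, its branch locus in $S$ is pure of codimension one and this codimension-one locus persists in $\overline{S}$.
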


\begin{proof}
Let $U$ be the image of $S$ in $\mathbb{P}^n_k$.  Then, it suffices to show that the restriction map of sections
\[\sections(Z/U)\to \sections(Z_C/C)\]
is bijective, where $Z$ is finite, surjective and generically \'etale over $U$.    

First, we assume that $Z$ is connected.  Then, by the same proof of Corollary~\ref{bertiniirreducible}, for a general genus-0, degree-$d$ curve $C$, the inverse image $f^{-1}(C)$ is an irreducible and smooth curve.  If there exists a codimension one point of $Z$ that has ramification index strictly greater than one, then there is no section for $f$ and for the restriction of $f$ on $f^{-1}(C)$.  Thus, we can assume that $f$ is finite, surjective and everywhere \'etale.  If $\text{deg}(f)$ is strictly greater than one, then there is no section for $f$ (\cite{finiteetale}, Prop.5.3.1, p.165).  Since $f$ is flat, $\text{deg}(f|_{f^{-1}(C)})$ is equal to $\text{deg}(f)$ (\cite{Liuqing}, Exercise 5.1.25(a), p.176).  Thus, there is no section for $\text{deg}(f|_{f^{-1}(C)})$ since $f^{-1}(C)$ is irreducible.  If $\text{deg}(f)$ is one, then the restriction map of sections is trivially bijective.

Now, suppose that $Z$ is not connected.  Let $Z_1,\cdots, Z_r$ be the irreducible components of $Z$.  Then, the restriction of $f$ on $Z_i$ is finite and generically \'etale, and hence also surjective.  Since $Z$ is a disjoint union of $Z_1,\cdots, Z_r$, then we have
$$\sections(Z/U)=\bigsqcup_{i=1}^r \sections(Z_i/U),$$
and
$$\sections(Z_C/C)=\bigsqcup_{i=1}^r \sections((Z_i)_C/C). $$
Therefore, by the case when $Z$ is irreducible, the restriction map of sections is bijective.
\end{proof}

\subsection{Notations and Set up}

In the rest of this paper, we will assume that $S$ is a smooth, quasi-projective $k$-variety of dimension$\ge 2$, where $k$ is an uncountable algebraically closed field of characteristic zero.  And we fix a generically finite dominant morphism $u_0: S\rightarrow \mathbb{P}^n_k$ so that we can talk about lines and line-pairs, or curves and curve-pairs in $S$ (see Definition~\ref{linesdef}).  Note that, without changing any of the results, we will assume further that $u_0$ is a finite, \'etale morphism over a dense, Zariski open subset of $\mathbb{P}^n_k$.  There is a table in Appendix~\ref{tab:TableOfNotationForMyResearch} to help readers keep track of the notations in the rest of this article.

\subsubsection{Bad sets in parameter spaces}

Let $k$ be an algebraically closed field of characteristic zero.  Suppose that $S$ is a smooth variety over $k$ with a finite \'etale morphism onto a dense, Zariski open subset of $\mathbb{P}_k^n$, say, $u_0:S\rightarrow \mathbb{P}_k^n$.  We claim that there exists a smooth projective compactification $W$ of $S$ extending $u_0$.  Since $S$ is quasi-projective, let $W$ be the reduced projective completion of $S$.  Up to replacing $W$ by its normalization, we can assume $W$ is normal.  Then, $W$ is singular only at a codimension two closed subset(\cite{Liuqing}, Prop.4.2.24).  Moreover, since the characteristic of $k$ is zero, by Hironaka's resolution of singularities, there exists smooth $W'$ and $W'\rightarrow W$, which is birational, projective and an isomorphism on the smooth locus of $W$.  So, replacing $W$ by $W'$, we can assume further that $W$ is smooth and projective over $k$, and $S$ is a dense open subset in $W$.  We include the following diagram to clarify the situation.

\[\xymatrix{
S\ar@{^{(}->}[r]^{\begin{subarray}{c} \text{open} \\ \text{dense} \end{subarray}}\ar[d]_{\begin{subarray}{c} \text{finite} \\ \text{\'etale} \end{subarray}} &  W\ar[d]  \\
\text{Image(S)}\ar@{^{(}->}[r]_{\begin{subarray}{c} \text{\,\,\,\,\,\,\,\,\,\,open} \\ \text{\,\,\,\,\,\,\,\,\,\,dense} \end{subarray}} & \mathbb{P}^n_k
}\]

Now, the image of $W$ contains an open dense subset of $\mathbb{P}^n_k$ since $S$ is finite and \'etale over a dense open subset of $\mathbb{P}^n_k$.  Moreover, since $W$ is projective over $\spec k$, $W\rightarrow \mathbb{P}^n_k$ is projective, and hence the image of $W$ is the whole $\mathbb{P}^n_k$.  So the space of conics in $W$ and the space of curve-pairs in $W$ are the same as the space of conics and curve-pairs in $\mathbb{P}^n_k$.

\begin{definitionnotation}\label{projectivecompactification}
The smooth projective variety $W$ constructed as above is called \emph{a projective compactification} of $S$.
\end{definitionnotation}

Recall that an Abelian scheme over $S$ is defined as a proper and smooth $S$-group scheme with connected fibers.  Theorem~\ref{maintheoremjason} gives the result for restriction of sections over line-pairs for a family of Abelian varieties.  We hope to generalize Theorem~\ref{maintheoremjason} to schemes $X$ admitting a finite morphism to some Abelian scheme $A$ over $S$.  Unfortunately, in this situation, the trick of taking boundaries fails to apply on $X$ (cf. \cite{GJ}, Lemma 4.3, Lemma 4.4 and Lemma 4.5).  So the isotrivial factor of $A$ gives moduli of sections (see Remark~\ref{modulibyisotrivialfactor}), and hence we have to consider curves of higher degree instead of line-pairs.  The process of proof will involve the application of pseudo-N\'eron models.

We first fix some notations to clarify the situation.  Let $A$ be an Abelian scheme over $S$, and $f: X\rightarrow A$ be a finite $S$-morphism.  There exists an open dense subset $V\subset S$ and a finite \'etale Galois cover $p: V'\rightarrow V$ such that the pullback of $A$ to $V'$ is isogenous to a product  of a strongly nonisotrivial family of Abelian varieties and a trivial family (see the proof of Theorem 4.7 in \cite{GJ}).  Without changing any results, we can assume that $V=S$.  And denote $V'$ by $S'$.

\begin{definitionnotation}\label{chowtrace}
Let $A_0$ be an Abelian variety over $k$ such that $(A_0,v_0)$ is a Chow $S'/k$-trace of $S'\times_S A$ where $v_0: S'\times_k A_0\to S'\times_S A$ is a morphism of Abelian schemes over $S'$ (\cite{GJ}, Theorem 3.2 (i), p.315).  Then, there exists a strongly nonisotrivial Abelian scheme $Q$ over $S'$ with $v_Q: Q\to S'\times_S A$ a morphism of Abelian schemes, and $v_0\times v_Q: (S'\times_k A_0)\times_{S'}Q \to S'\times_S A$ is an isogeny of Abelian schemes over $S'$ (\cite{GJ}, Cor.3.7, p.317).  Recall that an isogeny of Abelian schemes is a surjective $S$-group morphism with finite fibers, and such an isogeny must be finite.  We denote this isogeny by $\rho_{iso}$.
\end{definitionnotation}

Since $A_0\times_S S'$ is projective over $S'$, the Weil restriction $\mathfrak{R}_{S'/S}(A_0\times_k S')$ exists (\cite{BLR}, Theorem 7.6/4, p.194).  Moreover, since $S$ is a normal scheme, it is geometrically unibranch.  Thus, $A$ is projective over $S$ (\cite{raynaud}, Th\'eor\`eme XI 1.4).  Therefore, the Weil restriction $\mathfrak{R}_{S'/S}(A\times_S S')$ also exists.

\begin{lemma}\label{closedimmersion}
The functorial morphism 
\[\mathfrak{R}_{S'/S}(v_0): \mathfrak{R}_{S'/S}(A_0\times_k S')\to \mathfrak{R}_{S'/S}(A\times_S S')\] 
is a closed immersion.
\end{lemma}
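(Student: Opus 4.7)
The strategy is to split the proof into two steps: first prove that $v_0$ is itself a closed immersion of $S'$-schemes, then invoke the general principle that Weil restriction along a finite \'etale morphism preserves closed immersions.

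For the first step, observe that by Notation~\ref{chowtrace} the product $v_0\times v_Q$ is an isogeny, so it has finite kernel; hence $\ker(v_0)$ is finite as well. In characteristic zero this kernel is finite \'etale, and by the rigidity of Abelian varieties---since $S'\times_k A_0$ is constant over $S'$---the kernel descends to a finite subgroup $H\subset A_0$ over $k$. Replacing the pair $(A_0, v_0)$ by $(A_0/H, \bar v_0)$, which still represents the Chow $S'/k$-trace of $S'\times_S A$, one may assume $v_0$ has trivial kernel. An injective homomorphism of Abelian schemes is a proper monomorphism, and by Zariski's Main Theorem a proper monomorphism is finite; a finite monomorphism is a closed immersion. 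Hence $v_0$ is a closed immersion.

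For the second step, invoke the standard fact (see \cite{BLR}, Section 7.6) that Weil restriction along the finite \'etale morphism $S'\to S$ sends closed immersions of $S'$-schemes to closed immersions of $S$-schemes. This can be verified via the functor of points: for any $S$-scheme $T$, a $T$-valued point of $\mathfrak{R}_{S'/S}(A\times_S S')$ factors through $\mathfrak{R}_{S'/S}(A_0\times_k S')$ if and only if its adjoint $(T\times_S S')$-valued point of $A\times_S S'$ lands inside the closed subscheme $A_0\times_k S'$. Locally on $T$ this is a closed condition, obtained by descending the defining ideal sheaf along the finite flat morphism $T\times_S S'\to T$.

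The main subtlety lies in the first step, specifically the rigidity argument that descends $\ker(v_0)$ to a subgroup of $A_0$ defined over $k$ so that the quotient $A_0/H$ makes sense as an Abelian variety over $k$ and the replacement is compatible with the Chow trace structure used elsewhere in the paper. Once $v_0$ is known to be a closed immersion, the closed immersion property of its Weil restriction is essentially formal from the universal property.
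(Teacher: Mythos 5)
Your proof takes a genuinely different route from the paper. The paper's argument works by faithfully flat descent: it picks a finite collection of \'etale neighborhoods $\{S_i\}$ of $S$ that split $S'\to S$, base-changes to $T=\coprod_i S_i$ (which is harmless since closed immersions descend along faithfully flat morphisms), and in that split situation the Weil restriction becomes an explicit finite product, so $\mathfrak{R}_{S'/S}(v_0)$ is a product of copies of $v_0$. The remaining input --- that $v_0$ is a closed immersion in characteristic zero --- is cited directly from Conrad's paper on the Chow trace. You instead skip the descent computation by citing the general preservation of closed immersions under Weil restriction from \cite{BLR} Section 7.6 (a cleaner version of the paper's second step), and then try to reprove from scratch that $v_0$ is a closed immersion. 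That second part is where the gap lies.

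Your argument that $v_0$ is a closed immersion leans on two delicate claims. First, you say $\ker(v_0)$ is finite, hence finite \'etale in characteristic zero; but finiteness plus characteristic zero only gives \'etaleness over a field, not over the base $S'$ --- you additionally need $\ker(v_0)$ to be \emph{flat} over $S'$, and flatness of the kernel of a homomorphism of abelian schemes is not automatic (fiber dimensions of $v_0$ could jump). Second, the ``replacement'' of $(A_0,v_0)$ by $(A_0/H,\bar v_0)$ is circular as stated: the Chow trace is a final object, unique up to unique isomorphism, so $(A_0,v_0)$ and $(A_0/H,\bar v_0)$ can both represent it only if $H=0$. You cannot \emph{assume} $v_0$ has trivial kernel by switching models; you must \emph{prove} it, and the cleanest proof runs through the universal property directly (the factorization $A_0\to A_0/H\to A_0$ of the identity forces $H=0$). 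Once these two points are addressed you are essentially reproving the statement of \cite{conrad}, p.\ 20--21, which the paper simply cites. You would be better off citing that result as the paper does, and keeping your cleaner use of the Weil-restriction preservation fact for the formal second step.
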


\begin{proof}
Let $\{S_i\}$ be a finite set of \'etale neighborhoods of $S$ such that $\coprod_iS_i\rightarrow S$ is faithfully flat and the base change of $S'\rightarrow S$ by $S_i$ is an open immersion (\cite{BLR}, Prop.2.3/8, p.49).  Denote $\coprod_i S_i$ by $T$.  Since $T\to S$ is faithfully flat and locally of finite presentation, it suffices to prove that $\mathfrak{R}_{S'/S}(v_0)\times\text{Id}_{T}$ is a closed immersion (\cite{fgaexplained}, Prop.1.15, p.9).  However, since Weil restrictions commute with base change, we are reduced to the case where $S'$ is the disjoint union $\coprod_i S_i$.  Therefore, we have the isomorphisms
\[\mathfrak{R}_{S'/S}(A_0\times_k S')=\prod_i\mathfrak{R}_{S_i/S}(A_0\times_kS'\times_{S} S_i)=\prod_i A_0\times_k S'\times_{S}S_i,\]
and
\[\mathfrak{R}_{S'/S}(A\times_S S')=\prod_i\mathfrak{R}_{S_i/S}(A\times_S S'\times_{S} S_i)=\prod_i A\times_S S'\times_{S}S_i\]
(see the proof of Prop.7.6/5 in \cite{BLR}, p.196).  Since $k$ is a field of characteristic zero, the morphism $v_0$ is a closed immersion (\cite{conrad}, p.20 and p.21).  Thus, $\mathfrak{R}_{S'/S}(v_0)$ is a closed immersion.
\end{proof}

\begin{lemmadef}\label{isotrivialfactor}
Let $A\to \mathfrak{R}_{S'/S}(A\times_S S')$ be the functorial morphism of $S$-schemes, which is a closed immersion since $A$ is separated over $S$ (\cite{BLR}, p.197).  The \emph{isotrivial factor} of the Abelian scheme $A$ over $S$ is the fiber product of $A$ and $\mathfrak{R}_{S'/S}(A_0\times_k S')$ over $\mathfrak{R}_{S'/S}(A\times_S S')$.  In other words, the following diagram is Cartesian
\[\xymatrix{
\iso(A) \ar[r]\ar[d]  & \mathfrak{R}_{S'/S}(A_0\times_k S')\ar[d]^{\mathfrak{R}_{S'/S}(v_0)}  \\
A \ar[r] & \mathfrak{R}_{S'/S}(A\times_S S'). 
}\]
Then, $\iso(A)$ is a closed Abelian subscheme of $A$ over $S$.  
\end{lemmadef}

\begin{proof}
For any $S$-scheme $W$, the Weil restriction $\mathfrak{R}_{S'/S}(A\times_S S')$ represents the functor
\[W\mapsto \text{Hom}_{S'}(W\times_S S', A\times_S S').\]
Thus, $\mathfrak{R}_{S'/S}(A\times_S S')$ is a group scheme over $S$ because $A\times_S S'$ is a group scheme.  Moreover, since $A\to \mathfrak{R}_{S'/S}(A\times_S S')$ is induced by the identity on $A\times_S S'$, the functorial morphism $A\to \mathfrak{R}_{S'/S}(A\times_S S')$ is a homomorphism of group schemes.  Similarly, $\mathfrak{R}_{S'/S}(A_0\times_k S')$ is a group scheme over $S$ and $\mathfrak{R}_{S'/S}(A_0\times_k S')\to \mathfrak{R}_{S'/S}(A\times_S S')$ is a homomorphism of group schemes.  Therefore, $\iso(A)$ is a group scheme over $S$.

Let $T$ be the disjoint union of schemes as in Lemma~\ref{closedimmersion}.  Then, $\iso(A)\times_S T$ is $\prod_i A_0\times_k S'\times_{S}S_i$.  Thus, $\iso(A)$ is smooth over $S$ by the standard descent results (\cite{fgaexplained}, Prop.1.15, p.9).  Moreover, since $\iso(A)$ is a closed subscheme of the projective $S$-scheme $A$, $\iso(A)$ is projective over $S$.

Let $b_i$ be a point in $S_i$ and $b$ be the image of $b_i$ in $S$.  Then, $\kappa(b_i)$ is a finite separable extension of $\kappa(b)$.  Since surjectivity is stable under base change and $S'\times_S S_i\to S_i$ is an open immersion, the morphism $S'\times_S S_i\to S_i$ is an isomorphism.  Let $b''$ be a point in $S'\times_S S_i$ whose image in $S_i$ is $b_i$.  Then, $\kappa(b'')$ is the same as $\kappa(b_i)$.  Denote the image of $b''$ in $S'$ by $b'$.  

\[\xymatrix{
S'\ar[d]_{\begin{subarray}{c} \text{finite} \\ \text{\'etale} \end{subarray}} & S'\times_S S_i\ar[l]\ar[d]^{\begin{subarray}{c} \text{open} \\ \text{immersion} \end{subarray}} \\
S & S_i\ar[l]^{\text{\'etale}}
}\]
Then, $\mathfrak{R}_{S'/S}(A\times_S S')\times_S S_i$ equals $\mathfrak{R}_{S'\times_S S_i/S_i}(A\times_S S'\times_S S_i)$ which is $A\times_S S'\times_S S_i$ since $S'\times_S S_i\to S_i$ is an isomorphism.  Therefore, the geometric fiber $\mathfrak{R}_{S'/S}(A\times_S S')_{\overline{b}}$ is equal to 
\[(A\times_S S'\times_S S_i)\times_{S_i}\spec\overline{\kappa(b_i)}\]
which is the same as
\[A\times_S (S'\times_S S')\times_{S'}\spec\overline{\kappa(b'')}.\]
Let $G$ be the Galois group of $S'\to S$ (\cite{BLR}, Example B, p.139).  Then, $S'\times_S S'$ is isomorphic to the disjoint union of $S'$, $G\times S'$.  So the geometric fiber is
\[A\times_S (G\times S')\times_{S'}\spec\overline{\kappa(b'')},\]
i.e., a disjoint union of $|G|$ copies of the geometric fiber $A_{\overline{b}}$.  The same argument gives that the geometric fiber $\mathfrak{R}_{S'/S}(A_0\times_k S')_{\overline{b}}$ is a disjoint union of $|G|$ copies of the Abelian variety $A_0\times_k\spec\overline{\kappa(b)}$.

Because $A\to \mathfrak{R}_{S'/S}(A\times_S S')$ is a closed immersion, this morphism includes the geometric fiber $A_{\overline{b}}$ as one copy of the disjoint union of $|G|$ copies of $A_{\overline{b}}$.  Therefore, the geometric fiber of $\iso(A)$ over $b$ is the Abelian variety $A_0\times_k\spec\overline{\kappa(b)}$, which is irreducible.  As a consequence, $\iso(A)$ is a smooth, projective group scheme over $S$ with connected geometric fibers.  So $\iso(A)$ is an Abelian $S$-scheme.
\end{proof}

\begin{remark}\label{fiberdimension}
Let $b$ be a closed point of $S$.  Then the geometric fiber of $\iso(A)$ at $b$ is just $A_0\times_k\spec \overline{\kappa(b)}$, where $A_0$ is the Chow $S'/k$-trace.  Since $\iso(A)$ is smooth and projective over $S$, all the fibers of $\iso(A)$ over $S$ have the same dimension (\cite{Hart}, Cor.III.9.10, p.263).  Thus, the fiber dimension of $\iso(A)$ is just $\dim A_0$.
\end{remark}

\begin{lemmadef}\label{isotrivialquotient}
There exists a morphism of $S$-schemes $\pi: A\to \iso(A)$ such that the composition
\[\iso(A){\xrightarrow{\hspace*{1cm}}}A\overset{\pi}{\xrightarrow{\hspace*{1cm}}} \iso(A)\]
is an isogeny on the generic fiber of $\iso(A)$.  The morphism $\pi$ is called the \emph{isotrivial quotient} of the Abelian scheme $A$.
\end{lemmadef}

\begin{proof}
Over the function field $K$ of $S$, $\iso(A)_K$ is an Abelian subvariety of $A_K$.  By Poincar\'e's complete reducibility theorem (\cite{isogeny}, Theorem 8.9.3, p.267), there is an Abelian subvariety $B$ of $A_K$ such that the restriction of multiplication gives an isogeny 
\[m: \iso(A)_K\times_K B\to A_K.\]
Since $K$ is a perfect field, there is a dual isogeny
\[\widehat{m}: A_K\to\iso(A)_K\times_K B\]
such that $\widehat{m}\circ m:\iso(A)_K\times_K B\to \iso(A)_K\times_K B$ is the multiplication by $\text{deg}(m)$.  Let $\projection_1$ be the first projection from $\iso(A)_K\times_K B$ to $\iso(A)_K$.  Let $\iota: \iso(A)_K\to A_K$ be the closed immersion.  Then, $(\projection_1\circ\widehat{m})\circ\iota$ is the multiplication by $\text{deg}(m)$ on $\iso(A)_K$, which is an isogeny.

Since the Abelian scheme $\iso(A)$ is a N\'eron model of $\iso(A)_K$ (\cite{BLR}, Prop.1.2/8, p.15), the morphism $\projection_1\circ\widehat{m}: A_K\to \iso(A)_K$ extends to an open dense subset of $S$ containing all codimension one points of $S$ (Corollary~\ref{higherdimensionneron}).  Therefore, we have a rational map $\pi: A\dashrightarrow\iso(A)$.  Since $S$ is regular, the rational map $\pi$ is defined everywhere (\cite{BLR}, Cor.8.4/6, p.234).
\end{proof}

Denote $\rho: \iso(A)\rightarrow S$ as the structure morphism of $\iso(A)$.  If $b$ is a point in $S$, we will denote the fiber of $\iso(A)$ over $b$ by $\iso(A)_b$.  Now we define the bad set for sections and curve-pairs.  Fix a point $b\in S$, and let $p\in \iso(A)$ and $\sigma$ be a section of $A$ over $S$ mapping $b$ to $p$.  Denote $\mathfrak{m}=(m,q,l)$ be a curve-pair in $S$ where $m$ is a smooth curve, $l$ is a conic such that they intersect at $q\in S$.  

\begin{definitionnotation}\label{sectionsdefinition}
Let $\sections_b^p(A/S)$ be the set of sections of $A$ over $S$ such that every section in the set maps $b\in S$ to $p\in\iso(A)$ via $\pi: A\to \iso(A)$. 
\end{definitionnotation}
Consider the following three properties,
\begin{enumerate}[label=(\roman*)]
\item $\sections_b^p(A/S)\rightarrow\sections_b^p((A\times_S\mathfrak{m})/\mathfrak{m})$ is bijective;
\item $\sections_b^p((X\times_{A,\sigma}S)/S)\rightarrow \sections_b^p((X\times_{A, \sigma}S\times_S l)/l)$ is bijective,
\end{enumerate}
where the maps of the sets of sections are restrictions and the fiber product $X\times_{A,\sigma}S$ comes from the section $\sigma$ from $S$ to $A$ mapping $b$ to $p$.

Then, intuitively, the bad set will be 
\[\{(p,\sigma),(m,q,l)|either\,\,(i)\,\,is\,\,false\,\,or\,\,(ii)\,\,is\,\,false\}.\]
We give the rigorous construction of the bad set as following.  From now to the end of this subsection, we fix a point $b\in S$.  

\begin{definitionnotation}\label{grothendieckpifunctor}
Let $\mathfrak{S}\rightarrow \iso(A)$ be an irreducible component of the relative Grothendieck-$\Pi$-scheme parameterizing a point $p$ of $\iso(A)$ and a section $\sigma$ of $A$ over $S$ that maps $b$ to $p$ in the isotrivial quotient $\iso(A)$.   Let $\mathbb{P}^n_{\mathfrak{S}}$, resp. $S_{\mathfrak{S}}$, resp. $A_{\mathfrak{S}}$, resp. $X_{\mathfrak{S}}$, be the base change $\mathfrak{S}\times_k\mathbb{P}^n_k$, resp. $\mathfrak{S}\times_k S$, resp. $\mathfrak{S}\times_k A$, resp. $\mathfrak{S}\times_k X$.  Denote by $\projection_1$ and $\projection_2$ the projections from $S_{\mathfrak{S}}$ to $\mathfrak{S}$ and $S$ respectively.    
\end{definitionnotation}

\begin{definitionnotation}\label{md2}
Let $\mathcal{M}_{d+2}(\mathbb{P}^n_k, \tau)$ be the stack parameterizing pointed curve-pairs of degree $d+2$ in $\mathbb{P}^n_k$.  These are 4-tuples for such a pair $(s, [m], t, [l])$ consisting of a point $s$ of $\mathbb{P}^n_k$, a smooth curve $m$ of degree $d$ that contains $s$, a point $t$ on $m$, and a conic $l$ that contains $t$.  
\end{definitionnotation}

The marked point $s$ defines an evaluation morphism 
\[\rho_{ev}: \mathcal{M}_{d+2}(\mathbb{P}^n_k, \tau) \rightarrow \mathbb{P}^n_k, (s, [m], t, [l])\mapsto s.\]

\begin{definitionnotation}\label{mds}
Denote by $\mathcal{M}_{d+2}({\mathbb{P}^n_{\mathfrak{S}}}{/\mathfrak{S}}, \tau)$ the stack over $\mathfrak{S}$ that sends an $\mathfrak{S}$-scheme $T$ to the set of closed subschemes $\mathcal{C}_T$, flat over $T$, of $\mathbb{P}^n_T\times_T\mathbb{P}^n_T$ such that every geometric fiber of $\gamma_T$ is a pointed curve-pair, $([m\cup_t l], s)$, of a degree $d+2$ curve where $\gamma_T: \mathcal{C}_T\mapsto T$ is the structure morphism of $\mathcal{C}_T$, and $s\in m$.  
\end{definitionnotation}

For every $\mathcal{C}_T$, let $\mathcal{C}_T\rightarrow \mathbb{P}^n_T$ be the composition of the closed immersion $\mathcal{C}_T\rightarrow \mathbb{P}^n_T\times_T\mathbb{P}^n_T$ and the projection of $\mathbb{P}^n_T\times_T\mathbb{P}^n_T$ to its second factor.  Then, this defines the evaluation map 
\[\rho_{ev}^{\mathfrak{S}}: \mathcal{M}_{d+2}({\mathbb{P}^n_{\mathfrak{S}}}{/\mathfrak{S}}, \tau)\rightarrow \mathbb{P}^n_{\mathfrak{S}}.\]
Moreover, take $u_0^{\mathfrak{S}}: S_{\mathfrak{S}}\rightarrow \mathbb{P}^n_{\mathfrak{S}}$ as the morphism $\projection_1\times (u_0\circ\projection_2)$.  The base change of the section mapping $\spec k$ to $b\in S$ gives a section $\lambda$ of $S_{\mathfrak{S}}\rightarrow \mathfrak{S}$, and hence $\mu(b)=u_0^{\mathfrak{S}}\circ \lambda$ is a section of $\mathbb{P}^n_{\mathfrak{S}}$ over $\mathfrak{S}$.  

\[\xymatrix{
\mathfrak{S}\ar[d]_{\text{Id}_{\mathfrak{S}}}\ar@/^1pc/[r]^{\lambda}  &  S_{\mathfrak{S}}\ar[r]_{\projection_2}\ar[l]^{\projection_1}\ar[d]^{u_0^{\mathfrak{S}}}  &  S\ar[d]^{u_0}  \\
\mathfrak{S}  &  \mathbb{P}^n_{\mathfrak{S}}\ar[r]_{\projection_2}\ar[l]^{\projection_1}  &  \mathbb{P}_k^n
}\]

\begin{definitionnotation}\label{mdbb}
Denote by $\mathcal{M}_{d+2}({\mathbb{P}^n_{\mathfrak{S}}}{/\mathfrak{S}}, \tau)_{\mu(b)}$ the fiber product of $\rho_{ev}^{\mathfrak{S}}$ and the section $\mu(b): \mathfrak{S}\rightarrow \mathbb{P}^n_{\mathfrak{S}}$ over $\mathbb{P}^n_{\mathfrak{S}}$.  
\end{definitionnotation}
Then, $\mathcal{M}_{d+2}({\mathbb{P}^n_{\mathfrak{S}}}{/\mathfrak{S}}, \tau)_{\mu(b)}$ parameterizes the set of pairs of sections and curve-pairs $((b, [m], t, [l]), (\sigma, p))$ where $\sigma$ is a section of $A$ over $S$ mapping $b$ to $p$ and the marked point on the curve $[m]$ is the fixed point $b$.

\begin{definitionnotation}\label{md01}
Denote by $\mathcal{M}_{0,d}({\mathbb{P}^n_{\mathfrak{S}}}{/\mathfrak{S}}, 1)$ the stack over $\mathfrak{S}$ sending every $\mathfrak{S}$-scheme $T$ to the set of closed subschemes $\mathcal{B}_T$ of $\mathbb{P}^n_T\times_T\mathbb{P}^n_T$ such that every geometric fiber of $\mathcal{B}_T\rightarrow T$ is a genus-0, degree-$d$ curve with a marked closed point in $\mathbb{P}^n_k$, $([m], s)$.
\end{definitionnotation}

There is a forgetful morphism that sends a pointed curve-pair to the curve $[m]$ and the marked point $s$,
\[\mathcal{M}_{d+2}({\mathbb{P}^n_{\mathfrak{S}}}{/\mathfrak{S}}, \tau)\rightarrow \mathcal{M}_{0,d}({\mathbb{P}^n_{\mathfrak{S}}}{/\mathfrak{S}}, 1), (s, [m], t, [l])\mapsto (s, [m]).\]
Compose this morphism with the projection from $\mathcal{M}_{d+2}({\mathbb{P}^n_{\mathfrak{S}}}{/\mathfrak{S}}, \tau)_{\mu(b)}$ to $\mathcal{M}_{d+2}({\mathbb{P}^n_{\mathfrak{S}}}{/\mathfrak{S}}, \tau)$.  We get an $\mathfrak{S}$-morphism
\[\varphi: \mathcal{M}_{d+2}({\mathbb{P}^n_{\mathfrak{S}}}{/\mathfrak{S}}, \tau)_{\mu(b)}\to\mathcal{M}_{0,d}({\mathbb{P}^n_{\mathfrak{S}}}{/\mathfrak{S}}, 1).\]
We summarize the objects in the following diagram.
\[\xymatrix{
\mathcal{M}_{d+2}({\mathbb{P}^n_{\mathfrak{S}}}{/\mathfrak{S}}, \tau)_{\mu(b)}\ar[r]\ar[d]\ar@/_5pc/[dd]_{\varphi}   &  \mathfrak{S}\ar[d]^{\mu(b)}   \\
 \mathcal{M}_{d+2}({\mathbb{P}^n_{\mathfrak{S}}}{/\mathfrak{S}}, \tau)\ar[r] _{\,\,\,\,\,\,\,\,\,\,\,\,\,\,\,\,\,\,\,\,\rho_{ev}^{\mathfrak{S}}}\ar[d]  &  \mathbb{P}^n_{\mathfrak{S}}  \\
\mathcal{M}_{0,d}({\mathbb{P}^n_{\mathfrak{S}}}{/\mathfrak{S}}, 1)  &  
}\]

\begin{definitionnotation}\label{md0b}
Denote by $\mathcal{M}_{d+2}(\mathbb{P}^n_k, \tau, b)$ the stack parameterizing pointed curve-pairs in $\mathbb{P}^n_k$ such that the marked point on $m$ is $b$. 
\end{definitionnotation}

\begin{definitionnotation}\label{md0bb}
Denote by $\mathcal{M}_{0,d}(\mathbb{P}^n_k, b)$ the stack parameterizing genus-0, degree-$d$ curves with a marked point $b$ in $\mathbb{P}^n_k$.
\end{definitionnotation}

Let $\alpha$ be a $k$-point of $\mathfrak{S}$.  Then, the fiber of the morphism $\varphi$ over $\alpha$ corresponds to a $k$-morphism
\[\varphi_{\alpha}: \mathcal{M}_{d+2}(\mathbb{P}^n_k, \tau, b)\rightarrow \mathcal{M}_{0,d}(\mathbb{P}^n_k, b)\]
which is the forgetful morphism.  So we have the following diagram.
\[\xymatrix{
V_{\beta}\ar@{^(->}[r]^{\begin{subarray}{c} \text{open} \\ \text{dense} \end{subarray}}\ar[rd] & U_{\beta}\ar[r]\ar[d]& \mathcal{M}_{d+2}(\mathbb{P}^n_k, \tau, b)\ar[r] \ar[d]_{\varphi_{\alpha}} & \mathcal{M}_{d+2}({\mathbb{P}^n_{\mathfrak{S}}}{/\mathfrak{S}}, \tau)_{\mu(b)}\ar[d]^{\varphi}  \\
& \spec \beta \ar[r]& \mathcal{M}_{0,d}(\mathbb{P}^n_k, b)\ar[r]\ar[d] & \mathcal{M}_{0,d}({\mathbb{P}^n_{\mathfrak{S}}}{/\mathfrak{S}}, 1)\ar[d]  \\
& & \spec \alpha \ar[r]  &  \mathfrak{S}
}\]

For every $k$-point $\beta$ of $\mathcal{M}_{0,d}(\mathbb{P}^n_k, b)$, the fiber of $\varphi_{\alpha}$ is a Zariski open dense subset $U_{\beta}$ of the variety parameterizing conic curves in $\mathbb{P}^n_k$ that intersect the genus-0, degree-$d$ curve $m$ corresponding to $\beta$.  

\begin{lemma}
For every section $\sigma$ that maps $b$ to $p$ and corresponds to $\alpha$, the restriction of sections
\[\sections((X\times_{A,\sigma}S)/S)\rightarrow \sections((X\times_{A, \sigma}S\times_S l)/l)\]
is bijective for a general conic curve $l$.  
\end{lemma}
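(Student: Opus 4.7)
The strategy is to reduce the statement to Theorem~\ref{bertinisectionsrevised} with $d=2$, applied to a suitable normal model of $W := X \times_{A,\sigma} S$. Because $f: X \to A$ is finite, the base change $W \to S$ is finite. Let $W'$ denote the union (with reduced structure) of those irreducible components of $W$ that dominate $S$; then $W' \to S$ is finite and surjective. Let $Z \to W'$ be the normalization: since $W'$ is of finite type over $k$ and therefore Nagata, the normalization morphism is finite, so $Z \to S$ is a finite surjective morphism from a normal scheme, exactly the hypothesis required by Theorem~\ref{bertinisectionsrevised}.

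The next step is to identify the relevant sets of sections of $W$ with those of $Z$. Any $S$-section of $W$ has image contained in a single irreducible component $W_i$ of $W^{\text{red}}$, and the composition $S \to W_i \to S$ being the identity forces $W_i$ to dominate $S$; hence the section factors through $W'$, and by the universal property of normalization applied to the normal scheme $S$ it lifts uniquely to $Z$. This gives $\sections(W/S) = \sections(Z/S)$. For a general conic $l \subset S$, i.e.\ $l = S \times_{\mathbb{P}^n_k} C$ for a general smooth conic $C \subset \mathbb{P}^n_k$, the curve $l$ is smooth and irreducible (by Bertini applied to the generically \'etale morphism $u_0$, cf.\ Theorem~\ref{bertinismoothness}), and since each non-dominating component of $W$ has image a proper closed subset of $S$, a general $l$ is not contained in any of these images. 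The same normalization argument then yields $\sections(W_l/l) = \sections(Z_l/l)$.

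Finally, apply Theorem~\ref{bertinisectionsrevised} to the finite surjective morphism $Z \to S$ together with $u_0: S \to \mathbb{P}^n_k$, which is finite and \'etale over a dense open subset by the standing hypothesis of this section. Taking $d = 2$, we conclude that for a general smooth conic $C \subset \mathbb{P}^n_k$ the restriction map $\sections(Z/S) \to \sections(Z_C/C)$ is bijective, where $Z_C = Z \times_{\mathbb{P}^n_k} C = Z \times_S l$. Stringing together the three bijections produces the desired bijection between $\sections((X \times_{A,\sigma} S)/S)$ and $\sections((X \times_{A,\sigma} S \times_S l)/l)$.

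The main technical obstacle is controlling the non-dominating components of $W$, but this is settled by the observation that their images are proper closed subsets of $S$ and that a general conic in $S$, being the preimage under $u_0$ of a general conic in $\mathbb{P}^n_k$, avoids being contained in any finite collection of proper closed subsets. Everything else is bookkeeping around the normalization, which behaves well since we are in characteristic zero and all schemes in sight are Nagata.
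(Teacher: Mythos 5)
Your proposal takes the same route as the paper, which simply invokes Theorem~\ref{bertinisectionsrevised} with $d=2$: the paper's entire proof of this lemma is the one-line ``Take $C$ as a conic curve in Theorem~\ref{bertinisectionsrevised}.'' You correctly observe that $W = X\times_{A,\sigma}S$ is finite over $S$ but need not be normal, so that the theorem does not apply verbatim, and you supply a reduction to the normal case via $W'$ and the normalization $Z\to W'$. That preliminary reduction is a genuine and appropriate supplement to what the paper writes down, and the rest of your argument is sound.

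There is, however, one step where your justification does not actually apply, namely where you write that ``the same normalization argument then yields $\sections(W_l/l) = \sections(Z_l/l)$.'' The normalization argument you give for $\sections(W/S) = \sections(Z/S)$ is correct precisely because a section $S\to W'$ has image an entire irreducible component of $W'$ (the image is closed of full dimension by the identity condition), so the morphism $S\to W'$ is dominant onto a component and $S$ is normal; the universal property of $\nu:Z\to W'$ then gives the unique lift. A section $l\to W'_l\subset W'$ is \emph{not} dominant onto any component of $W'$, since its image is a curve inside a variety of dimension $\geq 2$, so the universal property of the normalization of $W'$ does not produce the lift. The correct replacement is to note that for a general conic $l$ the scheme $Z_l$ is smooth (hence normal) by Theorem~\ref{bertinismoothness}, that $Z_l\to W'_l$ is finite, and that it is birational on each irreducible component because the exceptional locus of $\nu$ pushes forward under the finite map $Z\to S$ to a proper closed subset of $S$ which a general conic meets only in a zero-dimensional set; therefore $Z_l$ \emph{is} the normalization of the one-dimensional scheme $W'_l$. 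Now the image of a section $l\to W'_l$ is a full irreducible component of $W'_l$, so the universal property of the normalization of $W'_l$ does apply and gives the desired unique lift. With this correction, your reduction to Theorem~\ref{bertinisectionsrevised} is complete.
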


\begin{proof}
Take $C$ as a conic curve in Theorem~\ref{bertinisectionsrevised}.
\end{proof}

Therefore, there is a maximal open dense subset $V_{\beta}$ of $U_{\beta}$ such that the restrictions of sections on conic curves are bijective.  Let $\mathcal{V}_{\alpha}$ be the union of $V_{\beta}$ in $\mathcal{M}_{d+2}(\mathbb{P}^n_k, \tau, b)$.  This is an open dense subset of $\mathcal{M}_{d+2}(\mathbb{P}^n_k, \tau, b)$.  Take the union $\mathcal{U}$ of $\mathcal{V}_{\alpha}$ in $\mathcal{M}_{d+2}({\mathbb{P}^n_{\mathfrak{S}}}{/\mathfrak{S}}, \tau)_{\mu(b)}$, which is also open dense in $\mathcal{M}_{d+2}({\mathbb{P}^n_{\mathfrak{S}}}{/\mathfrak{S}}, \tau)_{\mu(b)}$.   We summarize the notations as following.

\[\xymatrix{
V_{\beta}\ar@{^(->}[d]_{\begin{subarray}{c} \text{open} \\ \text{dense} \end{subarray}}\ar[r]  &  \mathcal{V}_{\alpha}\ar@{^(->}[d]_{\begin{subarray}{c} \text{open} \\ \text{dense} \end{subarray}}\ar[r]  &  \mathcal{U}\ar@{^(->}[d]_{\begin{subarray}{c} \text{open} \\ \text{dense} \end{subarray}}  \\
U_{\beta}\ar[r]\ar[d]& \mathcal{M}_{d+2}(\mathbb{P}^n_k, \tau, b)\ar[r] \ar[d]_{\varphi_{\alpha}} & \mathcal{M}_{d+2}({\mathbb{P}^n_{\mathfrak{S}}}{/\mathfrak{S}}, \tau)_{\mu(b)}\ar[d]^{\varphi}  \\
\spec \beta \ar[r]& \mathcal{M}_{0,d}(\mathbb{P}^n_k, b)\ar[r] & \mathcal{M}_{0,d}({\mathbb{P}^n_{\mathfrak{S}}}{/\mathfrak{S}}, 1)
}\]

\begin{definitionnotation}\label{www}
Suppose that $\mathcal{W}$ is the open dense subset of $\mathcal{M}_{d+2}({\mathbb{P}^n_{\mathfrak{S}}}{/\mathfrak{S}}, \tau)$ parameterizing the degree $d+2$ curve-pairs $\mathfrak{m}$ such that the restriction of sections
\[\sections(A/S)\rightarrow\sections((A\times_S\mathfrak{m})/\mathfrak{m})\]
is bijective.  We will prove the existence of such open dense $\mathcal{W}$ in Corollary~\ref{inductionI}.  
\end{definitionnotation}

Let $\mathcal{W}'=\mathcal{U}\cap (\mathcal{W}\times_{\mu(b)}\mathfrak{S})$.  Then, we define the bad set $\mathcal{D}$ as the complement of $\mathcal{W}'$ in $\mathcal{M}_{d+2}({\mathbb{P}^n_{\mathfrak{S}}}{/\mathfrak{S}}, \tau)_{\mu(b)}$.  And, by our construction, $\mathcal{D}$ parameterizes the pointed pairs $\{(\sigma, p), (b, [m], q, [l])\}$ such that either (i) is false or (ii) is false.  We will denote the bad set $\mathcal{D}$ by $\mathcal{D}_b$ since it depends on the choice of $b$ by construction.

\[\xymatrix{
\mathcal{W}'\ar@{^(->}[d]_{\,\,\,\,\,\,\,\,\,\,\,\,\,\,\,\,\begin{subarray}{c} \text{open} \\ \text{dense} \end{subarray}}\ar@{^(->}[r]^{\begin{subarray}{c} \text{open} \\ \text{dense} \end{subarray}\,\,\,\,\,\,\,\,\,\,\,\,\,\,\,\,} & \mathcal{U}\ar@{^(->}[d]^{\begin{subarray}{c} \text{open} \\ \text{dense} \end{subarray}}  &   \\
\mathcal{W}\times_{\mu(b)}\mathfrak{S}\ar@{^(->}[r]_{\begin{subarray}{c} \text{open} \\ \text{dense} \end{subarray}\,\,\,\,\,\,\,\,\,\,\,\,\,\,\,\,}\ar[d] & \mathcal{M}_{d+2}({\mathbb{P}^n_{\mathfrak{S}}}{/\mathfrak{S}}, \tau)_{\mu(b)}\ar[r]\ar[d]   &  \mathfrak{S}\ar[d]^{\mu(b)}   \\
\mathcal{W}\ar@{^(->}[r]_{\begin{subarray}{c} \text{open} \\ \text{dense} \end{subarray}\,\,\,\,\,\,\,\,\,\,\,\,\,\,\,\,\,\,\,\,\,\,} &  \mathcal{M}_{d+2}({\mathbb{P}^n_{\mathfrak{S}}}{/\mathfrak{S}}, \tau)\ar[r] _{\,\,\,\,\,\,\,\,\,\,\,\,\,\,\,\,\,\,\,\,\rho_{ev}^{\mathfrak{S}}}  &  \mathbb{P}^n_{\mathfrak{S}}  
}\]

Note that there are two natural projections from $\mathcal{D}_b$ to $\mathfrak{S}$ and $\mathcal{M}_{d+2}(\mathbb{P}^n_k, \tau, b)$, i.e.
\[\phi_1: \mathcal{D}_b\to \mathfrak{S},\,\,\, by\,\,\{(\sigma, p), (b, [m], q, [l])\}\mapsto (\sigma, p),\]
\[\phi_2: \mathcal{D}_b\to \mathcal{M}_{d+2}(\mathbb{P}^n_k, \tau, b), \,\,\,by\,\,\{(\sigma, p), (b, [m], q, [l])\}\mapsto (b, [m], q, [l]).\]
By projecting once more from $\mathfrak{S}$ to $\iso(A)$, we get a morphism
\[\phi_3: \mathcal{D}_b\to \iso(A),\,\,\, by\,\,\{(\sigma, p), (b, [m], q, [l])\}\mapsto  p.\]
Denote the fiber of $\phi_3$ over $p$ by $\mathcal{D}_b^p$.

\begin{definition}\label{badsets}
The set $\mathcal{D}_b$ constructed above is called \emph{the bad set of sections and curve-pairs marked by $b$}.  For $\{(\sigma, p), (b, [m], q, [l])\}$ in $\mathcal{D}_b^p$, $p$ is called a \emph{bad point} for the curve-pair $(b, [m], q, [l])$, and $\sigma$ is called a \emph{bad section} for $(b, [m], q, [l])$.
\end{definition}

\begin{definition}\label{goodcurves}
Fix $b\in S$ and $p\in\iso(A)_b$ closed points.  A curve-pair $\mathfrak{m}=(b, [m], q, [l])$ with $b$ marked on $[m]$ is called \emph{good for a section $\sigma$ in $\sections_b^p(A/S)$} if the following three properties hold
\begin{enumerate}[label=(\roman*)]
\item $\sections_b^p(A/S)\rightarrow\sections_b^p((A\times_S \mathfrak{m})/\mathfrak{m})$ is bijective,
\item $\sections_b^p((X\times_{A,\sigma}S)/S)\rightarrow \sections_b^p((X\times_{A, \sigma}S\times_S l)/l)$ is bijective,
\end{enumerate}
where the maps of the set of sections are restrictions and the fiber product $X\times_{A,\sigma}S$ comes from the section $\sigma$ from $S$ to $A$.
\end{definition}

\begin{definition}\label{goodirreduciblecurves}
Fix $b\in S$ and $p\in\iso(A)_b$ closed points.  An irreducible smooth curve $C$ with a marked point $b\in S$ is called \emph{good for a section $\sigma$ in $\sections_b^p(A/S)$} if the following two properties hold
\begin{enumerate}[label=(\roman*)]
\item $\sections_b^p(A/S)\rightarrow\sections_b^p((A\times_S C)/C)$ is bijective,
\item $\sections_b^p((X\times_{A,\sigma}S)/S)\rightarrow \sections_b^p((X\times_{A, \sigma}S\times_S m)/m)$ is bijective,
\end{enumerate}
where the maps of the set of sections are restrictions and the fiber product $X\times_{A,\sigma}S$ comes from the section $\sigma$ from $S$ to $A$.
\end{definition}

\begin{remark}\label{markonconic}
$\mathcal{D}_b^p$ could also be defined for curve-pairs with the marked point $b$ on the conic.  Definition~\ref{badsets} and Definition~\ref{goodcurves} are defined in the same way.
\end{remark}

\subsubsection{The space of curve-pairs}

\begin{definitionnotation}\label{mdvb}
Let $\mathcal{H}=\mathcal{M}_{d+2}(\mathbb{P}^n_k, \varepsilon, b)$ be the scheme representing the functor that sends every algebraic $k$-scheme $T$ to the set of closed subschemes $\mathcal{A}_T$ of $\mathbb{P}^n_T$ such that every geometric fiber of $\mathcal{A}_T$ over $T$ is a genus zero, degree $d+2$ curve with a marked point $b$, which can be reducible but at worst a curve-pair.  And, if the curve is a curve-pair, the marked point $b$ is on the irreducible component that is not a conic.  
\end{definitionnotation}

Let $X$ be a scheme over $S$ such that it admits a pseudo-N\'eron model $\widetilde{X}$ over an open dense $\widetilde{S}$ in $S$ of codimension at least two, e.g., $X$ admits a finite morphism to an Abelian scheme $A$ over $S$.

\begin{definitionnotation}\label{xxx}
Delete from $\mathcal{M}_{d+2}(\mathbb{P}_k^n, \tau, b)$ the closed subset representing curve-pairs in which the degree $d$ smooth curve $C_0$ and the conic $C_1$ are tangent to each other or intersect at more than two points.  The let $\mathcal{X}$ be the subspace of $\mathcal{M}_{d+2}(\mathbb{P}_k^n, \tau, b)$ after this deletion.  
\end{definitionnotation}

\begin{definitionnotation}\label{xxx0}
Denote the open locus of genus-$0$, degree-$(d+2)$ curves or curve-pairs contained in $\widetilde{S}$ by $\mathcal{H}_0$ and $\mathcal{X}_0$ respectively.  Let the space of degree-$(d+2)$ curves or curve-pairs in $S$ be $\mathcal{H}_1$ and $\mathcal{X}_1$ respectively.  
\end{definitionnotation}

\begin{definitionnotation}\label{ch01}
Let $\mathcal{C}_{\mathcal{H}}$ be the universal family of degree-$(d+2)$ curve over $\mathcal{H}$, i.e., every geometric fiber of $\mathcal{C}_{\mathcal{H}}$ over $\mathcal{H}$ is a degree-$(d+2)$ curve in $W$.  We denote the open subset of universal family of degree-$(d+2)$ curves in $\widetilde{S}$ (resp. $S$) by $\mathcal{C}_{\mathcal{H}_0}$ (resp. $\mathcal{C}_{\mathcal{H}_1}$).  
\end{definitionnotation}

\begin{definitionnotation}\label{universalsectionlifting}
Let $H$ be the scheme which is universal for the problem of lifting curves in $\mathcal{H}_1$ from $S$ to $X$, \emph{and mapping the point $b$ to $p$}.  
\end{definitionnotation}

Equivalently, we have the following diagram, where $\Phi$ is the composition of the structure morphism of $H$ over $\mathcal{H}_1$ and the open immersion $\mathcal{H}_1\rightarrow \mathcal{H}_0$ and all squares are Cartesian.

\[\xymatrix{
 H\times_{\mathcal{H}_0}\mathcal{C}_{\mathcal{H}_0}\times_{\widetilde{S}}\widetilde{X}\ar[r]\ar[d] &  \widetilde{X}\times_{\widetilde{S}}\mathcal{C}_{\mathcal{H}_0}\ar[r]\ar[d] &  \widetilde{X}\ar[d] \\
H\times_{\mathcal{H}_0}\mathcal{C}_{\mathcal{H}_0}\ar[d]\ar[r] & \mathcal{C}_{\mathcal{H}_0}\ar[r]\ar[d] & \widetilde{S}   \\
H \ar[r]^{\Phi} & \mathcal{H}_0   &
}\]

We note that $H$ is locally of finite type over $\mathcal{H}_0$, and hence over $k$, but may have infinitely many irreducible components.  Every irreducible component is quasi-projective over $\mathcal{H}_0$.  Note that for every closed point $x\in \mathcal{H}_0$ the fiber of $\Phi$, $H_x$, is either empty or a discrete, 0-dimensional variety consisting of at most countably many points (Lemma~\ref{moduliofsections}).  In particular, $H_x$ is a reduced scheme over $k$.

\begin{definitionnotation}\label{universalhhhhhhhh}
There is a universal section of $H\times_{\mathcal{H}_0}\mathcal{C}_{\mathcal{H}_0}\times_{\widetilde{S}}\widetilde{X} \rightarrow H\times_{\mathcal{H}_0}\mathcal{C}_{\mathcal{H}_0}$.  We compose it with the top line of the digram, and denote the morphism by 
\[\varrho: H\times_{\mathcal{H}_0}\mathcal{C}_{\mathcal{H}_0}\rightarrow \widetilde{X},\]
which factors through the inclusion $X\to \widetilde{X}$.  
\end{definitionnotation}

\subsection{Restrictions of Sections for Abelian Schemes}

\subsubsection{Main pseudo-N\'eron model theorem}

\begin{lemma}\label{cartiersmooth}
Let $X\rightarrow S$ be a morphism locally of finite type of regular Noetherian schemes.  Let $Z$ be a codimension one regular closed subscheme of $X$, and suppose that $Z\rightarrow S$ is smooth.  Then, there exists an open subset $U$ of $X$ that contains $Z$ such that $U\rightarrow S$ is smooth.
\end{lemma}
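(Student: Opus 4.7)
The plan is to verify smoothness of $X\to S$ at each point $z\in Z$ and then conclude via openness of the smooth locus. Fix $z\in Z$ mapping to $s\in S$, and set $R=\mathcal{O}_{S,s}$, $A=\mathcal{O}_{X,z}$, $A_0=\mathcal{O}_{Z,z}$. Since $X$ is regular Noetherian and $Z\subset X$ is a regular closed subscheme of codimension one, $Z$ is locally a Cartier divisor, so there exists a non-zero-divisor $f\in\mathfrak{m}_A$ with $A_0=A/fA$. Smoothness of $Z\to S$ at $z$ yields two inputs: $A_0$ is flat over $R$, and the fiber $A_0\otimes_R\kappa(s)=\mathcal{O}_{Z_s,z}$ is geometrically regular over $\kappa(s)$.

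The first step is to upgrade flatness from $A_0$ to $A$. Tensoring the short exact sequence $0\to A\xrightarrow{f}A\to A_0\to 0$ with $\kappa(s)$ over $R$ and using $\operatorname{Tor}_1^R(A_0,\kappa(s))=0$, multiplication by $f$ is surjective on $\operatorname{Tor}_1^R(A,\kappa(s))$. Because $R$ is regular Noetherian, $\kappa(s)$ has a finite free resolution (Koszul on a regular system of parameters), so $\operatorname{Tor}_1^R(A,\kappa(s))$ is a finitely generated module over the Noetherian local ring $A\otimes_R\kappa(s)$, and the image of $f$ there lies in the maximal ideal. Nakayama's lemma forces $\operatorname{Tor}_1^R(A,\kappa(s))=0$, and the local criterion of flatness then gives flatness of $A$ over $R$ at $z$. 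With flatness established, the same exact sequence tensored with $\kappa(s)$ shows $\bar f$ is a non-zero-divisor in $A\otimes_R\kappa(s)$ with quotient $A_0\otimes_R\kappa(s)$, the latter being smooth over $\kappa(s)$; in particular $A\otimes_R\kappa(s)$ is regular (ascent through a non-zero-divisor in the maximal ideal with regular quotient). To promote this to geometric regularity, for any field extension $K/\kappa(s)$ and any prime $\mathfrak p$ of $(A\otimes_R\kappa(s))\otimes_{\kappa(s)}K$ lying over $\mathfrak m_A/\mathfrak m_R A$, the image of $\bar f$ lies in $\mathfrak p$ and remains a non-zero-divisor by flat base change, while the quotient modulo $\bar f$ is a localization of $\mathcal{O}_{Z_s,z}\otimes_{\kappa(s)}K$, which is regular by geometric regularity of $\mathcal{O}_{Z_s,z}$; the same ascent argument yields regularity of the localization, so $A\otimes_R\kappa(s)$ is geometrically regular at $z$.

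Flatness of $A$ over $R$ at $z$ together with geometric regularity of the fiber $A\otimes_R\kappa(s)$ at $z$ is precisely smoothness of $X\to S$ at $z$. Openness of the smooth locus for a finite-type morphism then provides an open neighborhood of $z$ in $X$ on which $X\to S$ is smooth, and taking the union of such neighborhoods over all $z\in Z$ yields the desired open subset $U\supseteq Z$. The main subtlety I expect is the passage from regularity of the closed fiber to geometric regularity, which is meaningful in positive characteristic with imperfect residue field; the argument above sidesteps the usual Jacobian manipulations by observing that the principal non-zero-divisor $\bar f$ and the smoothness of the quotient both behave well under arbitrary field extensions, so the ascent argument works uniformly after base change.
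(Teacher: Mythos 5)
Your proof is correct, but it takes a genuinely different route from the paper's. The paper works in an embedded presentation: it writes $X$ locally as a closed subscheme of $\mathbb{A}^n_R$ cut out by $g_1,\dots,g_r$, applies the Jacobian criterion to $Z$ (via \cite{BLR}, Prop.~2.2/7) to select a subset of the $g_i$ together with the local equation $g$ of $Z$ whose differentials form part of a basis, drops $g$ to cut out an auxiliary smooth closed subscheme $Y\supseteq X$ in $\mathbb{A}^n_R$, and then uses regularity of $X$ to force $X=Y$ near $z$ (two regular local rings of the same dimension with a surjection between them). Your argument is intrinsic: you first bootstrap flatness from $\mathcal{O}_{Z,z}$ to $\mathcal{O}_{X,z}$ via the $\operatorname{Tor}$-long-exact-sequence of $0\to A\xrightarrow{f}A\to A_0\to 0$, the Koszul resolution of $\kappa(s)$ over the regular base, and Nakayama; then you establish geometric regularity of the fiber by ascending regularity through the nonzerodivisor $\bar f$ after an arbitrary finite base field extension. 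Each approach uses the regularity hypotheses in a different place: the paper uses regularity of $X$ (to conclude $X=Y$) and implicitly of $S$ (for the Jacobian criterion over the base), while yours uses regularity of $X$ to make $Z$ Cartier and regularity of $S$ to get a finite free resolution of $\kappa(s)$. Your route avoids choosing an embedding and makes the flatness step visible, and it correctly handles the positive-characteristic/imperfect-residue-field subtlety by running the regularity ascent after flat base field extension; the paper's route is shorter if one is willing to quote the Jacobian criterion in the form of \cite{BLR}. Both are valid.
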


\begin{proof}
See Appendix~\ref{Proofoftwolemmas}.
\end{proof}

The following theorem is the key application of pseudo-N\'eron models in the problem of restriction of sections.  The proof is exactly the same as Lemma 4.13 of \cite{GJ}.  We prove the relative version of this theorem, i.e., for fixed $b\in S$ and $p\in\iso(A)_b$.

\begin{theorem}\label{keylemma}
Suppose that:
\begin{itemize}
\item{$X$ is smooth projective over $S$,}
\item{$X$ has a pseudo-N\'eron model $\widetilde{X}$ over $W$, and}
\item{every geometric fiber $X_{\overline{s}}$ does not contain any rational curve.}
\end{itemize}
Then, any irreducible component $H_0$ of $H$ which dominates $\mathcal{H}$ also dominates $\mathcal{X}$.  That is, the intersection of the image $\Phi(H_0)$ with $\mathcal{X}$ contains a dense open subset of $\mathcal{X}$.
\end{theorem}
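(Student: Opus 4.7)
The strategy is the standard deformation-and-extension argument modeled directly on Lemma~4.13 of~\cite{GJ}. Let $H_0 \subset H$ be an irreducible component dominating $\mathcal{H}$; since $H_0$ is quasi-projective over $\mathcal{H}_0$, the image $\Phi(H_0)$ is constructible and so contains a dense open subset $V \subseteq \mathcal{H}$. It suffices to fix a general point $\mathfrak{m} \in \mathcal{X}$ and show that $\mathfrak{m} \in \Phi(H_0)$. Since $\mathcal{X}$ lies in the closure of the smooth-curve locus of $\mathcal{H}$ that is hit by $V$, I would choose a smooth curve $T = \spec R$ with $R$ a DVR together with a morphism $\pi : T \to \mathcal{H}$ satisfying $\pi(\eta) \in V\cap \mathcal{H}_1$ and $\pi(0) = \mathfrak{m}$. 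Pulling back the universal curve along $\pi$ gives a proper flat family $\mathcal{C}_T \to T$ together with a natural morphism $\mathcal{C}_T \to W$ (factoring through $S$ for generic choice of $\pi$); after Hironaka resolution (valid since $\chara k = 0$), $\mathcal{C}_T$ may be assumed to be a smooth surface. The point $\pi(\eta)\in V$ provides a lift $\sigma_\eta : \mathcal{C}_{T,\eta} \to X \hookrightarrow \widetilde{X}$ of the generic map $\mathcal{C}_{T,\eta} \to S \subset W$, which I view as a rational map $\sigma : \mathcal{C}_T \dashrightarrow \widetilde{X}$ over $W$ whose domain of definition contains the generic fiber of $\mathcal{C}_T \to T$.

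The core step is extending $\sigma$ to a morphism on all of $\mathcal{C}_T$. At each codimension-one point $\xi \in \mathcal{C}_T$, the local ring $R_\xi := \mathcal{O}_{\mathcal{C}_T,\xi}$ is a DVR, and the induced map $\spec R_\xi \to W$ yields a base change $\widetilde{X}_\xi := \widetilde{X} \times_W \spec R_\xi$. By Corollary~\ref{pseudoliu} the hypothesis of no rational curves on geometric fibers promotes $\widetilde{X}$ to a universal pseudo-N\'eron model, and a Noetherian-approximation argument in the spirit of Lemmas~\ref{etalebasechange} and~\ref{localbasechange} shows that $\widetilde{X}_\xi$ is a pseudo-N\'eron model of its generic fiber in the sense of Definition~\ref{weakextensiondefn}. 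Consequently $\sigma$ extends across $\xi$, and gluing over all codimension-one points produces an extension of $\sigma$ to an open $U \subseteq \mathcal{C}_T$ whose complement is a finite set of closed points. At any such remaining point $c$, if $\sigma$ still fails to extend, I iteratively blow up to resolve the indeterminacy; the resulting exceptional configuration then contains a rational curve mapping non-constantly to the geometric fiber of $\widetilde{X}$ over the image of $c$ in $W$, which lies in $S$ because $\mathcal{C}_T \to W$ factors through $S$. This exhibits a rational curve in a geometric fiber of $X$, contradicting the hypothesis. Hence $\sigma$ extends to a morphism $\widetilde{\sigma} : \mathcal{C}_T \to \widetilde{X}$ over $W$, and because the image of $\mathcal{C}_T$ lies in $S$ (where $\widetilde{X}=X$), the extension factors through $X$.

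To conclude, the universal property of $H$ applied to $(\mathcal{C}_T \to T, \widetilde{\sigma})$ yields a morphism $T \to H$ whose image is irreducible and contains the generic point of $H_0$; therefore the image lies in $H_0$, and specialization to $0 \in T$ exhibits $\mathfrak{m}$ as a point of $\Phi(H_0)$. The main technical obstacle is the codimension-one extension step: the localization $\spec R_\xi \to W$ is not of finite type, so Lemmas~\ref{etalebasechange} and~\ref{localbasechange} do not apply verbatim. A careful limit/Noetherian-approximation argument leveraging the \emph{universality} of $\widetilde{X}$ as a pseudo-N\'eron model — guaranteed precisely by the absence of rational curves on geometric fibers — is required here, and this is where the pseudo-N\'eron machinery developed earlier does its real work.
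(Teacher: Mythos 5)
Your proposal is a genuinely different route from the paper's, and it has a real gap at precisely the point you flag.

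The paper does not degenerate along a DVR curve $T$ at all. Instead it works globally with the universal family: it compactifies $H_0$ to a normal $\overline{H}$, uses generic smoothness in characteristic zero to find a dense open $V \subset \overline{\Phi}^{-1}(\mathcal{X})_{red} \cap \overline{H}^{reg}$, and then invokes Lemma~\ref{cartiersmooth} to produce an open $U \subset \mathcal{C}_{\overline{H}}$ containing $\mathcal{C}_V$ with $U \to W$ \emph{smooth}. The whole point of Definition~\ref{defweak} (the higher--dimensional weak extension property, phrased for smooth morphisms $Z\to S$ over a regular base of arbitrary dimension) is that it can then be applied \emph{once}, directly to $U\to W$, to extend $\varrho$ over the complement of a codimension--two subset of $\widetilde{W}$. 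The no--rational--curves hypothesis is used only afterwards, to extend across the node of each punctured curve--pair. At no point does the paper need to base change $\widetilde{X}$ along a non--finite--type morphism such as $\spec \mathcal{O}_{\mathcal{C}_T,\xi}\to W$.

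The obstacle you yourself identify at the end is genuine and not just a technicality to be ``leveraged.'' Two issues: (a) your crucial intermediate family $\mathcal{C}_T\to W$ is \emph{not} smooth when $\dim W > 2$, so Definition~\ref{defweak} cannot be applied to it, and the only tool you have left is a one--dimensional weak extension property over $\spec R_\xi$; (b) to get that, you want $\widetilde{X}_\xi$ to satisfy the weak extension property over the DVR $R_\xi$, but the universality of Definition~\ref{universalmodel} only covers \emph{finite type} morphisms of Dedekind schemes, and $\spec R_\xi\to W$ is neither, so the base--change lemmas do not apply. Your fallback — deducing the pseudo--N\'eron property of $\widetilde{X}_\xi$ directly from Corollary~\ref{pseudoliu} — requires no rational curves on \emph{every} geometric fiber of $\widetilde{X}_\xi$, including those over the image of $\xi$, and that image may lie in $W\setminus S$ where the theorem hypothesizes nothing. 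Your escape (``$\mathcal{C}_T\to W$ factors through $S$ for generic $\pi$'') is not available in general: $T$ must be chosen to land in $\overline{H}_0$ and hit the specific point $\mathfrak{m}$, so you do not have full genericity, and for a surface $\mathcal{C}_T$ in $W$ a codimension--one boundary $W\setminus S$ will typically be met in a curve, whose generic point is exactly a codimension--one point $\xi$ of $\mathcal{C}_T$ mapping outside $S$. In short, your route, as written, either uses the pseudo--N\'eron hypothesis in a way it is not available (non--finite--type base change), or silently replaces it with a no--rational--curves hypothesis on fibers of $\widetilde{X}$ over $W\setminus S$ that the theorem does not grant. The paper's route exists precisely to sidestep both problems; if you want a degeneration--along--a--DVR proof, you would first need an extension lemma for pseudo--N\'eron models under essentially--finite--type localizations and a separate argument about fibers over $W\setminus S$, neither of which is in the paper.
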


\begin{proof}
As in \cite{GJ}, we consider the diagram:

\[\xymatrix{
\mathcal{C}_{\mathcal{X}}\ar[r]\ar[d] & \mathcal{C}_{\mathcal{H}}\ar[r]\ar[d] &  W\\
\mathcal{X}\ar[r] & \mathcal{H}  &
}\]
where $\mathcal{C}_{\mathcal{X}}$ is the universal family of curve-pairs with nodes of curve-pairs deleted so that $\mathcal{C}_{\mathcal{X}}\rightarrow \mathcal{X}$ is smooth.    Also, the morphisms $\mathcal{C}_{\mathcal{H}}\rightarrow W$ and the composition $\mathcal{C}_{\mathcal{X}}\rightarrow W$ are smooth.

Since $H_0$ is quasi-projective over $\mathcal{H}_0$, we can choose a compactification $\overline{H}$ of $H_0$ such that $\overline{H}$ is normal and $\Phi$ extends to $\overline{\Phi}: \overline{H}\rightarrow \mathcal{H}$.  Let us first take an irreducible projective completion of $H_0$, say $\widetilde{H}_0$, give it a reduced closed subscheme structure and then take its normalization.  As a consequence, $\widetilde{H}_0$ is an integral scheme and the normalization morphism $\nu: \overline{H}\rightarrow \widetilde{H}_0$ is finite.  As topological spaces $H_0$ is a Zariski open dense in $\widetilde{H}_0$.  And $\overline{H}$ is birational and surjective onto $\widetilde{H}_0$.  Denote the inverse image of $H_0$ in $\overline{H}$ by $\overline{H}_0$, which is a normal, integral, open dense subscheme of $\overline{H}$.  We include the following diagram to clarify the situation, where $H_0$ is a locally closed subscheme of $\mathbb{P}^N_{\mathcal{H}}$ for some integer $N$.  

\[\xymatrix{
\overline{H}\ar[r]^{\nu}  &  \widetilde{H}_0\ar@{^{(}->}[r]^{closed}  &  H_0'\ar@{^{(}->}[rr]^{closed} &  &  \mathbb{P}^N_{\mathcal{H}}\ar[d]  \\
\overline{H}_0\ar[rr]\ar@{^{(}->}[u]|-{open}  &  &  H_0\ar@{^{(}->}[u]|-{open}\ar[r]^{\Phi} &  \mathcal{H}_0\ar@{^{(}->}[r]^{open}  &  \mathcal{H}
}\]

Suppose that $H_0$ is an open subscheme of $H_0'$, a closed subscheme of $\mathbb{P}^N_{\mathcal{H}}$, with reduce structure $\widetilde{H}_0$.  Then the composition of the top line and the rightmost will be the morphism $\overline{\Phi}$.  We note that the induced morphism $\overline{H}_0\rightarrow H_0$ can be obtained by first giving a reduced structure to $H_0$ and then taking its normalization.

By construction, $\overline{H}$ is proper over $\mathcal{H}$.  Now, the image of $\overline{H}$ in $\mathcal{H}$ is closed.  So the image is the whole $\mathcal{H}$ since $H_0$ dominates $\mathcal{H}$.  Thus, $\overline{\Phi}^{-1}(\mathcal{X})$ is nonempty.  Moreover, since $\overline{\Phi}^{-1}(\mathcal{X})$ is pure of codimension one in $\overline{H}$ and the non-regular locus of $\overline{H}$ is of codimension two (\cite{Liuqing}, Prop.4.2.24), $\overline{H}$ is regular at a general point of $\overline{\Phi}^{-1}(\mathcal{X})$.  We note that the regular locus and the smooth locus of $\overline{H}$ coincide because $k$ is algebraically closed (\cite{Liuqing}, Cor.4.3.33).  

We have that $\overline{\Phi}: \overline{H}\rightarrow \mathcal{H}$ is a surjective morphism of finite type $k$-schemes, so by the generic smoothness theorem ($\chara k=0$) there exists an open dense subset of the regular locus of $\overline{H}$ on which the morphism $\overline{\Phi}$ is smooth.  Hence, there exists an open dense subset of $\overline{\Phi}^{-1}(\mathcal{X})=\mathcal{X}\times_{\mathcal{H}}\overline{H}$ on which the morphism $\overline{\Phi}_{\mathcal{X}}$, the base change of $\overline{\Phi}$ to $\mathcal{X}$, is smooth.  By checking the inverse image of the smooth locus of $\mathcal{X}$, the reduced (actually smooth) locus of $\overline{\Phi}^{-1}(\mathcal{X})$ is nonempty.  Note that the smooth locus of $\mathcal{X}$ is nonempty by the Jacobian criterion.  Set $\overline{H}_{\mathcal{X}}=\overline{\Phi}^{-1}(\mathcal{X})_{red}$ as the reduced locus of $\overline{\Phi}^{-1}(\mathcal{X})$.  By the generic smoothness theorem again, there exists a dense open $V\subset \overline{H}_{\mathcal{X}}\cap \overline{H}^{reg}$ such that $\overline{\Phi}_{\mathcal{X}}: \overline{H}_{\mathcal{X}}\rightarrow \mathcal{X}$ is smooth on $V$.  

\[\xymatrix{
V\ar@{^{(}->}[r]  &  \overline{\Phi}^{-1}(\mathcal{X})\ar[r]^{\overline{\Phi}_{\mathcal{X}}}\ar@{^{(}->}[d]|-{immersion} & \mathcal{X}\ar@{^{(}->}[d]|-{immersion} \\
  & \overline{H}\ar[r]^{\overline{\Phi}} & \mathcal{H}  
}\]

We summarize the objects in our argument in the following diagram (cf. \cite{GJ}, p.324).  

\[\xymatrix{
\mathcal{C}_V\ar[ddd]\ar[rrr]\ar[dr]  &    &    & \mathcal{C}_{\overline{H}}\ar@{-->}[r]\ar[ddd]\ar[dl]  &   \widetilde{X}\ar[d]  \\
  &  \mathcal{C}_{\mathcal{X}}\ar[r]\ar[d]  &  \mathcal{C}_{\mathcal{H}}\ar@{.>}[rr]\ar[d]  &  &  W  \\
  &  \mathcal{X}\ar@{^{(}->}[r]  &  \mathcal{H} & &   \\
V\ar@{^{(}->}[rr]\ar[ur]|-{\overline{\Phi}_{\mathcal{X}}}  &  &  \overline{\Phi}^{-1}(\mathcal{X})\ar@{^{(}->}[r]\ar[ul]|-{\overline{\Phi}_{\mathcal{X}}}   & \overline{H}\ar[lu]|-{\overline{\Phi}} &  
}\]
Denote the base change $\mathcal{C}_{\mathcal{H}}\times_{\mathcal{H}}\overline{H}$ by $\mathcal{C}_{\overline{H}}$.  Then, $\mathcal{C}_V\rightarrow W$ is smooth, $\mathcal{C}_V\rightarrow \mathcal{C}_{\overline{H}}$ is an immersion and $V$ is contained in the regular locus of $\overline{H}$.

We claim that there exists an open subset $U$ of $\mathcal{C}_{\overline{H}}$ containing $\mathcal{C}_V$ such that $U\rightarrow W$ is smooth.  Let $\overline{\mathcal{C}}$ be the locus with double lines and nodes deleted.  Then $\overline{\mathcal{C}}\rightarrow \mathcal{H}$ is smooth.  Note that $\mathcal{C}_{\mathcal{X}}$ is a hypersurface in $\overline{\mathcal{C}}$.  Consider the open subset $\Omega=\overline{\mathcal{C}}\times_{\mathcal{H}}\overline{H}$ of $\overline{H}$.  Thus, $\Omega\rightarrow \overline{H}$ is smooth.  As a consequence, $\Omega$ is regular over the regular locus of $\overline{H}$ (\cite{Liuqing}, Theorem 4.3.36, p.142).  From the construction, $\mathcal{C}_V\subset \Omega^{reg}$ is a locally closed subset.  By Lemma~\ref{cartiersmooth}, there exists an open subset $U$ of $\Omega^{reg}$ containing $\mathcal{C}_V$ such that $U\rightarrow W$ is smooth.  

We also denote the restriction of $\varrho$ on $H_0\times_{\mathcal{H}_0} \mathcal{C}_{\mathcal{H}_0}$ and its base change to $\overline{H}_0\times_{\mathcal{H}_0} \mathcal{C}_{\mathcal{H}_0}$ by $\varrho$.    Then, $\varrho$ is a rational map from $U$ to $X$ over $W$, which is well-defined on $(\overline{H}_0\times_{\mathcal{H}_0} \mathcal{C}_{\mathcal{H}_0})\cap U$.  This rational map is marked as the dashed arrow from $\mathcal{C}_{\overline{H}}$ to $\widetilde{X}$ in the diagram above.
 
Now, let $\widetilde{W}\subset W$ be the image of $U\rightarrow W$ which is an open subset of $W$.  Then, $\widetilde{W}$ contains an open dense subset of the image of $\mathcal{C}_{\mathcal{X}}$ in $W$ since $U$ contains $\mathcal{C}_V$. Since $\widetilde{X}$ is a pseudo-N\'eron model of $X$ over $S$, $U\dashrightarrow\widetilde{X}$ is well-defined outside a codimension two subset of $\widetilde{W}$ by the weak extension property.  Every such codimension two subset in $\widetilde{W}$ can be avoided by a general smooth curve or curve-pair.  This gives an open dense subset $U'$ of $U$ such that $U'$ contains an open dense subset of $\mathcal{C}_{\mathcal{X}}$ and $\varrho$ is well-defined on $U'$.  As a consequence, every curve-pair in an open of $\mathcal{C}_{\mathcal{X}}$ with nodes deleted can be lifted.  Moreover, since there is no rational curve on $X$, any morphism from a punctured curve-pair to $X$ can be extended to the node.  Since $H$ parameterizes the space of degree-$(d+2)$ curves that can be lifted, an open subset of $V$ is contained in the image of $H_0$, i.e. $H_0$ also dominates $\mathcal{X}$.
\end{proof}

\subsubsection{Inductive pseudo-N\'eron deforming step}

Now we prove the existence of the open dense subset $\mathcal{W}$ of $\mathcal{M}_{d+2}({\mathbb{P}^n_{\mathfrak{S}}}/\mathfrak{S}, \tau)$.  That is, the open subet parameterizing the degree-$(d+2)$ curve-pairs $\mathfrak{m}$ such that the restriction of sections
\[\sections(A/S)\rightarrow\sections((A\times_S\mathfrak{m})/\mathfrak{m})\]
is bijective.  In \cite{GJ} (Theorem~\ref{maintheoremjason}), it was proved that this open dense subset exists if $\mathfrak{m}$ is a line-pair.  And since we work over a field of characteristic zero, the same is true for a very general conic, i.e., genus-$0$ and degree-$2$ curves by using the Pseudo-N\'eron model (Theorem~\ref{maintheoremjason}).  The following inductive step gives the relative version of Theorem~\ref{maintheoremjason} for very general genus-0, degree-$(d+2)$ curves, and curve-pairs, and fixed $b\in S$, $p\in\iso(A)_b$.

\begin{corollary}\label{inductionI}
Fix $b\in S$ and $p\in\iso(A)_b$ closed points.  Suppose that for a very general genus-0, degree-$(d+2)$ curve-pair $C\cup m$ every section in $\sections_b^p(X_{C\cup m}/C\cup m)$ is the restriction of a unique section in $\sections_b^p(X/S)$.  Then,  for a very general genus-0, degree-$(d+2)$ irreducible smooth curve contianing $b$, every section  over this curve mapping $b$ to $p$ is the restriction of a unique section of $X$ over $S$.
\end{corollary}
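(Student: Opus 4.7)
The plan is to establish a bijection between $\sections_b^p(X/S)$ and the set of irreducible components of $H$ dominating $\mathcal{H}_1$, and to show that $\Phi$ restricted to each such component is birational over $\mathcal{H}_1$. The conclusion will then follow by identifying the fiber of $\Phi$ over a very general smooth curve. First, one verifies the hypotheses of Theorem~\ref{keylemma} in the present setting: the finite morphism $f\colon X\to A$ forces every geometric fiber of $X/S$ to be finite over an Abelian variety and hence to contain no rational curve; a pseudo-N\'eron model of $X$ over a suitable compactification exists by Theorem~\ref{neronhigh} and Corollary~\ref{higherdimensionneron}; and projectivity of $X/S$ follows from that of $A/S$ since $f$ is finite. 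Thus every irreducible component $H_0\subset H$ dominating $\mathcal{H}$ also dominates the curve-pair locus $\mathcal{X}$.

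Next I would set up the bijection. Each global section $\widetilde{\tau}\in\sections_b^p(X/S)$ yields, by restriction to curves, a rational section $h_{\widetilde{\tau}}$ of $\Phi$ on a dense open of $\mathcal{H}_1$; its image is an irreducible subvariety of $H$ of dimension equal to $\dim\mathcal{H}_1$, hence lies in, and is dense in, a unique dominating component $\Xi(\widetilde{\tau})$. In particular $\Phi|_{\Xi(\widetilde{\tau})}$ is birational onto $\mathcal{H}_1$ with birational inverse $h_{\widetilde{\tau}}$. In the reverse direction, for any dominating component $H_0$, Theorem~\ref{keylemma} provides a point $[\tau,C\cup m]\in H_0\cap \Phi^{-1}(\mathcal{X})$ over a very general curve-pair, and the hypothesis produces a unique global extension $\Psi(H_0)\in\sections_b^p(X/S)$. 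Birationality of $\Phi|_{H_0}$ forces $\Psi(H_0)$ to be independent of the generic choice, and two global sections landing in the same component must coincide on curves covering a Zariski dense subset of $S$, hence on all of $S$ by separatedness of $X/S$; therefore $\Xi$ and $\Psi$ are mutually inverse.

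Finally, given $\sigma\in \sections_b^p(X_{C'}/C')$ over a very general smooth curve $C'\in\mathcal{H}_1$, the point $[\sigma,C']\in H$ lies in a unique irreducible component $H_0$, since pairwise intersections of components of $H$ map to proper closed subsets of $\mathcal{H}_1$ that a very general $[C']$ avoids. This $H_0$ must dominate $\mathcal{H}_1$, for the images of the countably many non-dominating components are proper closed. Birationality of $\Phi|_{H_0}$ then identifies $[\sigma,C']$ with $h_{\Psi(H_0)}([C']) = [\Psi(H_0)|_{C'},C']$, giving the extension $\sigma = \Psi(H_0)|_{C'}$, and uniqueness of the extension is immediate from the bijectivity of $\Xi$. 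The main obstacle is showing that the assignment $\Psi$ is well-defined and mutually inverse to $\Xi$: both points use the interplay between the uniqueness clause of the hypothesis and the separatedness of $X/S$ to prevent distinct global sections from collapsing into a single component of $H$.
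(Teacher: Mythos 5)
Your approach is organized differently from the paper's: you aim to build an explicit bijection $\Xi\colon\sections_b^p(X/S)\leftrightarrow\{\text{dominating components of }H\}$ and deduce the statement by reading off the fiber of $\Phi$ over a very general curve, whereas the paper argues by contradiction (assume some section over a very general smooth curve does not extend, produce a ``bad'' dominating component $H_2$, show $H_2$ dominates $\mathcal X$ by Theorem~\ref{keylemma}, and derive that $H_2$ meets a ``good'' component over a very general point of $\mathcal X$, contradicting Cartier's theorem on group schemes in characteristic zero). Your preliminary verification of the hypotheses of Theorem~\ref{keylemma} (no rational curves in geometric fibers since $f$ is finite, existence of a pseudo-N\'eron model via Theorem~\ref{neronhigh} and Corollary~\ref{higherdimensionneron}, projectivity from $A/S$) is a worthwhile addition that the paper elides.

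However, there is a genuine gap. You correctly establish that for $\widetilde\tau\in\sections_b^p(X/S)$ the classifying map $h_{\widetilde\tau}$ lands densely in a unique component $\Xi(\widetilde\tau)$ and that $\Phi|_{\Xi(\widetilde\tau)}$ is birational, and injectivity of $\Xi$ does follow from separatedness. But the reverse direction and the final step both invoke ``birationality of $\Phi|_{H_0}$'' for an \emph{arbitrary} dominating component $H_0$, and you never prove this; it would follow only after knowing $H_0=\Xi(\widetilde\tau)$ for some $\widetilde\tau$, which is circular. Concretely: from a point $[\tau,C\cup m]\in H_0\cap\Phi^{-1}(\mathcal X)$ the hypothesis gives a global extension $\widetilde\tau$, so $[\tau,C\cup m]\in H_0\cap\Xi(\widetilde\tau)$; but if $H_0\ne\Xi(\widetilde\tau)$ this only shows the two components meet along a codimension-one locus inside $\Phi^{-1}(\mathcal X)$, which is not by itself a contradiction. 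Likewise your final claim that ``pairwise intersections of components of $H$ map to proper closed subsets of $\mathcal H_1$'' is exactly the assertion that two dominating components of $H$ cannot intersect over a dense subset — i.e.\ generic reducedness of the fibers of $H\to\mathcal H_1$. This is the crux, and it is precisely what the paper supplies by invoking Cartier's theorem for locally finite type group schemes in characteristic zero. The ``uniqueness clause of the hypothesis plus separatedness of $X/S$'' you appeal to shows that two distinct \emph{global} sections cannot define the same component (injectivity of $\Xi$), but it says nothing about whether a ``bad'' component could share a divisor with a ``good'' one. To close the argument you must either cite the paper's Cartier-theorem step, or separately prove that the fiber of $H$ over a very general point of $\mathcal X$ is reduced.
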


\begin{proof}
The proof comes from \cite{GJ}, Theorem 4.15.  Let $\mathcal{Y}$ be the scheme parameterizing rational sections of $X$ over $S$ mapping $b$ to $p$.  Let $\mathcal{H}_1$ and $H$ be as Theorem~\ref{keylemma}.  Base change $X$, $S$ and the universal family of genus-0, degree-$(d+2)$ curves in $S$ by $\mathcal{Y}$, we have the following diagram,
\[\xymatrix{
  &  A_{\mathcal{Y}}\ar[d]  \\
\mathcal{C}_{\mathcal{H}_1}\times_k \mathcal{Y} \ar[r]\ar[d]\ar@{.>}[ru]^{\varphi}  &  S_{\mathcal{Y}} \\
\mathcal{H}_1\times_k\mathcal{Y} &  
}\]
where the rational map $\varphi$ is the composition of the morphism $\mathcal{C}_{\mathcal{H}_1}\times_k \mathcal{Y}\rightarrow S_{\mathcal{Y}}$ and the universal rational section of $A_{\mathcal{Y}}$ over $S_{\mathcal{Y}}$.  Let $(\mathcal{C}_{\mathcal{H}_1}\times_k\mathcal{Y})^{\circ}$ be the maximal domain of definition of $\varphi$.  The image of $(\mathcal{C}_{\mathcal{H}_1}\times_k\mathcal{Y})^{\circ}$ is also an open subset which will be denoted by $(\mathcal{H}_1\times_k\mathcal{Y})^{\circ}$.  And, similarly, there is an open dense $S_{\mathcal{Y}}^{\circ}$ in $S_{\mathcal{Y}}$.  Then, $\varphi$ on $(\mathcal{C}_{\mathcal{H}_1}\times_k\mathcal{Y})^{\circ}$ gives the universal section of lifting genus-0, degree-$(d+2)$ curves in $S_{\mathcal{Y}}^{\circ}$.  Since $H\times_k \mathcal{Y}$ is universal for the problem lifting genus-0, degree-$(d+2)$ curves in $S_{\mathcal{Y}}$, there is a map from $(\mathcal{H}_1\times_k\mathcal{Y})^{\circ}$ to $H\times_k\mathcal{Y}$.  Thus, we get a rational map $\Phi: \mathcal{H}_1\times_k\mathcal{Y}\dashrightarrow H\times_k\mathcal{Y}$.  Let $\mathcal{X}^{\circ}$ be the very general subset of $\mathcal{X}$ such that the statement about restriction of sections holds on $\mathcal{X}^{\circ}$.  Then,  an open dense of $\mathcal{X}^{\circ}\times_k\mathcal{Y}$ is in the maximal domain of definition of $\Phi$, and hence the projection of the image of $\Phi$ to $H$ is a union of some irreducible components of $H$.

Suppose that the theorem is false for very general genus-0, degree-$(d+2)$ curves.  Then there is an irreducible component, $H_2$, of $H$ dominating $\mathcal{H}_1$ such that the universal section over this irrreducible component is not the restriction of a section over $S$.  By Theorem~\ref{keylemma}, $H_2$ also dominates $\mathcal{X}$.  Thus, there exists an irreducible component $H_3$ of $H$ which is in the projection of the image of $\Phi$ to $H$ such that it intersects with $H_2$.  Then, over a very general point of $\mathcal{X}$, the fiber of $H$ has intersecting irreducible components.  However, this contradicts Cartier's theorem (\cite{mfd}, Theorem 1, Lecture 25) for locally finite type group schemes over a field of characteristic zero.  Therefore, the theorem also holds for a very general genus-0, degree-$(d+2)$ curve in $S$.
\end{proof}

\begin{lemma}\label{verygeneralmatchingabeliancase}
Suppose that every section of $A$ over a very general genus-0, degree-$d$, irreducible, smooth curve is contained in a unique section of $A$ over $S$.  Then, for very general points $b'$ and $b''$ in $S$ and for a very general conic $C$ containing $b'$ and a very general genus-0, degree-$d$ curve $m$ containing $b''$ with $d\ge 2$ such that $C$ and $m$ intersect at a very general point $c$, every section in $\sections(A_{C\cup m}/C\cup m)$ is the restriction of a unique section in $\sections(A/S)$. 
\end{lemma}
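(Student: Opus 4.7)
The plan is to construct the required global extension by lifting along $m$ using the hypothesis and then verifying that the restriction to $C$ automatically matches. Given $\sigma \in \sections(A_{C\cup m}/(C\cup m))$, write $\sigma_C=\sigma|_C$ and $\sigma_m=\sigma|_m$; these agree at the intersection point $c$. Since $m$ is a very general genus-$0$, degree-$d$ smooth curve, the hypothesis furnishes a unique $\tau \in \sections(A/S)$ with $\tau|_m=\sigma_m$. I claim that $\tau|_C=\sigma_C$, so that $\tau|_{C\cup m}=\sigma$ is the sought-for unique global extension; uniqueness of any global extension is automatic, since such an extension restricts to $\sigma_m$ on $m$ and is then forced by the hypothesis to agree with $\tau$.

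To verify the claim, I would consider $\eta := \tau|_C - \sigma_C \in \sections(A_C/C)$. By construction $\eta(c)=\tau(c)-\sigma_C(c)=\sigma_m(c)-\sigma_C(c)=0$. The key input is the rigidity of sections of Abelian schemes over $\mathbb{P}^1$: since $C$ is a smooth genus-$0$ curve and $\chara k=0$, the Abelian scheme $A_C\to C\cong \mathbb{P}^1_k$ is trivial, so $A_C\cong C\times_k A_0$ for some Abelian variety $A_0$ over $k$, and sections of this product correspond to $k$-morphisms $\mathbb{P}^1_k\to A_0$. Since $A_0$ contains no rational curves, every such morphism is constant, so $\eta$ is a constant section vanishing at $c$, forcing $\eta\equiv 0$, and therefore $\tau|_C=\sigma_C$.

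Injectivity of the restriction $\sections(A/S)\to \sections(A_{C\cup m}/(C\cup m))$ is immediate from the hypothesis, since further restriction to $m$ is already injective. The main obstacle is cleanly invoking the rigidity statement that Abelian schemes over $\mathbb{P}^1$ in characteristic zero are constant; this is standard, following from the absence of non-constant morphisms from $\mathbb{P}^1$ into the coarse moduli space of polarized Abelian varieties combined with the simple connectedness of $\mathbb{P}^1$. An alternative route that avoids a direct appeal to this triviality statement would apply Theorem~\ref{maintheoremjason} to the very general conic $C$ to produce an independent global extension $\tau'$ of $\sigma_C$, and then reduce $\tau=\tau'$ to the vanishing of $(\tau-\tau')|_m$, a section of $A_m$ over $m\cong \mathbb{P}^1_k$ vanishing at $c$; this still falls back on the same rigidity principle.
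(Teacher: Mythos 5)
Your proposal correctly identifies the structure of the problem: extend $\gamma|_m$ to $\tau$ using the hypothesis, extend $\gamma|_C$ to $\sigma$ using Theorem~\ref{maintheoremjason}, and then reduce to the matching problem $\sigma=\tau$. This is also how the paper begins. Uniqueness of the global extension is handled correctly. The problem is the mechanism you use for matching.

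The rigidity claim that $A_C\to C$ is trivial because ``$C\cong\mathbb{P}^1_k$'' fails in this setup. The curve $C$ is a $u_0$-conic in $S$, i.e.\ $C=S\times_{\mathbb{P}^n_k}L$ for a plane conic $L$ (Definition~\ref{linesdef}), and $u_0:S\to\mathbb{P}^n_k$ is a generically finite morphism of arbitrary degree. Thus $C$ is a finite cover of an open in $L\cong\mathbb{P}^1_k$ and will in general have positive genus; the attributes ``genus-$0$, degree-$d$'' in the statement of the lemma refer to the image curve in $\mathbb{P}^n_k$, not to $C\subset S$. Even in the favourable case where $u_0$ is birational, $C$ is only a dense open in $\mathbb{P}^1_k$, so $\pi_1(C)$ can be nontrivial and $A_C\to C$ need not be a constant family: Borel's theorem gives isotriviality over a rational base, but trivializing requires trivial monodromy, which is exactly what may fail over a punctured $\mathbb{P}^1$. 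Consequently $\operatorname{Sec}(A_C/C)$ need not consist of constants, and $\eta=\tau|_C-\sigma|_C$ vanishing at the single point $c$ does not force $\eta\equiv 0$. The same objection applies to your alternative route via $(\tau-\tau')|_m$. This is not a presentational issue: the whole reason Theorem~\ref{maintheoremjason} is nontrivial in \cite{GJ} is that $\operatorname{Sec}(A_C/C)$ is uncontrolled in exactly this way.

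The paper closes the gap with the Bi-gon Lemma (Lemma~\ref{lem-transversal3} in Appendix~\ref{thebigonlemma}): the global sections $\sigma$ and $\tau$ are promoted to multisections $\Omega'$, $\Omega''$ over the universal families of conics and degree-$d$ curves, and for a very general choice of linear slices $N',N''$ and of the points $b',c,b''$, the transversality/countability argument forces any two multisections that agree at the very general point $c$ to coincide as global objects. In particular $c$ must be chosen very general \emph{after} accounting for the (countably many) distinct global sections, which is a genuinely different and essentially combinatorial mechanism, not a fibre-wise rigidity statement. To repair your argument you would need to replace the appeal to triviality of $A_C$ over $\mathbb{P}^1$ with something in this spirit.
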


\begin{proof}
Denote by $M'$ the space of conics in $S$ and $M''$ the space of genus-0, degree-$d$ curves.  Denote by $C'$, resp. $C''$, the universal family of curves over $M'$, resp. $M''$ (Definition~\ref{defn-witness} (i)).  Let $\gamma$ be a section in $\sections(A_{C\cup m}/C\cup m)$.  Then, by Theorem~\ref{maintheoremjason}, $\gamma|_{C}$ is contained in a unique global section $\sigma\in \sections(A/S)$, and the same is true for $\gamma|_{m}$ by hypothesis.  Let $\tau\in\sections(A/S)$ be the unique global section extending $\gamma|_{m}$.  Denote by $r$ the integer $2\dim(S)-2$.  The transversal Grassmannian $G'$ is the Grassmannian parameterizing $r$-dimensional linear subvarieties $N'$ of $M'$, and similarly define $G''$ for $M''$ (Definition~\ref{defn-witness} (vii)).

For very general $C$ and $m$, they are contained in some very general linear subvariety $N'$ of $M'$ and linear subvariety $N''$ of $M''$ respectively.  Denote by $u_S'$ and $u_S''$ the composed morphisms
\[C'\to S\times_{\spec k}M'\to S,\]
and
\[C''\to S\times_{\spec k}M''\to S\]
(Definition~\ref{defn-witness} (iii)).  The base change of $\sigma$, resp. $\tau$, to $C'$, resp. $C''$, gives a $u_S'$-multisection $\sigma_1$, resp. $u_S''$-multisection $\tau_1$ of $A\to S$ (Definition~\ref{defn-omega}).  Consider the 2-pointed bi-gon $(C'_{t'}\cup_c C''_{t''}, b', b'')$ parameterized by a point $t'\in N'$, resp. $t''\in N''$, whose curve $C'_{t'}$ contains $c$ and $b'$, resp. whose curve $C''_{t''}$ contains $c$ and $b''$ (see the statement of Lemma~\ref{lem-transversal3}).  Denote the image of $\sigma_1$, resp. $\tau_1$ in $C'\times_S Y$, resp. $C''\times_S Y$ by $\Omega'$, resp. $\Omega''$ (Definition~\ref{defn-omega}).

The following composition 
\[\Omega'\times_{N'}C'_{N'}\to A\times_S(C'_{N'}\times_{N'} C'_{N'})\overset{Id_A\times u_{N',S}^{[2]}}{\xrightarrow{\hspace*{2cm}}} A\times_S(S\times_{\spec k}S)\]
defines a $(S\times_{\spec k}S, \projection_1)$-multisection of $A\to S$ (Lemma~\ref{lem-transversal4}).  Let the image of the restriction of this multisection on the fiber $\projection_1^{-1}(b')$ in $A$ be $\Omega'_{M', N', b'}$, and similarly define $\Omega''_{M'', N'', b''}$ (see the notations in Lemma~\ref{lem-transversal3}).  The base change of $\sigma$ via the composition
\[C'_{N'}\times_{N'}C'_{N'}\overset{u_{N',S}^{[2]}}{\xrightarrow{\hspace*{1.5cm}}} S\times_{\spec k}S \overset{\projection_1}{\xrightarrow{\hspace*{1cm}}} S\]
gives a section of $A\times_S(C'_{N'}\times_{N'} C'_{N'})$ over $C'_{N'}\times_{N'}C'_{N'}$, which is the same as the base change of $\sigma$ via 
\[C'_{N'}\times_{N'}C'_{N'}\overset{\projection_1}{\xrightarrow{\hspace*{1cm}}} C'_{N'} \overset{u'_S}{\xrightarrow{\hspace*{1cm}}} S.\]
Thus, the image of $\sigma$ in $A$ equals the image of $\Omega'\times_{N'}C'_{N'}$ in $A$.  The restriction of $\Omega'\times_{N'}C'_{N'}$ on the $\projection_1$-fiber $\projection_1^{-1}(b')$ is the restriction of $\sigma$ on the curves $C'_{t'}$ containing $b'$ and parameterized by $t'\in N'$.  Therefore, the image of $\gamma|_{C}$ is contained in $\Omega'_{M', N', b'}$, and the same argument shows that the image of $\gamma|_{m}$ is contained in $\Omega''_{M'', N'', b''}$.  By the Bi-gon Lemma (Lemma~\ref{lem-transversal3}), for a very general pair $(N', N'', b', c, b'')$ in $G'\times_{\spec k} G''\times_{\spec k}S\times_{\spec k}S\times_{\spec k}S$, $\gamma$ comes from a unique section of $A$ over $S$.
\end{proof}

\begin{corollary}\label{inductiveII}
Let $S$ be a smooth, quasi-projective $k$-scheme of dimension $b\ge 2$.  Let $A$ be an Abelian scheme over $S$.  For a very general curve-pair $C\cup m$ in $S$ such that the degree of $m$ is even, the restriction map of sections
\[\sections(A/S)\to \sections(A_{C\cup m}/C\cup m)\]
is a bijection.  This also holds for $C$ a very general genus-0, irreducible, smooth curve of even degree in $S$.
\end{corollary}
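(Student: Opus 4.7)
The plan is to prove both parts of the corollary simultaneously by induction on the even degree $d \ge 2$. Let $P(d)$ denote the statement that restriction of sections to $A$ is bijective for a very general genus-$0$, degree-$d$, irreducible smooth curve in $S$, and let $Q(d+2)$ denote the analogous statement for a very general curve-pair of total degree $d+2$ consisting of a conic $C$ and a smooth genus-$0$ curve $m$ of degree $d$. The two conclusions of the corollary are exactly the family of statements $P(d)$ and $Q(d+2)$ for all even $d \ge 2$.

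For the base case, $P(2)$ is the conic case of Theorem~\ref{maintheoremjason}, valid since $\chara k = 0$. For the inductive step, assuming $P(d)$, Lemma~\ref{verygeneralmatchingabeliancase} directly produces $Q(d+2)$ by gluing a very general conic and a very general genus-$0$, degree-$d$ smooth curve at a very general transversal point (here $P(d)$ furnishes exactly the hypothesis of the lemma). To pass from $Q(d+2)$ to $P(d+2)$, I will apply Corollary~\ref{inductionI} with $X = A$. The subtlety is that Corollary~\ref{inductionI} is phrased in terms of the strata $\sections_b^p(A/S)$ with $b \in S$ fixed and $p \in \iso(A)_b$ fixed. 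However, every section $\sigma$ of $A$ over $S$ lies in exactly one such stratum, indexed by $p = \pi(\sigma(b))$, and the same holds for sections over $C \cup m$; thus the full bijection asserted by $Q(d+2)$ decomposes, for each $b$ on the smooth component $m$, as stratum-wise bijections $\sections_b^p(A/S) \to \sections_b^p(A_{C \cup m}/C \cup m)$ for every $p$. This supplies the hypothesis of Corollary~\ref{inductionI}, which then upgrades each stratum-wise bijection to very general genus-$0$, degree-$(d+2)$ smooth curves through $b$. Reassembling the strata over $p \in \iso(A)_b$ yields the full restriction-of-sections bijection, and the induction closes.

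The main technical point I expect is translating the conclusion of Corollary~\ref{inductionI}, which fixes $b \in S$ \emph{before} quantifying over curves through $b$, into a statement about a very general curve in the universal space of curves (with no marked point). For this I plan to work on the universal family of pointed smooth genus-$0$, degree-$(d+2)$ curves in $S$: the locus of (curve, marked point) pairs for which the stratum-wise bijection holds for every $p \in \iso(A)_b$ is a countable intersection of open dense subsets of that universal family, hence very general there. Projecting to the unpointed moduli space and invoking that a very general curve in $S$ passes through a very general point of $S$, one obtains a very general locus of smooth genus-$0$, degree-$(d+2)$ curves satisfying $P(d+2)$, which completes the induction.
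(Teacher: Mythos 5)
Your proof is correct and follows essentially the same inductive route as the paper: start from the conic case of Theorem~\ref{maintheoremjason}, then alternate between Lemma~\ref{verygeneralmatchingabeliancase} (attaching a very general conic at a very general point to pass from a smooth degree-$d$ curve to a degree-$(d+2)$ curve-pair) and Corollary~\ref{inductionI} with $X=A$ (smoothing the curve-pair to an irreducible smooth curve of degree $d+2$). Your additional remarks translating between the unstratified bijection and the stratum-wise $\sections_b^p$ formulation, and passing from a fixed marked point to a very general unpointed curve, make explicit steps the paper leaves implicit but are consistent with the intended argument.
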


\begin{proof}
Take $X=A$ in Corollary~\ref{inductionI}.  In Lemma~\ref{verygeneralmatchingabeliancase}, take $m$ as a conic curve.  Then by Theorem~\ref{maintheoremjason} and Lemma~\ref{verygeneralmatchingabeliancase} the result holds for very general $C\cup m$.  Next, use Corollary~\ref{inductionI} to deform $C\cup m$ to a very general genus-0, irreducible smooth curve of degree 4.  Attach a very general conic to this curve at a very general point and apply Lemma~\ref{verygeneralmatchingabeliancase} and Corollary~\ref{inductionI} again.  Then, the corollary follows by induction.
\end{proof}

\subsection{Moduli of bad points caused by $\iso(A)$}

\begin{lemma}\label{abelianvarieties}
Let $A$ and $B$ be two Abelian varieties over a field $k$.  Then, there are at most countably many homomorphism of Abelian varieties from $A$ to $B$.
\end{lemma}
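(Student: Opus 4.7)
The plan is to exhibit $\homo_k(A,B)$ as the $\bar{k}$-points of a zero-dimensional $k$-scheme with countably many connected components, from which countability will follow immediately. First, since abelian varieties over a field are projective, the functor sending a $k$-scheme $T$ to the set of $T$-group scheme homomorphisms $A_T \to B_T$ is representable by a closed subfunctor $\underline{\homo}_k(A, B)$ of the Hilbert scheme $\hilb(A \times_k B)$ (by associating to a morphism its graph and then imposing the group-morphism condition, which is closed). The Hilbert scheme decomposes as a disjoint union $\coprod_P \hilb^P(A \times_k B)$ indexed by the countable set of Hilbert polynomials $P$, with each piece projective over $k$. Consequently $\underline{\homo}_k(A, B)$ is a countable disjoint union of finite-type $k$-schemes $H_P$.

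Next I would apply the rigidity lemma for abelian varieties (Mumford, \emph{Abelian Varieties}, Chapter II, \S 4): any $T$-family of homomorphisms between abelian varieties over a connected base $T$ is determined by its value on a single fiber. This implies that $\underline{\homo}_k(A,B)$ is formally unramified over $k$, hence zero-dimensional. Each Noetherian finite-type piece $H_P$ therefore has only finitely many $\bar{k}$-points. Summing, $\underline{\homo}_k(A, B)(\bar{k}) = \bigsqcup_P H_P(\bar{k})$ is a countable union of finite sets, and is thus countable. Since $\homo_k(A, B)$ embeds into $\underline{\homo}_k(A,B)(\bar{k})$, the lemma follows.

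The main obstacle is extracting zero-dimensionality cleanly from the rigidity statement, but this is entirely standard in the theory of abelian varieties. As a simpler alternative, one could cite the classical stronger theorem of Weil--Mumford (\emph{Abelian Varieties}, \S 19, Theorem 3), which asserts that $\homo_k(A, B)$ is even a finitely generated free $\mathbb{Z}$-module---countable \emph{a fortiori}. The proof of that result uses the injection $\homo_k(A,B) \otimes_{\mathbb{Z}} \mathbb{Z}_\ell \hookrightarrow \homo_{\mathbb{Z}_\ell}(T_\ell A, T_\ell B)$ for $\ell \neq \chara k$ together with a discreteness argument based on the polynomial nature of the degree form; either route suffices for the countability required here.
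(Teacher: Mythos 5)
Your main argument is essentially the paper's: both proofs realize $\homo_k(A,B)$ inside the Hilbert scheme of $A\times_k B$ via graphs, decompose it into countably many quasi-projective pieces indexed by Hilbert polynomial, and then show each piece is zero-dimensional, hence has only finitely many points. The only real difference is the mechanism for discreteness. You appeal to the rigidity lemma to conclude that the Hom scheme is formally unramified over $k$, whereas the paper makes the tangent-space computation explicit: at a point $[u]$ the Zariski tangent space is $\hg^0\bigl(A,\homo_{\mathcal{O}_A}(u^*\Omega^1_{B/k},\mathcal{I}_0)\bigr)$, and since $\Omega^1_{B/k}$ is a \emph{trivial} locally free sheaf this is a finite direct sum of $\hg^0(A,\mathcal{I}_0)=0$, so the tangent space vanishes. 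The two are really the same fact stated at different levels of abstraction, and your version is clean provided one accepts rigidity over a non-reduced (e.g.\ $\spec k[\epsilon]$) base, which is standard. Your alternative route, invoking the classical theorem that $\homo_k(A,B)$ is a finitely generated free $\mathbb{Z}$-module (via the Tate module injection and the degree-form discreteness argument), is a genuinely different and stronger statement; it would also suffice, though it uses more machinery than the lemma actually requires. Both routes are correct.
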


\begin{proof}
See Appendix~\ref{Proofoftwolemmas}.
\end{proof}

\begin{lemma}\label{moduliofsections}
Let $C$ be a smooth curve in $S$.  Then, for fixed $b\in C$ and $p\in A_b$ closed points, there are at most countably many sections of $X$ (resp. $A$, resp. $\iso(A)$) over $C$ that map $b$ to $p$.  And there are at most countably many sections of $X$ (resp. $A$, resp. $\iso(A)$) over $S$ that map $b$ to $p$.  In particular, for every $p'\in X_{b}$, there are at most countably many sections of $X$ over $C$, resp. over $S$, mapping $b$ to $p'$.
\end{lemma}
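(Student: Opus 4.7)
The plan is to reduce countability for $X$, then $A$, to countability for $\iso(A)$, and to establish the latter via the rigidity of morphisms to abelian varieties (Lemma~\ref{abelianvarieties}). First, the reduction from $X$ to $A$: since $f\colon X\to A$ is finite, for any section $\tau\colon S\to A$ (respectively $\tau\colon C\to A$) the fiber product $X\times_{A,\tau}S$ is a finite $S$-scheme, hence admits at most $\deg f$ sections. Postcomposition with $f$ is therefore a finite-to-one map from sections of $X$ to sections of $A$, and if $\sigma(b)=p'\in X_b$ then $f\circ\sigma$ maps $b$ to $f(p')\in A_b$. So countability for $X$ (with value $p'$) reduces to countability for $A$ (with value $f(p')$).

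Next I would reduce $A$ to $\iso(A)$ together with a strongly non-isotrivial factor. Base-change along the \'etale Galois cover $p\colon S'\to S$ of Definition-Notation~\ref{chowtrace} gives an isogeny $\rho_{iso}\colon \iso(A)_{S'}\times_{S'}Q\to A_{S'}$ of some degree $n$. A dual isogeny embeds $A_{S'}(S')$, modulo the finite subgroup $A_{S'}[n](S')$, into $\iso(A)_{S'}(S')\oplus Q(S')$. Since $S'\to S$ is faithfully flat, pullback on sections is injective. For $Q$: being strongly non-isotrivial means its $K'/k$-trace is trivial, where $K'=k(S')$, so by the Lang--N\'eron theorem $Q(K')$ is finitely generated, and since $Q\to S'$ is separated, $Q(S')\hookrightarrow Q(K')$ is countable. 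The same argument handles $C$ in place of $S$ after replacing $S'$ by $C':=C\times_S S'$.

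For the key case of $\iso(A)$, Definition-Lemma~\ref{isotrivialfactor} yields $\iso(A)_{S'}\cong A_0\times_k S'$. A section of this constant abelian scheme over $S'$ is a $k$-morphism $S'\to A_0$, which by the universal property of the Albanese variety factors uniquely as $s\mapsto \phi(\alpha(s))+t$, where $\alpha\colon S'\to\operatorname{Alb}(S')$ is the Albanese morphism with $\alpha(b')=0$ for a chosen lift $b'\in S'$ of $b$, $\phi\in\operatorname{Hom}(\operatorname{Alb}(S'),A_0)$ is a homomorphism of abelian varieties, and $t\in A_0(k)$. Imposing $\sigma(b)=p$ determines $t$, leaving only $\phi$ as a free parameter; by Lemma~\ref{abelianvarieties} there are at most countably many such $\phi$. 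Sections of $\iso(A)$ over $S$ pull back injectively to sections over $S'$, so the countability statement transfers. The same argument applied on each connected component of $C'=C\times_S S'$, each a smooth curve over $k$, handles the case of $C$.

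The main technical obstacle is the isogeny step: the induced map $(\iso(A)_{S'}\times_{S'}Q)(S')\to A_{S'}(S')$ need not be surjective, so one must use the dual isogeny in the opposite direction and control the resulting kernel on sections by the finite group $A_{S'}[n](S')$, which is finite since $A[n]$ is a finite flat $S'$-group scheme. Once this is in hand, the remaining steps---the finite-to-one reduction from $X$ and the Albanese factorization---are formal given the structural results established earlier in the paper.
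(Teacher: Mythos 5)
Your proposal takes essentially the same route as the paper: both reduce $X$ to $A$ by finiteness of $f$, pass to the finite \'etale Galois cover $S'\to S$ and the isogeny $\rho_{iso}$ together with its dual to reduce to the product $(A_0\times_k S')\times_{S'}Q$, use descent (injectivity on sections) to come back to $S$, handle the constant factor via the Albanese/Jacobian universal property plus Lemma~\ref{abelianvarieties}, and control the strongly non-isotrivial factor $Q$ by a Mordell--Weil-type finiteness (you invoke Lang--N\'eron directly where the paper cites \cite{GJ}, Lemma 3.6). The only cosmetic differences are that you make the finite kernel of the dual isogeny explicit rather than counting lifts fiberwise, and you use the Albanese of $S'$ (resp.\ of the components of $C'$) uniformly where the paper uses $\jac(C)$ for the curve case and the Albanese of $S$ for the global case.
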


\begin{proof}
It suffices to prove the statement for the Abelian scheme $A$ since $X\to A$ is finite and $\iso(A)$ is a closed subscheme of $A$.

First suppose that $A=A_0\times_k S$ for some abelian variety $A_0$ over $k$.  Since the inclusion $C\rightarrow S$ is fixed, giving a morphism from $C$ to $A_0\times_k S$ is the same as giving a morphism from $C$ to $A_0$.  Up to a translation we can assume that the image of $p$ is the identity in $A_0$.  Then, this is equivalent to specifying a homomorphism $(\jac(C),0)$ to $(A_0,0)$, where $\jac(C)$ is the Jacobian of $C$.  By Lemma~\ref{abelianvarieties}, there are at most countably many such homomorphisms, and hence at most countably many sections from $C$ to $A$ that map $b$ to $p$.

Now, suppose that $A$ is not a trivial family of Abelian varieties.  However, by a finite, \'etale and Galois base change $S'\rightarrow S$, we have an isogeny of Abelian $S'$-schemes,
\[\rho_{iso}: (A_0\times_k S')\times_{S'} Q\to A\times_S S'\]
where $A_0\times_k S'$ is a trivial family of Abelian varieties over $S'$ and $Q$ is a strongly nonisotrivial Abelian scheme over $S'$.

Consider the product $(A_0\times_k S')\times_{S'} Q$.  Let $b'\in S'$ and $p'\in A_0\times_k S'$.  Every section of $(A_0\times_k S')\times_{S'} Q$ over $C'=C\times_S S'$ that maps $b'$ to $p'$ comes from a section of $A_0\times_k S'$ over $C'$ mapping $b'$ to $p'$ and a section of $Q$ over $C'$.  There are at most countably many sections of $A_0\times_k S'$ over $C'$ mapping $b'$ to $p'$.  And since $Q$ is strongly nonisotrivial, there are at most countably many section of $Q$ over $S'$ (\cite{GJ}, Lemma 3.6, p.316).  Thus, there are at most countably many sections of $(A_0\times_k S')\times_{S'} Q$ over $C'$ mapping $b'$ to $p'$.  

By the standard descent result, $\homo_S(C, A)\to \homo_{S'}(C', A\times_S S')$ is injective (\cite{BLR}, Theorem 6.1/6 (a), p.135).  And, if a morphism from $C$ to $A$ is an immersion after the base change by $S'$, so is the original morphism (\cite{EGA}, $\operatorname{IV_2}$, Prop.2.7.1).  So the problem reduces to counting the sections of $A\times_S S'$ over $C'$ mapping a fixed point $b'\in S'$ to a fixed point $p'\in A\times_S S'$.  Let 
\[\tau_{iso}: A\times_S S'\to (A_0\times_k S')\times_{S'} Q\]
be the dual isogeny of $\rho_{iso}$.  Let $p''$ be the image of $p'$ under $\tau_{iso}$.  Then, since $\tau_{iso}$ is finite, for every section $\sigma''$ in $\sections_{b'}^{p''}((A_0\times_k S')\times_{S'} Q/C')$, there are at most finitely many sections of $A\times_S S'$ over $C'$ lifting $\sigma''$ and mapping $b'$ to $p'$.  However, since $\sections_{b'}^{p''}((A_0\times_k S')\times_{S'} Q/C')$ is at most countable, $\sections_{b'}^{p'}(A\times_S S'/C')$ is at most countable.  Putting all these together, $\sections_b^p(A/C)$ is at most countable.  

Replacing the Jacobian of a smooth curve by the Albanese variety of $S$ (\cite{rationalpoints}, Theorem 5.7.13, p.141), the result for $\sections_b^p(X/S)$, resp. $\sections_b^p(A/S)$, resp. $\sections_b^p(\iso(A)/S)$ follows immediately.
\end{proof}

\begin{lemma}\label{matching}
(1). Fix a point $b\in S$, a point $p$ in $\iso(A)_b$, and a point $p'\in (\pi\circ f)^{-1}(p)$.  Let $\sigma$ be a section in $\sections_b^p(A/S)$.  Then, for a conic $C$ and a genus-0, degree-$d$ curve $m$ containing $b$ with $d\ge 2$ such that $C$ and $m$ intersect at a very general point, every section in $\sections_b^{p'}(X_{C\cup m}/C\cup m)$ that maps to $\sigma|_{C\cup m}$ is the restriction of a unique section in $\sections_b^{p'}(X/S)$ that maps to $\sigma$ if $C\cup m$ is good for $\sigma$.

(2). Conversely, if $p$ is a bad point for $C\cup m$, then for $C$ and $m$ intersecting at a very general point, there exists a section in $\sections_b^{p'}(X_{C\cup m}/C\cup m)$ that cannot be extended uniquely.

(3). Let $C$ be a genus-0, degree-$d$, irreducible smooth curve marked by $b$ with $d\ge 2$.  Suppose that $C$ contains another very general point $c$.  Then, every section in $\sections_b^{p'}(X_{C}/C)$ that maps to $\sigma|_{C}$ is the restriction of a unique section in $\sections_b^{p'}(X/S)$ that maps to $\sigma$ if $C\cup m$ is good for $\sigma$.
\end{lemma}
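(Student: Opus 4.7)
The three parts share a common translation: since $f\colon X\to A$ is finite, a section of $X$ over a base (mapping to a fixed section $\sigma$ of $A$) is the same datum as a section of the finite cover $Y:=X\times_{A,\sigma}S$ pulled back to that base. Under this identification, property (ii) of Definition~\ref{goodcurves} asserts that restriction to the conic $l$ induces a bijection on the sections of $Y/S$, while property (i) says $\sigma$ is determined by its restriction to the curve-pair. The plan is to prove (1) directly, to deduce (2) as a contrapositive, and to reduce (3) to (1) with an auxiliary conic.

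For part (1), I take $\tau\in\sections_b^{p'}(X_{C\cup m}/C\cup m)$ lifting $\sigma|_{C\cup m}$, view it as a section of $Y_{C\cup m}/(C\cup m)$, and restrict to the conic $C$. Property (ii) of goodness produces a unique global section $\tilde\tau$ of $Y/S$ with $\tilde\tau|_C=\tau|_C$, and uniqueness of the lift on the nose follows from the same bijection. The crucial step is then $\tilde\tau|_m=\tau|_m$: both sides are sections of the finite morphism $Y_m\to m$ which agree at the intersection point $q=C\cap m$. Since $f$ is a dominant finite morphism between smooth varieties in characteristic zero, it is generically \'etale, so the branch locus $B\subset A$ of $f$ is a proper closed subscheme; hence $\sigma^{-1}(B)\subset S$ is a proper closed subset. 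The very general choice of $q$ ensures $q\notin\sigma^{-1}(B)$, so $Y\to S$ is \'etale in a Zariski-open neighborhood of $q$. Two sections of an \'etale cover agreeing at a point coincide on an open-and-closed subscheme of the base; since $m$ is irreducible, this forces $\tilde\tau|_m=\tau|_m$. Finally, $b\in m$ together with this agreement gives $\tilde\tau(b)=\tau(b)=p'$, so $\tilde\tau$ belongs to $\sections_b^{p'}(X/S)$ as required.

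For part (2), I argue contrapositively: failure of (ii) yields a section of $Y_C/C$ that either fails to extend or extends non-uniquely, and by Lemma~\ref{moduliofsections} together with a very general $q$ I glue such a section to a matching section on $m$ to produce a bad section of $X_{C\cup m}/(C\cup m)$. Failure of (i) propagates through $f$ to give the corresponding ambiguity at the $X$-level. For part (3), the strategy is to invoke the hypothesis that $C\cup m$ is good for $\sigma$ and apply (1): property (ii) of goodness for the curve-pair provides a section of $Y/S$ whose restriction to $Y_m/m$ matches $\tau(c)$ at $c$; gluing with $\tau$ along $c$ yields a section of $X_{C\cup m}/(C\cup m)$ to which (1) applies, and the \'etale argument at the very general point $c$ forces the resulting global extension to restrict back to $\tau$ on $C$. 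The main obstacle throughout is the \'etale matching step in (1): it rests on the non-containment $\sigma(S)\not\subset B$, without which $Y\to S$ would be nowhere \'etale and two sections of $Y_m\to m$ agreeing at one point could nevertheless disagree on $m$; in the present setup this non-containment is ensured by the generic behavior of sections parametrized by $\mathfrak{S}$ and by the very general choice of $q$.
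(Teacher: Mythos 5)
Your plan follows the same overall framework as the paper's proof: translate sections of $X$ mapping to $\sigma$ into sections of the pulled-back finite cover $Y=X\times_{A,\sigma}S$, use property (ii) of goodness to obtain a unique global extension of the restriction to the conic, and then argue that this extension agrees with the given section over $m$. Where you diverge is in that last matching step. The paper shows $(g_{\sigma}\circ\tau)|_m=\gamma|_m$ by observing that both are sections of $X_m\to m$ mapping $b$ to $p$; by Lemma~\ref{moduliofsections} there are at most countably many such sections, any two distinct ones coincide only on a proper closed subset of $m$, and the very general intersection point $c=C\cap m$ avoids all of these. You instead invoke \'etaleness of $Y_m\to m$ at $q$: two sections of an unramified morphism agreeing at one point agree on an open neighborhood, and since the coincidence locus is closed and $m$ is irreducible, they agree everywhere.

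The \'etale route has a genuine gap. It requires $Y=X\times_{A,\sigma}S\to S$ to be generically \'etale, which in turn requires $\sigma(S)\not\subset B$ for the branch locus $B$ of $f$. But $\sigma$ is the \emph{given} section in the lemma, not one you get to choose generically, and the containment $\sigma(S)\subset B$ can certainly occur: for instance when $A=A_0\times_kS$ is a constant family and $\sigma$ is a constant section through a point of the branch divisor of $X_0\to A_0$. You acknowledge this concern at the end and suggest it is resolved by ``the generic behavior of sections parametrized by $\mathfrak{S}$'' plus the very general choice of $q$, but neither helps: $\sigma$ is fixed, and if $\sigma^{-1}(B)=S$ then no choice of $q$ avoids it. The repair is essentially the paper's argument applied to $Y$: $Y_m\to m$ is finite flat, so it has only finitely many sections; any two distinct ones coincide on a proper closed subset of $m$; a very general $q$ misses all of these coincidence loci, and agreement at $q$ then forces equality with no hypothesis on ramification. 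Parts (2) and (3) of your sketch inherit the same dependence on the \'etale argument, so they need the same correction; note also that in part (3) the auxiliary curve $m$ does not appear in the statement, and the paper simply rereads the proof of part (1) with $C$ irreducible and the very general point $c$ playing the role of the node.
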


\begin{proof}
First, suppose that $C\cup m$ is good for the section $\sigma$.  Let $f_{\sigma}: X\times_{A,\sigma} S$ be the finite morphism arising from base change of $f$ by $\sigma$.  Denote $g_{\sigma}: X_S\to X$ to be the base change of $\sigma$ by $f$.  For $\sigma\in \sections_b^p(A/S)$, there are at most finitely many sections of $f_{\sigma}$ that map $b$ to $p$.  For any two different such sections, the intersection in $X_S$ maps in $S$, via $f_{\sigma}$, to a proper closed subset of $S$.  As we vary these sections of $f_{\sigma}$, this gives finitely many proper closed subsets of $S$.  Moreover, there are at most countably many sections in $\sections_b^p(X_{m}/m)$.  For any two distinct such sections, the intersection in $X$ maps in $S$, via the structure morphism of $X$, to a proper closed subset of $S$.   The complement of these closed subsets is a very general open subset of $S$.  Denote this very general subset by $S_0(\sigma)$.

Take $c\in S_0$ as the intersection point of $C$ and $m$.  Let $\gamma\in \sections_b^{p'}(X_{C\cup m}/C\cup m)$ such that $(f\circ\gamma)|_m$ is contained in the section $\sigma$ in $\sections_b^p(A/S)$.  Then form the following diagram.

\[\xymatrix{
C\cup m\ar@/_/[ddr]_{\gamma} \ar@/^/[drr]\ar[dr]|-{\gamma_0}&  &  &  \\
& X_S\ar[r]_{f_{\sigma}}\ar[d]^{g_{\sigma}}  &  S\ar[d]^{\sigma} &  \\
& X\ar[r]_f  &  A\ar[r]^{\pi}\ar[d]  & \iso(A)\ar[ld]^{\rho}  \\
&  &   S  &  
}\]

Since $X_S$ is the fiber product of $X$ and $S$ via $f$ and $\sigma$, every section of $f_{\sigma}$ over $S$ (resp. over $C\cup m$) arises from a unique section of $X$ over $S$ (resp. over $C\cup m$).  Thus, $\gamma$ gives a section, $\gamma_0$, of $f_{\sigma}$ over $C\cup m$ such that $g_{\sigma}\circ\gamma_0=\gamma$.  Since $C\cup m$ is good for $\sigma$, $\gamma_0|_C$ is contained in a unique section of $f_{\sigma}$ over $S$, say, $\tau$.  Then, $g_{\sigma}\circ \tau$ is a section of $X$ over $S$ such that $f\circ g_{\sigma}\circ\tau=\sigma$.  By construction, $(g_{\sigma}\circ \tau)|_C$ equals $g_{\sigma}\circ (\gamma_0|_C)$, which is $\gamma|_C$.  Let $\gamma_0$ be the restriction of $g_{\sigma}\circ\tau$ on $m$.  Suppose that $\gamma_0\neq \gamma|_m$, then $\gamma_0(c)=g_{\sigma}\circ\tau(c)=\gamma(c)$ is in the intersection of the images of $\gamma_0$ and $\gamma|_m$.  Since $\gamma_0$ is a section of $X$ over $m$ mapping $b$ to $p$, also $c$ is in the complement of $S_0$, contradicting the choice of $c$.  Therefore, $\gamma_0$ equals $\gamma|_m$, and $\gamma$ extends to a unique section of $X$ over $S$.

Next, suppose that $p$ is a bad point for $C\cup m$.  For every two distinct sections in $\sections_b^p(A/S)$, the intersection of their images in $A$ maps to a proper closed subset of $S$.  Remove these countably many closed subsets from $\cap_{\sigma}S_0(\sigma)$, $\sigma\in\sections_b^p(A/S)$.  Denote this very general subset by $S^{\circ}$.  Take $c\in S^{\circ}$ and a section $\tau\in\sections_b^p(A/S)$ such that $C\cup m$ is bad for $(\tau,p)$.  Since $p$ is a bad point, there exists $p'\in (\pi\circ f)^{-1}(p)$ and a section of $\gamma$ of $X$ over $C\cup m$ mapping $b$ to $p'$ such that either $\gamma_{\tau}$ cannot be extended, the extension is not unique, or $f\circ\gamma$ cannot be extended.  If $f\circ\gamma$ cannot be extended, $\gamma$ does not have an extension.  If $\gamma_{\tau}$ cannot be extended, then $\gamma$ cannot be extended.  If the extension is not unique, these different extensions gives distinct extensions of $\gamma$ as in the proof of the first part.  The statement (3) follows from the same proof as (1).
\end{proof}

\begin{corollary}\label{verygeneralmatching}
Fix a very general point $b\in S$ and a point $p$ in $\iso(A)_b$.  Then, for a very general conic $C$ and a very general genus-0, degree-$d$ curve $m$ containing $b$ with $d$ even and $d\ge 2$ such that $C$ and $m$ intersect at a very general point, every section in $\sections_b^p(X_{C\cup m}/C\cup m)$ is the restriction of a unique section in $\sections_b^p(X/S)$.
\end{corollary}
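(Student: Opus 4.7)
The plan is to reduce the statement on $X$ to the Abelian case through the finite morphism $f:X\to A$. Given $\gamma\in\sections_b^p(X_{C\cup m}/(C\cup m))$ with $\gamma(b)=p'\in(\pi\circ f)^{-1}(p)$, first I would push down to obtain $\gamma_A=f\circ\gamma\in\sections_b^p(A_{C\cup m}/(C\cup m))$. Since $d$ is even, so is $d+2$, and Corollary~\ref{inductiveII} extends $\gamma_A$ uniquely to a section $\sigma\in\sections(A/S)$; automatically $\sigma\in\sections_b^p(A/S)$ because $\pi(\sigma(b))=\pi(\gamma_A(b))=p$. Then, provided $C\cup m$ is good for $\sigma$ in the sense of Definition~\ref{goodcurves}, Lemma~\ref{matching}(1) yields a unique extension $\widetilde{\gamma}\in\sections_b^{p'}(X/S)\subset\sections_b^p(X/S)$ of $\gamma$ lifting $\sigma$. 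Uniqueness of $\widetilde{\gamma}$ in $\sections_b^p(X/S)$ follows because any other extension pushes forward under $f$ to an extension of $\gamma_A$, hence to $\sigma$ by Corollary~\ref{inductiveII}, and then Lemma~\ref{matching}(1) forces it to coincide with $\widetilde{\gamma}$.

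The remaining question is to arrange that $C\cup m$ is good for \emph{every} $\sigma\in\sections_b^p(A/S)$. Condition (i) of Definition~\ref{goodcurves} is $\sigma$-independent and holds on a very general open set of curve-pairs by Corollary~\ref{inductiveII}. For condition (ii), I would apply Theorem~\ref{bertinisectionsrevised} to the finite $S$-scheme $Z_\sigma:=X\times_{A,\sigma}S$. If the image of $Z_\sigma\to S$ is a proper closed subscheme, then both $\sections(Z_\sigma/S)$ and $\sections(Z_\sigma\times_S l/l)$ are empty and (ii) is trivial; otherwise the normalization $\widetilde{Z}_\sigma\to S$ is finite and surjective, and since $S$ and $l$ are both smooth, sections of $Z_\sigma$ and of $Z_\sigma\times_S l$ are in natural bijection with sections of $\widetilde{Z}_\sigma$ and of $\widetilde{Z}_\sigma\times_S l$ respectively. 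Theorem~\ref{bertinisectionsrevised} then supplies the desired bijection on condition (ii) for $l$ in some Zariski-open, $\sigma$-dependent subset of the variety of conics.

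The heart of the argument is the passage from ``general (for each $\sigma$)'' to ``very general (uniformly in $\sigma$)''. By Lemma~\ref{moduliofsections}, the set $\sections_b^p(A/S)$ is at most countable, so the intersection over $\sigma$ of the open conditions from (ii) remains a very general set of conics. The same countability argument governs the very general choice of intersection point $c$ required in Lemma~\ref{matching}(1): the subsets $S_0(\sigma)\subset S$ constructed in its proof form a countable family of very general subsets, whose intersection is still very general. With all these very general choices made, $C\cup m$ becomes good for every $\sigma\in\sections_b^p(A/S)$, and the argument of the first paragraph completes the proof.

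I expect the principal obstacle to be precisely this countable-intersection step: goodness of $C\cup m$ for a single $\sigma$ is only a general condition, and it is the countability of $\sections_b^p(A/S)$ from Lemma~\ref{moduliofsections} that allows these conditions to be simultaneously imposed. A secondary technical nuisance is applying Theorem~\ref{bertinisectionsrevised} to $X\times_{A,\sigma}S$ despite possible non-surjectivity and non-normality, which is handled by the normalization argument sketched above, together with the observation that the bijection on total sections restricts to a bijection on the marked $\sections_b^p$-subsets, since restriction preserves the value at $b$.
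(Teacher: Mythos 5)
Your argument is correct and follows essentially the same route as the paper's proof: push forward along $f$, apply Corollary~\ref{inductiveII} to handle goodness condition (i), apply Theorem~\ref{bertinisectionsrevised} to each $f_\sigma=X\times_{A,\sigma}S\to S$ for condition (ii), intersect over the countably many $\sigma\in\sections_b^p(A/S)$ using Lemma~\ref{moduliofsections}, and conclude via Lemma~\ref{matching}(1). Your normalization-of-$Z_\sigma$ remark is a careful addendum to a step the paper applies without comment, and your explicit appeal to countability makes precise the ``take a very general conic $C$ contained in $\mathcal{N}_2(\sigma,c)$ for every $\sigma$'' step that the paper states more tersely.
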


\begin{proof}
Consider the very general subset $S^{\circ}$ as in Lemma~\ref{matching}.  For a very general $b$, and very general $C\cup m$, the restriction of sections
\[\sections_b^p(A/S)\to\sections_b^p(A_{C\cup m}/C\cup m)\]
is bijective by Corollary~\ref{inductiveII}.  

Let $c\in m\cap S^{\circ}$ be a closed point.  For every $\sigma$ in $\sections_b^p(A/S)$, there is a very general family of conic curves $\mathcal{N}_2(\sigma,c)$ such that every section of $f_{\sigma}$ over $C$ extends to a unique section of $f_{\sigma}$ by Bertini's theorem (Theorem~\ref{bertinisectionsrevised}).  Take a very general conic $C$ that is contained in $\mathcal{N}_2(\sigma,c)$ for every $\sigma\in \sections_b^p(A/S)$.  Now, for every section $\gamma\in \sections_b^p(X_{C\cup m}/C\cup m)$, $f\circ\gamma$ is contained in a unique section $\sigma\in \sections_b^p(A/S)$.  Therefore, by Lemma~\ref{matching}, $\gamma$ is contained in a unique section of $\sections_b^p(X/S)$.
\end{proof}

\begin{remark}\label{modulibyisotrivialfactor}
For a fixed $p\in \iso(A)_b$, Corollary~\ref{verygeneralmatching} claims the existence of good curve-pairs for sections in $\sections_b^p(A/S)$.  However, such a good curve-pair might be bad for other choices $p_0\in\iso(A)_b$.  And as we vary the point $p_0$, the bad sets $\mathcal{D}_b^{p_0}$ might sweep out the moduli space $\mathcal{M}_d({\mathbb{P}^n_{\mathfrak{S}}}{/\mathfrak{S}}, \tau)_{\mu(b)}$.  To resolve this problem, we have to increase the degree of curve-pairs (Theorem~\ref{generalizedjasonmaintheorem}).
\end{remark}

\subsection{Main Theorem}

Now, we can give the proof of Theorem~\ref{generalizedjasonmaintheorem}.

\begin{proof}
Let $C_1$ be a very general conic curve containing a very general point $b_1\in S$.  Let $\mathcal{M}_{2,b_1}$ be the image of $\phi_3(\mathcal{D}_{b_1})$, i.e. the set of bad points $p_1\in\iso(A)_{b_1}$ for $C_1$.  By Corollary~\ref{verygeneralmatching}, $\mathcal{M}_{2,b_1}$ is a proper subset of $\iso(A)_{b_1}$.  Take $C_2$ a very general conic intersecting with $C_1$ at a very general point $c_2$ and a very general point $b_2$ on $C_2$. Denote by $\Delta'_2(C_1\cup C_2,b_1)$ the union of the images of $C_1\cup C_2$ under bad sections $\sigma$ of $A$ over $S$ mapping $b_1$ to some $p\in\mathcal{M}_{2,b_1}$.  Let $\Delta_2(C_1\cup C_2,b_1)$ be the image of $\Delta'_2(C_1\cup C_2,b_1)$ in $\iso(A)$ under $f\circ\pi$.  Then, $\Delta_2(C_1\cup C_2,b_1)$ is contained in $\rho^{-1}(C_1\cup C_2)$ and $\Delta_2(C_1\cup C_2,b_1)\cap \mathcal{M}_{2,b_1}$ equals $\mathcal{M}_{2,b_1}$.    Define $\Delta_2(C_1\cup C_2,b_2)$ in the same way for points in $\mathcal{M}_{2,b_2}$.

By choosing $C_2$, $c_2$ and $b_2$ very generally, $\Delta_2(C_1\cup C_2,b_2)\cap\iso(A)_{b_1}$ will intersect $\mathcal{M}_{2,b_1}$ transversally.  Moreover, since $C_1\cup_{c_2}C_2$ is very general, we may assume that 
\[\sections(A/S)\to\sections(A_{C_1\cup_{c_2}C_2}/C_1\cup_{c_2}C_2)\]
are bijective by Corollary~\ref{inductiveII}.  

Let $p$ be a point in $\mathcal{M}_{2,b_1}$, but not in $\Delta_2(C_1\cup C_2,b_2)$.  Let $\sigma$ be a section in $\sections_{b_1}^p(A/S)$.  If $\sigma(b_2)$ does not belong to $\mathcal{M}_{2,b_2}$, $C_1\cup C_2$ is good for $(\sigma, b_1,p)$.  If $\sigma(b_2)$ is in $\mathcal{M}_{2,b_2}$, then $\sigma(b_1)$ is in $\Delta_2(C_1\cup C_2,b_2)$, which contradicts the choice of $p$.  Thus, $C_1\cup C_2$ is good for every section in $\sections_{b_1}^p(A/S)$, and $p$ is a good point for this marked curve-pair.  Now, take a point $p_1'$ in $(\pi\circ f)^{-1}(p)$ where $p$ is in the set $\Delta_2(C_1\cup C_2,b_2)\cap \iso(A)_{b_1}$, but not in $\mathcal{M}_{2,b_1}$.  Denote by $\gamma$ a section of $X$ over $C_1\cup C_2$ mapping $b_1$ to $p_1'$.  Let $p_2'=\gamma(b_2)$.  Denote by $p_1$, resp. $p_2$, the image of $p_1'$, resp. $p_2'$ in $\iso(A)$.  Let $\sigma$ be a section of $A$ over $S$ extending $f\circ\gamma$.  Since $p_1$ is not in $\mathcal{M}_{2,b_1}$, $C_1\cup C_2$ is good for $(\sigma, b_2,p_2)$.  By Lemma~\ref{matching}, $\gamma$ extends to a unique section of $X$ over $S$ mapping $b_2$ to $p_2'$ and $b_1$ to $p_1'$.  Thus, by the second part of Lemma~\ref{matching}, $p$ is a good point for $(b_1,C_1,c_2,C_2)$.

Denote by $\mathcal{M}_{4,b_1}$ the set of bad points of $(b_1,C_1,c_2,C_2)$.  Then, by the above argument, $\mathcal{M}_{4,b_1}$ is contained in the intersection of $\Delta_2(C_1\cup C_2,b_2)$ and $\mathcal{M}_{2,b_1}$.  Therefore, $\dim\mathcal{M}_{4,b_1}$ is strictly less than $\dim\mathcal{M}_{2,b_1}$.  Let $C_{1,2}$ be a very general, genus-0, degree-4, irreducible, smooth curve containing $b_1$.  By Corollary~\ref{inductionI} and Lemma~\ref{matching}, the bad set of points is contained in $\mathcal{M}_{4,b_1}$.  Denote the bad points for $(C_{1,2},b_1)$ by $\mathcal{M}_{4,b_1}'$.  Attach a very general conic $C_3$ to $C_{1,2}$ at a very general point $c_3$.  Then inductively, we get a decreasing sequence of dimensions
\[\dim\mathcal{M}_{2,b_1}>\dim\mathcal{M}_{4,b_1}\ge\dim\mathcal{M}_{4,b_1}'>\dim\mathcal{M}_{6,b_1}\ge\dim\mathcal{M}_{6,b_1}'>\cdots.\]
Then, for $d>2e$ an even number, the bad set for $(b_1,m,c,C)$ of degree-$(d+2)$ is empty, and hence every section of $X$ over $C\cup m$ is the restriction of a unique section.  And, by Corollary~\ref{inductionI}, this is also true for very general irreducible smooth curve of degree-$(d+2)$.
\end{proof}

\begin{appendices}

\section{The Bi-gon Lemma}\label{thebigonlemma}

Let $k$ be an algebraically closed field.  In the statement of the
Bi-gon Lemma, also $k$ will be uncountable.
Let $\bB$ be an irreducible,
quasi-projective $k$-scheme of dimension $\geq 2$.

\begin{definition} \label{defn-defective} 
For every $k$-morphism of locally finite type $k$-schemes, $f:R\to S$,
for every integer $\delta \geq 0$, the $\delta$-\textbf{locus of} $f$, 
$E_{f,\geq \delta}\subseteq R$, is the union of all
irreducible components of fibers of $f$ that have dimension $\geq
\delta$.  The $\delta$-\textbf{image of} $f$, $F_{f,\geq
  \delta}\subseteq S$, is the image under $f$ of $E_{f,\geq \delta}$.
\end{definition}

\begin{lemma}(\cite{Hart}, Exercise II.3.22, p.95)
\label{lem-defective} 
For every locally finite type morphism $f$ and every integer $\delta
\geq 0$, 
the subset $E_{f,\geq \delta}$ of $R$ is closed.  If also $f$ is
quasi-compact, resp. proper, 
then the subset $F_{f,\geq \delta}$ of $S$ is constructible, resp. closed.
\end{lemma}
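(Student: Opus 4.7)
The plan is to reduce both claims to standard upper semi-continuity of fiber dimension. Observe that $E_{f,\geq\delta}$ is exactly the set of points $x \in R$ for which the local fiber dimension
\[
\dim_x f^{-1}(f(x))
\]
is at least $\delta$. Indeed, $x$ lies on an irreducible component of $f^{-1}(f(x))$ of dimension $\geq \delta$ if and only if some component of the fiber through $x$ has dimension $\geq \delta$, which is the same as the local dimension at $x$ being $\geq \delta$.

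Given this identification, the first assertion follows directly from Chevalley's upper semi-continuity theorem for the local fiber dimension of a morphism locally of finite type (this is the content of the cited Hartshorne exercise). In other words, the function $x \mapsto \dim_x f^{-1}(f(x))$ is upper semi-continuous on $R$, so each superlevel set is closed. This gives that $E_{f,\geq \delta}$ is closed in $R$.

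For the image, note that $F_{f,\geq \delta} = f(E_{f,\geq \delta})$. In the quasi-compact case, $f$ restricted to the closed subscheme $E_{f,\geq \delta}$ (with its reduced structure, say) remains a quasi-compact morphism that is locally of finite type, hence of finite type, so by Chevalley's theorem on images of constructible sets under morphisms of finite type between locally Noetherian schemes, $F_{f,\geq \delta}$ is constructible in $S$. In the proper case, the image of the closed set $E_{f,\geq \delta}$ under the closed map $f$ is closed, so $F_{f,\geq \delta}$ is closed.

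There is essentially no obstacle once the right identification is made; the main technical input is Chevalley's upper semi-continuity of local fiber dimension, which we take as given from the standard reference. The only minor subtlety is ensuring that the local fiber dimension genuinely characterizes membership in $E_{f,\geq\delta}$, which follows since an irreducible component of the fiber through $x$ realizes the supremum used in the local dimension at $x$.
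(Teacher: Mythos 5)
The paper provides no proof of this lemma; the statement is given with only the citation to Hartshorne, Exercise II.3.22, and the argument is left to the reader. Your proof supplies exactly the standard content behind that citation. The identification
\[
E_{f,\geq\delta} \;=\; \set{\,x\in R : \dim_x f^{-1}(f(x)) \ge \delta\,}
\]
is correct because $\dim_x$ of a locally Noetherian scheme is precisely the maximum of the dimensions of the irreducible components passing through $x$, and then Chevalley's upper semi-continuity of local fiber dimension (which is what the cited Hartshorne exercise, or EGA IV 13.1.3, proves) shows each superlevel set is closed. Your treatment of $F_{f,\geq\delta}$ is also the expected one: restricting $f$ to the closed set $E_{f,\geq\delta}$ yields a morphism that inherits quasi-compactness, hence is of finite type in the locally Noetherian setting of Definition~\ref{defn-defective} (all schemes there are locally of finite type over $k$), so Chevalley's constructible image theorem applies; and in the proper case one simply uses that proper morphisms are closed. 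I see no gap — this is the argument the citation points to, and it is the same route the paper intends.
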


\begin{definition} \label{defn-omega} 
For every proper, surjective morphism $\rho:Y\to \bB$, 
for every pair $(T,w)$ of an integral scheme $T$ and a dominant,
finite type morphism, 
$$
w:T\to \bB,
$$
a $(T,w)$-\textbf{multisection of}
$\rho$ is a pair $(\Omega,v)$ of an irreducible scheme $\Omega$ and a
proper morphism $v=(v_Y,v_T)$,
$$
v:\Omega \to Y\times_{\bB}T, \ \ v_Y:\Omega \to Y, \ \ v_T:\Omega
\to T,
$$ 
such that $v_T$ is surjective and generically finite.  Since $v$ is
proper, also the image $(v(\Omega),v(\Omega)\hookrightarrow
Y\times_{\bB}T)$ is a $(T,w)$-multisection of $\rho$.  This is the
\textbf{image multisection} of $(\Omega,v)$.

For every pair $((\Omega',v'),(\Omega'',v''))$ 
of $(T,w)$-multisections, denote
the fiber product of $v'$ and $v''$ by 
$$
(\pi':P_{v',v''} \to \Omega',\ \
\pi'':P_{v',v''} \to \Omega''), \ \ v'\circ \pi' = v''\circ \pi''.
$$
The
\textbf{special subset} $S_{v',v''}$
\textbf{of the pair}
is the closed image in $T$ 
of $P_{v',v''}$.
\end{definition}

\begin{definition} \label{defn-witness} 
\textbf{(i).} For an integral, quasi-projective 
$k$-scheme $M$, a \textbf{family of smooth, proper, connected curves
  over} $M$ is a smooth, proper morphism,
$$
\ol{u}_M:\ol{\Cc} \to M,
$$
whose geometric fibers are connected curves.

\mni
\textbf{(ii).}
For every open immersion 
$$
\iota:\Cc \to \ol{\Cc}
$$
whose image is dense in every fiber of $\ol{u}$, the composite
morphism $u_M = \ol{u}_M\circ \iota$ is 
a \textbf{family of smoothly compactifiable curves over} $M$.

\mni
\textbf{(iii).}
A \textbf{family of curves to $\bB$} is a pair $(M,u)$
of an irreducible, quasi-projective $k$-scheme $M$ and a proper
morphism $u=(u_Y,u_M)$,
$$
u:\Cc \to \bB\times_{\SP k}M, \ \ u_{\bB}:\Cc \to \bB, \ \ u_M:\Cc
\to M,
$$
such that $u_M$ is a family of smoothly compactifiable curves over $M$.

\mni
\textbf{(iv).}
The family of curves to $\bB$ is
\textbf{connecting}, resp. \textbf{minimally connecting}, 
if the following induced $k$-morphism is dominant, resp. dominant and
generically finite, 
$$
u^{(2)}:\Cc\times_M \Cc \to \bB \times_{\SP k} \bB, 
\ \ 
\text{pr}_{i}\circ
u^{(2)} = u_{\bB}\circ \text{pr}_i, i=1,2.
$$
By definition, both $u_M\circ \text{pr}_1$ and $u_M\circ \text{pr}_2$
are equal as morphisms from $\Cc\times_M \Cc$ to $M$; denote this
common morphism by $\wt{u}_M$.  
Denote by $\wt{u}^{(2)}$ the induced morphism
$$
(u^{(2)},\wt{u}_M):\Cc\times_M \Cc \to \bB\times_{\SP k} \bB \times_{\SP k} M.
$$
\textbf{(v).}
A connecting family of curves to $\bB$ is a
\textbf{Bertini family} if for every 
integral $k$-scheme $\wt{B}$ and for
every 
finite, surjective $k$-morphism
$\phi:\wt{\bB}\to \bB$, the induced morphism 
$\wt{\bB}\times_{\bB} \Cc \to M$ has
integral geometric generic fiber.  Denote by $M_\phi$ the maximal open
subscheme of $M$ over which $\wt{\bB}\times_{\bB} \Cc$ has integral
geometric fibers.

\mni
\textbf{(vi).}
An integral closed subvariety $N$ of $M$ is
\textbf{transversal}
if the following family  of curves to $\bB$ is minimally connecting,
$$
(N,u\times \text{Id}_N:\Cc\times_M \to \bB \times_{\SP k} M \times_M N).
$$
Such a subvariety is $\phi$-\textbf{Bertini} if $N$ intersects the
open $M_\phi$.

\mni
\textbf{(vii).}
The \textbf{transversal dimension} is $d:= 2\text{dim}(\bB)-2$.  For
every integer $e$ with $0\leq e\leq d$, 
the
\textbf{transversal Grassmannian} $G_e$ is the open subscheme
of the Grassmannian
parameterizing $e$-dimensional linear sections $N$ of $M$.
\end{definition}

\begin{lemma} \label{lem-transversal-a} 
For every connecting family of curves $(M,u)$, for every integer $e$
with $0<e\leq d$, a general point of $G_e$ parameterizes a linear
section $N$ of $M$ that is geometrically integral.
\end{lemma}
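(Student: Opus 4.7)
The plan is to apply iterated Bertini on the integrality of general hyperplane sections, together with a standard Grassmannian incidence count. The connecting hypothesis on $(M, u)$ provides a dominant morphism $u^{(2)} : \Cc \times_M \Cc \to \bB \times_{\SP k} \bB$, and since $\Cc \to M$ has relative dimension $1$, one obtains $\dim M + 2 = \dim(\Cc \times_M \Cc) \geq 2 \dim \bB$, hence $\dim M \geq d \geq e$. If $\dim M = e$ then $G_e$ parameterizes the unique linear section $N = M$ itself, which is geometrically integral by hypothesis, so the lemma is trivial; henceforth I assume $\dim M > e$.

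I would fix the projective embedding $M \hookrightarrow \mathbb{P}^N_k$ implicit in the definition of $G_e$ and denote by $\overline{M} \subseteq \mathbb{P}^N_k$ the projective closure. Since $M$ is irreducible over the algebraically closed field $k$, both $M$ and $\overline{M}$ are integral, equivalently geometrically integral. Next apply iteratively the classical Bertini theorem: for an integral projective variety $X \subseteq \mathbb{P}^N_k$ of dimension at least $2$, a general hyperplane section $X \cap H$ is again integral. Intersecting $\overline{M}$ with $\dim M - e$ general hyperplanes $H_1, \ldots, H_{\dim M - e}$ produces, for a general choice in the Grassmannian of $(\dim M - e)$-codimensional linear subspaces, an integral projective variety
\[
\overline{N} \; := \; \overline{M} \cap H_1 \cap \cdots \cap H_{\dim M - e}
\]
of dimension $e$.

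Finally I would verify that a general such $\overline{N}$ actually meets $M$, so that $N := \overline{N} \cap M$ is a dense open subscheme of $\overline{N}$, hence geometrically integral, and is parameterized by a general point of $G_e$. The boundary $Z := \overline{M} \setminus M$ is a proper closed subset of $\overline{M}$ of dimension at most $\dim M - 1$. A standard incidence count on the Grassmannian gives, for a general codimension-$(\dim M - e)$ linear subspace $L \subset \mathbb{P}^N_k$, the bound
\[
\dim(L \cap Z) \; \leq \; \dim Z - (\dim M - e) \; \leq \; e - 1 \; < \; \dim \overline{N},
\]
so $\overline{N}$ cannot be contained in $Z$, and hence $N$ is nonempty. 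The only step that is not pure bookkeeping is the iterated invocation of Bertini on integrality of hyperplane sections; over an algebraically closed field this is unconditional, and I do not foresee a real obstacle.
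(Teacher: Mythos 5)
Your proof is correct, and the mathematical content is the same Bertini input the paper relies on, but your route is genuinely more elementary. The paper disposes of the lemma in one line by citing Jouanolou's Th\'eor\`eme 6.10, which is a Bertini theorem for linear systems on a quasi-projective scheme and yields geometric integrality of a general codimension-$c$ linear section in one stroke. You instead pass to the projective closure $\overline{M}$, iterate the classical single-hyperplane Bertini theorem for integrality $\dim M - e$ times, and then handle the boundary $\overline{M}\setminus M$ with a Grassmannian incidence count. The reduction from ``general flag of hyperplanes'' to ``general linear subspace'' is justified because the flag variety is irreducible and maps smoothly and surjectively onto the Grassmannian, so dense opens correspond; and the bound $\dim M \geq d \geq e$ coming from the connecting hypothesis is exactly what makes the iteration run down to dimension $e \geq 1$. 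What the Jouanolou citation buys is that one does not need the projective-closure trick or the boundary argument at all, and the statement applies immediately to quasi-projective $M$; what your version buys is self-containedness at the price of checking each step of the chain. One point to be explicit about when citing the classical Bertini theorem: the statement must give that a general hyperplane section of an integral projective variety of dimension $\geq 2$ over an algebraically closed field is integral, i.e., both irreducible \emph{and} reduced. This is true (reducedness of a general section of a reduced variety is also part of the Bertini package and is folded into the Jouanolou reference), but it is worth not conflating it with mere irreducibility, since irreducibility alone would not give geometric integrality of $N$.
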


\begin{proof}
This follows from a Bertini Connectedness Theorem, \cite{jouan} Th\'{e}or\`{e}me 6.10.
\end{proof}

\begin{lemma} \label{lem-transversal} 
For every connecting, Bertini family $(M,u)$,
for every integer $e$ with $0\leq e \leq d$, a general point of $G_e$
parameterizes a linear section $N$ of $M$ such that the induced
morphism $u^{(2)}_N$ is generically finite.  
For every finite, surjective 
$k$-morphism $\phi:\wt{\bB}\to \bB$, every general $N\in G_d$
is transversal and $\phi$-Bertini.
\end{lemma}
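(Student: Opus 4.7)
The plan is to prove both assertions from a Bertini-style dimension count for linear sections of $M$ combined with the connecting and Bertini hypotheses on $(M,u)$. The basic numerology: since $u_M$ has one-dimensional fibers, $\dim(\Cc\times_M\Cc) = \dim M + 2$, while $\dim(\bB\times_{\spec k}\bB) = 2\dim\bB = d+2$; by the connecting hypothesis, $u^{(2)}$ is dominant, so its generic fiber has dimension $\dim M - d$.

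For the first assertion I would begin by showing that $\wt{u}^{(2)}\colon \Cc\times_M\Cc \to \bB\times_{\spec k}\bB\times_{\spec k}M$ is generically finite onto its image $Z$: over a general point $(b_1,b_2,m)$, the fiber lies in $u_{\bB}^{-1}(b_1)\times u_{\bB}^{-1}(b_2)\cap (\Cc_m\times\Cc_m)$, which is finite because the restriction of $u_{\bB}$ to the generic smooth curve $\Cc_m$ is non-constant (connectedness). Hence $\dim Z = \dim M + 2$ and the projection $Z\to \bB\times\bB$ has generic fiber of dimension $\dim M - d$. Using Lemma~\ref{lem-defective} applied to $u^{(2)}$ with $\delta = \dim M - d + 1$, the locus where fibers of $u^{(2)}$ have exceptionally large dimension maps under $\wt{u}_M$ into a proper closed subset $E_M\subsetneq M$. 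A general $N \in G_e$ meets each prescribed proper closed subset of $M$ in the expected codimension, so for general $N$ and every generic $(b_1,b_2)\in \bB\times\bB$, the fiber of $Z\to \bB\times\bB$ intersects $\bB\times\bB\times N$ in dimension $\max(e-d,0)$. For $0\leq e\leq d$ this is $0$, and combined with the generic finiteness of $\wt{u}^{(2)}$, this forces the induced morphism $u^{(2)}_N\colon \Cc_N\times_N\Cc_N\to \bB\times\bB$ to be generically finite onto its image.

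For the second assertion, take $e=d$. The count above gives generically $0$-dimensional fibers, and since $\dim(\Cc_N\times_N\Cc_N) = d+2 = \dim(\bB\times\bB)$, generic finiteness forces the image to have full dimension $d+2$, so $u^{(2)}_N$ is dominant. Hence $N$ is transversal. For the $\phi$-Bertini property, the Bertini hypothesis on $(M,u)$ provides a nonempty open subscheme $M_\phi\subseteq M$; since $M$ is irreducible, $M_\phi$ is dense, and any general $N\in G_d$ meets $M_\phi$.

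The main technical obstacle is making the Bertini-type dimension count for linear sections rigorous when $M$ is only quasi-projective rather than projective space. The approach I would take is to fix a projective embedding of a compactification $\ol{M}\subseteq \PP^{n_0}_k$, interpret $G_e$ as the open subscheme of the associated Grassmannian parameterizing $e$-dimensional linear sections $N\subseteq \ol{M}$ lying inside $M$, and apply the classical dimension theorem for linear sections (cf.\ \cite{foag}, Exercise 11.4.F, or Lemma~\ref{lem-transversal-a} combined with \cite{jouan}, Th\'eor\`eme 6.10) to each of the finitely many prescribed proper closed subsets of $\ol{M}$ that arise above. Since only finitely many open conditions are imposed, the intersection of the corresponding open loci in $G_e$ is automatically dense, yielding a general $N$ with all the desired properties.
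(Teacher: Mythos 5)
The approach here is genuinely different from the paper's — you attempt a single Bertini-type dimension count over $\bB\times_{\spec k}\bB$, whereas the paper proves the claim by induction on $e$, at each step attaching one general point $m$ to the span of $N$ and using integrality of linear sections (Lemma~\ref{lem-transversal-a}) to argue that the image of $u^{(2)}_{N'}$ strictly increases in dimension. Unfortunately your count has two genuine gaps.

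First, the assertion ``the locus where fibers of $u^{(2)}$ have exceptionally large dimension maps under $\wt{u}_M$ into a proper closed subset $E_M\subsetneq M$'' is false. The set $E_{u^{(2)},\geq \dim M-d+1}$ is indeed a proper closed subset of $\Cc\times_M\Cc$, but a proper closed subset can still dominate $M$. For example, take $\bB=\PP^2$ and $M$ the space of lines with $\Cc$ the universal line. Then $u^{(2)}$ has $0$-dimensional general fiber, but over every diagonal point $(b,b)$ the fiber is the pencil of lines through $b$, which is $1$-dimensional; thus $E_{u^{(2)},\geq 1}$ contains the diagonal $\Delta\cong \Cc$ of $\Cc\times_M\Cc$, which surjects onto $M$. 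The paper's base case argues differently: it cuts down to the sublocus $E$ where $\wt{u}^{(2)}$ fails to be finite, observes $E$ is a proper closed subset, and then uses that the generic fiber of $\wt{u}_M$ is an irreducible $2$-dimensional scheme, so $E$ cannot contain a whole such fiber generically, which is what lets one conclude $F_{\geq 2}$ is not dense.

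Second, and more fundamentally, the conclusion ``this forces the induced morphism $u^{(2)}_N$ to be generically finite onto its image'' does not follow from knowing that the fiber of $u^{(2)}_N$ over a \emph{general} point of $\bB\times_{\spec k}\bB$ has dimension $\max(e-d,0)$. Generic finiteness onto the image means the fiber over a general point \emph{of the image} is finite. When $e<d$, the domain $\Cc_N\times_N\Cc_N$ has dimension $e+2<d+2$, so the image of $u^{(2)}_N$ is automatically a proper subvariety of $\bB\times_{\spec k}\bB$; a general $(b_1,b_2)$ then has empty fiber, which is vacuously consistent with your count but tells you nothing about fibers over general points of the (possibly much lower-dimensional) image. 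For example, if the family of curves $\Cc_N\to N$ over a bad linear section $N$ were isotrivial, $u^{(2)}_N$ would factor through a $2$-dimensional scheme, have image of dimension at most $2$, and be far from generically finite — but your count over general $(b_1,b_2)\in\bB\times\bB$ would not detect this. Likewise, for $e=d$, your count shows fibers over general $(b_1,b_2)$ are at most $0$-dimensional, but it does not by itself establish that they are non-empty (dominance), so the implication to transversality is also not closed. The paper's induction is precisely engineered to sidestep this: by tracking the dimension of the image of $u^{(2)}_N$ rather than fibers over ambient general points, and forcing the image to grow by exactly one at each step, it establishes the crucial equality $\dim(\operatorname{Im} u^{(2)}_N)=e+2$ for all $0\le e\le d$.

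Your argument for the $\phi$-Bertini part (density of $M_\phi$ in the irreducible $M$) matches the paper and is fine.
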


\begin{proof}
Generic finiteness of $u^{(2)}_N$ is proved by induction on $e$.  The
base case is when $e=0$.  
Since the family is connecting, the morphism $u$ is generically finite
to its image.  Thus, the morphism $\wt{u}^{(2)}$ is generically finite to
its image.  The $1$-relative locus $E$ of $\wt{u}^{(2)}$ is a proper,
closed subset of $\Cc\times_M \Cc$.  For the induced morphism,
$$
\wt{u}_M|_E:E\to M,
$$
the $2$-relative locus $E_{\geq 2}$ of this morphism is a closed subset of
$E$.
Since $E$ is a proper closed subset of $\Cc\times_M \Cc$, and since
the geometric generic fiber of $\wt{u}_M$ is irreducible, the proper
closed subset $E_{\geq 2}$ is disjoint from this fiber.
Thus, the
image $F_{\geq 2}$ of $E_{\geq 2}$ in $M$ is a constructible subset that
does not contain the generic point, i.e., it is not
Zariski dense.  Denote by $M^o$ the open subset of $M$ that is the
complement of the closure of the image of $F_{\geq 2}$.  
For every singleton $N$ of a closed point of $M^o$, 
the restrictions to $N$ of $\wt{u}^{(2)}$ and $u^{(2)}$ are equal;
refer to this common restriction by $u^{(2)}_N$.  Since $\wt{u}^{(2)}$
is generically finite on the fiber over $N$ by construction, also
$u^{(2)}_N$ is generically finite.  This establishes the base case.

For the induction step, assume that the result is proved for an integer
$e$ satisfying 
$0\leq e< 2\text{dim}(B)-2$.  Then for a general linear subvariety $N$ of
$M$ of dimension $e$, the image of $u_N^{(2)}$ has dimension $e+2 <
2\text{dim}(B)$.  Thus, the image is not Zariski dense.  Since
$u^{(2)}$ is dominant, a general point of $\bB\times_{\SP k}\bB$ is
contained in the image of $u^{(2)}$ over a general point of $M$, say
$m$.  Let $N'$ be the intersection of $M$ with the span of $N$ and
$m$.  Then $N'$ is a linear subvariety of $M$ of dimension $e+1$.  By
Lemma \ref{lem-transversal-a}, for $N$ general and for $m$ general,
the linear section $N'$ is geometrically integral.  
Thus, the image of $u^{(2)}_{N'}$ is a geometrically integral scheme
that is strictly larger than the image of
$u_N^{(2)}$.  Thus, the dimension of the image of $u^{(2)}_{N'}$
is strictly larger than the dimension of the image of $u^{(2)}_N$.
Since $u^{(2)}_N$ is generically finite, and since $N'$ has dimension
precisely $1$ larger than the dimension of $N$, also $u^{(2)}_{N'}$
has dimension precisely $1$ larger than the dimension of $u^{(2)}_N$.  
Thus,
also $u^{(2)}_{N'}$ is generically finite to its image.  

In particular, for
$e$ equal to $d$, since $u^{(2)}_N$ is generically
finite and the domain and target both have the same dimension, the
image of $u^{(2)}_N$ contains a nonempty Zariski open subset of
$\bB\times_{\SP k}\bB$.  By hypothesis, $\bB\times_{\SP k} \bB$ is
integral.  Thus, the image contains a dense Zariski open subset of
$\bB\times_{\SP k} \bB$.  Therefore $(N,u_N)$ is a minimally
connecting family, i.e., $N$ is transversal.

Finally, by hypothesis, the open subscheme $M_{\phi}$ contains the
generic point of $M$, and hence this open subscheme is dense.
Therefore, a general $N$ intersects $M_{\phi}$.
\end{proof}

\begin{lemma} \label{lem-transversal4} 
For every minimally connecting family of curves to $\bB$,
$$
(N,(u_{N,\bB},u_N):\Cc_N \to
\bB\times_{\SP k} N),
$$ 
for every $(\Cc_N,u_{N,\bB})$-multisection $(\Omega,v)$ of $\rho$,
the scheme
$\Omega \times_N \Cc_N$ is irreducible, and the following composition
$\wt{v}_B$ is a multisection of $\rho$ relative to
$\bB\times_{\SP k} \bB \xrightarrow{\text{pr}_1} \bB$,
$$
\Omega \times_N \Cc_N \xrightarrow{v_N\times \text{Id}_{\Cc_N}}
Y\times_{\bB} \Cc_N \times_N \Cc_N \xrightarrow{\text{Id}_Y \times
  u_{N,\bB}^{[2]}} Y\times_{\bB} \bB \times_{\SP k} \bB.
$$
\end{lemma}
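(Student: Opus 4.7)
The statement has two parts: the irreducibility of $\Omega\times_N\Cc_N$ and the verification that $\wt{v}_B$ defines a multisection relative to $\projection_1:\bB\times_{\SP k}\bB\to\bB$. The plan is to derive the first from the flatness and geometric integrality of fibers of the family $\Cc_N\to N$, and the second from the minimally connecting hypothesis on $(N,u_N)$ together with the existing properties of $(\Omega,v)$. Throughout I view $\Omega\times_N\Cc_N$ as the base change of $\Cc_N\to N$ along the composite $\Omega\xrightarrow{v_{\Cc_N}}\Cc_N\to N$.

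For irreducibility, I exploit that $\Cc_N\to N$ is the restriction along an open immersion of a smooth proper family of geometrically connected smooth curves, hence flat with geometrically integral fibers (a smooth connected curve over an algebraically closed field is irreducible, and this property is preserved by the restriction to a fiberwise dense open). Thus the first projection $p_1:\Omega\times_N\Cc_N\to\Omega$ is flat with geometrically integral generic fiber, the integrality surviving the scalar extension $K(N)\hookrightarrow K(\Omega)$. Because $\Omega$ is integral and $p_1$ is flat, every associated point of $\Omega\times_N\Cc_N$ lies above the generic point of $\Omega$, so the components of $\Omega\times_N\Cc_N$ are in bijection with the components of the generic fiber. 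As the generic fiber is irreducible, $\Omega\times_N\Cc_N$ is irreducible.

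For the multisection claim, I first identify $Y\times_\bB(\bB\times_{\SP k}\bB)$ under $\projection_1$ with $Y\times_{\SP k}\bB$; under this identification $\wt{v}_B$ sends $(\omega,c)$ to $(v_Y(\omega),u_{N,\bB}(c))$, and the projection $v_T$ to $T=\bB\times_{\SP k}\bB$ factors as $u_{N,\bB}^{[2]}\circ(v_{\Cc_N}\times_N\text{Id}_{\Cc_N})$. The first factor is the base change of the surjective generically finite morphism $v_{\Cc_N}$, so it is surjective and generically finite; the second factor is dominant and generically finite by the minimally connecting hypothesis on $(N,u_N)$ that follows from the transversality of $N$. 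Hence $v_T$ is dominant and generically finite. For properness, I apply base change to the proper morphism $v:\Omega\to Y\times_\bB\Cc_N$: the fibered product $v\times_N\text{Id}_{\Cc_N}$ is a proper morphism $\Omega\times_N\Cc_N\to Y\times_\bB(\Cc_N\times_N\Cc_N)$, and taking the scheme-theoretic image of $\wt{v}_B$ inside $Y\times_\bB T$ (which is irreducible by the first part) realizes the image multisection of Definition~\ref{defn-omega}.

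The main obstacle is primarily bookkeeping rather than a substantive difficulty: one must carefully track the various fiber products, verify that $u_{N,\bB}^{[2]}$ is compatible with the $\projection_1$ structure on $Y\times_\bB(\bB\times_{\SP k}\bB)$, and be careful to pass to the image multisection when distinguishing strict surjectivity of $v_T$ from mere dominance. The two core ingredients, flatness of the family of curves and minimal connectedness of $(N,u_N)$, handle the substantive content.
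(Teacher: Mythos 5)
Your proof is correct and follows essentially the same route as the paper: flatness of $\Cc_N\to N$ with integral geometric fibers gives irreducibility of $\Omega\times_N\Cc_N$, and the multisection property comes from factoring $v_T$ as $u_{N,\bB}^{[2]}\circ(v_{\Cc_N}\times_N\text{Id}_{\Cc_N})$ and invoking the minimally connecting hypothesis for the second factor. Your extra detail on associated points for the irreducibility step and the explicit description of $\wt{v}_B$ after the $\projection_1$ identification are fine elaborations of the same argument, not a different method.
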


\begin{proof}
Since the morphism
$$
u_N:\Cc_N\to N
$$
is flat with integral geometric fibers, the following base change
morphism is flat with integral geometric fibers,
$$
\text{pr}_{\Omega}: \Omega \times_N \Cc_N \to \Omega.
$$
Since $\Omega$ is irreducible, and since $\text{pr}_{\Omega}$
is flat with integral geometric generic fiber, also $\Omega \times_N
\Cc_N$ is irreducible.  

Since $v_{\Cc}$ is surjective and generically finite, and since $u_N$
is flat, also the following morphism is surjective and generically
finite,
$$
v_{\Cc}\times \text{Id}_{\Cc_N}:\Omega \times_N \Cc_N \to \Cc_N
\times_N \Cc_N.
$$
Since $(N,u_N)$ is minimally connecting, the following morphism
is dominant and generically finite,
$$
u_N^{[2]}:\Cc_N\times_N \Cc_N \to \bB \times_{\SP k} \bB.
$$
Thus, the composition is dominant and generically finite. This
composition equals the composition of $\wt{v}_B$ with the morphism
$$
\rho\times \text{Id}_{\bB} :Y\times_{\SP k} \bB \to \bB\times_{\SP k} \bB.
$$
Thus, the morphism $\wt{v}_B$ is a multisection of $\rho$.
\end{proof}

For every pair of connecting families of curves to $\bB$, 
$$
(M',u':\Cc'\to \bB\times_{\SP k}M'), \ \
(M'',u'':\Cc''\to \bB\times_{\SP k}M''),
$$
denote by $G$, resp. by $G'$, the open subscheme of the
Grassmannian parameterizing $d$-dimensional linear sections $N'$ of
$M'$, resp. $N''$ of $M''$, that are transversal; by Lemma
\ref{lem-transversal}, there is a dense open subscheme parameterizing
linear sections that are transversal.

\begin{lemma} \label{lem-transversal5} 
For every pair of connecting families of curves to $\bB$ as above that
are Bertini families, 
for every pair 
$$
(\Omega',v'), \ \ 
(\Omega'',v'')
$$
of a $(\Cc',u'_{\bB})$-multisection of
$\rho$ and a $(\Cc'',u''_{\bB})$-multisection of $\rho$, for a general
pair $(N',N'') \in G'\times_{\SP k} G''$, 
the families
$(N',u'\times \text{Id}_{N'})$ and $(N'',u'' \times \text{Id}_{N''})$ 
are minimal connecting families
of curves to $\bB$.  Also, for a general pair $(b',b'')\in \bB\times_{\SP
  k} \bB$,
the family $(N',u'_{N'})$, resp. $(N'',u''_{N''})$,
is a Bertini family for the image in $Y$ of the
multisection $\wt{v}'_{\bB,b'}$, resp. $\wt{v}''_{\bB,b''}$ of $\rho$,
obtained by restricting to the fiber of $\text{pr}_2:B\times_{\SP
  k}B\to B$ over $b'$, resp. over $b''$. 
\end{lemma}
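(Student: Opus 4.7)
\emph{Proof proposal.} The lemma has two assertions, and I would handle them separately.

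For the minimal connectivity claim, I would simply apply Lemma \ref{lem-transversal} to each of the two connecting Bertini families $(M', u')$ and $(M'', u'')$ independently. This produces dense open subsets $U' \subset G'$ and $U'' \subset G''$ parameterizing transversal linear sections, and the product $U' \times U''$ is the desired dense open in $G' \times_{\SP k} G''$.  So $(N', u' \times \mathrm{Id}_{N'})$ and $(N'', u'' \times \mathrm{Id}_{N''})$ are minimally connecting for a general pair.

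For the Bertini claim, focus on the primed case (the double-primed case is entirely symmetric).  First I would use Lemma \ref{lem-transversal4} to identify $\wt{v}'_\bB : \Omega' \times_{N'} \Cc'_{N'} \to Y \times_{\SP k} \bB$ as a dominant, generically finite morphism.  Composing with $\rho$ on the first factor and with $\mathrm{pr}_2$ on the second, generic flatness supplies a dense open $U \subset \bB$ so that for $b' \in U$ the restriction to $\mathrm{pr}_2^{-1}(b')$ yields a proper, generically finite morphism whose image in $Y$ (with reduced structure, or normalized) produces a finite surjective cover $\phi_{b'} : \wt{\bB}_{b'} \to \bB$.  These fit together into a flat family $\Phi : \wt{\bB} \to \bB \times U$ of finite covers.

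The plan is then to invoke the Bertini hypothesis for $(M', u')$ in families.  For each fixed $\phi_{b'}$, the hypothesis guarantees that $M'_{\phi_{b'}} \subset M'$ is dense open.  I would introduce the incidence locus
\[
I = \{(b', m) \in U \times M' : \text{the fiber of } \wt{\bB} \times_\bB \Cc' \to U \times M' \text{ over } (b', m) \text{ is geometrically integral}\}.
\]
Each slice $\{b'\} \times M'$ meets $I$ in a dense open by the Bertini hypothesis, so it suffices to know $I$ is open in $U \times M'$.  Granting this, $I$ is dense open, so for a general $N' \in G'$ the intersection $(\bB \times N') \cap I$ is dense open in $\bB \times N'$ and surjects onto $\bB$; consequently, for a general $b' \in \bB$ the set $N' \cap M'_{\phi_{b'}}$ is non-empty, which is exactly $\phi_{b'}$-Bertininess.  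The symmetric argument for $(b'', N'')$ and intersecting the resulting dense opens in $G' \times G'' \times \bB \times \bB$ finishes the proof.

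The main obstacle is precisely the openness of $I$ in $U \times M'$.  This reduces to verifying that, after possibly shrinking $U$, the assembled family $\wt{\bB} \times_\bB \Cc' \to U \times M'$ is flat, and then combining openness of geometric irreducibility in proper flat families (EGA IV$_3$, 9.7.7–9.7.8, upper semicontinuity of the number of geometric connected components) with openness of geometric reducedness.  In the characteristic-zero setting of the paper this poses no additional difficulty, but some care is needed to assemble the family $\Phi$ without losing flatness.
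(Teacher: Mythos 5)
Your handling of the minimal-connectivity part is fine and matches the paper: two independent applications of Lemma \ref{lem-transversal}. The Bertini part, however, has a genuine gap.

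You set out to show that $(N', u'_{N'})$ is Bertini for the cover $\phi_{b'}$ arising from $\wt{v}'_{\bB,b'}$. But $\wt{v}'_{\bB,b'}$ is itself built from $\Omega'$, $N'$, and $b'$, so this is a self-referential condition. The paper's own proof, and the application in the Bi-gon Lemma \ref{lem-transversal3}, establish the \emph{cross} pairing instead: $(N'', u''_{N''})$ Bertini relative to $\wt{v}'_{\bB,b'}$, and $(N', u'_{N'})$ Bertini relative to $\wt{v}''_{\bB,b''}$. (In Lemma \ref{lem-transversal3} one restricts $\Omega'_{M',N',b',i'}$ over $\Cc''_{t''}$ and needs that pullback to be irreducible --- precisely the cross condition.) The \textquotedblleft resp.\textquotedblright{} phrasing of the statement is cross-wired, so your literal reading is defensible, but the condition you prove is not the one used downstream. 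The circularity in your incidence step is a symptom of this misalignment: $U$, $\phi_{b'}$, and hence $I \subset U \times M'$ are all constructed from a \emph{fixed} $N'$, yet you then write \textquotedblleft for a general $N' \in G'$ the intersection $(\bB\times N')\cap I\ldots$\textquotedblright{} as though $N'$ were an independent variable. With the cross pairing, $I$ would live in $U\times M''$ and the free variable would be $N''\in G''$, a genuinely separate parameter, and the circularity disappears.

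As a route comparison: the paper's proof is shorter. Once $\wt{v}'_{\bB,b'}$ is identified as a multisection of $\rho$ (via Lemma \ref{lem-transversal4}), it simply reapplies Lemma \ref{lem-transversal} to the finite surjective cover obtained by normalizing its image in $Y$, getting general $N''\in G''$ transversal and $\phi$-Bertini. This outsources your acknowledged worry --- flatness of the assembled family $\Phi$ and openness of geometric integrality --- to what Lemma \ref{lem-transversal} has already established. Your incidence-locus approach, once the primes are straightened out, is a more explicit way to produce the dense open $W_{i',i''}\subset G'\times_{\SP k} G''\times_{\SP k} \bB\times_{\SP k}\bB$ invoked in Lemma \ref{lem-transversal6}, but it carries technical overhead that the paper's appeal to the prior lemma avoids.
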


\begin{proof}
By Lemma \ref{lem-transversal4}, each of
$(\Omega'_{N'}\times_{N'}\Cc'_{N'},\wt{v}'_{\bB})$ and
$(\Omega''_{N''}\times_{N''}\Cc''_{N''},\wt{v}''_{\bB})$ is a
$\text{pr}_1$-multisection of $\rho$.  Thus, for general $(b',b'')\in
\bB\times_{\SP k} \bB$,
the restriction of the $\text{pr}_1$-multisection
$\Omega'_{N'}\times_{N'} \Cc'_{N'}$, resp. $\Omega''_{N''}\times_{N''}
\Cc''_{N''}$, to the $\text{pr}_2$-fiber over 
$b'$, resp. over $b''$, maps dominantly and generically finitely to
$B$, i.e., each of the finitely many
irreducible components of the the restriction is a
$u'$-multisection of $\rho$, resp. a $u''$-multisection of $\rho$.
Denote the image in $Y$ of this finite union of
multisection by $\wt{v}'_{\bB,b'}$, resp. by $\wt{v}''_{\bB,b''}$.  
By Lemma \ref{lem-transversal}, for $N''$ general applied to the
finitely many irreducible components of $\wt{v}'_{\bB,b'}$, 
the
family $(N'',u''_{N''})$ is transversal and Bertini relative to
$\wt{v}'_{\bB,b'}$. 
Similarly, for $N'$ general, the family
$(N',u'_{N'})$ is transversal and Bertini relative to
$\wt{v}''_{\bB,b''}$.  
\end{proof}

\begin{lemma} \label{lem-transversal6} 
Assume that $k$ is algebraically closed and uncountable.
With the same hypotheses as above, for a countable family of
$(M',u')$-multisections, $(\Omega'_{i'},v'_{i'})_{i'\in I'}$, with
pairwise distinct images in $Y\times_{\bB}\Cc'$,
resp. for a countable family of $(M'',u'')$-multisections,
$(\Omega''_{i''},v''_{i''})_{i''\in I''}$, with pairwise distinct
images in $Y\times_{\bB} \Cc''$,
if $(N',N'')\in G'\times_{\SP k} G''$ and $(b',b'')\in \bB\times_{\SP
  k} \bB$ are very general, then for every 
$(i',i'')\in I'\times I''$, the conclusion holds for
$(\Omega'_{i'},v'_{i'})$ and $(\Omega''_{i''},v''_{i''})$.  
\end{lemma}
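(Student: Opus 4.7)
The plan is to reduce to Lemma~\ref{lem-transversal5} applied to one pair of multisections at a time, and then to invoke a Baire-type argument using the hypothesis that $k$ is uncountable and that $I' \times I''$ is countable.

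For each $(i', i'') \in I' \times I''$, I would apply Lemma~\ref{lem-transversal5} to the pair $(\Omega'_{i'}, v'_{i'})$ and $(\Omega''_{i''}, v''_{i''})$. The first task is to upgrade the output of Lemma~\ref{lem-transversal5}, which is phrased iteratively (first $(N', N'')$ general, then $(b', b'')$ general), to the statement that its conclusion holds on a single dense open subset $U_{i', i''}$ of the joint parameter space $G' \times_{\spec k} G'' \times_{\spec k} \bB \times_{\spec k} \bB$. For this, I would observe that the locus $T_{i',i''}^c$ where one of the finitely many required conditions fails is constructible by Chevalley's theorem together with the constructibility results in Lemma~\ref{lem-defective}. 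By Lemma~\ref{lem-transversal5}, over a general $(N', N'')$ the fiber of $T_{i',i''}^c$ in $\bB \times_{\spec k} \bB$ is not dense; upper semicontinuity of fiber dimension then yields $\dim T_{i',i''}^c < \dim(G' \times_{\spec k} G'' \times_{\spec k} \bB \times_{\spec k} \bB)$, so $T_{i',i''}^c$ is not dense in the product, and its complement $U_{i',i''}$ is dense open.

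I would then form the countable intersection $U := \bigcap_{(i', i'') \in I' \times I''} U_{i', i''}$. Since $I' \times I''$ is countable and $k$ is an uncountable algebraically closed field, $U$ is by definition a very general subset of $G' \times_{\spec k} G'' \times_{\spec k} \bB \times_{\spec k} \bB$, and in particular non-empty (any line in the product meets only finitely many points of each proper closed subset of $T_{i',i''}^c$, and $k$ has uncountably many points). Any $(N', N'', b', b'') \in U$ simultaneously realizes the conclusion of Lemma~\ref{lem-transversal5} for every pair $(i', i'') \in I' \times I''$, which is the statement of the lemma.

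The main obstacle is the conversion of iterated-general to joint-general in the preceding lemma, sketched in the second paragraph; once this is in place, the rest is the standard Baire-style countable intersection argument. The hypothesis that the multisections in each family have pairwise distinct images in $Y \times_{\bB} \Cc'$ and $Y \times_{\bB} \Cc''$ is precisely what guarantees that each $T_{i',i''}^c$ is a proper constructible subset and not all of the joint parameter space, so that each $U_{i',i''}$ is truly dense and the countable intersection remains very general rather than collapsing.
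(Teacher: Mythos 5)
Your proposal follows the same approach as the paper's proof: for each pair $(i',i')$ use Lemma~\ref{lem-transversal5} to produce a dense open $W_{i',i''}$ in the joint parameter space $G'\times_{\spec k} G''\times_{\spec k} \bB\times_{\spec k}\bB$, then take the countable intersection, which is very general (and non-empty) because $k$ is uncountable. The paper is terser, simply asserting the existence of $W_{i',i''}$, whereas you supply the constructibility and fiber-dimension reasoning needed to pass from the iterated ``for general $(N',N'')$, then for general $(b',b'')$'' phrasing of Lemma~\ref{lem-transversal5} to a single dense open in the joint parameter space; this is a genuine gap in the written proof that you fill in correctly. One small correction to your closing remark: the pairwise-distinct-images hypothesis is not what makes each $U_{i',i''}$ dense --- Lemma~\ref{lem-transversal5} applies to any pair of multisections and already yields a proper closed complement. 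That hypothesis is carried along here so that it is available in the Bi-gon Lemma (Lemma~\ref{lem-transversal3}), where one needs the special subsets $S_{i_1',i_2'}$ of distinct multisections to be proper closed subsets of $\bB$.
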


\begin{proof}
For each $(i',i'')$, by Lemma \ref{lem-transversal5}, there exists a
dense open $W_{i',i''}$ of $G'\times_{\SP k} G''\times_{\SP k} \bB
\times_{\SP k} \bB$ parameterizing $(N',N'',b',b'')$ such
that Lemma \ref{lem-transversal5} holds.  Thus, for every
$(N',N'',b',b'')$ in the countable intersection $\cap_{(i',i'')}
W_{i',i''}$, the conclusion of the lemma holds for every
$(\Omega'_{i'},v'_{i'})$ and $(\Omega''_{i''},v''_{i''})$.  
\end{proof}

\begin{lemma}(The Bi-gon Lemma) \label{lem-transversal3}
With hypotheses as in the previous lemma, for a very general
$(N',N'',b',b,b'')$ in $G'\times_{\SP k} G'' \times_{\SP k} \bB
\times_{\SP k} \bB\times_{\SP k} \bB$, 
for a very
general $2$-pointed bi-gon $(\Cc=\Cc'_{t'} \cup_b \Cc''_{t''},b',b'')$ 
parameterized by a point $t'\in N'$, resp.
$t''\in N''$, whose curve $\Cc'_{t'}$ contains $b$ and $b'$,
resp. whose curve $\Cc''_{t''}$ contains $b$ and
$b''$, 
the only sections $\sigma$ of $\rho$ over $\Cc$ whose restriction to
$\Cc'_{t'}$ is 
in some $\Omega'_{M',N',b',i'}$ and whose restriction to $\Cc''_{t''}$
is in some $\Omega''_{M'',N'',b'',i''}$ are those that come from global
sections $\Omega'_{M',N',b',i'} = \Omega = \Omega''_{M'',N'',b'',i''}$
over $B$.
\end{lemma}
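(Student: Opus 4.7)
The strategy is to force each of the multisections $\Omega'_{M',N',b',i'}$ and $\Omega''_{M'',N'',b'',i''}$ to be (the image of) a global section of $\rho$ over $\bB$ using the Bertini property, and then to identify the two resulting global sections using the very general choice of the node $b$.  For the first reduction, I apply Lemma~\ref{lem-transversal6} to the given countable families $(\Omega'_{i'},v'_{i'})_{i'\in I'}$ and $(\Omega''_{i''},v''_{i''})_{i''\in I''}$, enlarged to enumerate the irreducible components of the derived multisections $\wt{v}'_{\bB,b',i'}$ and $\wt{v}''_{\bB,b'',i''}$ produced by Lemma~\ref{lem-transversal5}.  This yields $(N',N'',b',b'')$ very general in $G'\times_{\SP k}G''\times_{\SP k}\bB\times_{\SP k}\bB$ so that the conclusions of Lemma~\ref{lem-transversal5} hold simultaneously for every pair $(i',i'')$.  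Invoking the (at most countable) cardinality of the set of global sections of $\rho$ (cf.\ Lemma~\ref{moduliofsections}), I further impose countably many conditions on $b\in\bB$ to be very general, avoiding the pairwise equalizer loci of distinct global sections of $\rho$ and lying in the Bertini open of each relevant finite cover $\Omega'_{M',N',b',i'}\to\bB$ and $\Omega''_{M'',N'',b'',i''}\to\bB$.

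The second step treats each leg of the bi-gon separately.  Since $\sigma|_{\Cc'_{t'}}$ factors through the closed subscheme $\Omega'_{M',N',b',i'}\subseteq Y$, it defines a section of the pulled-back finite cover $\Omega'_{M',N',b',i'}\times_{\bB}\Cc'_{t'}\to\Cc'_{t'}$.  By the Bertini property of $(M',u')$ relative to the finite surjective morphism $\phi':\Omega'_{M',N',b',i'}\to\bB$ from Definition~\ref{defn-witness}(v), for $t'\in N'$ in a dense open the total space $\Omega'_{M',N',b',i'}\times_{\bB}\Cc'_{t'}$ is geometrically integral.  An integral finite cover of an irreducible curve that admits a section must be birational; hence the generic degree of $\phi'$ equals one, and $\Omega'_{M',N',b',i'}$ is (the image of) a global section $\Omega:\bB\to Y$ of $\rho$.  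The identical argument applied to the second leg yields a global section $\Omega'':\bB\to Y$ whose image equals $\Omega''_{M'',N'',b'',i''}$.

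Finally, at the node both global sections satisfy $\Omega(b)=\sigma(b)=\Omega''(b)$.  By the very general choice of $b$ in the first step, distinct global sections of $\rho$ cannot agree at $b$, forcing $\Omega=\Omega''$.  Therefore $\sigma$ is the restriction of the common global section $\Omega$ to $\Cc$, and $\Omega'_{M',N',b',i'}=\Omega(\bB)=\Omega''_{M'',N'',b'',i''}$, as required.  The main obstacle is the second step: one must ensure that the pullback $\Omega'_{M',N',b',i'}\times_{\bB}\Cc'_{t'}$ is genuinely one-dimensional and integral, and then pass from ``integral finite cover with a section'' to ``birational cover'' and finally to ``global section'', which requires the normality of $\bB$ (available in the smooth setting of the applications) and a careful accounting of irreducible components of the image multisections.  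A secondary subtlety is the invocation of the countability of global sections of $\rho$, which in the present geometric context follows from Lemma~\ref{moduliofsections}.
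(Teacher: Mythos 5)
Your proposal has a genuine gap in the second step, where the Bertini property is applied to the wrong family of curves. You claim that by the Bertini property of $(M',u')$ relative to $\phi':\Omega'_{M',N',b',i'}\to\bB$, the pullback $\Omega'_{M',N',b',i'}\times_{\bB}\Cc'_{t'}$ is geometrically integral for general $t'\in N'$, and then conclude that having a section forces degree one. But Lemma~\ref{lem-transversal5} does \emph{not} establish this ``same-side'' Bertini property; it establishes the \emph{cross-paired} property, namely that $(N'',u''_{N''})$ is Bertini relative to $\Omega'_{M',N',b',i'}\to\bB$ and $(N',u'_{N'})$ is Bertini relative to $\Omega''_{M'',N'',b'',i''}\to\bB$. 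The same-side statement is in fact typically false: $\Omega'_{M',N',b',i'}$ is constructed by sweeping the original multisection $\Omega'_{i'}$ over the pencil of curves $\Cc'_t$ with $t\in N'$ passing through $b'$. When you restrict this multisection back to one such curve $\Cc'_{t'}$, the pullback $\Omega'_{M',N',b',i'}\times_{\bB}\Cc'_{t'}$ contains the ``diagonal'' component $\Omega'_{i'}|_{\Cc'_{t'}}$ together with extra components coming from the other curves $\Cc'_t$ ($t\in N'$, $\Cc'_t\ni b'$) that also meet $\Cc'_{t'}$ at its generic point; hence the pullback is reducible, and no integrality conclusion holds. Your argument that $\sigma|_{\Cc'_{t'}}$ plus integrality implies degree one therefore does not get off the ground.

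Because of this, the order of operations in the paper is forced. The paper first uses the very general choice of $b$ to avoid the \emph{special subsets} $S_{i',i''}$ of pairs of multisections (not merely equalizer loci of global sections, which are not yet known to exist), so that the matching condition $\sigma(b)\in\Omega'_{M',N',b',i'}\cap\Omega''_{M'',N'',b'',i''}$ forces equality $\Omega'_{M',N',b',i'}=\Omega''_{M'',N'',b'',i''}$ as multisections. Only \emph{then} is the cross-paired Bertini property invoked: the restriction of $\Omega'_{M',N',b',i'}$ over $\Cc''_{t''}$ (the \emph{other} curve of the bi-gon) is integral, and since $\sigma|_{\Cc''_{t''}}$ lands there too (by the equality just proved), the cover has a section and must be degree one, yielding a global section. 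Your proposal attempts to derive ``each is a global section'' before matching at $b$, which inverts the logical order and relies on a Bertini statement that is not available.
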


\begin{proof}
Let $W$ denote the countable intersection of $W_{i',i''}$ inside
$G'\times_{\SP k} G'' \times_{\SP k} \bB \times_{\SP k} \bB$ as in the
proof of the previous lemma.  Let $(N',N'',b',b'')$ be an element of
$W$.  
Consider the
countable collection of image multisections $(\Omega'_{M',N',b',i'})_{i'}$ and
$(\Omega''_{M'',N'',b'',i''})$ of $\rho$ as closed subschemes of $Y$.
For every pair $(i_1',i_2')$ of distinct elements of $I'$, 
the special subset $S_{i_1',i_2'}$ associated to
$\Omega'_{M',N',b',i_1'}$ and $\Omega'_{M',N',b',i_2'})$ 
is a proper
closed subset of $B$, and similarly for the special subset
$S_{i_1'',i_2''}$ associated to every pair $(i_1'',i_2'')$ of distinct
elements of $I''$.
Finally, for every $i'\in I'$ and every $i''\in I''$,
the special subset
$S_{i',i''}$ associated to 
$\Omega'_{M',N',b',i'}$ and $\Omega''_{M'',N'',b'',i''})$ is a proper
closed subset except in those cases where $\Omega'_{M',N',b',i'}$
equals $\Omega''_{M'',N'',b'',i''}$.  

Choose $b$ to be a very general
point of $\bB$ that is contained in none of these special subsets that
is a proper closed
subsets of $\bB$.   The matching condition at $b$ for a section $\sigma$
implies that $\sigma(\Cc)$ is contained in 
$\Omega'_{M',N',b',i'} = \Omega''_{M'',N'',b'',i''}$ for
unique $\Omega'_{M',N',b',i'}$ and $\Omega''_{M'',N'',b'',i''}$ in their
respective countable multisections.  

By the previous lemma,
for every
$i'\in I'$, 
if the restriction of $\Omega_{M',N',b',i'}$ over $\Cc''_{t''}$
has a section, then the multisection $\Omega_{M',N',b',i'}$ is a
global section.  Similarly, for every $i''\in I''$, if the restriction
of $\Omega_{M'',N'',b'',i''}$ over $\Cc'_{t'}$ has a section, then the
multisection $\Omega_{M'',N'',b'',i''}$ is a global section.
Thus, for every section $\sigma$ as in the previous paragraph,
$\Omega'_{M',N',b',i'} = \Omega''_{M'',N'',b'',i''}$ is a global section.
\end{proof}

\section{Proofs of Two Lemmas}\label{Proofoftwolemmas}

For completeness, we prove two lemmas that are already in the literature in this appendix.

\mni
Proof of Lemma~\ref{cartiersmooth}.

\begin{proof}
This is a local problem, so we can assume that $S=\spec R$ and $X$ is a closed subscheme of $W=\mathbb{A}^n_R$ defined by $g_1, \cdots, g_r$.  Let $Z$ is defined by $g$ in $\mathcal{O}_X$.  Let $z\in Z$ and $dw_1$,$\cdots$, $dw_n$ be a basis of $(\Omega_{W/S}^1)_z$.  Then, up to a re-indexing, $g_{t+1}$, $\cdots$, $g_{n-t-2}$, $g$ generate the ideal sheaf defining $Z$ and $dw_1$,$\cdots$, $dw_t$, $dg_{t+1}$, $\cdots$, $dg_{n-t-2}$, $dg$  generate $(\Omega_{W/S}^1)_z$ (\cite{BLR}, Prop.2.2/7, p.39).  Let $Y$ be the closed subscheme of $\mathbb{A}^n_R$ defined by $g_{t+1}$, $\cdots$, $g_{n-t-2}$.  Then, we have $X\subset Y$, a closed subscheme.  Since both $dw_1$,$\cdots$, $dw_n$ and $dw_1$,$\cdots$, $dw_t$, $dg_{t+1}$, $\cdots$, $dg_{n-t-2}$, $dg$ are basis of $(\Omega_{W/S}^1)_z$, there exists some $w_{t+1}$ such that $dw_1$,$\cdots$, $dw_t$, $dw_{t+1}$, $dg_{t+1}$, $\cdots$, $dg_{n-t-2}$ form a basis of $(\Omega_{W/S}^1)_z$.  By the Jacobian criterion, $Y$ is smooth at $z$ over $S$.  Thus, locally at $z$, $Y$ is regular (\cite{Liuqing}, Theorem 4.3.36, p.142).  Therefore, $X$ and $Y$ are regular schemes of the same dimension locally at $z$ with $X\subset Y$.  We get $X=Y$ locally at $z$, and hence $X$ is smooth at $z$ over $S$.  Consider every point $z\in Z$, there will be an open subset $U\subset X$ such that $U\rightarrow S$ is smooth.  
\end{proof}

\mni
Proof of Lemma~\ref{abelianvarieties}.

\begin{proof}
Since Abelian varieties are projective, there exists a very ample sheaf $\mathcal{L}$ on $A\times_k B$.  Then, for every homomorphism $u: (A,0)\rightarrow (B,0)$, the graph $G_u$ in $A\times_k B$ has a Hilbert polynomial $P(t)$ with respect to $\mathcal{L}$.  Let $\homo^{P}_k(A,B)$ be the scheme parameterizing homomorphisms from $A$ to $B$ with Hilbert polynomial $P(t)$.  Then, $\homo^{P}_k(A,B)$ is quasi-projective over $k$.  Now, take a homomorphism $u$ from $A$ to $B$.  The Zariski tangent space of $\homo^{P}_k(A,B)$ at $[u]$ is isomorphism to the $k$-vector space of global sections of
\[\mathcal{E}=\homo_{\mathcal{O}_A}(u^*\Omega^1_{B/k},\mathcal{I}_0)\]
where $\mathcal{I}_0$ is the ideal sheaf defining the origin $0$ in $A$.  Since $B$ is an Abelian variety, $\Omega^1_{B/k}$ is isomorphic to the trivial locally free sheaf $\Omega_0\otimes_k\mathcal{O}_B$ where $\Omega_0$ is dual space $T_{B,0}^*$ of the Zariski tangent space $T_{B,0}$ of $B$ at the origin (\cite{mfdabelian}, (iii), p.39).  Thus, $\mathcal{E}$ is equal to

\[\homo_k(\Omega_0, k)\otimes_k\mathcal{I}_0.\]
Since $A$ is projective and by the exact sequence of the ideal sheaf $\mathcal{I}_0$ and structure sheaf of the origin $\mathcal{O}_A/\mathcal{I}_0$ (\cite{Liuqing}, Cor.3.3.21), there is no nonzero global section for $\mathcal{I}_0$, and hence the finite direct sum $\mathcal{E}$ of $\mathcal{I}_0$ does not have nonzero global section.  Therefore, $\hg^0(A,\mathcal{E})=0$ and $[u]$ is an isolated point of the quasi-projective $k$-variety $\homo^{P}_k(A,B)$.  So there are at most countably many such homomorphisms.
\end{proof}

\section{Table of Notations}

\begin{table}[H]
\centering
\begin{tabular}{r c p{10cm} }
\toprule
$W$ &  & Notation~\ref{projectivecompactification}\\
$A_0$ &  & Notation~\ref{chowtrace}\\
$Q$ &  & strongly nonisotrivial factor in Notation~\ref{chowtrace}\\
$\rho_{iso}$ &  & isogeny in Notation~\ref{chowtrace}\\  
$\iso(A)$ &  & Definition-Lemma~\ref{isotrivialfactor}\\
$\pi:A\to\iso(A)$ &  & Definition-Lemma~\ref{isotrivialquotient}\\
$\sections_b^p(\cdot/ \cdot)$ &  & Notation~\ref{sectionsdefinition}\\
$\mathfrak{S}$ &  & Notation~\ref{grothendieckpifunctor}\\  
$\mathcal{M}_{d+2}(\mathbb{P}^n_k, \tau)$ & & Notation~\ref{md2}\\
$\rho_{ev}$ & & evaluation map of $\mathcal{M}_{d+2}(\mathbb{P}^n_k, \tau)$\\
$\mathcal{M}_{d+2}({\mathbb{P}^n_{\mathfrak{S}}}{/\mathfrak{S}}, \tau)$ & & Notation~\ref{mds}\\
$\rho_{ev}^{\mathfrak{S}}$ & & evaluation map of $\mathcal{M}_{d+2}({\mathbb{P}^n_{\mathfrak{S}}}{/\mathfrak{S}}, \tau)$\\
$\mathcal{M}_{d+2}({\mathbb{P}^n_{\mathfrak{S}}}{/\mathfrak{S}}, \tau)_{\mu(b)}$ & & Notation~\ref{mdbb}\\
$\mathcal{M}_{0,d}({\mathbb{P}^n_{\mathfrak{S}}}{/\mathfrak{S}}, 1)$ & & Notation~\ref{md01}\\
$\mathcal{M}_{d+2}(\mathbb{P}^n_k, \tau, b)$ & & Notation~\ref{md0b}\\
$\mathcal{M}_{0,d}(\mathbb{P}^n_k, b)$ & & Notation~\ref{md0bb}\\
$\mathcal{W}$ & & Notation~\ref{www}, an open dense subset of $\mathcal{M}_{d+2}({\mathbb{P}^n_{\mathfrak{S}}}{/\mathfrak{S}}, \tau)$\\
$\mathcal{D}_b$, $\mathcal{D}_b^p$ & & Definition~\ref{badsets}\\
$\mathcal{H}=\mathcal{M}_{d+2}(\mathbb{P}^n_k, \varepsilon, b)$ & & Notation~\ref{mdvb}\\
$\mathcal{X}$ & & Notation~\ref{xxx}\\
$\mathcal{H}_0$, $\mathcal{H}_1$, $\mathcal{X}_0$, $\mathcal{X}_1$ & & Notation~\ref{xxx0}\\
$\mathcal{C}_{\mathcal{H}}$, $\mathcal{C}_{\mathcal{H}_0}$, $\mathcal{C}_{\mathcal{H}_1}$ & & Notation~\ref{ch01}\\
$H$ & & Notation~\ref{universalsectionlifting}\\
$\varrho$ & & Notation~\ref{universalhhhhhhhh}\\
$H_0$ & & Theorem~\ref{keylemma}\\
\bottomrule
\end{tabular}
\label{tab:TableOfNotationForMyResearch}
\end{table}

\end{appendices}

\vspace{1em}

\noindent\small{\textsc{Mathematics Department, Stony Brook University, Stony Brook, NY, 11794} }

\noindent\small{Email: \texttt{santai.qu@stonybrook.edu}}

 \end{document}